\def\black@#1{%
    \noalign{%
        \ifdim#1>\displaywidth
            \dimen@\prevdepth
            \nointerlineskip
            \vskip-\ht\strutbox@
            \vskip-\dp\strutbox@
            \vbox{\noindent\hbox to \displaywidth{\hbox to#1{\strut@\hfill}}}%
            \prevdepth\dimen@
        \fi
    }%
}
\renewcommand{\tocsection}[3]{%
  \indentlabel{\@ifnotempty{#2}{\bfseries\ignorespaces#1 #2\quad}}\bfseries#3}
\renewcommand{\tocsubsection}[3]{%
  \indentlabel{\@ifnotempty{#2}{\ignorespaces#1 #2\quad}}#3}
\newcommand\@dotsep{4.5}
\def\@tocline#1#2#3#4#5#6#7{\relax
  \ifnum #1>\c@tocdepth 
  \else
    \par \addpenalty\@secpenalty\addvspace{#2}%
    \begingroup \hyphenpenalty\@M
    \@ifempty{#4}{%
      \@tempdima\csname r@tocindent\number#1\endcsname\relax
    }{%
      \@tempdima#4\relax
    }%
    \parindent\z@ \leftskip#3\relax \advance\leftskip\@tempdima\relax
    \rightskip\@pnumwidth plus1em \parfillskip-\@pnumwidth
    #5\leavevmode\hskip-\@tempdima{#6}\nobreak
    \leaders\hbox{$\m@th\mkern \@dotsep mu\hbox{.}\mkern \@dotsep mu$}\hfill
    \nobreak
    \hbox to\@pnumwidth{\@tocpagenum{\ifnum#1=1\bfseries\fi#7}}\par
    \nobreak
    \endgroup
  \fi}
\renewcommand\csname r@tocindent0\endcsname{0pt}
\def\l@subsection{\@tocline{2}{0pt}{2.5pc}{5pc}{}}
 \def\@testdef #1#2#3{%
   \def\reserved@a{#3}\expandafter \ifx \csname #1@#2\endcsname
  \reserved@a  \else
 \typeout{^^Jlabel #2 changed:^^J%
 \meaning\reserved@a^^J%
 \expandafter\meaning\csname #1@#2\endcsname^^J}%
 \@tempswatrue \fi}
\newtheorem{thm}{Theorem}[section]
\newtheorem{lem}[thm]{Lemma}
\newtheorem{lemma}[thm]{Lemma}
\newtheorem{prop}[thm]{Proposition}
\newtheorem{cor}[thm]{Corollary}
\theoremstyle{definition}
\theoremstyle{remark}
\newtheorem{rem}[thm]{Remark}
\newtheorem{rmk}[thm]{Remark}
\newcommand{\thmref}[1]{Theorem~\ref{#1}}
\newcommand{\corref}[1]{Corollary~\ref{#1}}
\newcommand{\secref}[1]{\S\ref{#1}}
\newcommand{\lemref}[1]{Lemma~\ref{#1}} 
\newcommand{\propref}[1]{Proposition~\ref{#1}}
\newcommand{\remref}[1]{Remark~\ref{#1}}
\newcommand*{\dis}{\displaystyle}
\newcommand*{\qq}{\qquad}
\newcommand*{\lpar}{ }
\newcommand*{\tx}[1]{\text{#1}}
\newcommand*{\lamb}{\lambda}
\newcommand{\del}{\delta}
\newcommand*{\ep}{\epsilon}
\newcommand*{\kap}{\kappa}
\newcommand*{\degree}{^{\circ}}
\newcommand*{\suchthat}{\, \middle| \,}
\newcommand*{\n}{\!\!}
\newcommand*{\Pminus}{P_{-}}
\newcommand*{\Pminusbar}{\myoverline{-3}{0}{P}_{-}}
\newcommand*{\myoverline}[3]{\mkern -#1mu\overline{\mkern#1mu#3\mkern#2mu}\mkern -#2mu}	
\newcommand*{\vboldbar}{\myoverline{0}{0}{\vbold}}
\newcommand*{\Zbar}{\myoverline{-3}{0}{\Z}}
\newcommand*{\zbar}{\myoverline{-2}{0}{\z}}
\newcommand*{\Thbar}{\myoverline{-1}{-1.2}{\Th}}
\newcommand*{\Dapbar}{\myoverline{-3}{-1}{D}_\ap}
\newcommand*{\wbar}{\myoverline{0}{-0.5}{\omega}}
\newcommand*{\dvarbar}{\myoverline{0}{0}{\dvar}}
\newcommand*{\Mconst}{M}
\newcommand*{\half}{\frac{1}{2}}
\newcommand*{\threebytwo}{\frac{3}{2}}
\newcommand*{\fivebytwo}{\frac{5}{2}}
\newcommand*{\onebythree}{\frac{1}{3}}
\newcommand*{\twobythree}{\frac{2}{3}}
\newcommand*{\fourbythree}{\frac{4}{3}}
\newcommand*{\onebyfour}{\frac{1}{4}}
\newcommand*{\threebyfour}{\frac{3}{4}}
\newcommand*{\onebysix}{\frac{1}{6}}
\newcommand*{\fivebysix}{\frac{5}{6}}
\newcommand{\eightbyseven}{\frac{8}{7}}
\newcommand{\sevenbytwelve}{\frac{7}{12}}
\newcommand*{\Rsp}{\mathbb{R}}
\newcommand*{\Csp}{\mathbb{C}}
\newcommand*{\Nsp}{\mathbb{N}}
\newcommand*{\Zsp}{\mathbb{Z}}
\newcommand*{\Scalsp}{\mathcal{S}}
\newcommand*{\Dcalsp}{\mathcal{D}}
\newcommand*{\Lone}{L^1}
\newcommand*{\Ltwo}{L^2}
\newcommand*{\Lfourbythree}{L^\fourbythree}
\newcommand{\Leightbyseven}{L^\eightbyseven}
\newcommand*{\Linfty}{L^{\infty}}
\newcommand*{\Hhalf}{\dot{H}^\half}
\newcommand*{\Ccal}{\mathcal{C}}
\newcommand*{\Wcal}{\mathcal{W}}
\newcommand{\approxLtwo}{\approx_{\Ltwo}}
\newcommand*{\al}{\alpha}
\newcommand*{\ap}{{\alpha'}}
\newcommand*{\bp}{{\beta'}}
\newcommand*{\xp}{{x'}}
\newcommand*{\yp}{{y'}}
\newcommand*{\zp}{{z'}}
\newcommand*{\diff}{\mathop{}\! d}
\newcommand*{\difff}{\mathop{}\!\! d}
\newcommand*{\compose}[1]{\circ{#1}}
\newcommand*{\conv}{*}
\newcommand*{\Hil}{\mathbb{H}}
\newcommand*{\Pa}{\mathbb{P}_A}
\newcommand*{\Ph}{\mathbb{P}_H}
\newcommand*{\Phol}{\mathbb{P}_{hol}}
\newcommand*{\Id}{\mathbb{I}}
\newcommand*{\Imag}{\tx{Im}}
\newcommand*{\Real}{\tx{Re}}
\newcommand{\Jdel}{J_\delta}
\newcommand{\Jdelone}{J_{\del_1}}
\newcommand{\Jdeltwo}{J_{\del_2}}
\newcommand*{\grad}{\nabla}
\newcommand*{\Dt}{D_t}
\newcommand*{\Dtone}{\Dt^1}
\newcommand*{\Dttwo}{\Dt^2}
\newcommand{\Dtdel}{\Dt^\del}
\newcommand{\Dtdelone}{\Dt^{\del_1}}
\newcommand{\Dtdeltwo}{\Dt^{\del_2}}
\newcommand*{\pt}{\partial_t}
\newcommand*{\ps}{\partial_s}
\newcommand*{\px}{\partial_x}
\newcommand*{\py}{\partial_y}
\newcommand*{\pz}{\partial_z}
\newcommand*{\pxp}{\partial_\xp}
\newcommand*{\pyp}{\partial_\yp}
\newcommand*{\pzp}{\partial_\zp}
\newcommand*{\pap}{\partial_\ap}
\newcommand*{\papabs}{\abs{\pap}}
\newcommand*{\pbp}{\partial_\bp}
\newcommand*{\pal}{\partial_\al}
\newcommand*{\Dap}{D_{\ap}}
\newcommand*{\Dapabs}{\abs{D_{\ap}}}
\newcommand*{\Dapfrac}{\frac{1}{\Zap}\pap}
\newcommand*{\Dapbarfrac}{\frac{1}{\Zapbar}\pap}
\newcommand*{\Dapabsfrac}{\frac{1}{\Zapabs}\pap}
\newcommand*{\Ecal}{\mathcal{E}}
\newcommand*{\Ecalthreefive}{\mathcal{E}_{3.5}}
\newcommand*{\Ecalfourfivei}{\mathcal{E}_{4.5 + i}}
\newcommand*{\EcalfourfiveN}{\mathcal{E}_{4.5 + N}}
\newcommand*{\Esigmazero}{E_{\sigma,0}}
\newcommand*{\Esigmaone}{E_{\sigma,1}}
\newcommand*{\Esigmatwo}{E_{\sigma,2}}
\newcommand*{\Esigmathree}{E_{\sigma,3}}
\newcommand*{\Esigmafour}{E_{\sigma,4}}
\newcommand*{\Esigma}{E_{\sigma}}
\newcommand*{\Ecalsigmaone}{\mathcal{E}_{\sigma,1}}
\newcommand*{\Ecalsigmatwo}{\mathcal{E}_{\sigma,2}}
\newcommand*{\Ecalsigma}{\mathcal{E}_{\sigma}}
\newcommand*{\EcalDelta}{\mathcal{E}_{\Delta}}
\newcommand*{\EcalDeltatwo}{\mathcal{E}_{\Delta,2}}
\newcommand*{\EcalDeltatwofive}{\mathcal{E}_{\Delta,2.5}}
\newcommand*{\EcalDeltathree}{\mathcal{E}_{\Delta,3}}
\newcommand*{\vbold}{\mathbf{v}}
\newcommand*{\A}{\mathcal{A}}
\newcommand*{\Aone}{A_1}
\newcommand*{\Aonesigma}{A_{1,\sigma}}
\newcommand*{\Aonestar}{\Aone^*}
\newcommand*{\bvar}{b}
\newcommand*{\bvarone}{b_1}
\newcommand*{\bvartwo}{b_2}
\newcommand*{\bap}{\bvar_\ap}
\newcommand*{\bvarap}{\bap}
\newcommand*{\bstar}{b^*}
\newcommand*{\avar}{a}
\newcommand{\avarone}{\avar_1}
\newcommand{\avartwo}{\avar_2}
\newcommand*{\atay}{\mathfrak{\underline{a}}}
\newcommand*{\h}{h}
\newcommand*{\hal}{\h_\al}
\newcommand*{\hinv}{\h^{-1}}
\newcommand*{\g}{g}
\newcommand*{\gvar}{g}
\newcommand*{\gone}{\g_1}
\newcommand*{\gtwo}{\g_2}
\newcommand*{\gvartwo}{\gvar_2}
\newcommand*{\vvar}{v}
\newcommand*{\vvarone}{v_1}
\newcommand*{\vvartwo}{v_2}
\newcommand*{\thvar}{\theta}
\newcommand*{\Th}{\Theta}
\newcommand*{\cvar}{c}
\newcommand{\cvarone}{\cvar_1}
\newcommand{\cvartwo}{\cvar_2}
\newcommand*{\dvar}{d}
\newcommand*{\dvarone}{d_1}
\newcommand*{\dvartwo}{d_2}
\newcommand*{\etwo}{e_2}
\newcommand{\errorone}{error_1}
\newcommand{\errortwo}{error_2}
\newcommand*{\f}{f}
\newcommand*{\w}{\omega}
\newcommand*{\wone}{\omega_1}
\newcommand*{\wtwo}{\omega_2}
\newcommand*{\Psizp}{\Psi_{\zp}}
\newcommand*{\Jone}{J_1}
\newcommand*{\Jtwo}{J_2}
\newcommand*{\Rone}{R_1}
\newcommand*{\Rtwo}{R_2}
\newcommand{\U}{U}
\newcommand*{\z}{z}
\newcommand*{\zal}{\z_\al}
\newcommand*{\zalbar}{\zbar_\al}
\newcommand*{\zalabs}{\abs{\zal}}
\newcommand*{\zt}{\z_t}
\newcommand*{\ztbar}{\zbar_t}
\newcommand*{\ztal}{\z_{t\al}}
\newcommand*{\ztalbar}{\zbar_{t\al}}
\newcommand*{\ztt}{\z_{tt}}
\newcommand*{\zttbar}{\zbar_{tt}}
\newcommand*{\zttal}{\z_{tt\al}}
\newcommand*{\Z}{Z}
\newcommand*{\Zap}{\Z_{,\ap}}
\newcommand*{\Zaphalf}{\Zap^{1/2}}
\newcommand*{\Zapbar}{\Zbar_{,\ap}}
\newcommand*{\Zbarap}{\Zapbar}
\newcommand*{\Zapabs}{\abs{\Zap}}
\newcommand*{\Zt}{\Z_t}
\newcommand*{\Ztbar}{\Zbar_t}
\newcommand*{\Ztap}{\Z_{t,\ap}}
\newcommand*{\Ztapbar}{\Zbar_{t,\ap}}
\newcommand*{\Ztbarap}{\Ztapbar}
\newcommand*{\Ztt}{\Z_{tt}}
\newcommand*{\Zttbar}{\Zbar_{tt}}
\newcommand*{\Zttap}{\Z_{tt,\ap}}
\newcommand*{\Zttapbar}{\Zbar_{tt,\ap}}
\newcommand*{\Zttbarap}{\Zttapbar}
\newcommand*{\Ztttbar}{\Zbar_{ttt}}
\newcommand*{\nobrac}[1]{ #1 }
\DeclarePairedDelimiter{\oldbrac}{\lparen}{\rparen}			
\NewDocumentCommand{\brac}{ s o m }{						
	\IfBooleanT{#1}{
  		\IfValueT{#2}{\oldbrac[#2]{#3}}
		\IfValueF{#2}{\oldbrac{#3}} 
	}
	\IfBooleanF{#1}{
  		\IfValueT{#2}{\PackageError{mypackage}{Incorrect use of brac. Insert star}{}}
		\IfValueF{#2}{\oldbrac*{#3}} 
	}		
}
\DeclarePairedDelimiter\oldcbrac{\lbrace}{\rbrace}				
\NewDocumentCommand{\cbrac}{ s o m }{					
	\IfBooleanT{#1}{
  		\IfValueT{#2}{\oldcbrac[#2]{#3}}
		\IfValueF{#2}{\oldcbrac{#3}} 
	}
	\IfBooleanF{#1}{
  		\IfValueT{#2}{\PackageError{mypackage}{Incorrect use of cbrac. Insert star}{}}
		\IfValueF{#2}{\oldcbrac*{#3}} 
	}		
}
\DeclarePairedDelimiter\oldsqbrac{\lbrack}{\rbrack}				
\NewDocumentCommand{\sqbrac}{ s o m }{					
	\IfBooleanT{#1}{
  		\IfValueT{#2}{\oldsqbrac[#2]{#3}}
		\IfValueF{#2}{\oldsqbrac{#3}} 
	}
	\IfBooleanF{#1}{
  		\IfValueT{#2}{\PackageError{mypackage}{Incorrect use of sqbrac. Insert star}{}}
		\IfValueF{#2}{\oldsqbrac*{#3}} 
	}		
}
\DeclarePairedDelimiter{\oldabs}{\lvert}{\rvert}
\NewDocumentCommand{\abs}{ s o m }{						
	\IfBooleanT{#1}{
  		\IfValueT{#2}{\oldabs[#2]{#3}}
		\IfValueF{#2}{\oldabs{#3}} 
	}
	\IfBooleanF{#1}{
  		\IfValueT{#2}{\PackageError{mypackage}{Incorrect use of abs. Insert star}{}}
		\IfValueF{#2}{\oldabs*{#3}} 
	}		
}
\DeclarePairedDelimiterX{\oldnorm}[1]{\lVert}{\rVert}{#1}
\NewDocumentCommand{\norm}{ s o o m }{					
	\IfValueT{#2} {
		\IfBooleanT{#1}{
  			\IfValueT{#3}{\oldnorm[#2]{#4}_{#3}}
			\IfValueF{#3}{\oldnorm{#4}_{#2}} 
		}
		\IfBooleanF{#1}{
  			\IfValueT{#3}{\PackageError{mypackage}{Incorrect use of norm. Insert star}{}}
			\IfValueF{#3}{\oldnorm*{#4}_{#2}} 
		}
	}
	\IfValueF{#2} {
		\IfBooleanT{#1}{\oldnorm{#4}}	
		\IfBooleanF{#1}{\oldnorm*{#4}}		
	}	
}
\begin{document}

\title[water waves]{Angled crested like water waves with surface tension: Wellposedness of the problem }
\author{Siddhant Agrawal}
\address{Department of Mathematics, University of Massachusetts Amherst, MA 01003}
\email{agrawal@math.umass.edu}
\date{\today}

\begin{abstract}
We consider the capillary-gravity water wave equation in two dimensions. We assume that the fluid is inviscid, incompressible, irrotational and the air density is zero. We construct an energy functional and prove a local wellposedness result without assuming the Taylor sign condition. When the surface tension $\sigma$ is zero, the energy reduces to a lower order version of the energy obtained by Kinsey and Wu \cite{KiWu18} and allows angled crest interfaces.  For positive surface tension, the energy does not allow angled crest interfaces but admits initial data with large curvature of the order of $\sigma^{-\onebythree + \ep} $ for any $\ep >0$.
\end{abstract}

\maketitle
\tableofcontents

\section{Introduction}

We are concerned with the motion of a fluid in dimension two with a free boundary. In this work we will identify 2D vectors with complex numbers. The fluid region $\Omega(t)$ and the air is separated by an interface $\Sigma(t)$, with the fluid being below the air region and $\Sigma(t)$ being a one dimensional curve. We assume that the interface tends point-wise to the real line at infinity but we do not assume that the interface is a graph. The air and the fluid are assumed to have constant densities of 0 and 1 respectively. The fluid is also assumed to be inviscid, incompressible, irrotational and we assume that the bottom is at infinite depth. The gravitational field is assumed to be a constant vector $-i$ pointing in the downward direction. The motion of the fluid is then governed by the Euler equation
\begin{equation}\label{euler-capillary}
\begin{aligned}
& \mathbf{v_t + (v.\nabla)v} = -i -\nabla P  \qq\text{ on } \Omega (t) 	\\
& \tx{div } \mathbf{ v} = 0, \quad \tx{curl } \mathbf{ v }=0 \qq\text{ on } \Omega(t) 
\end{aligned}
\end{equation}
Along with the boundary conditions
\begin{equation}\label{boundary}
\begin{aligned}
& P = -\sigma \partial_{s}\thvar \qq\text{ on } \Sigma (t) \\
& (1,\mathbf{v}) \tx{ is tangent to the free surface } (t, \Sigma(t)) \\
& \mathbf{v} \to 0, \grad P \to -i \qq\text{ as } |(x,y)| \to \infty
\end{aligned}
\end{equation}
Here $\thvar = $ angle the interface makes with the $x -axis$, $\ps = $ arc length derivative, $\sigma = $ coefficient of surface tension $\geq0$. 

The earliest results on local well-posedness for the Cauchy problem are for small data in 2D and were obtained by Nalimov \cite{Na74}, Yoshihara \cite{Yo82,Yo83} and Craig \cite{Cr85}. In the case of zero surface tension, Wu \cite{Wu97,Wu99} obtained the proof of local well-posedness for arbitrary data in Sobolev spaces. See also the works in  \cite{ChLi00, Li05, La05, ZhZh08, CaCoFeGaGo13, AlBuZu14, HuIfTa16, AlBuZu18, HaIfTa17, Po19, Ai17, Ai20}. 

In the case of non-zero surface tension, the local well-posedness of the equation in Sobolev spaces was established by Beyer and Gunther in \cite{BeGu98}. See also the works in \cite{Ig01, Am03, Sc05, CoSh07, ShZe11, AlBuZu11, CaCoFeGaGo12, Ng17}. The zero surface tension limit of the water wave equations in Sobolev spaces was proved by Ambrose and Masmoudi \cite{AmMa05, AmMa09}. See also the works in \cite{OgTa02, ShZe08, MiZh09,CaCoFeGaGo12, ShSh19}.

An important quantity related to the well-posedness of the problem in the zero surface tension case is the Taylor sign condition. This says that there should exist a constant $c>0$ such that 
\begin{align*}
-\frac{\partial P}{\partial n} \geq c >0 \quad \tx{ on } \Sigma(t)
\end{align*}
In \cite{Wu97} Wu proved that this condition  is satisfied for the infinite bottom case if the interface is $C^{1,\alpha}$ for $\alpha>0$. This was later shown to be true for flat bottoms and with perturbations  to flat bottom by Lannes \cite{La05}.  In the case of non-zero surface tension, the Taylor sign condition is not needed for establishing the local wellposedness for a fixed $\sigma>0$, but now the time of existence $T$ depends on the value of $\sigma$ and $T\to0$ as $\sigma \to 0$. The Taylor sign condition again becomes important if one studies the zero surface tension limit, as in this case one needs uniform time of existence for $0\leq \sigma\leq \sigma_0$ for some fixed $\sigma_0>0$. In particular observe that assuming the Taylor sign condition to prove the zero surface tension limit forces one to assume quantitative bounds on the $C^{1,\al}$ norm of the initial interface. In fact in the case of non-zero surface tension, all results mentioned above obtain a time of existence $T \lesssim \norm[\infty]{\kap}^{-1}$ where $\kap$ is the initial curvature of the interface. 

In the zero surface tension case, for non $C^{1}$ curves the Taylor sign condition is only satisfied in a weak sense with $-\frac{\partial P}{\partial n} \geq 0$ \cite{Wu97, KiWu18} and this makes the quasilinear equation degenerate. Because of this obtaining a local well-posedness result in this setting becomes highly non trivial as standard energy estimates in Sobolev spaces do not work.  Kinsey and Wu \cite{KiWu18} managed to overcome this difficulty by using a weighted Sobolev norm with the weight depending nonlinearly on the interface and proved a priori estimates for interfaces which can have angled crests. Building upon this work Wu \cite{Wu19} proved a local well-posedness result that allows for angled crested interfaces as initial data. Later on in \cite{Ag20} we studied the evolution of the singularities of these waves. 

In this paper we extend the work of Kinsey and Wu \cite{KiWu18} for $\sigma=0$ to the case of $\sigma\geq 0$. We construct an energy functional $\Ecalsigma(t)$ and prove an a priori estimate \footnote{\thmref{thm:aprioriEsigma} actually uses $\Esigma(t)$ instead of $\Ecalsigma(t)$ however both are equivalent to each other by \propref{prop:equivEsigma}} for it in \thmref{thm:aprioriEsigma} which works for all $\sigma \geq 0$. Using this we prove a local well-posedness result for $\sigma>0$ in \thmref{thm:existence}. The energy $\Ecalsigma(t)$ has several interesting properties:
\begin{enumerate}
\item For $\sigma=0$ the energy $\Ecalsigma(t)$ reduces to a lower order version of the energy of Kinsey and Wu \cite{KiWu18}. In particular it allows singular interfaces such as interfaces with angled crests and cusps. 
\item For $\sigma>0$ the energy $\Ecalsigma(t)$ does not allow any singularities in the interface. In particular it does not allow angled crested interfaces. 
\item For $\sigma>0$ even though the energy $\Ecalsigma(t)$ does not allow singularities in the interface, it does allow interfaces with large curvature. It allows the $\Linfty$ norm of the curvature of the initial interface to grow like $\sigma^{-\onebythree + \ep}$ for any $\ep>0$. In particular for $\sigma$ small and for interfaces close to being angled crested, we obtain a time of existence much larger than all previous results. See \corref{cor:example} for more details.
\item We do not assume the Taylor sign condition in proving the a priori estimate  \thmref{thm:aprioriEsigma} or the local wellposedness result \thmref{thm:existence} and the energy $\Ecalsigma(t)$ is an increasing function of $\sigma$. For initial data in appropriate Sobolev spaces we obtain uniform time of existence of solutions for $0\leq \sigma \leq \sigma_0$ for arbitrary $\sigma_0>0$  thereby recovering the uniform time of existence result of Ambrose and Masmoudi \cite{AmMa05} in this case. \footnote{ Ambrose and Masmoudi \cite{AmMa05} had the restriction of $\sigma_0$ being small which we do not have. See the discussion after \eqref{form:Taylor} and \secref{sec:discussion} for more details.}
\end{enumerate}

The growth rate of $\sigma^{-\onebythree}$ for the $\Linfty$ norm of the curvature can be explained by the following scaling argument. Let us ignore gravity and consider a solution $\Z(\al,t)$ to the capillary water wave equation with surface tension $\sigma$ \footnote{The role of gravity will be clarified in a future work.}. Then for any $\lamb>0$ and $s \in \Rsp$, the function $\Z_\lamb(\al,t) = \lamb^{-1}\Z(\lamb\al, \lamb^s t) $ is a solution to the capillary water wave equation with surface tension $\sigma_\lamb = \lamb^{2s-3}\sigma$. We are interested in the zero surface tension limit, so we want the solutions $\Z_\lamb(\cdot,t)$ to exist in the same time interval $[0,T]$ and hence should have the same time scales. Hence we put $s=0$ to get $\Z_\lamb(\al,t) = \lamb^{-1}\Z(\lamb\al,  t) $ and surface tension $\sigma_\lamb = \lamb^{-3}\sigma$. In this case, the curvature $\kap_\lamb (\al,t) = \lamb\kap(\lamb\al,t)$ which yields $\sigma_\lamb\kap_\lamb^3 (\al,t) = \sigma\kap^3(\lamb\al,t)$. Hence $\norm*[\infty]{\sigma^{\onebythree}\kap}$ is invariant under this scaling and so the curvature grows like $\sigma^{-\onebythree}$ as $\sigma \to 0$.

In a forthcoming paper we show that in an appropriate regime (see \remref{rmk:convergence}) the zero surface tension limit of the solutions obtained in \thmref{thm:existence} allows for angled crested interfaces (more generally non-$C^1$ interfaces) and satisfies the gravity water wave equation. 

We will follow the approach  of Kinsey and Wu \cite{KiWu18} and Wu \cite{Wu19}, and work with free surface equations in conformal coordinates, derive quasilinear equations and use weighted Sobolev norms. The presence of surface tension gives rise to several structural and analytical difficulties which are not present in \cite{KiWu18} and \cite{Wu19}. We explain the difficulties, new ideas and the main results in detail in \secref{sec:discussionandresults}. In addition to the work of Kinsey and Wu \cite{KiWu18} and Wu \cite{Wu19}, we also use ideas from the work of Ambrose and Masmoudi \cite{AmMa05} in choosing appropriate variables to work with. 

The paper is organized as follows: In \secref{sec:notation} we introduce the notation and prove some basic formulae including the formula for the Taylor sign condition. In \secref{sec:discussionandresults} we state our main results and discuss the main ideas and heuristics. In \secref{sec:quasilinear} we derive our quasilinear equations by taking derivatives to the Euler equation. In \secref{sec:aprioriEsigma} we prove the main a priori estimate for the energy $\Esigma(t)$. In \secref{sec:equivalence} we prove the equivalence of the energies $\Esigma(t)$ and $\Ecalsigma(t)$ and explain their relation to the Sobolev norm. In \secref{sec:existence} we prove an existence result in Sobolev space for $\sigma>0$ and also a blow up criterion. Finally in \secref{sec:proof} we complete the proof of our main results. The appendix \secref{sec:appendix} contains all the commonly used identities and estimates used throughout the paper.

\bigskip

\noindent \textbf{Acknowledgment}: This work was part of the author's Ph.D. thesis and he is very grateful to his advisor Prof. Sijue Wu for proposing the problem and for her guidance during this project. The author would also like to thank Prof. Jeffrey Rauch for many helpful discussions. The author was supported in part by NSF Grants DMS-1101434, DMS-1361791 through his advisor.
\section{Notation and Preliminaries}\label{sec:notation}
\bigskip

We will try to be as consistent as possible with the notation used in \cite{KiWu18}. Most of this section is essentially taken directly from \cite{KiWu18} except for the new definitions and some modifications of the formulae due to surface tension. The Fourier transform is defined as
\[
\hat{f}(\xi) = \frac{1}{\sqrt{2\pi}}\int e^{-ix\xi}f(x) \diff x
\] 
We will denote by $ \Dcalsp(\Rsp)$ the space of smooth functions with compact support in $\Rsp$ and $ \Dcalsp'(\Rsp)$ will be the space of distributions.  $\Scalsp(\Rsp)$ will denote the Schwartz space of rapidly decreasing functions and $\Scalsp'(\Rsp)$ is the space of tempered distributions. A Fourier multiplier with symbol $a(\xi)$ is the operator $T_a$ defined formally by the relation $\dis \widehat{T_a{f}} = a(\xi)\hat{f}(\xi)$. The operators $\papabs^s $ for $s\in\Rsp$ are defined as the Fourier multipliers with symbol $\abs{\xi}^s$. 
The Sobolev space $H^s(\Rsp)$ for $s\geq 0$  is the space of functions with  $\norm[H^s]{f} = \norm*[\Ltwo(\diff \xi)]{(1+\abs{\xi}^2)^{\frac{s}{2}}\hat{f}(\xi)} < \infty$. The homogenous Sobolev space $\Hhalf(\Rsp)$ is the space of functions modulo constants with  $\norm[\Hhalf]{f} = \norm*[\Ltwo(\diff \xi)]{\abs{\xi}^\half \hat{f}(\xi)} < \infty$. The Poisson kernel is given by
\begin{align}\label{eq:Poissonkernel}
K_\ep(x) = \frac{\ep}{\pi(\ep^2 + x^2)} \qquad \tx{ for } \ep>0
\end{align}

From now on compositions of functions will always be in the spatial variables. We write $f = f(\cdot,t), g = g(\cdot,t), f \compose g(\cdot,t) :=  f(g(\cdot,t),t)$. Define the operator $U_g$ as given by $U_g f = f\compose g$. Observe that $U_f U_g = U_{g\compose f}$.  Let $[A,B] := AB - BA$ be the commutator of the operators $A$ and $B$. If $A$ is an operator and $f$ is a function, then $(A + f)$ will represent the addition of the operators A and the multiplication operator $T_f$ where $T_f (g) = fg$.  We will denote the spacial coordinates in $\Omega(t) $ with $z = x+iy$, whereas $\zp = \xp + i\yp$ will denote the coordinates in the lower half plane $\Pminus = \cbrac{(x,y) \in \Rsp^2 \suchthat y<0}$. As we will frequently work with holomorphic functions, we will use the holomorphic derivatives $\pz = \half(\px-i\py)$ and $\pzp = \half(\pxp-i\pyp)$. 

Let the interface $\Sigma(t) : \z = \z(\al, t) \in \Csp$ be given by a Lagrangian parametrization $\al$ satisfying $\z_{\al}(\al,t) \neq 0 $ for all  $\al \in \Rsp$. Hence $\zt(\al,t) = \mathbf{v}(\z(\al,t),t)$ is the velocity of the fluid on the interface and $\ztt(\al,t) = (\mathbf{v_t + (v.\nabla)v})(\z(\al,t),t)$ is the acceleration.  As ${\frac{\zal}{\zalabs}(\al,t) = e^{i\thvar(\al,t)}}$ and $\frac{1}{\zalabs}\pal$ is the arc length derivative in Lagrangian coordinates, the pressure can be rewritten as
\[
P(\z(\al,t),t) = i\sigma \frac{1}{\zal}\pal\frac{\zal}{\zalabs} (\al,t)
\]
Note that $ \frac{1}{\zal}\pal\frac{\zal}{\zalabs}$ is purely imaginary. Hence the Euler equation becomes
\[
\ztt(\al,t) + i = -\hat{n}\frac{\partial P}{\partial \hat{n}}(\z(\al,t))  -\hat{t}\frac{\partial P}{\partial \hat{t}}(\z(\al,t)) 
\]
where
\begin{align*}
& \hat{t} = \frac{\zal}{\zalabs} = e^{i\thvar} = \tx{unit tangent vector} \\
& \hat{n} = i\frac{\zal}{\zalabs} = ie^{i\thvar} = \tx{unit outward normal vector}
\end{align*}
Define
\[
\avar(\al,t) = -\frac{1}{\zalabs}\frac{\partial P}{\partial \hat{n}}(\z(\al,t)) \; \in \Rsp
\]
Hence we get
\begin{align}  \label{form:zttbar}
& \ztt + i = i\avar \zal - \frac{\zal}{\zalabs}\frac{1}{\zalabs}\pal(P(\z(\al,t),t)) \notag \\
\tx{Therefore } \quad & \zttbar -i = -i\avar \zalbar -i\sigma \frac{1}{\zal}\pal \frac{1}{\zal}\pal \frac{\zal}{\zalabs} 
\end{align}

Let $\Psi(\cdot,t): \Pminus \to  \Omega(t)$ be conformal maps satisfying $\lim_{\z\to \infty} \Psi_\z(\z,t) =1$  and $\lim_{\z\to \infty} \Psi_t(\z,t) =0$.  With this, the only ambiguity left in the definition of $\Psi$ is that of the choice of translation of the conformal map at $t=0$, which does not play any role in the analysis. Let $\Phi(\cdot,t):\Omega(t) \to \Pminus $ be the inverse of the map $\Psi(\cdot,t)$ and define $\h(\cdot,t):\Rsp \to \Rsp$ as
\begin{align}\label{def:h}
\h(\al,t) = \Phi(\z(\al,t),t)
\end{align}
hence $\h(\cdot,t)$ is a homeomorphism. As we use both Lagrangian and conformal parameterizations, we will denote the Lagrangian parameter by $\al$ and the conformal parameter by $\ap$. Let $\hinv(\ap,t)$ be its inverse i.e.
\[
\h(\hinv(\ap,t),t) = \ap
\] 
From now on, we will fix our Lagrangian parametrization at $t=0$ by imposing
\begin{align*}
h(\al,0)= \al \quad  \tx { for all } \al \in \Rsp
\end{align*}
Hence the Lagrangian parametrization is the same as conformal parametrization at $t=0$. Define the variables
\[
\begin{array}{l l l}
 \Z(\ap,t) = \z\compose \hinv (\ap,t)  & \Zap(\ap,t) = \pap \Z(\ap,t) &  \quad  \tx{ Hence } \quad  \brac*[]{\dfrac{\zal}{\hal}} \compose \hinv = \Zap \\
  \Zt(\ap,t) = \zt\compose \hinv (\ap,t)  & \Ztap(\ap,t) = \pap \Zt(\ap,t) &  \quad   \tx{ Hence } \quad  \brac*[]{\dfrac{\ztal}{\hal}} \compose \hinv = \Ztap \\
 \Ztt(\ap,t) = \ztt\compose \hinv (\ap,t)  & \Zttap(\ap,t) = \pap \Ztt(\ap,t) &  \quad   \tx{ Hence } \quad  \brac*[]{\dfrac{\zttal}{\hal}} \compose \hinv = \Zttap \\
\end{array}
\]
Hence $\Z(\ap,t), \Zt(\ap,t)$ and $\Ztt(\ap,t)$ are the parameterizations of the boundary, the velocity and the acceleration in conformal coordinates and in particular $\Z(\cdot,t)$ is the boundary value of the conformal map $\Psi(\cdot,t)$. Note that as $\Z(\ap,t) = \z(\hinv(\ap,t),t)$ we see that  $\pt \Z \neq \Zt$. Similarly $\pt \Zt \neq \Ztt$.  The substitute for the time derivative is the material derivative. Define the operators
\begingroup
\allowdisplaybreaks
\begin{fleqn}
\begin{align}\label{eq:mainoperators}
\begin{split}
\qquad & \Dt =  \tx{material derivative} = \pt + \bvar \pap \qquad \tx{ where } \bvar = \h_t \compose \hinv \\
 & \Dap = \Dapfrac \qquad \Dapbar = \Dapbarfrac \qquad \Dapabs = \Dapabsfrac \\
& \Hil  =   \text{Hilbert transform  = Fourier multiplier with symbol } -sgn(\xi) \\
& \qquad  \Hil f(\alpha ' ) =  \frac{1}{i\pi} p.v. \int \frac{1}{\alpha ' - \beta'}f(\beta') \diff\beta' \\
& \Ph = \text{Holomorphic projection} = \frac{\Id + \Hil}{2} \\
& \Pa = \text{Antiholomorphic projection} = \frac{\Id - \Hil}{2} \\ 
& \papabs   = \ i\Hil \partial_{\alpha'} = \sqrt{-\Delta} = \text{ Fourier multiplier with symbol } |\xi| \ \\
& \papabs^{1/2}  = \text{ Fourier multiplier with symbol } \abs{\xi}^{1/2} 
\end{split}
\end{align}
\end{fleqn}
\endgroup
Now we have $\Dt \Z = \Zt$ and $\Dt \Zt = \Ztt$ and more generally $\Dt (f(\cdot,t)\compose \hinv) = (\pt f(\cdot,t)) \compose \hinv$ or equivalently $\pt (F(\cdot,t)\compose \h) = (\Dt F(\cdot,t)) \compose \h$. This means that $\Dt = U_h^{-1}\pt U_h$ i.e. $\Dt$ is the material derivative in conformal coordinates.  In this work, the material derivative $\Dt$ is more heavily used as compared to the time derivative $\pt$. 

Define $\U: \Pminusbar \to \Csp $ as
\begin{align*}
\U = \vboldbar \compose \Psi 
\end{align*}
and observe that $\U$ is a holomorphic function on $\Pminus$. Also note that its boundary value is given by $ \Ztbar(\ap,t) = \U(\ap,t)$ for all $\ap\in \Rsp$. The Hilbert transform defined above satisfies the following property.
\begin{lem}[\cite{Ti86}]
Let $1<p<\infty$ and let $F(z)$ be a holomorphic function in the lower half plane with $F(z) \to 0$ as $z\to \infty$. Then the following are equivalent
\begin{enumerate}
\item $\dis \sup_{y<0} \norm[p]{F(\cdot + iy)} < \infty$
\item $F(z)$ has a boundary value $f$, non-tangentially almost everywhere with $f \in L^p$ and $\Hil(f) = f$. 
\end{enumerate}
\end{lem}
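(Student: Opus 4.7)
The plan is to prove the two implications separately, following the classical Hardy space theory of the half plane. The underlying Fourier-analytic principle is that $\Hil f = f$, given the symbol $-\operatorname{sgn}(\xi)$ of $\Hil$ recorded in \eqref{eq:mainoperators}, is equivalent to $\hat{f}$ being supported in $(-\infty, 0]$; by Paley-Wiener, this is precisely the condition characterizing $L^p$ boundary values of functions holomorphic in $\Pminus$ that vanish at infinity.

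For the direction (2) $\Rightarrow$ (1), starting from $f \in L^p$ with $\Hil f = f$, I would define $F$ on $\Pminus$ via the Cauchy integral
\[
F(z) = \frac{1}{2\pi i}\int_\Rsp \frac{f(t)}{t-z}\,dt,
\]
which is automatically holomorphic in $\Pminus$ and vanishes at infinity. For $z = x + iy$ with $y<0$, splitting the kernel $\frac{1}{t-x-iy}$ into its real and imaginary parts expresses $F(x+iy)$ as a linear combination of the Poisson integral $K_{|y|}\conv f$ (with $K_\ep$ as in \eqref{eq:Poissonkernel}) and the conjugate Poisson integral $Q_{|y|}\conv f$. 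The identity $\Hil f = f$ forces the conjugate Poisson piece to collapse (up to a scalar multiple) onto the Poisson piece, so that $F(\cdot+iy)$ is a constant multiple of $K_{|y|}\conv f$. Young's inequality applied to the Poisson kernel then yields the uniform bound $\sup_{y<0}\norm[p]{F(\cdot+iy)} \leq C\norm[p]{f}$.

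For the direction (1) $\Rightarrow$ (2), the core step is to extract the boundary value $f$. Picking a sequence $y_n \uparrow 0$ and setting $f_n(x) = F(x+iy_n)$, the hypothesis gives uniform $L^p$ bounds on $\{f_n\}$, so for $1<p<\infty$ weak-$*$ compactness produces a subsequential limit $f \in L^p$. For any fixed $y_0 < 0$, the translated function $z \mapsto F(z+iy_0)$ lives in $H^p$ of $\Pminus$ with bounded horizontal sections, so the Poisson representation on a shifted half-plane gives $F(x+iy) = (K_{|y-y_0|}\conv F(\cdot+iy_0))(x)$ for $y<y_0$. Passing $y_0 \uparrow 0$ identifies $F(x+iy) = (K_{|y|}\conv f)(x)$ for every $y<0$. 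The standard maximal-function inequality (the non-tangential maximal function of a Poisson integral is controlled by the Hardy-Littlewood maximal function) then produces non-tangential a.e.\ convergence $F(\cdot+iy)\to f$. Finally, since $F$ is holomorphic and not merely harmonic on $\Pminus$, the Cauchy-Riemann constraint on its harmonic conjugate forces $\hat{f}$ to be supported in $(-\infty,0]$, which is exactly $\Hil f = f$.

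The main technical obstacle is the non-tangential a.e.\ convergence in (1) $\Rightarrow$ (2); it requires the maximal-function estimate together with a density argument (approximating by $f \in L^p \cap L^2$, say, where the Fourier characterization is immediate). The remaining ingredients -- the M.~Riesz $L^p$-boundedness of $\Hil$, standard convolution estimates for the Poisson kernel, and the Paley-Wiener characterization of $H^p$ functions -- are all classical and are the reason this statement is simply quoted from \cite{Ti86}.
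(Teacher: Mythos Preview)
The paper does not prove this lemma at all; it is stated with the citation \cite{Ti86} and used as a black box, so there is no ``paper's own proof'' to compare against. Your sketch is a faithful outline of the classical Hardy-space argument (Poisson/Cauchy representation, weak-$*$ compactness, maximal function control of non-tangential limits, and the Paley--Wiener/Fourier characterization $\Hil f = f \Leftrightarrow \operatorname{supp}\hat f \subset (-\infty,0]$), and it is correct at the level of detail you have given.
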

In particular this says if $\U$ decays appropriately at infinity, then the boundary value of $\U$ namely $\Ztbar$ will satisfy $\Hil \Ztbar = \Ztbar$.  We can now define the main variables used in this paper
\begin{fleqn}
\begin{align}\label{eq:mainvariables}
\begin{split}
 \qquad & \A = (\avar\hal) \compose \hinv  \\
& \Aonesigma = \A \Zapabs^2   \qquad  \tx{ Hence } \quad \frac{\Aonesigma}{\Zapabs} = -\frac{\partial P}{\partial \hat{n}}\compose \hinv\\
& \Aone = 1 - \Imag[\Zt,\Hil]\Ztapbar  \\
& \g = \thvar \compose \hinv \qquad  \tx{ Hence } \quad \frac{\Zap}{\Zapabs} = e^{i\g} \tx{ and } \Dapabs \g = (\ps \thvar) \compose \hinv = -i\Dap \frac{\Zap}{\Zapabs}  \\
& \Th = (\Id+\Hil)\Dapabs \g = -i(\Id+\Hil) \Dap \frac{\Zap}{\Zapabs} \\
& \w = e^{i\g} = \frac{\Zap}{\Zapabs} \qquad  \tx{ Hence } \quad \Dapabs \w = i \w\Real \Th
\end{split}
\end{align} 
\end{fleqn} 
Observe that $\Real \Th = \kap\compose\hinv$ where $\kap$ is the curvature of the interface. With this notation, by precomposing  $\eqref{form:zttbar}$ with $\hinv$ we get
\begin{align} \label{form:Zttbar_old}
\Zttbar -i = -i\frac{\Aonesigma}{\Zap} -i\sigma \Dap \Dap \frac{\Zap}{\Zapabs}  
\end{align}
Let us now derive the formulae of $\Aonesigma$ and $\bvar$.
\medskip

\subsubsection{}$\!\!\!\mathbf{Formula \ for\  } \Aonesigma$
\medskip

Let $F = \vboldbar$ and hence $F$ is holomorphic in $\Omega(t)$ and $\ztbar = F(\z(\al,t),t)$. Hence
\[
\zttbar = F_t(\z(\al,t),t) + F_\z(\z(\al,t),t)\zt(\al,t) \quad\qquad \ztalbar = F_\z(\z(\al,t),t)\zal(\al,t) 
\]
\begin{align*}
\tx{Hence }\quad  \zttbar  = F_t \compose z + \zt \frac{\ztalbar}{\zal}  \qq \qq \qq
\end{align*}
Precomposing with $\hinv$ we obtain $\dis \enspace  \Zttbar  = F_t\compose\Z + \Zt \frac{\Ztapbar}{\Zap} $. Now multiply by $i\Zap$ and use \eqref{form:Zttbar_old} to get
\[
\Aonesigma = i\Zap F_t\compose\Z + \Zap +i\Zt\Ztapbar -\sigma\pap\Dap\frac{\Zap}{\Zapabs}
\]
Apply $(\Id - \Hil) $ and use the fact that $\Hil(\Zap - 1) = \Zap -1$ and $\Hil 1 = 0$ to obtain
\[
(\Id - \Hil)\Aonesigma = 1 + i[\Zt,\Hil]\Ztapbar - \sigma (\Id - \Hil)\pap\Dap\frac{\Zap}{\Zapabs}
\]
Now take the real part 
\[
\Aonesigma = 1 - \Imag[\Zt,\Hil]\Ztapbar + \sigma \pap \Hil \Dap \frac{\Zap}{\Zapabs}
\]
Hence 
\begin{align} \label{form:Aonesigma}
\Aonesigma = \Aone + \sigma \pap \Hil \Dap \frac{\Zap}{\Zapabs} \qquad \tx{ and in particular }\quad \Aonesigma \big|_{\sigma = 0} = \Aone 
\end{align}
Note that the only non-holomorphic quantity in the above formula is $i\Zt\Ztapbar$. Also note that as $\Aone  = 1 - \Imag[\Zt,\Hil]\Ztapbar$ by the calculation in \cite{KiWu18, Wu15b} we have that $\Aone \geq 1$. From \eqref{form:Aonesigma} the Taylor sign condition term can be written as
\begin{align}\label{form:Taylor}
 -\frac{\partial P}{\partial \hat{n}} \compose \hinv = \frac{\Aonesigma}{\Zapabs} = \frac{1}{\Zapabs}\brac{\Aone + \sigma\papabs(\kap\compose\hinv)}
\end{align}
where $\kap\compose\hinv = -i\Dap\frac{\Zap}{\Zapabs}$ is the curvature in conformal coordinates. For $\sigma=0$, this formula was first derived by Wu \cite{Wu97} to prove the Taylor sign condition for $C^{1,\alpha}$ interfaces with $\alpha>0$ and was crucially used in Kinsey-Wu \cite{KiWu18} to prove a priori estimates for angled crest interfaces. We will also use this formula in an essential way in this paper.  

Observe that if the interface is $C^{1,\alpha}$ then the Taylor sign condition is true for $\sigma=0$ but will fail generically if $\sigma$ is large. Ambrose and Masmoudi \cite{AmMa05} assumed that the Taylor sign condition holds for all $0\leq \sigma\leq \sigma_0$ which is only true if $\sigma_0$ is small. This issue was resolved by Shatah and Zeng \cite{ShZe08} where they only assumed the Taylor sign condition for $\sigma = 0$ which removed the smallness assumption of $\sigma_0$. However in both cases the Taylor sign condition is assumed for $\sigma=0$ which we do not have as we allow interfaces with angled crests for $\sigma=0$ for which only a weak Taylor sign condition $-\frac{\partial P}{\partial \hat{n}} \geq 0$ is satisfied. 

\medskip

\subsubsection{}$\!\!\!\mathbf{Formula \ for\  } \bvar$
\medskip

Recall that $\h(\al,t) = \Phi(\z(\al,t),t)$ and so by taking derivatives we get
\[
\h_t = \Phi_t \compose \z + (\Phi_\z \compose \z)\zt  \quad \qquad \hal = (\Phi_\z \compose \z)\zal 
\]
\begin{align*}
\tx{ Hence } \enspace \h_t = \Phi_t \compose \z + \frac{\hal}{\zal}\zt 
\end{align*}
Precomposing with $ \hinv$ we obtain $\dis  \quad \h_t \compose \hinv = \Phi_t \compose \Z + \frac{\Zt}{\Zap}$. Apply $(\Id - \Hil)$ and take real part, to get
\begin{align}\label{form:bvar} 
\bvar = \Real(\Id-\Hil)\brac{\frac{\Zt}{\Zap}} 
\end{align}
This formula is the same as the one in \cite{KiWu18} and surface tension does not affect the formula.
\smallskip

\subsection{\texorpdfstring{Fundamental Equation \nopunct}{}} \hspace*{\fill} \medskip

Substituting the formula for $\Aonesigma$ in equation \eqref{form:Zttbar_old}, we get
\begin{align*}
\Zttbar -i = -i\frac{ \Aone}{\Zap} -i\sigma \Dap \Hil \Dap \frac{\Zap}{\Zapabs} -i\sigma \Dap \Dap \frac{\Zap}{\Zapabs} 
\end{align*}
Now combine the second and third term and use $\Th = -i(\Id+\Hil) \Dap \frac{\Zap}{\Zapabs}$ to get the fundamental equation
\begin{align}  \label{form:Zttbar}
\Zttbar -i = -i \frac{\Aone}{\Zap} +\sigma \Dap \Th
\end{align}
Note that as $\Aone$ does not depend on $\sigma$, the effect of surface tension is that it adds a holomorphic quantity to the conjugate of the acceleration. We also see that
\begin{align*}
\frac{\Aone}{\Zapabs} = -\frac{\partial P}{\partial \hat{n}} \compose \hinv \Bigr\vert_{\sigma=0} \geq 0
\end{align*}
and hence it represents the Taylor sign condition in the absence of surface tension. As the equation is written in terms of $\Aone$ and not $\Aonesigma$, our energy $\Esigma$ will always be positive irrespective of the value of surface tension.

\subsection{\texorpdfstring{System \nopunct}{}} \hspace*{\fill}\label{sec:systemone} \medskip

To summarize the system is in the variables $(\Zap,\Zt)$ satisfying
\begin{align}\label{eq:systemone}
\begin{split}
\bvar & = \Real(\Id - \Hil)\brac{\frac{\Zt}{\Zap}} \\
\Aone & = 1 - \Imag \sqbrac{\Zt,\Hil}\Ztapbar \\
(\pt + \bvar\pap)\Zap &= \Ztap - \bap\Zap \\
(\pt + \bvar\pap)\Ztbar & = i -i\frac{\Aone}{\Zap} + \frac{\sigma}{\Zap}\pap(\Id + \Hil)\cbrac{\Imag\brac{\frac{1}{\Zap}\pap \frac{\Zap}{\Zapabs} }}
\end{split}
\end{align}
along with the condition that their harmonic extensions, namely $\Psizp(\cdot + iy) = K_{-y}\conv \Zap$ and $\U(\cdot + iy) = K_{-y}\conv \Ztbar$ for all $y<0$, \footnote{here $K_{-y}$ is the Poisson kernel \eqref{eq:Poissonkernel}} are holomorphic functions on $\Pminus$ and satisfy
\begin{align*}
\lim_{c \to \infty} \sup_{\abs{\zp}\geq c}\cbrac{\abs{\Psizp(\zp) - 1} + \abs{\U(\zp)}}  = 0 
\qquad \tx{ and } \quad \Psizp(\zp) \neq 0 \quad \tx{ for all } \zp \in \Pminus 
\end{align*}

We observe that for such a $\Psizp$  we can uniquely define $\log(\Psizp) : \Pminus \to \Csp$ such that $\log(\Psizp)$ is a continuous function with $\Psizp = \exp\cbrac{\log(\Psizp)}$ and $(\log(\Psizp))(\zp) \to 0$ as $\zp \to \infty$. Also note that one can obtain $\Z(\cdot,t)$ by the formula 
\begin{align*}
\Z(\ap,t) = \Z(\ap,0) +  \int_0^t \cbrac{\Zt(\ap,s) - \bvar(\ap,s)\Zap(\ap,s)} \diff s
\end{align*}
In particular instead of the variables $(\Zap,\Zt)$, one can view the system being in variables $(\Z,\Zt)$. 

Another important observation one immediate makes is that in the above system, there is no restriction that the function $\Z(\cdot,t)$ be injective. Even if the curve $\Z(\cdot,t)$ becomes self-intersecting, the system still makes sense and one can still find a solution. Hence the above system allows self-intersecting domains similar to work of \cite{CaCoFeGaGo12, CaCoFeGaGo13} where solutions with splash and splat singularities were constructed. Observe that we assumed that the interface is non-self intersecting while deriving this system from the Euler equation \eqref{euler-capillary} and \eqref{boundary}. If the solution $(\Z,\Zt)(t)$ of \eqref{eq:systemone} leads to a non-self intersecting curve then one can go back and obtain a solution to the Euler equation in a similar way as done in \cite{Wu19} section 2.5. However if the interface becomes self-intersecting, then its relation to the Euler equation is lost and the solution becomes non-physical. From now on we will exclusively focus on the system \eqref{eq:systemone} and hence all results in this paper apply to self-intersecting curves as well.

One can rewrite the function $\h(\al,t)$ defined in \eqref{def:h} as the solution to the ODE
\begin{align*}
\frac{\diff h}{\diff t} &= \bvar(\h, t) \\
h(\al,0) &= \al
\end{align*}
where $\bvar$ is given by \eqref{eq:systemone}. From this we easily see that as long as $\sup_{[0,T]} \norm[\infty]{\bvarap}(t) <\infty$ we can solve this ODE and for any $t\in [0,T]$ we have that $h(\cdot,t)$ is a homeomorphism. Hence it makes sense to talk about the functions $z = \Z\compose \h, \zt = \Zt \compose \h$ which are Lagrangian parametrizations of the interface and the velocity on the boundary. 

\medskip

\section{Main results and discussion}\label{sec:discussionandresults}

\subsection{\texorpdfstring{Results \nopunct}{}}\label{sec:results}\hspace*{\fill} \medskip

For $\sigma\geq 0$ define the energy
\begingroup
\allowdisplaybreaks
\begin{align*}
\Ecalsigmaone & =  \norm[2]{\pap\frac{1}{\Zap}}^2 + \norm*[\bigg][\Hhalf]{\frac{1}{\Zap}\pap\frac{1}{\Zap}}^2  +  \norm[\Hhalf]{\sigma\pap\Th}^2 + \norm*[\Bigg][2]{\sigma^\onebysix\Zap^\half\pap\frac{1}{\Zap}}^6 + \norm*[\Bigg][\infty]{\sigma^\half\Zap^\half\pap\frac{1}{\Zap}}^2  \\*
& \quad  +  \norm*[\Bigg][2]{\frac{\sigma^\half}{\Zap^\half}\pap^2\frac{1}{\Zap}}^2  + \norm*[\Bigg][\Hhalf]{\frac{\sigma^\half}{\Zap^\threebytwo}\pap^2\frac{1}{\Zap}}^2     + \norm[2]{\frac{\sigma}{\Zap}\pap^3\frac{1}{\Zap}}^2 + \norm[\Hhalf]{\frac{\sigma}{\Zap^2}\pap^3\frac{1}{\Zap}}^2 \\
\Ecalsigmatwo &= \norm[2]{\Ztapbar}^2 + \norm*[\Bigg][2]{\frac{1}{\Zap^2}\pap\Ztapbar}^2 + \norm*[\Bigg][2]{\frac{\sigma^\half}{\Zap^\half}\pap\Ztapbar}^2 + \norm*[\Bigg][2]{\frac{\sigma^\half}{\Zap^\fivebytwo}\pap^2\Ztapbar}^2 \\
\Ecalsigma & = \Ecalsigmaone + \Ecalsigmatwo
\end{align*}
\endgroup

Note that $\Ecalsigmaone$ controls terms based solely on the interface $\Z(\cdot,t)$ and $\Ecalsigmatwo$ controls the weighted derivatives of the velocity $\Zt(\cdot,t)$. Hence the energy can be thought of as a weighted Sobolev norm with the weight given by  powers of $\frac{1}{\Zap}$. Also observe that all the terms of $\Ecalsigma$ are the boundary values of holomorphic functions.

Observe that the energy $\Ecalsigma(t)$ is well defined and is finite if $(\Zap-1,\frac{1}{\Zap} - 1, \Zt)(t) \in H^{3.5}(\Rsp)\times H^{3.5}(\Rsp)\times H^3(\Rsp)$. If we ignore the weights in the energy, we get back the Sobolev norm. See \lemref{lem:equivsobolev} for the precise relationship between the energy $\Ecalsigma$ and the Sobolev norm of the solution. 

The energy $\Ecalsigma$ is equivalent to $\Esigma$ which is what we use to prove the main a priori estimate \thmref{thm:aprioriEsigma} ($\Esigma$ is defined in \secref{sec:aprioriEsigma} and the equivalence between $\Ecalsigma$ and $\Esigma$ is proved in \propref{prop:equivEsigma}). A more intuitive explanation of the energy $\Ecalsigma(t)$ can be obtained by applying the fundamental operators  $\Dt, \frac{1}{\Zapabs^2}\pap$ and $\sigma^\onebythree \Dapabs$ occurring in the quasilinear equations \eqref{eq:DapbarZtbar} and \eqref{eq:Th} on the variable $\g = \thvar\compose \hinv$. This is explained in more detail in \secref{sec:discussion}.

For $\sigma=0$, the energy $\Ecalsigma(t)$ allows angled crest interfaces and outward pointing cusps (i.e. interfaces with angled crests and cusps yield $\Ecalsigma < \infty$ if $\sigma=0$). To see this, we only need to show that the first two terms of $\Ecalsigmaone$ allows angled crests and cusps. The argument is exactly the same as explained in \cite{KiWu18, Ag20} and we briefly explain it here. If the interface has an angled crest of angle $\nu\pi$ at $\ap = 0$, then $\Z(\ap) \sim (\ap)^\nu$ near $\ap = 0$. Hence near $\ap = 0$ we have
\begin{align*}
\Zap(\ap) \sim (\ap)^{\nu-1} \qquad \! \frac{1}{\Zap}(\ap) \sim (\ap)^{1-\nu} \qquad \pap\frac{1}{\Zap}(\ap) \sim (\ap)^{-\nu} \qquad \! \frac{1}{\Zap}\pap \frac{1}{\Zap}(\ap) \sim (\ap)^{1 - 2\nu}
\end{align*}
From this we can see that that the first two terms of $\Ecalsigmaone$ allows interfaces with angled crests of angles $\nu\pi$ with $0<\nu<\half$. A similar argument shows that cusps i.e. $\nu = 0$ are also allowed. See \cite{Ag20} for more details. 

For $\sigma>0$, in contrast we see that most of the terms with surface tension in $\Ecalsigmaone$ do not allow angled crest interfaces which can be seen by a similar argument as above. For example consider the term $\norm[\infty]{\sigma^\half\Zap^\half\pap\frac{1}{\Zap}}$ in the energy. Suppose we have an interface with an angled crest of angle $\nu\pi$ at $\ap =0$ with $\nu\geq 0$. Then by the argument above we see that near $\ap = 0$
\begin{align*}
\Zap^\half\pap\frac{1}{\Zap}(\ap) \sim (\ap)^{-\frac{\nu}{2} - \half}
\end{align*}
which is unbounded and so the energy does not allow angled crest interfaces if $\sigma>0$ (i.e. interfaces with angled crests and cusps yield $\Ecalsigma = \infty$ if $\sigma>0$). In fact if $\Ecalsigma<\infty$ and $\sigma>0$, then from \lemref{lem:equivsobolev} and \remref{rem:equivsobolev} we automatically have $\norm[H^2]{\Ztap} + \norm[H^{2.5}]{\pap\Zap} < \infty$ i.e. the data has to be in Sobolev spaces and hence do not allow singularities. However this does not say anything about the size of the Sobolev norm which can be very large. In fact the $\Linfty$ norm of the curvature can be of the order of $\sigma^{-\frac{1}{3}}$ (See \corref{cor:example} below for an example).

The fact that the energy does not allow singularities for $\sigma>0$ is quite natural as one would expect that surface tension should be smoothing and hence stable solutions should not have angled crests. Moreover the natural extension of the  energy of Kinsey-Wu \cite{KiWu18} does not allow any singularities in the interface. Observe that in Kinsey-Wu \cite{KiWu18}, the quantity $\Zttbarap \in \Ltwo$ and $\pap\brac{\frac{\Aone}{\Zap}} \in \Ltwo$. Hence if we assume these quantities remain in $\Ltwo$ when $\sigma>0$, then by the equation \eqref{form:Zttbar} we see that $\sigma\pap\Dap\Th \in \Ltwo$. Hence $\Dapabs\Th \in \Linfty_{loc}$ and as $\Real \Th = \kap\compose\hinv$, we see that $\ps\kap \in \Linfty_{loc}$.  Hence the interface has to be at least $C^{2,1}$ which rules out any type of singularity. 

We now state the main result of this paper.  

\begin{thm}\label{thm:existence}
Let $\sigma >0$ and assume the initial data $(\Z,\Zt)(0)$ satisfies $(\Zap-1,\frac{1}{\Zap} - 1, \Zt)(0) \in H^{3.5}(\Rsp)\times H^{3.5}(\Rsp)\times H^3(\Rsp)$. Then $ \Ecalsigma(0) < \infty$ and there exists $T,C_1 > 0$ depending only on $\Ecalsigma(0)$ such that the initial value problem to  \eqref{eq:systemone} has a unique solution $(\Z,\Zt)(t)$ in the time interval $[0,T]$ satisfying  
$(\Zap-1,\frac{1}{\Zap} - 1, \Zt) \in C^l([0,T], H^{3.5 - \threebytwo l}(\Rsp)\times H^{3.5 - \threebytwo l}(\Rsp)\times H^{3 - \threebytwo l}(\Rsp))$ 
for $l = 0,1$ and $\dis \sup_{t\in [0,T]} \Ecalsigma (t)\leq C_1 <\infty$
\end{thm}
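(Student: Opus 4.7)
My approach combines three ingredients the paper has already set up: (i) local wellposedness and a blow-up criterion in standard Sobolev spaces for \emph{fixed} $\sigma>0$, which is the content of \secref{sec:existence}; (ii) the a priori estimate for $\Esigma$ in \thmref{thm:aprioriEsigma}, whose time of existence depends only on the initial value of $\Esigma$ and \emph{not} on higher Sobolev norms nor on $\sigma$; and (iii) the equivalence $\Esigma \approx \Ecalsigma$ from \propref{prop:equivEsigma} together with the quantitative relation between $\Ecalsigma$ and Sobolev norms from \lemref{lem:equivsobolev}. The strategy is thus the classical one for quasilinear dispersive equations: approximate by smooth data, solve in a high-regularity setting where existence is known, apply the uniform a priori estimate to extend the time of existence to a length depending only on $\Ecalsigma(0)$, and then pass to the limit.

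\textbf{Finiteness of $\Ecalsigma(0)$ and smooth approximations.} Given $(\Zap-1,\tfrac{1}{\Zap}-1,\Zt)(0) \in H^{3.5}\times H^{3.5}\times H^3$, I would first check each of the (finitely many) terms defining $\Ecalsigmaone+\Ecalsigmatwo$ by Sobolev embedding $H^{3.5}\hookrightarrow C^{2,\alpha}$, boundedness of Riesz/Hilbert transforms on $\Ltwo$ and $\Hhalf$, and the standard product/commutator estimates collected in the appendix; the hypothesis $\tfrac{1}{\Zap}-1 \in H^{3.5}$ ensures $\Zap$ is bounded away from $0$, which is what makes the weighted terms finite. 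I would then mollify the initial data to produce a sequence $(\Z^\delta,\Zt^\delta)(0)$ of smoother data converging to $(\Z,\Zt)(0)$ in the above Sobolev class, with $\Ecalsigma^\delta(0)$ converging to $\Ecalsigma(0)$ (this is routine since $\Ecalsigma$ involves only $L^2$-based norms with weights continuous in the data).

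\textbf{Uniform energy control and closing the argument.} By the existence result of \secref{sec:existence}, each smooth approximation produces a unique local solution $(\Z^\delta,\Zt^\delta)$ on some interval $[0,T_\delta]$ in a higher Sobolev class, together with a blow-up criterion expressed in terms of that Sobolev norm. Applying \thmref{thm:aprioriEsigma} to the smooth solutions gives $T, C_1 > 0$ depending only on $\Ecalsigma(0)$ such that $\Esigma^\delta(t) \leq C_1$ on $[0,\min(T,T_\delta)]$, uniformly in $\delta$. Feeding this into \lemref{lem:equivsobolev} yields a $\sigma$-dependent but $\delta$-uniform bound on the Sobolev norms $\norm[H^{3.5}]{\Zap^\delta-1}+\norm[H^{3.5}]{\tfrac{1}{\Zap^\delta}-1}+\norm[H^3]{\Zt^\delta}$, so the blow-up criterion forces $T_\delta\geq T$. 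With uniform bounds in hand, a compactness argument (Banach--Alaoglu + Aubin--Lions, combined with reading off time derivatives from \eqref{eq:systemone}, where $\pt\Zap = \Ztap-\pap(\bvar\Zap)$ and $\pt\Ztbar$ is given by the fourth equation, both losing $\tfrac{3}{2}$ derivatives) extracts a limit solution $(\Z,\Zt)$ belonging to $C^l([0,T];H^{3.5-(3/2)l}\times H^{3.5-(3/2)l}\times H^{3-(3/2)l})$ for $l=0,1$, with $\sup_{[0,T]}\Ecalsigma(t)\leq C_1$ by lower semicontinuity.

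\textbf{Uniqueness and the main obstacle.} For uniqueness I would take two solutions $(\Z^1,\Zt^1)$ and $(\Z^2,\Zt^2)$ in the stated class, subtract the systems \eqref{eq:systemone}, and estimate the difference in a low-regularity weighted norm modeled on $\Esigma$ itself (exploiting that the linearized problem has the same quasilinear structure: a transport part in $\Dt$, a $\tfrac{\Aone}{\Zap}$ coefficient satisfying the weak Taylor sign condition, and a surface tension term $\sigma\Dap\Th$ which is coercive), and then close by Gr\"onwall. The principal difficulty I anticipate is ensuring that the a priori bounds of \thmref{thm:aprioriEsigma} genuinely transfer to the regularized solutions with constants independent of $\delta$ and of $\sigma$; this likely requires regularizing the system itself (along the lines of the $(\delta,\ep)$-scheme suggested by the variables $\avardelep,\bdelep,\wdelep$ in the preamble) in a way that is compatible with holomorphic projections, the material derivative $\Dt$, and the conformal structure underlying the identity \eqref{form:Aonesigma}. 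The secondary difficulty is uniqueness at the relatively low regularity permitted by $\Ecalsigma$, where one must be careful not to lose more than $\tfrac{3}{2}$ derivatives when comparing two solutions.
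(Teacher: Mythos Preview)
Your overall architecture is exactly the paper's: mollify the data, invoke the Sobolev local existence and blow-up criterion of \secref{sec:existence} (Corollary~\ref{cor:existencesobolev}), apply \thmref{thm:aprioriEsigma} together with \propref{prop:equivEsigma} and \lemref{lem:equivsobolev} to obtain a uniform time of existence depending only on $\Ecalsigma(0)$, and pass to the limit.

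Two points of departure are worth noting. First, your ``main obstacle'' is not one: the mollified data $(\Z^\delta,\Zt^\delta)(0)$ are smooth, so \corref{cor:existencesobolev} yields a smooth solution of the \emph{exact} system \eqref{eq:systemone}, and \thmref{thm:aprioriEsigma} applies to it directly with constants independent of $\delta$. The $(\del,\ep)$-regularized system of \secref{sec:existence} was used only to prove \corref{cor:existencesobolev} itself and does not reappear here. Second, the paper passes to the limit not by compactness but by the Lipschitz stability estimate in part~2 of \corref{cor:existencesobolev}: once the approximations are uniformly bounded in $H^{3.5}\times H^{3.5}\times H^3$ on $[0,T]$, that estimate shows they form a Cauchy sequence in $H^2\times H^2\times H^{1.5}$, and interpolation upgrades convergence. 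This also delivers uniqueness for free, so your separate difference-energy argument is unnecessary. Your compactness route would work, but the stability estimate is cleaner and avoids the care needed to recover $C^0$ (as opposed to $C_w$) in time.
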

 It is important to note that we do not allow angled crested interfaces as initial data when $\sigma>0$. However the initial data does allow interfaces with large $C^{1,\alpha}$ norm (see below). The most important feature about the above result is that the time of existence depends only on $\Ecalsigma(0)$ and not the Sobolev norm of the initial data. We now explain some of the important points about the result and the energy $\Ecalsigma$. 

\smallskip
\textbf{Properties:} 
\begin{enumerate}
\item No assumptions on the Taylor sign condition: Observe that no assumptions are made on the Taylor sign condition in the above result. Recall that the Taylor sign condition is $ -\frac{\partial P}{\partial \hat{n}} \geq c >0$ and from \eqref{form:Taylor} that the quantity $ -\frac{\partial P}{\partial \hat{n}}$ depends on the value of $\sigma$. 

All previous results on zero surface tension limit assume the Taylor sign condition. In the case of Ambrose and Masmoudi \cite{AmMa05} the condition $ -\frac{\partial P}{\partial \hat{n}} \geq c >0$ is assumed for all $0\leq \sigma\leq \sigma_0$ which forces $\sigma_0$ to be small. In Shatah and Zeng \cite{ShZe08} the condition $ -\frac{\partial P}{\partial \hat{n}} \geq c >0$ is assumed only for $\sigma = 0$ removing the smallness assumption on $\sigma$. However in both cases one still needs control on the $C^{1,\alpha}$ norm of the interface. We do not make any assumptions which allows us to deal with both large $\sigma$ and large $C^{1,\alpha}$ norm of the interface. 

\item Time of existence is independent of $\sigma$: Observe that the energy $\Ecalsigma(t)$ is an increasing function of $\sigma$. Hence for arbitrary $\sigma_0>0$, the time of existence is uniform for all $0<\sigma\leq \sigma_0$. In particular we recover the uniform time of existence part of the result of Ambrose and Masmoudi \cite{AmMa05} in this case. 

\item  Energy allows angled crest solutions for $\sigma = 0$: If we put $\sigma =0$ in the energy $\Ecalsigma$, then it allows solutions with angled crest with angle less than $90 ^\circ $ and also allows cusps. These are exactly the solutions allowed by the energy obtained by Kinsey and Wu in \cite{KiWu18} and the local wellposedness for angled crested solutions has been proven by Wu in \cite{Wu19}. Our energy for $\sigma = 0$ is a lower order version of the energy in \cite{KiWu18} by half spacial derivative.

\item Energy does not allow angled crest solution for $\sigma >0$ but allows large curvature: In the proof of this theorem we show the estimate $\norm*[\big][\infty]{\kap} \leq \sigma^{-\onebythree}C(\Ecalsigma)$ holds and hence for $\sigma>0$ the curvature is bounded, which automatically excludes angled crest solutions. Note however that for small values of $\sigma$, the energy allows data with quite large curvature of the order of $\sigma^{-\frac{1}{3}}$. (See \corref{cor:example} below where for any given $\ep>0$ arbitrarily small, we construct examples where $\Ecalsigma =O(1)$ and the curvature of the initial data grows like $\sigma^{-\onebythree + \ep}$ as $\sigma\to 0$). 

\end{enumerate}

In the above theorem we do not prove existence when $\sigma=0$ because in this case the energy $\Ecalsigma(0)$ allows singular solutions such as angled crested solutions which have already been proved to exist in \cite{Wu19}. The above result is proved via a new a priori estimate \thmref{thm:aprioriEsigma} which is an extension of the a priori estimate of Kinsey and Wu \cite{KiWu18} for $\sigma\geq 0$, and a local existence result for $\sigma>0$ in Sobolev spaces \thmref{thm:existencesobolev}.  

As noted above the main point of the above theorem is that the time of existence depends only on $\Ecalsigma(0)$. The usefulness of the energy $\Ecalsigma(t)$ comes from the fact that there are interfaces (such as smooth interfaces close to being angled crest) for which the $C^{1,\alpha}$ norm (for any $\alpha>0$) of the interface of the initial data is very large but $\Ecalsigma(0)$ remains bounded. This translates into a longer time of existence if we use the energy $\Ecalsigma(t)$ instead of the Sobolev norm. We now give an example which demonstrates this. We will use the following notation:  Let $A \subset \Csp$ be a non-empty set and let $p \in \bar{A}$. Let $f,g:A \to \Csp$ be functions such that $g(z) \neq 0$ for all $z$ in a punctured neighborhood of $p$. We say that $f(z)\sim g(z)$  as  $z \to p$ in $A$, if $\lim_{\z \to p}\frac{f(z)}{g(z)} \in \Csp^*$ where $\Csp^* = \Csp\backslash\cbrac{0}$.

As before let $(\Psi,\U)(\cdot,0):\Pminus \to \Csp$ be holomorphic maps with $\Psi_z \neq 0$ for all $z\in P_{-}$ and with their boundary values being the initial data namely $(\Z,\Ztbar)(\ap,0) = (\Psi,\U)(\ap,0)$ for all $\ap\in\Rsp$. 
Recall the notation namely that for $\zp \in \Pminus$ we write $\zp = \xp + i\yp$. At $t=0$ define the quantity
\begin{align*}
M & =   \sup_{\yp<0}\norm[\Leightbyseven(\Rsp, \diff \xp)]{\Psi_\zp^\threebyfour\partial_\zp \brac{\frac{1}{\Psi_\zp}}} + \sup_{\yp<0}\norm[\Lfourbythree(\Rsp, \diff \xp)]{\Psi_\zp^\half\partial_\zp \brac{\frac{1}{\Psi_\zp}}} + \sup_{\yp<0}\norm[\Ltwo(\Rsp, \diff \xp)]{\partial_\zp \brac{\frac{1}{\Psi_\zp}}}   \\
& \quad  + \sup_{\yp<0}\norm[\Linfty(\Rsp, \diff \xp)]{\frac{1}{\Psi_\zp}\partial_\zp \brac{\frac{1}{\Psi_\zp}}}  + \sup_{\yp<0}\norm[\Lone(\Rsp, \diff \xp)]{\frac{1}{\Psi_\zp}\partial_\zp^2 \brac{\frac{1}{\Psi_\zp}}} + \sup_{\yp<0}\norm[\Ltwo(\Rsp, \diff \xp)]{\frac{1}{\Psi_\zp^2}\partial_\zp^2 \brac{\frac{1}{\Psi_\zp}}}  \\
& \quad + \sup_{\yp<0}\norm[\Lone(\Rsp, \diff \xp)]{\frac{1}{\Psi_\zp^3}\partial_\zp^3 \brac{\frac{1}{\Psi_\zp}}} + \sup_{\yp<0}\norm[\Ltwo(\Rsp,\diff \xp)]{\frac{1}{\Psi_\zp} - 1} + \sup_{\yp<0}\norm[H^{3.5}(\Rsp,\diff \xp)]{\U}
\end{align*}
It is easy to check that if $M<\infty$, then the initial data satisfies the hypothesis of Theorem 3.9 of Wu \cite{Wu19} and we get a unique solution $(\Z,\Zt)(t)$ to \eqref{eq:systemone} for $\sigma=0$. Also by exactly the same argument as in section 5 of \cite{Ag20}, $M<\infty$ allows interfaces with angled crests of angles $\nu\pi$ with $0<\nu<\half$ and also allows certain cusps (see \cite{Ag20} for more details). Also note that the condition $M<\infty$ forces the gradient of the velocity to be zero at the singularities of the interfaces (see \cite{Ag20} for more details). With this we can now state the main corollary. 
\vspace*{-0.6cm}
\begin{figure}[ht]
\centering
\begin{tikzpicture}[scale=0.55]
\draw [thick, xshift=-25pt] (-4,-0.5) to [out=0,in=-89] (-2,4) to [out=-87+180,in=180] (-0,0) to [out=0,in=180+45] (6,3.5) to [out=-65,in=180] (10,0) to [out=0,in=150] (14,4) to [out=180+38,in=180] (17,-0.5);
\draw [densely dashed, xshift=-25pt, yshift=-20pt] (-4,0.2-0.4) to [out=0,in=-100] (-1.9,2-0.0) to [out=-100+180,in=180] (-0.26,0.3) to [out=0,in=180+45] (5.8,3.5) to (6,3.5) to [out=-65,in=180] (10,0.2) to [out= 0,in=90](13.2,3.8) to [out=-90,in=180] (17,0.2-0.4);
\node at (17.1,-0.3) {\ \ $Z(\alpha,t)$};
\node at (17.7,-0.8-0.3) {\!\!$Z^{\ep,\sigma}(\alpha,t)$};
\draw [thin,xshift=-25pt,  <->] (-4.3,0-0.45) to (-4.3,-0.52-0.4) ;
\node at (-5.5,-0.3-0.4) {${\ep}$};
\node at (1.8,2.6) {$\rho_{air} =0$};
\node at (4.2,0) {$\rho_{water} =1$};
\end{tikzpicture}
\caption{Waves with and without surface tension}
\end{figure}
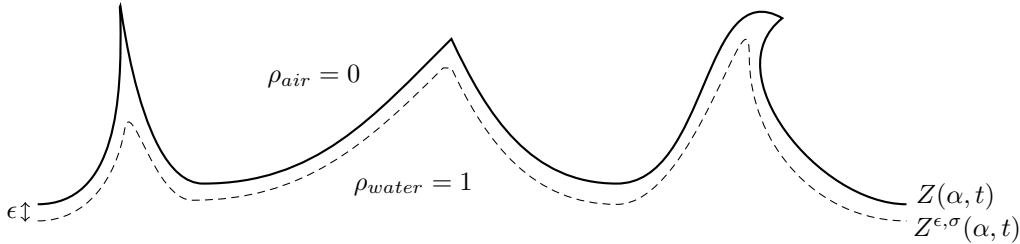
\vspace*{-0.1cm}
\begin{cor}\label{cor:example}
Consider an initial data $(\Z,\Zt)(0)$ with $M<\infty$. Let $(\Z,\Zt)(t)$ be the unique solution of equation \eqref{eq:systemone} for  $\sigma = 0$ with initial data $(\Z,\Zt)(0)$  as obtained in \cite{Wu19}. For $0<\epsilon \leq 1$ and $ \sigma \geq 0$ denote by $(\Z^{\epsilon,\sigma}, \Zt^{\epsilon,\sigma})(t)$ the unique solution to the equation \eqref{eq:systemone}  with surface tension $\sigma $ and with initial data  $(\Z^{\epsilon,\sigma}, \Zt^{\epsilon,\sigma})(0) = (\Z\conv P_\epsilon, \Zt\conv P_\epsilon)(0)$ where $P_\epsilon$ is the Poisson kernel. Then we have the following
\begin{enumerate}
\item For any $c>0$, there exists $T,C_1>0$ depending only on $c$ and $M$ such that for all $\sigma\geq 0$ and $0<\ep\leq 1$ satisfying $\dis \frac{\sigma}{\ep^\threebytwo} \leq c$, the solutions $(\Z^{\epsilon,\sigma}, \Zt^{\epsilon,\sigma})(t)$ exist in the time interval $[0,T]$ with $\sup_{t\in[0,T]} \Ecalsigma(\Z^{\ep,\sigma},\Zt^{\ep,\sigma})(t) \leq C_1 <\infty$.
\item If the initial interface $\Z(\cdot,0)$ has only one angled crest of angle $\nu\pi$ with $0<\nu<\half$, then the $\Linfty$ norm of the curvature $\kap^{\ep,\sigma}$ of the initial interface $\Z^{\ep,\sigma}(\cdot,0)$ satisfies $\norm[\infty]{\kap^{\ep,\sigma}} \sim \ep^{-\nu}$ as $\ep\to0$. In particular for any $0<\del<\onebythree$ arbitrarily small, choosing $\nu = \half - \threebytwo\del$ and $\sigma = \ep^\threebytwo$ we obtain $\norm[\infty]{\kap^{\ep,\sigma}} \sim \sigma^{-\onebythree + \del}$ as $\sigma \to 0$. Hence \thmref{thm:existence} allows initial interfaces with large curvature when $\sigma$ is small.  
\end{enumerate}
\end{cor}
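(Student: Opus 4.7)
The plan is to reduce claim (1) to \thmref{thm:existence} by proving the initial energy bound $\Ecalsigma(\Z^{\ep,\sigma},\Zt^{\ep,\sigma})(0) \leq C_1(c,M)$ whenever $\sigma/\ep^{\threebytwo} \leq c$. Since the time of existence in \thmref{thm:existence} depends only on the initial energy, this immediately yields uniform existence on $[0,T]$ with $T = T(c,M)$ and the uniform control $\sup_{[0,T]}\Ecalsigma(t) \leq C_1$, which is exactly claim (1).

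The central observation is that $\Z(\cdot,0) = \Psi(\cdot,0)$ and $\Ztbar(\cdot,0) = \U(\cdot,0)$ are boundary traces of holomorphic functions on $\Pminus$, so for any holomorphic combination $G$ of $\Psi_z$, $1/\Psi_z$, $\U$, and their $z$-derivatives the regularization satisfies $G^\ep(\ap) = G(\ap - i\ep, 0)$. The terms of $\Ecalsigma$ with no $\sigma$-factor (e.g.\ $\norm[2]{\pap(1/\Zap)}$, $\norm[\Hhalf]{(1/\Zap)\pap(1/\Zap)}$, and the first two terms of $\Ecalsigmatwo$) are controlled by $M$ at once, since Poisson convolution is contractive on every $L^p$. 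For each $\sigma$-dependent term, I would first rewrite the half-integer weights via the identity $\Zap^{1/2}\pap(1/\Zap) = -2\pap(\Zap^{-1/2})$ (and its analogs for higher $z$-derivatives) so that the entire expression is a $\pap$-derivative of a genuinely holomorphic combination of $\Psi_z$. Then two tools suffice: Young's inequality via $G(\cdot - i\ep) = G(\cdot - i\ep/2)\conv P_{\ep/2}$ with $\norm[q]{P_{\ep/2}} \sim \ep^{1/q - 1}$, which trades an integrability index for a power of $\ep$; and the Cauchy interior estimate on the disc $D(\ap - i\ep, \ep/2) \subset \Pminus$, which gains one factor of $\ep^{-1}$ per $z$-derivative of a holomorphic function. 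The weights in $\Ecalsigma$ are chosen precisely so that each $\sigma$-dependent piece is bounded by $(\sigma/\ep^{\threebytwo})^{k} C(M)$ for some $k > 0$, and thus by $c^k C(M)$ under the hypothesis.

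For part (2), the curvature in conformal coordinates is $\kap\compose\hinv = \Imag(\Psi_{zz}/\Psi_z)/|\Psi_z|$ restricted to the boundary, so the regularized curvature reads
\begin{equation*}
\kap^{\ep,\sigma}(\ap,0) = \frac{\Imag\bigl((\Psi_{zz}/\Psi_z)(\ap - i\ep, 0)\bigr)}{|\Psi_z(\ap - i\ep, 0)|}.
\end{equation*}
Near an angled crest of angle $\nu\pi$ at $\ap = 0$ one has $\Psi(z,0) \sim z^{\nu}$, hence $\Psi_{zz}/\Psi_z \sim (\nu - 1)/z$ and $|\Psi_z| \sim |z|^{\nu - 1}$. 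Evaluating at $z = -i\ep$ yields $|\kap^{\ep,\sigma}(0,0)| \sim \ep^{-\nu}$, and the same expansion shows that the contributions at $|\ap| \gtrsim \ep$ are dominated by this, so $\norm[\infty]{\kap^{\ep,\sigma}(\cdot,0)} \sim \ep^{-\nu}$ as $\ep \to 0$. Taking $\nu = \half - \threebytwo\del$ and $\sigma = \ep^{\threebytwo}$ gives $\ep = \sigma^{\twobythree}$ and $\norm[\infty]{\kap^{\ep,\sigma}} \sim \sigma^{-2\nu/3} = \sigma^{-\onebythree + \del}$.

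The main obstacle is the bookkeeping in the second paragraph: the non-integer weights $\Zap^{\pm 1/2}, \Zap^{\pm 3/2}, \Zap^{\pm 5/2}$ appearing in the $\sigma$-terms must be matched to the $L^p$ boundedness of the holomorphic products listed in $M$, so that the $\ep$-power collected from the Poisson and Cauchy estimates is exactly absorbed by the attached $\sigma^{k}$-factor under $\sigma/\ep^{\threebytwo} \leq c$. The $\Hhalf$ pieces and the mixed velocity-interface term $\norm[2]{\sigma^{\half}\Zap^{-\fivebytwo}\pap^2\Ztapbar}^2$ are the most delicate, since half-derivative estimates do not follow from naive H\"older and require splitting through $\Ph$ and $\Pa$, estimating the holomorphic and anti-holomorphic parts separately against the Poisson-extended norms in $M$.
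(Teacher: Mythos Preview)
Your plan is correct and follows the same route as the paper: reduce to \thmref{thm:existence} by bounding $\Ecalsigma(\Z^{\ep,\sigma},\Zt^{\ep,\sigma})(0)$ uniformly when $\sigma/\ep^{3/2}\le c$, exploiting that Poisson regularization is evaluation of the holomorphic extension at depth $\ep$ together with the Poisson $L^q\to L^p$ bound (packaged as \lemref{lem:conv} in the paper, which is exactly your Young-plus-Cauchy step). The paper does this term by term, matching each $\sigma$-weighted piece against the specific $L^{8/7}$, $L^{4/3}$, $L^2$, $L^1$, $L^\infty$ norms appearing in $M$ so that the collected $\ep$-power is exactly a positive power of $\tau=\sigma/\ep^{3/2}$. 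Your rewriting $\Zap^{1/2}\pap(1/\Zap)=-2\pap(\Zap^{-1/2})$ is correct but unnecessary; the paper keeps the factors $\Psi_z^{1/2}\partial_z(1/\Psi_z)$, $\Psi_z^{3/4}\partial_z(1/\Psi_z)$ intact and applies \lemref{lem:conv} directly.

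One point worth sharpening: the $\Hhalf$ pieces (in particular $\norm[\Hhalf]{\sigma\pap\Th}$ and $\norm[\Hhalf]{(\sigma/\Zap^2)\pap^3(1/\Zap)}$) are not done via $\Ph/\Pa$ splitting. The paper instead unfolds $\Th$ through \eqref{form:Th} as $i\w\pap(1/\Zap)$ plus a commutator, and then uses the weighted $\Hhalf$ product estimate \propref{prop:Hhalfweight} (together with \propref{prop:Leibniz}) to bound, e.g., $\sigma\w\pap^2(1/\Zap)$ in $\Hhalf$ against $L^2$ quantities already controlled. Your $\Ph/\Pa$ idea is plausible but less direct here.

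For part (2) your argument is correct and equivalent to the paper's, which phrases the same computation as $\kap=|\Zap|^{-1}\pap g$ with $g$ the tangent angle: the jump of $g$ gives $\pap g_\ep(0)\sim\ep^{-1}$ and the local conformal-map asymptotics give $|\Zap|_\ep(0)\sim\ep^{\nu-1}$, hence $\norm[\infty]{\kap^{\ep,\sigma}}\sim\ep^{-\nu}$.
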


\begin{rmk}\label{rmk:convergence}
In a companion paper \cite{Ag20a} we prove that if in addition $\dis \max\cbrac{\frac{\sigma}{\ep^{3/2}}, \ep} \to 0$, then the solutions $(\Z^{\epsilon,\sigma}, \Zt^{\epsilon,\sigma})(t) \to (\Z, \Zt)(t)$ in an appropriate norm. 
\end{rmk}

\begin{rmk}\label{rem:scaling}
Note that in the corollary, we prove uniform time of existence under the condition $\frac{\sigma}{\ep^\threebytwo} \lesssim 1$ and in particular in this case $\sigma \to 0$ as $\ep \to 0$. We do not know what happens when $\sigma = 1$ and $\ep \to 0$. It is our belief that one cannot expect uniform time of existence in this case for general initial data. To see why, observe that as explained in the introduction if we ignore gravity then the relevant scaling for the problem of zero surface tension limit is $\Z_\lamb(\al,t) = \lamb^{-1}\Z(\lamb\al, t) $ with $\sigma_\lamb = \lamb^{-3}\sigma$ and this scaling leaves the quantity $\norm*[\infty]{\sigma^\onebythree\kappa}$ invariant. Because of this we believe that one cannot expect a general local wellposedness result which gives a time of existence of order 1 when $\norm*[\infty]{\sigma^\onebythree\kappa} \to \infty$. Note that as explained in the second point in the corollary above, the scaling $\frac{\sigma}{\ep^\threebytwo} \lesssim 1$ allows initial interfaces with the $\Linfty$ norm of the curvature of the order of $\sigma^{-\onebythree + \del}$ for arbitrary small $\delta>0$, thereby getting us arbitrary close to the rate $\norm*[\infty]{\sigma^\onebythree\kappa} \sim 1$.

%
\end{rmk}

The novelty of the above result lies in the fact that all previous results on surface tension for large data obtain a time of existence $T \lesssim \norm[\infty]{\kap}^{-1}$ where $\kap$ is the initial curvature, even if $\sigma$ is very small. Hence if the interface $\Z(\cdot,0)$ has a single angled crest of angle $\nu\pi$ then $Z^{\ep,\sigma}(\cdot, 0)$ has curvature $\norm[\infty]{\kap^{\ep,\sigma}} \sim \ep^{-\nu}$, which yields $T \lesssim \ep^\nu \to 0$ as $\ep \to 0$. The above corollary says that these solutions in fact exist on a much longer time interval and that the time of existence is at least $O(1)$ even as $\ep \to 0$, provided there is a balance between surface tension  and smoothness $\sigma\lesssim \ep^\threebytwo$.

The scaling factor of $\sigma/\ep^\threebytwo$ comes about as a compatibility condition of two of the main operators in the quasilinear equations  \eqref{eq:Th} and \eqref{eq:DapbarZtbar} (see \secref{sec:discussion} for more details). Observe that
\begin{align*}
\brac{\frac{1}{\Zapabs^2}\pap}^{\n-\threebytwo}\brac{\frac{\sigma^\onebythree}{\Zapabs}\pap}^{\n 3} \sim \sigma \pap^\threebytwo
\end{align*}
which naturally gives us the factor $\sigma/\ep^\threebytwo$. 


\subsection{\texorpdfstring{Discussion}{}}\label{sec:discussion} \hspace*{\fill} \medskip

We now give a brief heuristic explanation into the nature of our results. 
\begin{enumerate}[leftmargin =*, align=left, label=\arabic*)]
\item Taylor sign condition:

In Ambrose and Masmoudi \cite{AmMa05} the Taylor sign condition $ -\frac{\partial P}{\partial \hat{n}} \geq c >0$ is assumed for all $0\leq \sigma \leq \sigma_0$ which forces $\sigma_0$ to be small. To see this why this assumption was made, one can do a similar computation as was done in \cite{AmMa05} in conformal coordinates and obtain a quasilinear equation of the form
\begin{align}\label{eq:quasig}
\cbrac{\Dt^2  + i\Hil\brac{\frac{\Aonesigma}{\Zapabs^2}\pap} - i\sigma\Hil \brac{\frac{1}{\Zapabs}\pap}^3} g = l.o.t
\end{align}
where $g = \thvar\compose\hinv$ and $i\Hil\pap = \papabs$. Hence heuristically the equation looks like
\begin{align*}
\cbrac{\pt^2 + \brac{\frac{\Aonesigma}{\Zapabs}}\papabs + \sigma\papabs^3} \g = l.o.t
\end{align*}
Note that the coefficient in the second term is Taylor sign condition term namely $\frac{\Aonesigma}{\Zapabs} = -\frac{\partial P}{\partial \hat{n}}\compose\hinv$ and is given by the formula \eqref{form:Taylor}.  Hence to obtain a positive energy from this equation, the Taylor sign condition at $t=0$ was assumed i.e. $\frac{\Aonesigma}{\Zapabs}  \geq c>0$. This condition is satisfied if $\sigma$ is small enough depending on the initial data, but will fail generically if $\sigma$ is large. 

In Shatah and Zeng \cite{ShZe08} it is observed that one can in fact obtain an energy with the coefficient being $ -\frac{\partial P}{\partial \hat{n}} \big\vert_{\sigma=0}$ instead of $ -\frac{\partial P}{\partial \hat{n}}$ which removes the smallness restriction on $\sigma$. However this was achieved using a variational formulation and as we are interested in studying angled crests, it is not clear how to use that approach in our problem. 

To overcome this issue we make the simple  observation that if we take an arc length derivative $\Dapabs$ or a material derivative $\Dt$ to the equation \eqref{eq:quasig}, then the structure of the equation improves. In fact by taking these derivatives to the equation \eqref{eq:quasig} one obtains an equation of the form
\begin{align}\label{eq:quasif}
\cbrac{\Dt^2  + i\Hil\brac{\frac{\Aone}{\Zapabs^2}\pap} - i\sigma\Hil \brac{\frac{1}{\Zapabs}\pap}^3} f = l.o.t
\end{align}
where $f = \Dapabs\g$ or $f = \Dt\g$. Note crucially that the second term now has $\Aone$ instead of $\Aonesigma$. As $\Aone\geq 1$ irrespective of $\sigma$, we do not need to make any assumptions on $\sigma$ being small.  Note that $ \Dapabs\g = \Real \Th $ and $\Dt\g = -\Imag(\Dapbar\Ztbar)$ from \eqref{form:Dtg} and we derive equations for $\Th$ and $\Dapbar\Ztbar$ in \eqref{eq:Th} and \eqref{eq:DapbarZtbar} respectively. Of course we still need to deal with the issue that $ -\frac{\partial P}{\partial \hat{n}} \big\vert_{\sigma=0} = \frac{\Aone}{\Zapabs}$ does not have a positive lower bound. This is resolved in a similar manner as Kinsey and Wu \cite{KiWu18} and is explained below.

\item Heuristic energy estimate:

The main goal of this paper is to extend the work of Kinsey and Wu \cite{KiWu18} to the case of non-zero surface tension. In \cite{KiWu18} a priori estimates are given for angled crested interfaces in the case of $\sigma=0$ for angles $\nu\pi$ with $0<\nu<\half$. Let us do a heuristic energy estimate to understand the difficulties. If the interface is $C^{1,\alpha}$ then we have  $0<c_1 \leq \frac{1}{\Zapabs} \leq c_2 < \infty$ and hence for smooth enough interfaces the main operator in \eqref{eq:quasif} behaves like $\pt^2 + \papabs + \sigma \papabs^3$ for which standard energy estimates in Sobolev spaces work. If the interface has an angled crest of angle $\nu\pi$ at $\ap=0$, then $\Z(\ap) \sim (\ap)^\nu$ and hence $ \frac{1}{\Zapabs}(\ap) \sim \abs{\ap}^{1-\nu}$ near $\ap= 0$ and hence the quasilinear equation near $\ap =0$ behaves like
\begin{align}\label{eq:quasiheuristic}
\begin{split}
& \cbrac{\pt^2 + \abs{\ap}^{2-2\nu}\papabs + \sigma\abs{\ap}^{3-3\nu}\papabs^3}f \\
 & = \abs{\ap}^{1-2\nu}f + \sigma \abs{\ap}^{2-3\nu}\papabs^2f + \sigma \abs{\ap}^{1-3\nu}\papabs f + \sigma\abs{\ap}^{-3\nu}f  + \tx{other l.o.t}
\end{split}
\end{align}
We have included a few simplified versions of the lower order terms to demonstrate the issues in proving an energy estimate. We obtain our energies by multiplying the above equation with either $\pt f$ or $\papabs\pt f$ and then integrating. If we multiply the equation by $\pt f$ and integrate, we obtain the energy
\begin{align}\label{eq:energyheuristic}
E_\sigma(t) = \half\norm[2]{\pt f}^2 + \half\norm[\Hhalf]{\abs{\ap}^{1-\nu} f}^2 + \half\sigma\norm[\Hhalf]{\abs{\ap}^{\threebytwo - \threebytwo\nu} \papabs f}^2
\end{align}
For simplicity we can also add the term $\half\norm[2]{f}^2$ to the energy which is compatible with $\pt f \in \Ltwo$ obtained from the energy. To close the energy estimate, we need to control the $\Ltwo$ norm of the right hand side of \eqref{eq:quasiheuristic} (we ignore the commutator terms in this heuristic). Hence to control the first term $\abs{\ap}^{1-2\nu} f \in \Ltwo$ , we obtain the restriction $\nu\leq \half$ which is one of the main reasons of the restrictions on the angles in Kinsey and Wu \cite{KiWu18}. Note that we cannot control the term $\sigma\abs{\ap}^{2-3\nu}\papabs^2 f \in \Ltwo$ as we only have control of $3/2$ derivatives on $f$. For smooth enough interfaces, it was observed by Ambrose-Masmoudi \cite{AmMa05} that by carefully choosing $f$ (by using variables derived from $\thvar$), this term does not appear in the quasilinear equation and we follow the same approach.  We do not use the modified tangential velocity as in \cite{AmMa05} but instead use the more natural  material derivative $\Dt$ along with the variable $\thvar$ to obtain our highest order quasilinear equation. Hence the equation looks like
\begin{align}\label{eq:quasiheuristictwo}
\begin{split}
& \cbrac{\pt^2 + \abs{\ap}^{2-2\nu}\papabs + \sigma\abs{\ap}^{3-3\nu}\papabs^3}f \\
 & = \abs{\ap}^{1-2\nu}f  + \sigma \abs{\ap}^{1-3\nu}\papabs f + \sigma\abs{\ap}^{-3\nu}f  + \tx{other l.o.t}
\end{split}
\end{align}

Now we need to control $\sigma \abs{\ap}^{1-3\nu}\papabs f \in \Ltwo$ and $\sigma\abs{\ap}^{-3\nu}f \in \Ltwo$. As we only have $f\in \Ltwo$, there is no way we can control the term $\sigma\abs{\ap}^{-3\nu}f \in \Ltwo$ and this is the reason why we do not allow angled crest data if $\sigma>0$. If we work with the smooth interface $\Z^\ep = \Z*P_\ep$ where $P_\ep$ is the Poisson kernel, then this has the effect of changing $\abs{\ap} \mapsto \abs{-i\ep + \ap}$ near $\ap =0$. Hence to close the energy estimate, we obtain the restriction $\sigma\ep^{-3\nu} \lesssim 1$. For the interface $\Z^\ep$, the curvature $\kap \sim \ep^{-\nu}$ and hence this can be written as $\sigma\kap^3 \lesssim 1$. A similar argument for $\sigma \abs{\ap}^{1-3\nu}\papabs f \in \Ltwo$ also yields the same restriction. Note that this is the exact scaling as was mentioned in the introduction and \remref{rem:scaling}. Also note that these restrictions do not depend on the choice of $f$, but is purely a consequence of the structure of the quasilinear equation and attempting to prove an $\Ltwo$ based energy estimate.

The main goal is now to construct and prove an energy estimate which mimics this heuristic energy estimate. A key difficulty in implementing this is to find a suitable $f$ and obtain a corresponding quasilinear equation. We need the following properties for $f$
\begin{enumerate}
\item $f$ needs to have appropriate weights so that the energy such as \eqref{eq:energyheuristic} allows angled crests solutions of angles less than $90\degree$ when $\sigma=0$
\item The quasilinear equation for $f$ should not have terms like $\sigma\abs{\ap}^{2-3\nu}\papabs^2 f$ in the errors, i.e. it should look like \eqref{eq:quasiheuristictwo}  and not \eqref{eq:quasiheuristic}
\end{enumerate}
These are rather severe restrictions on $f$ and it is not clear whether one can find such a function. Observe that if $f$ has such properties, then no weighted derivate of the form $w\pal f$ will satisfy the same properties (Kinsey-Wu \cite{KiWu18} used the weighted derivative $\frac{1}{\Zapabs^2}\pap$ and Ambrose-Masmoudi \cite{AmMa05} used the arc length derivative $\frac{1}{\Zapabs}\pap$ to obtain the higher order energies).  Fortunately we observe that the material derivative $\Dt f$ will satisfy both the properties if $f$ satisfies both properties. In this paper we use $f = \Dapbar\Ztbar$ as the variable for the energy of the highest order in $\Esigma$ and it is related to the material derivative of the angle via $\Dt\g = -\Imag(\Dapbar\Ztbar)$.

\item Constructing the energy:

Observe that the quasilinear equations we derive in \secref{sec:quasilinear} are essentially of the form
\begin{align}\label{eq:quasifthree}
\cbrac{\Dt^2  + i\frac{\Aone}{\Zapabs}\brac{\frac{1}{\Zapabs}\pal}\Hil - i\sigma \brac{\frac{1}{\Zapabs}\pap}^3\Hil} f = l.o.t
\end{align}
for suitable $f$. To obtain an energy we multiply by $\papabs\Dt f$ and integrate to get the energy
\begin{align*}
\norm[\Hhalf]{\Dt f}^2 + \norm[2]{\frac{\sqrt{\Aone}}{\Zapabs}\pap f}^2 + \sigma\norm[2]{\frac{1}{\Zapabs^\half}\pap\Dapabs f}^2
\end{align*}
Observe that as $\Dt\g = -\Imag(\Dapbar\Ztbar)$ we have from \eqref{form:Imag} that $\Ph\Dt\g \approx \frac{i}{2}\Dapbar\Ztbar$. The energy $\Esigma$ defined in \secref{sec:aprioriEsigma} consists of the following
\begin{enumerate}
\item $\Esigmazero$: ad-hoc but carefully chosen lower order terms introduced to help in closing the energy estimate but which still allows the scaling $\sigma/\ep^\threebytwo$ in \corref{cor:example}. \footnote{For example if we introduce the lower order term $\norm[\infty]{\sigma^\onebythree\Zap} \sim \norm[\infty]{T_b^{-1}T_c\Ph(\g)}$ in $\Esigmazero$, then it significantly simplifies the proof of the energy estimate and still allows angled crested interfaces for $\sigma = 0$. However the introduction of this term does not allow the scaling $\sigma/\ep^\threebytwo$ in \corref{cor:example} and one would then get a weaker result.}
\item $\Esigmaone$: Energy as above for $f \approx \Ztbar\Zap \approx \brac{\frac{1}{\Zapabs^2}\pap}^{-1}\Ph \Dt\g$
\item $\Esigmatwo$: Energy as above for $f \approx \papabs^\half\nobrac{\Ztbar} \approx \brac{\frac{1}{\Zapabs^2}\pap}^{-\half}\Ph \Dt\g$
\item $\Esigmathree$: Energy as above for $f \approx \papabs^{-\half}\Th \approx \brac{\frac{1}{\Zapabs^2}\pap}^{\half}\Ph \g$
\item $\Esigmafour$: Energy as above for $f \approx \Dapbar\Ztbar \approx -2i\Ph \Dt\g$
\end{enumerate}

\medskip
Observe that the variables used as $f$ consist of applying the operators $\Dt$ and $\frac{1}{\Zapabs^2}\pap$ on the variable $\g$. These show up as they are part of the fundamental operators in the quasilinear equation \eqref{eq:quasifthree} which are namely 
\begin{align*}
T_a = \Dt = \tx{ material derivative } \qq T_b = \frac{1}{\Zapabs^2}\pap \qq \tx{ and }\quad  T_c = \frac{\sigma^\onebythree}{\Zapabs}\pap
\end{align*}
where we have ignored $\Aone$ as we prove in the energy estimate that $1\leq \Aone \leq 1 + \norm[2]{\Ztapbar}^2$ and hence we can consider $\Aone \approx 1$. All terms in the energy $\Ecalsigma$ and in fact all the terms showing up in \secref{sec:quantEsigma} can be written entirely in terms of these operators and $\g$ (and also using the operator $\Ph$). 

To understand how the energy looks like in the arc length coordinate system, we define the operators
\begin{align*}
T_1 = \Dt = \tx{ material derivative } \qq T_2 =  \atay\ps \qq \tx{ and }\quad  T_3 = \sigma^\onebythree\ps
\end{align*}
where $\dis \atay = -\frac{\partial P}{\partial \hat{n}}\bigg\rvert_{\sigma=0, \mathbf{v} \equiv 0}$ and $\ps$ is the arc length derivative. The $\Dt$ above is by abuse of notation the material derivative in arc length coordinates. It is clear from \eqref{form:Taylor} that $\atay$ in conformal coordinates is exactly $\frac{1}{\Zapabs}$ and we also see that the arc length derivate $\ps$ in conformal coordinates is the operator $\frac{1}{\Zapabs}\pap$. To get the analog of $\Ph$ in arc length coordinates, we let $\Phol$ denote the linear operator defined by the property that for any smooth real valued function $f:\Sigma\to \Rsp$ vanishing at infinity, $\Phol(f):\Sigma\to\Csp$ is the boundary value of a holomorphic function on $\Omega$ vanishing at infinity with $\Real\{\Phol(f) \} = f/2$.

We can now give a heuristic representation of $\Ecalsigma$ in both conformal and arc length coordinate systems using these operators. Let $g, \thvar$ be the angle of the interface with the x-axis in conformal and arc length coordinates respectively. We have the following representation for $\Ecalsigmaone$: 

\begin{enumerate}[leftmargin =*, align=left, label=\arabic*)]
\item $\dis \norm[2]{\pap\frac{1}{\Zap}} \sim \norm[\Hhalf]{T_b^\half\Ph(\g)} \sim \norm[\Hhalf]{T_2^{\half}\Phol(\thvar)}  $ 

\item $\dis \norm[\Hhalf]{\frac{1}{\Zap}\pap\frac{1}{\Zap}} \sim \norm[\Hhalf]{T_b\Ph(\g)} \sim \norm[\Hhalf]{T_2\Phol(\thvar)}$

\item $\dis \norm[2]{\sigma^\onebysix\Zapabs^{\half}\pap\frac{1}{\Zap}}  \sim \norm[\Hhalf]{T_c^\half\Ph(\g)}  \sim \norm[\Hhalf]{T_3^\half\Phol(\thvar)} $

\item $\dis \norm[\infty]{\sigma^\half\Zapabs^\half\pap\frac{1}{\Zap}} \sim \norm[\infty]{T_b^{-\half}T_c^{\threebytwo}\Ph(\g)} \sim \norm[\infty]{T_2^{-\half}T_3^{\threebytwo}\Phol(\thvar)}$

\item  $\dis \norm*[\Bigg][2]{\frac{\sigma^\half}{\Zap^\half}\pap^2\frac{1}{\Zap}} \sim \norm[\Hhalf]{T_c^{\threebytwo}\Ph(\g)}  \sim \norm[\Hhalf]{T_3^{\threebytwo}\Phol(\thvar)} $

\item $\dis \norm*[\Bigg][\Hhalf]{\frac{\sigma^\half}{\Zap^\threebytwo}\pap^2\frac{1}{\Zap}} \sim \norm[\Hhalf]{T_b^\half T_c^\threebytwo \Ph(\g)}  \sim \norm[\Hhalf]{T_2^\half T_3^\threebytwo \Phol(\thvar)} $

\item $\dis \norm[\Hhalf]{\sigma\pap\Th} \sim \norm[\Hhalf]{T_b^{-1}T_c^3 \Ph(\g)} \sim \norm[\Hhalf]{T_2^{-1}T_3^3 \Phol(\thvar)}$

\item $\dis \norm[2]{\frac{\sigma}{\Zap}\pap^3\frac{1}{\Zap}} \sim \norm[\Hhalf]{T_b^{-\half}T_c^3 \Ph(\g)} \sim \norm[\Hhalf]{T_2^{-\half}T_3^3 \Phol(\thvar)} $

\item $\dis \norm[\Hhalf]{\frac{\sigma}{\Zap^2}\pap^3\frac{1}{\Zap}} \sim \norm[\Hhalf]{T_c^3 \Ph(\g)} \sim \norm[\Hhalf]{T_3^3 \Phol(\thvar)} $

\end{enumerate}

As mentioned before we have $\Ph\Dt\g \approx \frac{i}{2}\Dapbar\Ztbar$. Hence $\Ecalsigmatwo$ has the heuristic representation 

\begin{enumerate}[leftmargin =*, align=left, label=\arabic*)]
\item $\dis \norm[2]{\Ztapbar} \sim \norm[\Hhalf]{T_aT_b^{-\half}\Ph(\g)} \sim \norm[\Hhalf]{T_1T_2^{-\half}\Phol(\thvar)}$

\item $\dis  \norm*[\Bigg][2]{\frac{1}{\Zap^2}\pap\Ztapbar} \sim \norm[\Hhalf]{T_aT_b^{\half}\Ph(\g)}  \sim \norm[\Hhalf]{T_1T_2^{\half}\Phol(\thvar)} $

\item $\dis \norm*[\Bigg][2]{\frac{\sigma^\half}{\Zap^\half}\pap\Ztapbar} \sim \norm[\Hhalf]{T_aT_b^{-1}T_c^{\threebytwo}\Ph(\g)} \sim \norm[\Hhalf]{T_1T_2^{-1}T_3^{\threebytwo}\Phol(\thvar)}$

\item $\dis \norm*[\Bigg][2]{\frac{\sigma^\half}{\Zap^\fivebytwo}\pap^2\Ztapbar} \sim \norm[\Hhalf]{T_aT_c^{\threebytwo}\Ph(\g)} \sim \norm[\Hhalf]{T_1T_3^{\threebytwo}\Phol(\thvar)}$
\end{enumerate}
The operators $\Ph$ and $\Phol$ are not fundamental in the representation and are more of a technical nature. One can ignore them to get the essence of the energy. Also heuristically for the arc length representation, one can write $T_1 \theta$ as the arc length derivative of the velocity on the boundary. For example if the velocity on the boundary in arc length coordinates is $\zt$, then one can heuristically write $\dis \norm[2]{\Ztapbar}  \sim \norm[\Hhalf]{T_1T_2^{-\half}\Phol(\thvar)} \sim \norm[\Hhalf]{T_2^{-\half}\Phol(\ps \zt)}$. Similarly for other terms of $\Ecalsigmatwo$. 

Writing the energy with variables being the free surface elevation $\eta$ and the velocity potential $\phi$ is a little more complicated in general due to the restriction that the interface has to be a graph. However if for example there is a single singularity of the interface at the origin and the data is symmetric, then the energy in these variables can be written easily by converting it from the representation in arc length coordinates. However even then one has to be a little careful as the energy $\Ecalsigma$ for $\sigma = 0$ allows interfaces with cusps (as shown in \cite{Ag20}), which would make the slope of the interface at the cusp infinite and the interface non chord-arc, and this may create some difficulties in proving a local wellposedness result in these coordinates.

\item Analytical difficulties: 

In addition to the structural difficulties due to surface tension explained above, we also face numerous analytical difficulties. Even in the special case of $\sigma=0$, the energy $\Ecalsigma\vert_{\sigma=0}$ is lower order as compared to the energy in Kinsey-Wu \cite{KiWu18} by half weighted spacial derivative and we crucially do not have $\Dap\Zttbar \in \Linfty$. This makes our estimates even in the case of $\sigma=0$ much more subtle. In addition we now have a lot of nonlinear terms due to surface tension which we need to control. To overcome these issues, we define weighted function spaces adapted to our problem and prove estimates for these spaces (see \lemref{lem:CW}). We use these function spaces along with estimates from harmonic analysis to control the nonlinear terms.

\end{enumerate}

\section{The quasilinear equations}\label{sec:quasilinear}

We will now use the fundamental equation $\eqref{form:Zttbar}$ as the starting point to derive our quasilinear equations. Our main equation is for the variable $\Dapbar \Ztbar$ which is obtained by applying the operators $\Dapbar\Dt$ to the equation $\eqref{form:Zttbar}$. We also obtain equations for $\Ztbar, \Ztapbar$ and $\Th$ which should be thought of lower order and auxiliary equations. Let us first derive some simple but useful formulas:

\begin{enumerate}[leftmargin =*, align=left, label=\alph*)]
\item We have
\[
\frac{\Zap}{\Zapabs}\pap\frac{1}{\Zap} = \w\pap\brac{\frac{\wbar}{\Zapabs}} = \pap \frac{1}{\Zapabs} + \w\Dapabs \wbar
\]
Observe that $\dis \pap \frac{1}{\Zapabs}$ is real valued and $\dis \w\Dapabs \wbar$ is purely imaginary. From this we obtain the relations
\begin{align} \label{form:RealImagTh}
\Real\brac{\frac{\Zap}{\Zapabs}\pap\frac{1}{\Zap} } = \pap \frac{1}{\Zapabs} \quad \qquad \Imag\brac{\frac{\Zap}{\Zapabs}\pap\frac{1}{\Zap}} = i\brac{ \wbar\Dapabs \w} = -\Real \Th
\end{align}

\item From \eqref{eq:mainvariables} we see that $\g = \Imag\cbrac{\log (\Zap)}$ and hence from \eqref{eq:systemone} we see that
\begin{align*}
\Dt \g = \Imag\cbrac{\frac{\Dt\Zap}{\Zap}} = \Imag\cbrac{\frac{\Ztap - \bvarap\Zap}{\Zap}}
\end{align*}
Now as $\bvarap$ is real valued we have
\begin{align}  \label{form:Dtg}
\Dt \g = - \Imag (\Dapbar \Ztbar) 
\end{align}

\item Observe that for any complex valued function $f$, $\Hil (\Real f) =  i\Imag(\Hil f)$ and  $\Hil(i \Imag f) = \Real (\Hil f)$. Hence we get the following useful identities
\begin{align}
(\Id + \Hil)(\Real f) = f -i\Imag (\Id - \Hil) f  \label{form:Real}\\
(\Id + \Hil)(i \Imag f) = f - \Real (\Id - \Hil)f  \label{form:Imag}
\end{align}
Now we observe that 
\begin{align*}
\Aone  = 1 -\Imag \sqbrac{\Zt,\Hil}\Ztapbar  = \Real(\Id - \Hil)\cbrac{1 + i\Zt\Ztapbar}
\end{align*}
and hence using \eqref{form:Imag} we obtain
\begin{align}\label{form:Aonenew}
\Aone = 1 + i\Zt\Ztapbar -i(\Id + \Hil)\cbrac{\Real(\Zt\Ztapbar)}
\end{align}
Similarly as $\bvar = \Real(\Id - \Hil)\brac{\frac{\Zt}{\Zap}}$ we again use \eqref{form:Imag} to get
\begin{align*}
\bvar = \frac{\Zt}{\Zap} -i(\Id + \Hil)\cbrac{\Imag\brac{\frac{\Zt}{\Zap}}}
\end{align*}
and hence 
\begin{align}\label{form:bvarapnew}
\bvarap = \Dap\Zt + \Zt\pap\frac{1}{\Zap}  -i\pap(\Id + \Hil)\cbrac{\Imag\brac{\frac{\Zt}{\Zap}}}
\end{align}

\item We now record some frequently used commutator identities. They are easily seen by differentiating
\begin{align}\label{eq:commutator}
 [\pap, \Dt ] &= \bap \pap   \qquad &[\Dapabs, \Dt] &= \Real{(\Dap \Zt)} \Dapabs = \Real{(\Dapbar \Ztbar) \Dapabs} \\   \relax 
 [\Dap, \Dt]  &=  \brac{\Dap \Zt} \Dap   \qquad &  [\Dapbar, \Dt] &= \brac{\Dapbar \Ztbar } \Dapbar 
\end{align}
Using the commutator relation ${[\pap, \Dt ]} = \bap \pap$ we obtain the following formulae
\begin{align}
\Dt \Zapabs & = \Dt e^{\Real\log \Zap} =  \Zapabs \cbrac{\Real(\Dap\Zt) - \bvarap} \label{form:DtZapabs} \\ 
 \Dt \frac{1}{\Zap} & = \frac{-1}{\Zap}(\Dap\Zt - \bvarap) = \frac{1}{\Zap}\cbrac{(\bvarap - \Dap\Zt - \Dapbar \Ztbar) + \Dapbar \Ztbar} \label{form:DtoneoverZap}
\end{align}
Observe that $(\bvarap - \Dap\Zt - \Dapbar \Ztbar)$ is real valued and this fact will be useful later on.
\medskip

 \item As we will frequently work with the operator $\Dapabs^3$ we record some commonly used expansions
 \begin{align} 
 \Dapabs^2 f & = \brac{\pap\frac{1}{\Zapabs}}\Dapabs f + \frac{1}{\Zapabs^2}\pap^2 f   \notag\\
 \begin{split} \label{form:Dapabs^3f}
 \Dapabs^3 f & =   \brac{\frac{1}{\Zapabs}\pap^2\frac{1}{\Zapabs}}\Dapabs f + \brac{\pap\frac{1}{\Zapabs}}^2 \Dapabs f + 3\brac{\pap\frac{1}{\Zapabs}}\frac{1}{\Zapabs^2}\pap^2 f  \\
 &  \quad + \frac{1}{\Zapabs^3}\pap^3 f 
 \end{split} \\
 \begin{split} \label{form2:Dapabs^3f}
 \Dapabs^3 f & = \pap\cbrac{\frac{1}{\Zapabs^\threebytwo}\pap\brac{\frac{1}{\Zapabs^\threebytwo}\pap f}} -\frac{3}{2}\brac{\pap\frac{1}{\Zapabs}}\frac{1}{\Zapabs^2}\pap^2 f \\
 & \quad -2\brac{\pap\frac{1}{\Zapabs}}^2\Dapabs f -\frac{1}{2}\brac{\frac{1}{\Zapabs}\pap^2\frac{1}{\Zapabs}}\Dapabs f
 \end{split}
 \end{align}

\end{enumerate}
\medskip

We will now derive formulas for $\Th, \Dt \Th$ and $\Dt^2 \Th$. All three of them are derived similarly. 
\medskip


\subsubsection{}$\!\!\!\mathbf{Formula \ for\  } \Th$
\medskip

We know from \eqref{form:RealImagTh} that $\dis \Real \Th = -\Imag\brac{\frac{\Zap}{\Zapabs} \pap \frac{1}{\Zap}}$. Applying $(\Id + \Hil)$ to this formula and using the identities \eqref{form:Real} and \eqref{form:Imag} we get
\begin{align} \label{form:Th}
\Th = i \frac{\Zap}{\Zapabs} \pap \frac{1}{\Zap} - i \Real (\Id - \Hil) \brac{\frac{\Zap}{\Zapabs} \pap \frac{1}{\Zap}} 
\end{align}
As $\pap \frac{1}{\Zap}$ is holomorphic, this implies that the second term in the above formula is lower order. Hence $\Th \approx  i\frac{\Zap}{\Zapabs} \pap \frac{1}{\Zap}  $ and therefore $\Th $ and $\pap \frac{1}{\Zap}$ have the same regularity. 
\medskip


\subsubsection{}$\!\!\!\mathbf{Formula \ for\  } \Dt\Th$
\medskip

Apply $\Dapabs$ on the formula for $\Dt \g$ in \eqref{form:Dtg} to obtain
\begin{align*}
\Dt \Dapabs \g & = - \Imag{(\Dapabs \Dapbar \Ztbar)} -\Real{(\Dapbar \Ztbar)} \Dapabs \g 
\end{align*}
As $\Dapabs \g = \Real \Th$, hence $\Real{(\Dapbar \Ztbar)} \Dapabs \g = \Real{\cbrac{(\Dapbar \Ztbar) \Real \Th}} = \Imag{\cbrac{i (\Dapbar \Ztbar) \Real \Th}}$. Also observe that $\Dt\Dapabs \g = \Real (\Dt \Th)$. Hence we have
\begin{align}  \label{form:RealDtTh}
\Real (\Dt \Th)   = - \Imag\cbrac{(\Dapabs + i \Real \Th) \Dapbar \Ztbar}
\end{align}
Now apply $(\Id + \Hil)$ on both sides and use the identities \eqref{form:Real} and \eqref{form:Imag} to get
\begin{align} \label{form:DtTh}
\Dt \Th = i\brac{\Dapabs + i \Real \Th} \Dapbar \Ztbar - i \Real (\Id - \Hil)\cbrac{(\Dapabs + i \Real \Th) \Dapbar \Ztbar}  + i \Imag \nobrac{(\Id - \Hil) \Dt \Th} 
\end{align}
Note that $\Dap \Ztbar$ and $\Th $ are holomorphic this causes the second and third term in the above formula to be of lower order. For example observe that as $(\Id - \Hil)\Th =0$, we have
\begin{align*}
(\Id - \Hil)\Dt\Th = \sqbrac{\Dt,\Hil}\Th = \sqbrac{\bvar,\Hil}\pap\Th
\end{align*}
Hence $\Dt \Th \approx i(\Dapabs + i \Real \Th) \Dapbar \Ztbar$.

\medskip


\subsubsection{}$\!\!\!\mathbf{Formula \ for\  } \Dt^2\Th$
\medskip

Apply $\Dt$ on the formula for $\Real( \Dt \Th)$ in  \eqref{form:RealDtTh} to obtain
\begin{flalign*}
\enspace \Real (\Dt^2\Th)   &= - \Imag\cbrac{\Dt(\Dapabs + i \Real \Th) \Dapbar \Ztbar} & \\
  & = - \Imag\cbrac{(\Dapabs + i \Real \Th) \Dt\Dapbar \Ztbar} + \Imag\cbrac{\Real(\Dapbar\Ztbar)\Dapabs\Dapbar\Ztbar  -i\Real (\Dt \Th)\Dapbar\Ztbar}
\end{flalign*}
Now apply $(\Id + \Hil)$ on both sides and use the identities \eqref{form:Real} and \eqref{form:Imag} to get
\begin{align} \label{form:Dt^2Th}
\begin{split}
\Dt^2\Th &= i(\Dapabs + i \Real \Th)\Dt \Dapbar \Ztbar - i \Real (\Id - \Hil)\cbrac{(\Dapabs + i \Real \Th) \Dt\Dapbar \Ztbar} \\
  & \quad  +  i \Imag \nobrac{(\Id - \Hil) \Dt^2 \Th} +  (\Id + \Hil) \Imag\cbrac{\Real(\Dapbar\Ztbar)\Dapabs\Dapbar\Ztbar  -i\Real (\Dt \Th)\Dapbar\Ztbar} 
\end{split}
\end{align}
Again in this formula only the first term is the main term and all other terms are lower order. Hence $\Dt^2 \Th \approx  i(\Dapabs + i \Real \Th)\Dt \Dapbar \Ztbar $.
 
\medskip


\subsection{\texorpdfstring{Equation for $\Ztbar$ \nopunct}{}} \hspace*{\fill} \medskip

Apply $\Dt$ to the fundamental equation \eqref{form:Zttbar}
\begin{align*}
\Ztttbar  = -i \frac{\Dt\Aone}{\Zap} -i\frac{\Aone}{\Zap}\brac{\Zap \Dt\frac{1}{\Zap}} -\sigma \brac{\Dap\Zt}\Dap\Th +\sigma\Dap\Dt\Th 
\end{align*}
Now use the formula for $\dis \Dt \frac{1}{\Zap}$ from \eqref{form:DtoneoverZap} and $\Dt\Th$ from \eqref{form:DtTh} to obtain
\begin{align*}
\begin{split}
\Ztttbar & = -i\frac{1}{\Zap}\brac{\Dt\Aone +\Aone(\bap -\Dap\Zt - \Dapbar\Ztbar)} -i\frac{\Aone}{\Zap} \Dapbar\Ztbar -\sigma \brac{\Dap\Zt}\Dap\Th \\
 & \quad +i\sigma\Dap(\Dapabs + i\Real \Th)\Dapbar\Ztbar -i\sigma\Dap\Real(\Id - \Hil) \cbrac{(\Dapabs +i\Real\Th)\Dapbar \Ztbar}  \\
 & \quad +i\sigma\Dap\Imag(\Id - \Hil) \Dt\Th \\
\end{split}
\end{align*}
Let us define the real valued variable $\Jone$ as
\begin{align} \label{form:Jone}
\begin{split}
\Jone &=  \Dt\Aone +  \Aone\brac{\bvarap - \Dap\Zt - \Dapbar\Ztbar} + \sigma \pap \Real \brac{\Id - \Hil} \cbrac{(\Dapabs + i\Real \Th) \Dapbar \Ztbar} \\
& \quad - \sigma\pap \Imag\brac{\Id - \Hil} \Dt\Th
\end{split}
\end{align}
Using this we get
\begin{align}\label{eq:Ztbar}
\Ztttbar + i\frac{\Aone}{\Zap}\Dapbar\Ztbar -i\sigma\Dap(\Dapabs + i\Real\Th)\Dapbar \Ztbar = -\sigma(\Dap\Zt) \Dap\Th -i\frac{\Jone}{\Zap}
\end{align}
We modify this equation slightly to get an equation appropriate for the computation of the lower order term in the energy. Rewrite the above equation as
\begin{align*}
\begin{split}
& \Ztttbar + i\frac{\Aone}{\Zap}\Dapbar\Ztbar -i\sigma\Dap\cbrac{\brac{\Dapabs\frac{1}{\Zapbar}} \Ztapbar + \frac{1}{\Zapbar}\Dapabs\Ztapbar } \\
& = -\sigma(\Dap\Zt)\Dap\Th -\sigma\Dap\cbrac{(\Real\Th) \Dapbar \Ztbar} - i\frac{\Jone}{\Zap}
\end{split}
\end{align*}
Multiply by $\Zap$ and rearrange to get
\begin{align} \label{eq:ZtZap}
\begin{split}
& \Ztttbar\Zap + i\Aone\Dapbar\Ztbar -i\sigma\pap\brac{\frac{1}{\Zapbar}\Dapabs\Ztapbar  } \\
& = i\sigma\pap\cbrac{\brac{\Dapabs \frac{1}{\Zapbar}}\Ztapbar } -\sigma(\Dap\Zt)\pap\Th -\sigma\pap\cbrac{(\Real\Th)\Dapbar\Ztbar} -i\Jone
\end{split}
\end{align}
This equation gives rise to the energy $\Esigmaone$ in the energy estimate. 
\medskip

\subsection{\texorpdfstring{Equation for $\Dapbar \Ztbar$ \nopunct}{}} \hspace*{\fill} \medskip

Apply $\Dapbar$ to the equation \eqref{eq:Ztbar} and use commutator identities to get
\begin{align*}
\begin{split}
& \Dt^2\Dapbar\Ztbar + i\frac{\Aone}{\Zapabs^2}\pap\Dapbar\Ztbar -i\sigma \Dapbar \Dap (\Dapabs +i\Real\Th)\Dapbar \Ztbar \\
& = - (\Dapbar\Ztbar)\Dapbar\Zttbar -2(\Dapbar\Ztbar)(\Dt\Dapbar\Ztbar) -i\Dapbar\brac{\frac{\Aone}{\Zap}}(\Dapbar\Ztbar) \\
& \quad -\sigma \Dapbar\cbrac{(\Dap\Zt)\Dap\Th } -i\brac{\Dapbar\frac{1}{\Zap}} \Jone - \frac{i}{\Zapabs^2}\pap\Jone
\end{split}
\end{align*}
Observe from \eqref{form:Zttbar} that $-(\Dapbar\Ztbar)\Dapbar\brac{\Zttbar +i\frac{\Aone}{\Zap}} = -\sigma(\Dapbar\Ztbar) \Dapbar\Dap\Th$. Hence we get
\begin{align*}
\begin{split}
& \Dt^2\Dapbar\Ztbar + i\frac{\Aone}{\Zapabs^2}\pap\Dapbar\Ztbar -i\sigma \Dapbar \Dap (\Dapabs +i\Real\Th)\Dapbar \Ztbar \\
& = -2(\Dapbar\Ztbar)(\Dt\Dapbar\Ztbar) -2\sigma\Real(\Dap\Zt)\Dapbar\Dap\Th -\sigma(\Dapbar\Dap\Zt)\Dap\Th \\
& \quad -i\brac{\Dapbar\frac{1}{\Zap}} \Jone - \frac{i}{\Zapabs^2}\pap\Jone
\end{split}
\end{align*}
Now
\begin{align}\label{eq:DapbarDap}
 \Dapbar \Dap  = \Dapbarfrac \brac{\frac{\Zapabs}{\Zap} \Dapabs} = \brac{\Dapbarfrac \frac{\Zapabs}{\Zap} } \Dapabs + \Dapabs^2 = \brac{\Dapabs -i\Real \Th} \Dapabs 
\end{align}
\begin{flalign*}
\text{Hence} \quad & \Dapbar \Dap  \brac{\Dapabs +i\Real \Th}  &\\
& = \brac{\Dapabs -i \Real \Th} \Dapabs \brac{\Dapabs +i\Real \Th} &\\
& =  \brac{\Dapabs -i \Real \Th} \brac{\Dapabs +i \Real \Th} \Dapabs + \brac{\Dapabs -i \Real \Th} \brac{i\Real \brac{\Dapabs \Th}} &\\
& = \brac{\Dapabs^2 +i\Real \brac{\Dapabs \Th} + \brac{\Real \Th}^2}\Dapabs + i\Real\brac{\Dapabs^2\Th} + i\Real\brac{\Dapabs \Th}\Dapabs &\\
 & \quad + \brac{\Real \Th} \nobrac{\Real\brac{\Dapabs \Th}} &\\
 & = \Dapabs^3 + \brac{2i\Real \brac{\Dapabs \Th} + \brac{\Real \Th}^2} \Dapabs + i\Real \brac{\Dapabs^2 \Th} + \brac{\Real \Th} \Real \brac{\Dapabs \Th}&
\end{flalign*}
Therefore $ \Dapbar \Dap  \brac{\Dapabs +i\Real \Th} \approx  \Dapabs^3$. Hence we get the main equation for $\Dapbar\Ztbar$
\begin{equation} \label{eq:DapbarZtbar}
 \brac{\Dt^2 +i\frac{\Aone}{\Zapabs^2}\pap  -i\sigma\Dapabs^3} \Dapbar\Ztbar  =  \Rone -i\brac{\Dapbar\frac{1}{\Zap}} \Jone -i\frac{1}{\Zapabs^2}\pap \Jone 
\end{equation}
where
\begin{align} \label{form:Rone}
\begin{split}  
\Rone & =  -2(\Dapbar\Ztbar)(\Dt\Dapbar\Ztbar) -2\sigma\Real(\Dap\Zt)\Dapbar\Dap\Th -\sigma(\Dapbar\Dap\Zt)\Dap\Th \\ 
& \quad  +i\sigma\brac{2i\Real \brac{\Dapabs \Th} + \brac{\Real \Th}^2} \Dapabs \Dapbar\Ztbar -\sigma \Real \brac{\Dapabs^2 \Th}\Dapbar\Ztbar  \\
& \quad   + i\sigma \brac{\Real \Th} \brac{ \Real \brac{\Dapabs \Th}}\Dapbar\Ztbar 
\end{split}
\end{align}
and $\Jone$ was defined in \eqref{form:Jone}. This equation gives rise to the energy $\Esigmafour$ in the energy estimate. 
\medskip

\subsection{\texorpdfstring{Equation for $\Ztapbar$ \nopunct}{}} \hspace*{\fill} \medskip

Multiply the equation for $\Dapbar\Ztbar$ in \eqref{eq:DapbarZtbar} by $\Zbarap$ to get the equation
\begin{align} \label{eq:Ztbarap}
\begin{split}
& \brac{\Dt^2 +i\frac{\Aone}{\Zapabs^2}\pap  -i\sigma\Dapabs^3} \Ztbarap \\
 & =  \Rone\Zapbar -i\brac{\pap\frac{1}{\Zap}} \Jone -i\Dap \Jone - \Zapbar\sqbrac{\Dt^2 +i\frac{\Aone}{\Zapabs^2}\pap -i\sigma\Dapabs^3, \frac{1}{\Zapbar}} \Ztapbar
\end{split}
\end{align}

This equation gives rise to the energy $\Esigmatwo$ in the energy estimate. This equation will also be useful to get estimates for the term $\Dap \Jone$.
\medskip

\subsection{\texorpdfstring{Equation for $\Th$ \nopunct}{}} \hspace*{\fill} \medskip

Apply $\Dapbar$ to the fundamental equation \eqref{form:Zttbar} and use $\Dapbar \Dap  = \brac{\Dapabs -i\Real \Th} \Dapabs$ from \eqref{eq:DapbarDap} to obtain
\begin{align*}
 \Dt\Dapbar\Ztbar  +i \Dapbar \brac{\frac{\Aone}{\Zap}} -\sigma  \brac{\Dapabs -i\Real \Th} \Dapabs\Th = -\brac{\Dapbar \Ztbar}^2
\end{align*}\\
Now applying the operator $i(\Dapabs +i\Real\Th)$ we obtain
\begin{align*}
\begin{split}
 & i(\Dapabs +i\Real\Th)  \Dt\Dapbar\Ztbar  - \Dapabs \Dapbar \brac{\frac{\Aone}{\Zap}} - i\sigma (\Dapabs +i\Real\Th) \brac{\Dapabs -i\Real \Th} \Dapabs\Th \\
&= -2i\brac{\Dapbar \Ztbar}\brac{\Dapabs \Dapbar \Ztbar} +\brac{\Real \Th}\cbrac{ \brac{\Dapbar \Ztbar}^2 +i\Dapbar\brac{\frac{\Aone}{\Zap}}}
\end{split}
\end{align*}
Observe that
\begin{align*}
(\Dapabs +i\Real\Th) \brac{\Dapabs -i\Real \Th} \Dapabs \Th =  \Dapabs^3\Th -i\Real\brac{\Dapabs\Th} \Dapabs\Th  + \brac{\Real\Th}^2 \Dapabs\Th
\end{align*}
Hence we get
\begin{align*}
\begin{split}
 & i(\Dapabs +i\Real\Th)  \Dt\Dapbar\Ztbar  - \Dapabs \Dapbar \brac{\frac{\Aone}{\Zap}} - i\sigma \Dapabs^3\Th \\
& =  -2i\brac{\Dapbar \Ztbar}\brac{\Dapabs \Dapbar \Ztbar} + \brac{\Real \Th }\cbrac{ \brac{\Dapbar \Ztbar}^2 +i\Dapbar\brac{\frac{\Aone}{\Zap}} +i\sigma\brac{\Real\Th} \Dapabs\Th} \\
& \quad +\sigma \Real\brac{\Dapabs\Th} \Dapabs\Th 
\end{split}
\end{align*}
Now recall from \eqref{form:Th} that $\Th = i \frac{\Zap}{\Zapabs} \pap \frac{1}{\Zap} - i \Real (\Id - \Hil) \brac{\frac{\Zap}{\Zapabs} \pap \frac{1}{\Zap}}$. Therefore
\begin{align*}
 -\Dapabs \Dapbar\brac{\frac{\Aone}{\Zap}} & =  -\Dapabs\cbrac{\frac{\Aone}{\Zapabs} \brac{\frac{\Zap}{\Zapabs}\pap\frac{1}{\Zap}} + \frac{1}{\Zapabs^2}\pap \Aone}  \\
 & =  i\frac{\Aone}{\Zapabs^2}\pap\Th   - \frac{\Aone}{\Zapabs^2}\pap\Real (\Id - \Hil) \brac{\frac{\Zap}{\Zapabs} \pap \frac{1}{\Zap}}  \\
 & \quad - \brac{\Dapabs\frac{\Aone}{\Zapabs}}\brac{\frac{\Zap}{\Zapabs}\pap \frac{1}{\Zap}} -\Dapabs\brac{\frac{1}{\Zapabs^2}\pap \Aone}
\end{align*}
Hence we have
\begin{align*}
\begin{split}
 & i(\Dapabs +i\Real\Th)  \Dt\Dapbar\Ztbar  +  i\frac{\Aone}{\Zapabs^2}\pap\Th  - i\sigma \Dapabs^3\Th \\
& =  -2i\brac{\Dapbar \Ztbar}\brac{\Dapabs \Dapbar \Ztbar} + \brac{\Real \Th }\cbrac{ \brac{\Dapbar \Ztbar}^2 +i\Dapbar\brac{\frac{\Aone}{\Zap}} +i\sigma\brac{\Real\Th} \Dapabs\Th} \\
& \quad +\sigma \Real\brac{\Dapabs\Th} \Dapabs\Th + \brac{\Dapabs\frac{\Aone}{\Zapabs}}\brac{\frac{\Zap}{\Zapabs}\pap \frac{1}{\Zap}} + \Dapabs\brac{\frac{1}{\Zapabs^2}\pap \Aone}  \\
& \quad + \frac{\Aone}{\Zapabs^2}\pap\Real (\Id - \Hil) \brac{\frac{\Zap}{\Zapabs} \pap \frac{1}{\Zap}}
\end{split}
\end{align*}
Recall from \eqref{form:Dt^2Th} that 
\begin{align*} 
\begin{split}
\Dt^2\Th &= i(\Dapabs + i \Real \Th)\Dt \Dapbar \Ztbar - i \Real (\Id - \Hil)\cbrac{(\Dapabs + i \Real \Th) \Dt\Dapbar \Ztbar}  +  i \Imag \nobrac{(\Id - \Hil) \Dt^2 \Th}\\
  & \quad +  (\Id + \Hil) \Imag\cbrac{\Real(\Dapbar\Ztbar)\Dapabs\Dapbar\Ztbar  -i\Real (\Dt \Th)\Dapbar\Ztbar} 
\end{split}
\end{align*}
Hence replacing the term $i(\Dapabs + i \Real \Th)\Dt \Dapbar \Ztbar$ in the equation with $\Dt^2\Th$ we get our main equation as
\begin{align} \label{eq:Th}
 \brac{\Dt^2 +i\frac{\Aone}{\Zapabs^2}\pap  -i\sigma\Dapabs^3} \Th = \Rtwo +i\Jtwo
\end{align}
where
\begin{align}\label{form:Rtwo}
\begin{split}
\Rtwo &=  -2i\brac{\Dapbar \Ztbar}\brac{\Dapabs \Dapbar \Ztbar} + \brac{\Real \Th }\cbrac{ \brac{\Dapbar \Ztbar}^2 +i\Dapbar\brac{\frac{\Aone}{\Zap}} +i\sigma\brac{\Real\Th} \Dapabs\Th} \\
& \quad +\sigma \Real\brac{\Dapabs\Th} \Dapabs\Th + \brac{\Dapabs\frac{\Aone}{\Zapabs}}\brac{\frac{\Zap}{\Zapabs}\pap \frac{1}{\Zap}} + \Dapabs\brac{\frac{1}{\Zapabs^2}\pap \Aone}  \\
& \quad +  (\Id + \Hil) \Imag\cbrac{\Real(\Dapbar\Ztbar)\Dapabs\Dapbar\Ztbar  -i\Real (\Dt \Th)\Dapbar\Ztbar}\\
& \quad + \frac{\Aone}{\Zapabs^2}\pap\Real (\Id - \Hil) \brac{\frac{\Zap}{\Zapabs} \pap \frac{1}{\Zap}} 
\end{split} \\
 \Jtwo&= \Imag (\Id - \Hil) \brac*[\big]{\Dt^2 \Th} - \Real (\Id - \Hil)\cbrac{(\Dapabs + i \Real \Th) \Dt\Dapbar \Ztbar} 
\end{align}
Note that the variable $\Jtwo$ is real valued. This equation gives rise to the energy $\Esigmathree$ in the energy estimate.

\section{Main a priori estimate}\label{sec:aprioriEsigma}
\medskip

We now describe our main a priori estimate. Define
\begin{align*}
& \Esigmazero =  \norm[\infty]{\sigma^{\half} \Zapabs^\half \pap\frac{1}{\Zap}}^2  +   \norm[2]{\sigma^\onebysix\Zapabs^\half\pap\frac{1}{\Zap}}^6  +  \norm[2]{\pap\frac{1}{\Zap}}^2  + \norm[2]{\frac{\sigma^\half}{\Zapabs^\half}\pap^2\frac{1}{\Zap}}^2 \\
& \Esigmaone = \norm[\Hhalf]{(\Zttbar -i)\Zap}^2 + \norm[2]{\sqrt{\Aone}\Ztapbar}^2 + \norm[2]{\frac{\sigma^\half}{\Zapabs^\half}\pap\Ztapbar}^2 \\
& \Esigmatwo = \norm[2]{\Dt\Ztapbar}^2 + \norm[\Hhalf]{\sqrt{\Aone}\frac{\Ztapbar}{\Zapabs}}^2 + \norm[\Hhalf]{\frac{\sigma^\half}{\Zapabs^\threebytwo}\pap\Ztapbar}^2\\
& \Esigmathree = \norm[2]{\Dt\Th}^2 + \norm[\Hhalf]{\sqrt{\Aone}\frac{\Th}{\Zapabs}}^2 + \norm[\Hhalf]{\frac{\sigma^\half}{\Zapabs^\threebytwo}\pap\Th}^2\\
& \Esigmafour = \norm[\Hhalf]{\Dt\Dapbar\Ztbar}^2 + \norm[2]{\sqrt{\Aone}\Dapabs\Dapbar\Ztbar}^2 + \norm[2]{\frac{\sigma^\half}{\Zapabs^\half}\pap\Dapabs\Dapbar\Ztbar}^2 \\
& \Esigma = \Esigmazero + \Esigmaone + \Esigmatwo + \Esigmathree + \Esigmafour
\end{align*}

Observe that the variables used above are all very natural. $\Zt$ and $\Ztt$ are the velocity and acceleration on the boundary respectively, $\Th$ is twice the holomorphic projection of the curvature and $\Dapbar \Ztbar$ is related to the material derivative of the angle by the relation $\Imag \brac{\Dapbar \Ztbar} = - \brac{\pt \thvar}\compose \hinv$ from \eqref{form:Dtg}. The weight ${\frac{1}{\Zapabs}}$ is related to the Taylor sign condition from \eqref{form:Taylor}. The energies $E_{\sigma,i}$ for $1\leq i \leq 4$ are obtained from the quasilinear equations derived in \secref{sec:quasilinear} whereas the energy $\Esigmazero$ is added as a lower order term. For $\sigma=0$, the energies $\Esigmathree$ and $\Esigmafour$ are equivalent, but for $\sigma>0$ the energy $\Esigmafour$ is higher order than $\Esigmathree$. 

\begin{thm}\label{thm:aprioriEsigma}
Let $\sigma\geq 0$ and let $(\Z, \Zt)(t)$ be a smooth solution to \eqref{eq:systemone} in $[0,T]$ for $T>0$, such that for all $s\geq 3$ we have $(\Zap-1,\frac{1}{\Zap} - 1, \Zt) \in \Linfty([0,T], H^{s+\half}(\Rsp)\times H^{s+\half}(\Rsp)\times H^{s}(\Rsp))$. Then $\sup_{t\in[0,T]} \Esigma(t) < \infty$ and there exists a polynomial P with universal non-negative coefficients such that for all $t \in [0,T)$ we have
\[
\frac{d\Esigma(t)}{dt} \leq P(\Esigma(t))
\]
\end{thm}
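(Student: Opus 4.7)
The plan is to prove separate differential inequalities $\frac{d}{dt}E_{\sigma,i}\leq P(\Esigma)$ for $i=0,1,2,3,4$ and then add them. The four pieces $\Esigmaone,\Esigmatwo,\Esigmathree,\Esigmafour$ will be estimated via the quasilinear equations \eqref{eq:ZtZap}, \eqref{eq:Ztbarap}, \eqref{eq:Th}, \eqref{eq:DapbarZtbar} derived in \secref{sec:quasilinear}, all of which share the principal part
\[
\mathcal{L} f \;=\; \Dt^2 f \;+\; i\frac{\Aone}{\Zapabs^2}\pap f \;-\; i\sigma \Dapabs^3 f
\]
applied to a different $f$. The lower-order piece $\Esigmazero$ is bounded directly by differentiating in $t$ and using \eqref{form:DtoneoverZap} together with the first two lines of \eqref{eq:systemone}.

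The generic energy step pairs $\mathcal{L}f = R$ with $\Dt\bar f$ (for an $\Ltwo$ piece) or with $\papabs\Dt\bar f$ (for an $\Hhalf$ piece), takes real parts, and integrates by parts using $\papabs = i\Hil\pap$ together with the commutator identities \eqref{eq:commutator} and the formula \eqref{form:DtZapabs}. The three summands of $\mathcal{L}$ then produce the three positive quadratic summands of $E_{\sigma,i}$, namely (suitable weighted versions of) $\norm[2]{\Dt f}^2$, $\norm[2]{\sqrt{\Aone}\,\Dapabs f}^2$, and $\sigma\norm[2]{\frac{1}{\Zapabs^\half}\pap\Dapabs f}^2$, plus commutator remainders and a forcing contribution from $R$. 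The crucial structural feature is that the coefficient of $\pap$ inside $\mathcal{L}$ is $\Aone$ rather than $\Aonesigma$; since $\Aone\geq 1$ irrespective of $\sigma$, the positive energy is available for every $\sigma\geq 0$ with no Taylor-sign hypothesis, and this is precisely the gain from working with the equations for $\Th$ and $\Dapbar\Ztbar$ instead of for $\g$, as explained in \secref{sec:discussion}.

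The bulk of the work is to bound every remainder by $P(\Esigma)$. Using the identities \eqref{form:bvar}, \eqref{form:Aonenew}, \eqref{form:bvarapnew}, \eqref{form:DtoneoverZap}, \eqref{form:Dtg}, \eqref{form:Jone}, \eqref{form:Rone}, \eqref{form:Rtwo}, the quantities $\bvar$, $\bap$, $\Dt\Aone$, $\Dt\frac{1}{\Zap}$, $\Jone$, $\Jtwo$, $\Rone$, $\Rtwo$ can all be rewritten in terms of $\Zt$, $\Ztap$, $\Th$ and $\frac{1}{\Zap}$, to which the weighted Sobolev, $\Ccal_{\sqrt{\Delta}}$ and $\Wcal_{\sqrt{\Delta}}$ estimates of the appendix apply. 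The bound $\Aone\leq 1+\norm[2]{\Ztapbar}^2\leq 1+\Esigma$ keeps the $\sqrt{\Aone}$-weights harmless, and most remainders reduce to holomorphic projections of products of bounded factors, for which Hilbert-transform and commutator estimates are available.

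The main obstacle is carrying out this error control under two competing constraints. At $\sigma=0$ the energy $\Esigma$ is a half spacial derivative below the Kinsey-Wu energy of \cite{KiWu18}, so bounds like $\Dap\Zttbar\in\Linfty$ are unavailable and one must rely on the weighted $\Ccal_{\sqrt{\Delta}}$ and $\Wcal_{\sqrt{\Delta}}$ spaces throughout. For $\sigma>0$, the surface-tension contributions from $[\Dt,\Dapabs^3]$, from the $\sigma$-term inside \eqref{form:Aonesigma} when computing $\Dt\Aone$, and from $\Jone,\Rone,\Rtwo$ must carry exactly matching powers of $\sigma$ and of $\frac{1}{\Zapabs}$ so as to be absorbed by one of the three $\sigma$-weighted summands of $\Esigma$ without producing a $\sigma^{-1}$ factor. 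The heuristic dictionary of \secref{sec:discussion} based on the operators $T_a=\Dt$, $T_b=\frac{1}{\Zapabs^2}\pap$, $T_c=\frac{\sigma^\onebythree}{\Zapabs}\pap$ dictates the correct pairing of test function, weight and power of $\sigma$ in each error term, with the compatibility $T_b^{-\threebytwo}T_c^3\sim \sigma\pap^\threebytwo$ forcing the balance. Once every error is shown to be $\leq P(\Esigma)$, summation over $i$ yields the claimed inequality.
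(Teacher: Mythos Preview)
Your outline matches the paper's proof: \secref{sec:quantEsigma} first catalogues a long list of quantities controlled by $P(\Esigma)$ using the $\Ccal$/$\Wcal$ machinery of \lemref{lem:CW}, and \secref{sec:closeEsigma} then closes each $E_{\sigma,i}$ by pairing the quasilinear equations \eqref{eq:ZtZap}, \eqref{eq:Ztbarap}, \eqref{eq:Th}, \eqref{eq:DapbarZtbar} with the appropriate test function, exactly as you describe. One small slip: there is no $\sigma$-term in $\Dt\Aone$, since $\Aone = 1-\Imag[\Zt,\Hil]\Ztapbar$ is $\sigma$-independent (this is the whole point of writing the equations with $\Aone$ rather than $\Aonesigma$); the surface-tension errors enter only through $\Jone$, $\Rone$, $\Rtwo$, and the commutators with $\Dapabs^3$, which you otherwise identify correctly.
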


\begin{rem} \label{rem:aprioriEsigma}
We mention a minor technical point in the statement of the theorem. The energy $\Esigmazero$ contains a term which is the $L^\infty$ norm of a function and hence may not in general be differentiable in time even for smooth solutions. Hence for this term we replace the time derivative by the upper Dini derivative $  \limsup_{s \to 0^{+}} \frac{\norm[\infty]{f(t+s)}-\norm[\infty]{f(t)}}{s}$
\end{rem}

This theorem along with \thmref{thm:existencesobolev} will allow us to prove \thmref{thm:existence}. The above result is an extension of the a priori energy estimate obtained by Kinsey and Wu \cite{KiWu18} to the case of non-zero surface tension. Note that the hypothesis of the theorem easily implies that $\sup_{t\in[0,T]} \Esigma(t) < \infty$. This can also be seen from \propref{prop:equivEsigma} and \lemref{lem:equivsobolev}. The rest of the section is devoted to proving the a priori estimate.

In this section whenever we write $\dis f \in \Ltwo$, what we mean is that there exists a universal polynomial $P$ with nonnegative coefficients such that $\dis \norm*[2]{f} \leq P(\Esigma)$. Similar definitions for $\dis f\in \Hhalf$ or $\dis  f \in \Linfty$. We define the norm $ \dis \norm[\Linfty\cap\Hhalf]{f} = \norm[\infty]{f} + \norm[\Hhalf]{f} $. We also define two new spaces $\Ccal$ and $\Wcal$:
\begin{enumerate}
\item  If $\dis w \in \Linfty$ and $\dis \Dapabs w \in \Ltwo$, then we say $w\in \Wcal$. Define 
\[
 \norm[\Wcal]{w} = \norm[\infty]{w}+ \norm[2]{\Dapabs w}
 \]

\item If $\dis f \in \Hhalf$ and $\dis f\Zapabs \in \Ltwo$, then we say $f\in \mathcal{C}$. Define
\[
\norm[\Ccal]{f} = \norm[\Hhalf]{f} + \brac*[\bigg]{1+ \norm*[\bigg][2]{\pap\frac{1}{\Zapabs}}}\norm[2]{f\Zapabs}
 \]
\end{enumerate}

We also define the norm $\norm[\Wcal\cap\Ccal]{f} = \norm[\Wcal]{f} + \norm[\Ccal]{f}$. The reason for the introduction of these spaces is that we will frequently have situations where $f \in \Hhalf, w\in \Linfty$ and we want $fw \in \Hhalf$. We will also have situations where $f\in \Hhalf, g\Zapabs\in \Ltwo$ and we want $fg\Zapabs\in \Ltwo$. Clearly these are not true in general but in special cases this can be proved and the following lemma addresses this issue for a majority of the situations we encounter.

\begin{lem} \label{lem:CW} 
The following properties hold for the spaces $\Wcal$ and $\Ccal$
\begin{enumerate}[leftmargin =*, align=left]
\item If $w_1,w_2 \in \Wcal$, then $w_1w_2 \in \Wcal$. Moreover $\norm[\Wcal]{w_1w_2} \leq \norm[\Wcal]{w_1}\norm[\Wcal]{w_2}$
\item If $f\in \Ccal$ and $w\in \Wcal$, then $fw \in \Ccal$.  Moreover $\norm[\Ccal]{fw} \lesssim \norm[\Ccal]{f}\norm[\Wcal]{w}$
\item If $f,g \in \Ccal$, then $fg\Zapabs \in \Ltwo$. Moreover $\norm[2]{fg\Zapabs} \lesssim \norm[\Ccal]{f}\norm[\Ccal]{g}$
\end{enumerate}
\end{lem}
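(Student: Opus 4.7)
The proof naturally splits into three parts of increasing technical difficulty.

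\smallskip

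\emph{Part (1)} is essentially algebraic. Since $\Dapabs = \frac{1}{\Zapabs}\pap$ is a first-order derivation, the Leibniz rule gives $\Dapabs(w_1 w_2) = w_1 \Dapabs w_2 + w_2 \Dapabs w_1$. Combining this with $\norm[\infty]{w_1 w_2} \leq \norm[\infty]{w_1}\norm[\infty]{w_2}$ and estimating each term in $\Dapabs(w_1 w_2)$ by $\norm[\infty]{w_i}\norm[2]{\Dapabs w_j}$, the sum is dominated term by term by the expansion of $\norm[\Wcal]{w_1}\norm[\Wcal]{w_2}$, producing the stated sharp bound.

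\smallskip

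\emph{Part (2)}: The weighted $\Ltwo$ piece $\norm[2]{fw\Zapabs} \leq \norm[\infty]{w}\norm[2]{f\Zapabs}$ is immediate. For the $\Hhalf$ piece I would use the double-integral characterization
\[
\norm[\Hhalf]{fw}^2 \approx \iint \frac{|(fw)(\ap) - (fw)(\bp)|^2}{|\ap-\bp|^2}\, d\ap\, d\bp
\]
and split $(fw)(\ap) - (fw)(\bp) = w(\ap)(f(\ap)-f(\bp)) + f(\bp)(w(\ap)-w(\bp))$. The first summand yields $\norm[\infty]{w}^2 \norm[\Hhalf]{f}^2$. For the second, write $w(\ap) - w(\bp) = \int_{\bp}^{\ap}\Zapabs(s)\Dapabs w(s)\, ds$ and apply a careful Cauchy-Schwarz that pairs $\Dapabs w \in \Ltwo$ against the weighted piece of the $\Ccal$-norm; the auxiliary factor $(1+\norm[2]{\pap(1/\Zapabs)})$ in $\norm[\Ccal]{\cdot}$ absorbs the error arising from the non-uniform size of $\Zapabs$.

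\smallskip

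\emph{Part (3)} is the main technical step. The plan is to change variables to a length-like parameter, say $\gamma(\ap) = \int_0^\ap \Zapabs^2(s)\,ds$, under which
\[
\int |fg\Zapabs|^2\, d\ap = \int |\tilde f(\gamma)\tilde g(\gamma)|^2\, d\gamma,
\]
with $\tilde f(\gamma) = f(\ap(\gamma))$ and $\tilde g(\gamma) = g(\ap(\gamma))$. Observe that $\norm[\Ltwo(d\gamma)]{\tilde f} = \norm[\Ltwo(d\ap)]{f\Zapabs}$. By the 1D Sobolev embedding $H^{1/2}(\Rsp) \cap \Ltwo(\Rsp) \hookrightarrow L^4(\Rsp)$, applied in the $\gamma$-variable, one obtains $\norm[4]{\tilde f}_\gamma \lesssim \norm[H^{1/2}(d\gamma)]{\tilde f}$, and Cauchy-Schwarz gives $\norm[2]{fg\Zapabs}^2 \leq \norm[4]{\tilde f}_\gamma^2 \norm[4]{\tilde g}_\gamma^2$. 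The remaining task is to bound $\norm[H^{1/2}(d\gamma)]{\tilde f}$ by $\norm[\Ccal]{f}$. For this one exploits the $C^{1/2}$-regularity of $1/\Zapabs$, which is implied by $\pap(1/\Zapabs) \in \Ltwo$ via the 1D Sobolev embedding $\dot H^1 \hookrightarrow C^{1/2}$ (with Hölder constant $\norm[2]{\pap(1/\Zapabs)}$). This regularity guarantees that the change of variables distorts the $\Hhalf$-norm by at most a factor $(1+\norm[2]{\pap(1/\Zapabs)})$, which is exactly the factor built into the $\Ccal$-norm.

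\smallskip

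\emph{Main obstacle.} The heart of the proof is Part (3), in particular the control of $\norm[H^{1/2}(d\gamma)]{\tilde f}$ in terms of $\norm[\Hhalf(d\ap)]{f}$ when $\Zapabs$ is only Hölder regular. The peculiar weight factor $(1+\norm[2]{\pap(1/\Zapabs)})$ in the definition of $\norm[\Ccal]{\cdot}$ is tailored precisely to encode this Hölder distortion, so that the $L^4$-embedding available in the $\gamma$-variable translates back into a genuinely global product estimate in the conformal parameter $\ap$.
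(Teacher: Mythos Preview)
Part~(1) is correct and matches the paper. Your approaches to Parts~(2) and~(3), however, diverge substantially from the paper's, and Part~(3) contains a genuine gap.

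The paper proves Parts~(2) and~(3) through Proposition~\ref{prop:Hhalfweight} in the appendix, using commutator identities rather than the double-integral characterization or changes of variables. For Part~(2) one applies the first estimate of that proposition with $(f,w,h)\mapsto (f\Zapabs,\,1/\Zapabs,\,w)$, expanding $\papabs^{1/2}(fw)$ via commutators $[\papabs^{1/2},\,\cdot\,]$ and using $\norm[2]{[\papabs^{1/2},a]b}\lesssim\norm[2]{a'}\norm[2]{b}$. For Part~(3) one applies the second estimate with $w=1/\Zapabs$ (real-valued), writing $fg\Zapabs=(f\Zapabs)(g\Zapabs)w$. The key device is to decompose each factor into holomorphic and antiholomorphic parts $\Ph,\Pa$ and use the exact identity
\[
2(\Ph F)(\Pa G)w \;=\; [\,w\,\Pa G,\Hil\,]\Ph F \;-\; [\,w\,\Ph F,\Hil\,]\Pa G,
\]
which follows from $(\Id-\Hil)\Ph F=0$ and $(\Id+\Hil)\Pa G=0$. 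Each commutator is then bounded in $\Ltwo$ by $\norm[\Hhalf]{w\,\Pa G}\norm[2]{F}$ (resp.\ $\norm[\Hhalf]{w\,\Ph F}\norm[2]{G}$), and these $\Hhalf$ norms reduce to $\norm[\Hhalf]{wG}+\norm[2]{w'}\norm[2]{G}$ via one more commutator with $\Hil$. This is a short, purely algebraic argument exploiting the Hilbert-transform structure.

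Your change-of-variables approach to Part~(3) has a real gap at the last step. The assertion that ``the change of variables distorts the $\Hhalf$-norm by at most a factor $(1+\norm[2]{\pap(1/\Zapabs)})$'' is not a standard fact and cannot be obtained by pointwise kernel comparison. For the model weights $1/\Zapabs(\ap)=(|\ap|+\delta)^{1-\nu}$ with $0<\nu<\tfrac12$ (which have $\norm[2]{\pap(1/\Zapabs)}$ uniformly bounded as $\delta\to0$), the kernel ratio
\[
\frac{|\ap-\bp|^2\,\Zapabs^2(\ap)\,\Zapabs^2(\bp)}{\bigl|\gamma(\ap)-\gamma(\bp)\bigr|^2}
\]
is of order $\delta^{-2\nu}$ when $\ap$ is near $0$ and $\bp\sim 1$, so it blows up. The map $\gamma$ is not bi-Lipschitz (indeed $\Zapabs$ is allowed to be unbounded), and the $C^{1/2}$ regularity of $1/\Zapabs$ alone does not control this distortion. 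Even if the integrated inequality $\norm[\dot H^{1/2}(d\gamma)]{\tilde f}\lesssim\norm[\Ccal]{f}$ turns out to be true, establishing it is essentially equivalent in difficulty to Part~(3) itself: via $H^{1/2}\hookrightarrow L^4$ in the $\gamma$ variable your claim becomes $\norm[2]{f^2\Zapabs}\lesssim\norm[\Ccal]{f}^2$, which is just the diagonal case $f=g$. So the argument is, as written, circular, and the paper's holomorphic-projection trick is what actually does the work.
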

In the lemma and in the definitions of $\Ccal$ and $\Wcal$, the function $\dis \frac{1}{\Zapabs}$ is used as a weight but there is nothing special about this function. We can define similar spaces and prove the lemma for any weight. The only property used of the weight is that $\dis \norm*[\Big][2]{\pap \frac{1}{\Zapabs}} < \infty$. See  \propref{prop:Hhalfweight} in the appendix for more details and for the proof of the lemma. In our case we are able to use the weight $\dis \frac{1}{\Zapabs}$ as $\dis \norm*[\Big][2]{\pap \frac{1}{\Zapabs}}$ is controlled by the energy $\Esigma$.

In this section we will sometimes use the function $\Zaphalf$. This is defined as 
\[
 \Zaphalf = e^{\half\log(\Zap)} \quad \enspace \tx{ where } \log(\Zap) \to 0 \tx{ as } \abs{\ap} \to \infty \\
\]
Note that there is no ambiguity in the definition of $\log(\Zap)$ as it is continuous and  we have fixed its value at infinity. We also use the following notation
\begin{align*}
\sqbrac{f_1, f_2;  f_3}(\ap) = \frac{1}{i\pi} \int \brac{\frac{f_1(\ap) - f_1(\bp)}{\ap - \bp}}\brac{\frac{f_2(\ap) - f_2(\bp)}{\ap-\bp}} f_3(\bp) \diff \bp
\end{align*}
This notation will be used in some of the computations later on in the section. 
  
\subsection{\texorpdfstring{Quantities controlled by $\Esigma$ \nopunct}{}}\label{sec:quantEsigma}  \hspace*{\fill} \medskip

Now we come to main part of the section. Here we control all the important terms controlled by the energy $\Esigma$. We will frequently use the estimates proved in the appendix to control the terms such as \propref{prop:Hardy}, \propref{prop:commutator}, \propref{prop:Leibniz} and \propref{prop:triple}. 

For $\sigma=0$ the energy $\Esigma$ is lower order as compared to the energy in Kinsey-Wu \cite{KiWu18} by half weighted spacial derivative. In particular we do not have control of $\Dap\Zttbar \in \Linfty$ which was heavily used in Kinsey-Wu \cite{KiWu18} but we only have $\Dap\Zttbar \in \Hhalf$. Because of this, the energy estimate becomes much more subtle even in the case of $\sigma=0$, and we need to prove stronger control of existing terms. For e.g. in \cite{KiWu18} it is shown that $\Aone, \Dapbar\Ztbar, \bvarap, \frac{1}{\Zapabs^2}\pap\Aone  \in \Linfty$ and we show that in fact $\Aone, \Dapbar\Ztbar, \bvarap, \frac{1}{\Zapabs^2}\pap\Aone \in \Linfty\cap\Hhalf$. Most of the terms for the $\sigma=0$ case controlled here in $\Hhalf$  are new. Of course estimates for terms involving surface tension are also all new. 

\bigskip
\begin{enumerate}[widest = 99, leftmargin =*, align=left, label=\arabic*)]
\item $\dis \Ztapbar \in \Ltwo, \Dapabs\Dapbar\Ztbar \in \Ltwo$ 
\medskip \\
Proof: This is true as $\Aone \geq 1$ and as $\Esigmaone$ and $\Esigmafour$ are part of the energy
\medskip

\item $\dis \Aone \in \Linfty \cap \Hhalf$ 
\smallskip \\
Proof: Recall that $\displaystyle \Aone = 1 - \Imag[\Zt,\Hil]\Ztapbar$ and hence
 \[
 \dis \norm*[\infty]{\Aone} \leq 1 + \norm*[\infty]{[\Zt,\Hil]\Ztapbar} \lesssim 1 + \norm*[2]{\Ztapbar}^2
 \]
  by \propref{prop:commutator}. Similarly by \propref{prop:commutator} and Sobolev embedding we have
\[
\dis \norm*[\Hhalf]{\Aone} \leq \norm*[\big][2]{\papabs^\half [\Zt,\Hil]\Ztapbar} \lesssim \norm*[\big][BMO]{\papabs^\half \Zt} \norm*[2]{\Ztapbar} \lesssim \norm*[2]{\Ztapbar}^2
\]
\smallskip

\item $\dis \pap\frac{1}{\Zap} \in \Ltwo, \pap\frac{1}{\Zapabs} \in \Ltwo, \Dapabs \w \in \Ltwo$ and hence $\w \in \Wcal$
\medskip\\
Proof: Observe that $\dis \pap\frac{1}{\Zap} \in \Ltwo$ as it is part of the energy $\Esigmazero$. From \eqref{form:RealImagTh} we easily get that $\dis \pap\frac{1}{\Zapabs} $ and $\dis \Dapabs \w$ are in $\Ltwo$. Also as $\norm[\infty]{\w} = 1$ and $\Dapabs w \in \Ltwo$ we get that  $w\in \Wcal$. Now that we have shown that $\dis \pap\frac{1}{\Zapabs} \in \Ltwo$, we can use \lemref{lem:CW} from now on.
\medskip

\item $\dis \Dapbar\Ztbar \in \Linfty, \Dapabs\Ztbar \in \Linfty, \Dap\Ztbar \in \Linfty$  
\medskip\\
Proof: We only need to prove that $\Dapbar\Ztbar \in \Linfty$ and the rest follows. Observe that
\begin{align*}
 \pap \brac{\Dapbar\Ztbar}^2  = 2 (\Ztapbar)\brac{\Dapbar\Dapbar \Ztbar}
\end{align*}
As $\Dapbar \Ztbar $ decays at infinity, by integrating we get
\[
 \norm[\infty]{(\Dapbar \Ztbar)^2} \lesssim \int \abs{\Ztapbar}\abs{\Dapbar\Dapbar\Ztbar}\diff\ap \lesssim      \norm[\Ltwo]{\Ztapbar} \norm[\Ltwo]{\Dapabs\Dapbar\Ztbar}
 \]
 Hence  $\dis \norm[\infty]{\Dapbar \Ztbar} \lesssim \sqrt{\norm[\Ltwo]{\Ztapbar} \norm[\Ltwo]{\Dapabs\Dapbar\Ztbar}}$
\medskip

\item $\dis\Dapbar^2\Ztbar \in \Ltwo, \Dapabs^2\Ztbar \in \Ltwo, \Dap^2\Ztbar \in \Ltwo$ 
\medskip\\
Proof: We already know that $\Dapabs \Dapbar \Ztbar \in \Ltwo$ and hence $\Dapbar^2\Ztbar \in \Ltwo$. Now 
\[
\Dapbar^2\Ztbar = \Dapbar\brac{\w\Dapabs\Ztbar} = (\Dapbar\w)\Dapabs\Ztbar + \w^2\Dapabs^2\Ztbar
\]
Now observe that $\dis \Dapabs \w \in \Ltwo $ and $\dis \Dapabs\Ztbar \in \Linfty$ and hence the first term is in $\dis \Ltwo$. Hence we have $\dis \Dapabs^2 \Ztbar \in \Ltwo$. A similar argument works for the rest.
\medskip

\item $\dis  \Dapbar\Ztbar \in \Wcal\cap\Ccal, \Dapabs\Ztbar \in \Wcal\cap\Ccal, \Dap\Ztbar \in \Wcal\cap\Ccal$
\medskip\\
Proof: We will first prove $\dis  \Dapbar\Ztbar \in \Wcal\cap\Ccal$ . Observe that $\Dap\Ztbar \in \Linfty$, $\Dapabs\Dap\Zt \in \Ltwo$ and hence we have $\Dap\Ztbar \in \Wcal$. Now as $\Dap\Ztbar$ is holomorphic i.e. $\dis \Hil \Dap\Ztbar = \Dap\Ztbar$ we see that $\papabs\Dap\Ztbar = i\pap \Dap\Ztbar$. Hence we have
\begin{align*}
\lpar \norm[\Hhalf]{\Dap\Ztbar}^2   = \int \brac{\Dapbar\Zt}\brac{\papabs\Dap\Ztbar }\diff\ap  = i \int \brac{\Ztap}\brac{\Dapbar\Dap\Ztbar}\diff\ap
\end{align*}
Hence $\norm[\Hhalf]{\Dap\Ztbar} \lesssim \sqrt{\norm[\Ltwo]{\Ztbarap}\norm[\Ltwo]{\Dapabs\Dap\Ztbar}}$. Also as $\brac{\Dap\Ztbar} \Zapabs = \wbar \Ztapbar \in \Ltwo$, we have $\Dap\Ztbar \in \Ccal$. Now as $\Dapabs\Ztbar = (\Dap\Ztbar) \w$\, we have $\dis \norm[\Wcal\cap\Ccal]{\Dapabs\Ztbar} \lesssim \norm[\Wcal\cap\Ccal]{\Dap\Ztbar}\norm[\Wcal]{\w}$ by \lemref{lem:CW}. The rest are proved similarly.
\medskip

\item $\dis \pap \Pa\brac{\frac{\Zt}{\Zap}} \in \Linfty$
\begin{flalign*}
\lpar \tx{Proof: We see that }\enspace 2\pap \Pa \brac*[\Big]{\frac{\Zt}{\Zap}} & = (\Id - \Hil)\brac{\Dap\Zt} + (\Id - \Hil)\brac{\Zt\pap\frac{1}{\Zap}} &\\
& = 2\Dap\Zt - (\Id + \Hil)(\Dap\Zt) + (\Id - \Hil)\brac{\Zt\pap\frac{1}{\Zap}} \\
& = 2\Dap\Zt + \sqbrac{\frac{1}{\Zap},\Hil}\Ztap + \sqbrac{\Zt, \Hil}\pap\frac{1}{\Zap}  
\end{flalign*}
Hence $\dis  \quad \dis \norm*[\Big][\infty]{\pap\Pa\brac*[\Big]{\frac{\Zt}{\Zap}}}  \lesssim \norm[\infty]{\Dap\Zt} + \norm[2]{\Ztapbar}\norm*[\Big][2]{\pap\frac{1}{\Zap}}$ by \propref{prop:commutator}
\medskip

\item $\dis \Dapabs \Aone \in \Ltwo$ and hence $\dis \Aone \in \Wcal,  \sqrt{\Aone} \in \Wcal, \frac{1}{\Aone} \in \Wcal, \frac{1}{\sqrt{\Aone}} \in \Wcal$
\begin{flalign*}
\lpar \tx{Proof: Observe that }\enspace \Dapabs\Aone & = \Real \cbrac{\frac{\w}{\Zap}(\Id - \Hil)\pap\Aone} & \\
 & = \Real \cbrac{\w(\Id - \Hil)\Dap\Aone} -\Real\cbrac{\w\sqbrac{\frac{1}{\Zap},\Hil}\pap\Aone} 
\end{flalign*}
Using the formula of $\Aone$ from \eqref{form:Aonenew} we see that 
\begin{align*}
(\Id - \Hil)\Dap\Aone & = i(\Id - \Hil)\brac{(\Dap\Zt)\Ztapbar} +i(\Id - \Hil)\brac{\frac{\Zt}{\Zap}\pap\Ztapbar} \\
& = i(\Id - \Hil)\brac{(\Dap\Zt)\Ztapbar}+ i\sqbrac{\Pa\brac{\frac{\Zt}{\Zap}},\Hil}\pap\Ztapbar
\end{align*}
Hence using \propref{prop:commutator} we have 
\begin{align*}
 \norm[2]{\Dapabs\Aone} & \lesssim \norm[2]{(\Id - \Hil)\Dap\Aone} + \norm*[\Big][2]{\pap\frac{1}{\Zap}}\norm[\infty]{\Aone} \\
 & \lesssim \norm[\infty]{\Dap\Zt}\norm[2]{\Ztapbar} + \norm*[\Big][\infty]{\pap\Pa\brac*[\Big]{\frac{\Zt}{\Zap}}}\norm[2]{\Ztapbar} + \norm*[\Big][2]{\pap\frac{1}{\Zap}}\norm[\infty]{\Aone}
\end{align*}
Now as $\Aone \in \Linfty$ and $\Dapabs\Aone \in \Ltwo$, we have that $\Aone \in \Wcal$. Similarly using the fact that $\Aone \geq 1$, we easily get that $\dis \sqrt{\Aone} \in \Wcal, \frac{1}{\Aone} \in \Wcal, \frac{1}{\sqrt{\Aone}} \in \Wcal$
\medskip

\item $\dis \Th \in \Ltwo$, $\dis \Dt\Th \in \Ltwo$
\medskip\\
Proof: Using \eqref{form:Th} and the fact that the Hilbert transform is bounded on $\Ltwo$, we easily see that $\dis \norm[2]{\Th} \lesssim \norm*[\Big][2]{\pap\frac{1}{\Zap}}$. We have $\dis \Dt\Th \in \Ltwo$ as it part of the energy $\Esigmathree$
\medskip

\item $\dis \frac{\Th}{\Zapabs} \in \Ccal$
\medskip\\
Proof: We know from $\Esigmathree$ that $\dis \frac{\sqrt{\Aone}}{\Zapabs} \Th \in \Hhalf$. Now as $\dis \norm*[2]{\sqrt{\Aone}\Th} \lesssim \norm[\infty]{\Aone}^\half\norm[2]{\Th}$  we now have $\dis \frac{\sqrt{\Aone}}{\Zapabs} \Th \in \Ccal$. Hence  we get $\dis \norm[\Ccal]{\frac{\Th}{\Zapabs}} \lesssim \norm[\Ccal]{\frac{\sqrt{\Aone}}{\Zapabs}\Th}\norm[\Wcal]{\frac{1}{\sqrt{\Aone}}}$ from \lemref{lem:CW}
\medskip

\item $\dis \Dap\frac{1}{\Zap}\in \Ccal$, $ \dis \Dapabs\frac{1}{\Zap}\in \Ccal$, $ \dis \Dapabs\frac{1}{\Zapabs}\in \Ccal$, $\dis \frac{1}{\Zapabs^2}\pap \w \in \Ccal$
\medskip\\
Proof: Observe from \eqref{form:Th} that
\begin{align*}
\qquad \frac{\Th}{\Zapabs} &= i \frac{\Zap}{\Zapabs^2} \pap \frac{1}{\Zap} - i \Real\cbrac{\frac{1}{\Zapabs} (\Id - \Hil) \brac{\frac{\Zap}{\Zapabs} \pap \frac{1}{\Zap}} } \\
& = i \Dapbar\frac{1}{\Zap}  + i \Real\cbrac{\sqbrac{\frac{1}{\Zapabs},\Hil}  \brac{\frac{\Zap}{\Zapabs} \pap \frac{1}{\Zap}} }  - i \Real\cbrac{\sqbrac{\frac{1}{\Zapbar},\Hil}  \brac{\pap \frac{1}{\Zap}} } 
\end{align*}
Hence $\dis \norm*[\Big][\Hhalf]{\Dapbar\frac{1}{\Zap}} \lesssim \norm*[\bigg][\Hhalf]{\frac{\Th}{\Zapabs}} + \norm*[\Big][2]{\pap\frac{1}{\Zap}}^2$ from \propref{prop:commutator} which implies that $\dis \Dapbar \frac{1}{\Zap} \in \Ccal$. As $\wbar \in \Wcal$, by \lemref{lem:CW} we get $\dis \Dap\frac{1}{\Zap}\in \Ccal$ and $\dis \Dapabs \frac{1}{\Zap} \in \Ccal$. Observe that
\[
\Real\brac{\Dapbar\frac{1}{\Zap} } = \Dapabs \frac{1}{\Zapabs} \quad \qquad \Imag\brac{\Dapbar\frac{1}{\Zap}} =  i\brac{ \frac{\wbar}{\Zapabs^2}\pap \w} 
\]
Hence $\dis  \Dapabs \frac{1}{\Zapabs} \in \Ccal$ and $\dis \frac{\wbar}{\Zapabs^2}\pap \w \in \Ccal$. Now again using $\w \in \Wcal$ and \lemref{lem:CW} we easily obtain $\dis \frac{1}{\Zapabs^2}\pap \w \in \Ccal$
\medskip

\item $\dis \frac{1}{\Zapabs^2}\pap \Aone \in \Linfty\cap\Hhalf  $ and hence $\dis \frac{1}{\Zapabs^2}\pap \Aone \in \Ccal$
\medskip\\
Proof: Observe that $\dis  \frac{1}{\Zapabs^2}\pap\Aone  = \Real \cbrac{\frac{\w^2\wbar^2}{\Zapabs}(\Id - \Hil)\Dapabs\Aone}$ and hence we first show that $\dis \frac{\wbar^2}{\Zapabs}(\Id - \Hil)\Dapabs\Aone \in \Linfty \cap \Ccal$. Now
\[
\frac{\wbar^2}{\Zapabs}(\Id - \Hil)\Dapabs\Aone = (\Id - \Hil)\brac*[\bigg]{\frac{1}{\Zap^2}\pap\Aone} - \sqbrac{\frac{\wbar^2}{\Zapabs},\Hil}\Dapabs\Aone
\]
Using the formula of $\Aone$ from \eqref{form:Aonenew} we see that  
\begin{align*}
\lpar (\Id - \Hil)\brac*[\bigg]{\frac{1}{\Zap^2}\pap\Aone} & = i(\Id - \Hil)\cbrac*[\bigg]{\brac*[\bigg]{\frac{\Ztap}{\Zap^2}}\Ztapbar} +i(\Id - \Hil)\cbrac*[\bigg]{\Zt\brac*[\bigg]{\frac{1}{\Zap^2}\pap\Ztapbar}} \\
& = i\sqbrac*[\bigg]{ \frac{\Ztap}{\Zap^2} ,\Hil}\Ztapbar + i\sqbrac{ \Zt,\Hil} \brac*[\bigg]{\frac{1}{\Zap^2}\pap\Ztapbar}
\end{align*}
Hence from \propref{prop:commutator} we have 
\begin{align*}
\begin{split}
\lpar \norm[\Linfty\cap\Hhalf]{\frac{\wbar^2}{\Zapabs}(\Id - \Hil)\Dapabs\Aone} & \lesssim \norm[2]{\Ztapbar}\brac{\norm*[\bigg][2]{\pap\frac{\Ztapbar}{\Zap^2}} + \norm*[\bigg][2]{\frac{1}{\Zap^2}\pap\Ztapbar}}  \\
& \quad  + \norm*[\Big][2]{\pap\frac{1}{\Zap}}\norm[2]{\Dapabs\Aone}
\end{split}
\end{align*}
and as $\dis \Dapabs \Aone \in \Ltwo$, we have $\dis \frac{\wbar^2}{\Zapabs}(\Id - \Hil)\Dapabs\Aone \in \Linfty \cap \Ccal$. Now using the fact that $\w \in \Wcal$ and \lemref{lem:CW}, we can conclude that $\dis \frac{1}{\Zapabs^2}\pap \Aone \in L^\infty\cap\Ccal.$
\medskip

\item $\dis \frac{1}{\Zapabs^3}\pap^2\Aone \in \Ltwo $, $\dis \Dapabs\brac*[\bigg]{\frac{1}{\Zapabs^2}\pap\Aone} \in \Ltwo $ and hence $\dis \frac{1}{\Zapabs^2}\pap \Aone \in \Wcal$
\medskip\\
Proof: Observe that $\dis \Dapabs\brac*[\bigg]{\frac{1}{\Zapabs^2}\pap\Aone}  = \Real \cbrac*[\bigg]{\frac{\w^3\wbar^3}{\Zapabs}(\Id - \Hil)\pap\brac*[\bigg]{\frac{1}{\Zapabs^2}\pap\Aone}}$ and hence it is enough to show that  $\dis \frac{\wbar^3}{\Zapabs}(\Id - \Hil)\pap\brac*[\bigg]{\frac{1}{\Zapabs^2}\pap\Aone} \in \Ltwo$. Now
\begin{flalign*}
\lpar & \frac{\wbar^3}{\Zapabs}(\Id - \Hil)\pap\brac*[\bigg]{\frac{1}{\Zapabs^2}\pap\Aone} \\
 & = (\Id - \Hil) \cbrac*[\bigg]{\wbar^2\Dap \brac*[\bigg]{\frac{1}{\Zapabs^2}\pap\Aone}} - \sqbrac{ \frac{\wbar^3}{\Zapabs} ,\Hil}\pap \brac*[\bigg]{\frac{1}{\Zapabs^2}\pap\Aone}  \\
 & = (\Id - \Hil) \cbrac*[\bigg]{\Dap \brac*[\bigg]{\frac{1}{\Zap^2}\pap\Aone} - 2\brac*[\bigg]{\frac{\wbar}{\Zapabs^2}\pap\Aone}\brac{\Dap\wbar}} - \sqbrac{ \frac{\wbar^3}{\Zapabs} ,\Hil}\pap \brac*[\bigg]{\frac{1}{\Zapabs^2}\pap\Aone}
\end{flalign*}
Using the formula of $\Aone$ from \eqref{form:Aonenew} we see that  
\begin{flalign*}
&& (\Id - \Hil) \cbrac*[\bigg]{\Dap \brac*[\bigg]{\frac{1}{\Zap^2}\pap\Aone}} & = i(\Id - \Hil)\cbrac*[\bigg]{\Dap\cbrac*[\bigg]{\brac*[\bigg]{\frac{\Ztap}{\Zap^2}}\Ztapbar + \Zt\brac*[\bigg]{\frac{1}{\Zap^2}\pap\Ztapbar} }}  \\
&&& = i(\Id - \Hil)\cbrac*[\bigg]{\brac*[\bigg]{\pap\frac{\Ztap}{\Zap^2}}\brac{\Dap\Ztbar} + 2\brac{\Dap\Zt} \brac*[\bigg]{\frac{1}{\Zap^2}\pap\Ztapbar} } \\
&&& \quad + i\sqbrac{\Pa\brac{\frac{\Zt}{\Zap}},\Hil}\pap \brac*[\bigg]{\frac{1}{\Zap^2}\pap\Ztapbar} 
\end{flalign*}
Hence from \propref{prop:commutator} we have
\begin{flalign*}
\lpar \norm*[\bigg][2]{\Dapabs\brac*[\bigg]{\frac{1}{\Zapabs^2} \pap\Aone}} & \lesssim 
\norm*[\bigg][2]{\frac{1}{\Zap^2}\pap\Ztapbar}\brac{ \norm[\infty]{\Dap\Zt} +  \norm*[\Big][\infty]{\pap\Pa\brac{\frac{\Zt}{\Zap}}}} \\
& \quad + \norm*[\bigg][2]{\pap\frac{\Ztap}{\Zap^2}}\norm[\infty]{\Dap\Ztbar} + \norm[2]{\pap\frac{1}{\Zap}}\norm*[\bigg][\infty]{\frac{1}{\Zapabs^2}\pap\Aone} 
\end{flalign*}
Now the other term is easily controlled
\begin{align*}
\norm*[\bigg][2]{\frac{1}{\Zapabs^3}\pap^2\Aone} \lesssim \norm[2]{\pap\frac{1}{\Zapabs}}\norm*[\bigg][\infty]{\frac{1}{\Zapabs^2}\pap\Aone} + \norm*[\bigg][2]{\Dapabs\brac*[\bigg]{\frac{1}{\Zapabs^2} \pap\Aone}}
\end{align*}
As $\dis \frac{1}{\Zapabs^2}\pap\Aone \in \Linfty$ and $\dis \Dapabs\brac{\frac{1}{\Zapabs^2}\pap\Aone} \in \Ltwo $ we get that $\dis \frac{1}{\Zapabs^2}\pap\Aone \in \Wcal$.
\medskip

\item $\dis \bvarap \in \Linfty\cap\Hhalf $ and $\Hil(\bvarap) \in \Linfty\cap\Hhalf $
\medskip\\
Proof: Using the formula of $\bvarap$ from \eqref{form:bvarapnew} we see that  
\begin{align}\label{form:IminusHbvarap}
\begin{split}
(\Id -\Hil)\bvarap & = (\Id - \Hil)\brac{\frac{\Ztap}{\Zap}} + \sqbrac{\Zt,\Hil}\brac*[\Big]{\pap \frac{1}{\Zap}} \\
& = \sqbrac{\frac{1}{\Zap},\Hil}\Ztap + 2\Dap\Zt + \sqbrac{\Zt,\Hil}\brac*[\Big]{\pap \frac{1}{\Zap}} 
\end{split}
\end{align}
Hence $\dis  \norm[\Linfty\cap\Hhalf]{(\Id - \Hil)\bvarap}  \lesssim \norm[2]{\Ztapbar}\norm[2]{\pap\frac{1}{\Zap}} + \norm[\Linfty\cap\Hhalf]{\Dap\Zt} $ from \propref{prop:commutator}. As $\bvarap$ is real valued, this implies $\dis \bvarap \in L^\infty\cap\Hhalf $ and $\Hil(\bvarap) \in L^\infty\cap\Hhalf $
\medskip

\item $\dis \Dapabs \bvarap \in \Ltwo$ and hence $\bvarap \in \Wcal$ 
\medskip\\
Proof: Observe that
\begin{align*}
 \Dapabs\bvarap & = \Real \cbrac{\frac{\w}{\Zap}(\Id - \Hil)\pap\bvarap} \\
 & = \Real \cbrac{\w(\Id - \Hil)\Dap\bvarap} -\Real\cbrac{\w\sqbrac{\frac{1}{\Zap},\Hil}\pap\bvarap}
\end{align*}
Using the formula of $\bvarap$ from \eqref{form:bvarapnew} we see that 
\begin{align*}
\lpar (\Id -\Hil)\Dap\bvarap & = (\Id - \Hil)\cbrac{\Dap^2\Zt + (\Dap\Zt)\brac*[\Big]{\pap \frac{1}{\Zap}} + \frac{\Zt}{\Zap}\pap \brac*[\Big]{\pap \frac{1}{\Zap}} } \\
& = (\Id - \Hil)\cbrac{\Dap^2\Zt + (\Dap\Zt)\brac*[\Big]{\pap \frac{1}{\Zap}}} + \sqbrac{\Pa\brac{\frac{\Zt}{\Zap}},\Hil}\pap \brac*[\Big]{\pap \frac{1}{\Zap}}
\end{align*}
Hence $\dis \norm[2]{\Dapabs\bvarap} \lesssim \norm[2]{\Dap^2\Zt} + \norm*[\Big][2]{\pap\frac{1}{\Zap}}\cbrac{ \norm[\infty]{\Dap\Zt}  +  \norm*[\Big][\infty]{\pap\Pa\brac{\frac{\Zt}{\Zap}}} + \norm[\infty]{\bvarap} } $  from \propref{prop:commutator}
\medskip
 
\item $\dis \pap\Dt\frac{1}{\Zap} \in \Ltwo$, $\dis \Dt\pap\frac{1}{\Zap} \in \Ltwo$
\medskip\\
Proof: Recall from \eqref{form:DtoneoverZap} that $\dis \Dt\frac{1}{\Zap} = \frac{1}{\Zap}(\bvarap - \Dap\Zt)$ and hence
\[
\pap\Dt\frac{1}{\Zap} = \brac*[\Big]{\pap\frac{1}{\Zap}}(\bvarap - \Dap\Zt) + \Dap\bvarap - \Dap^2\Zt
\]
Hence $\dis \norm*[\Big][2]{\pap\Dt\frac{1}{\Zap}} \lesssim \norm*[\Big][2]{\pap\frac{1}{\Zap}}\brac{\norm[\infty]{\bvarap}+\norm[\infty]{\Dap\Zt}} + \norm[2]{\Dapabs\bvarap} + \norm[2]{\Dap^2\Zt} $. Similarly we have $\dis \norm*[\Big][2]{\Dt\pap\frac{1}{\Zap}} \lesssim  \norm*[\Big][2]{\pap\Dt\frac{1}{\Zap}}+ \norm[\infty]{\bvarap}\norm*[\Big][2]{\pap\frac{1}{\Zap}}$
\medskip
 
\item $\dis \Zttapbar \in \Ltwo $
\medskip\\
Proof: From $\Esigmatwo$ we have that $\Dt\Ztapbar \in \Ltwo$. Hence $\norm[2]{\Zttapbar} \lesssim \norm[2]{\Dt\Ztapbar} + \norm[\infty]{\bvarap}\norm[2]{\Ztapbar}$
\medskip

\item $\dis \Dapbar \Zttbar \in \Ccal, \Dapabs \Zttbar \in \Ccal, \Dt\Dapbar\Ztbar \in \Ccal \tx{ and }\Dt\Dapabs\Ztbar \in \Ccal$
\medskip\\
Proof: From $\Esigmafour$ we have that $\Dt\Dapbar\Ztbar \in \Hhalf$. Observe that 
\[
\Dt\Dapbar\Ztbar = \Dapbar\Zttbar - (\Dapbar\Ztbar)^2
\]
and as $\Dapbar\Ztbar \in \Ccal\cap\Wcal$, by using \lemref{lem:CW} we get that $(\Dapbar\Ztbar)^2 \in \Ccal$. Hence $\Dapbar\Zttbar \in \Hhalf$. As $\Zttapbar \in \Ltwo$ we get that $\Dapbar\Zttbar \in \Ccal$. By again using the equation above, we get that $\Dt\Dapbar\Ztbar \in \Ccal$. By using $\wbar\in \Wcal$ and that $\Dapbar\Zttbar \in \Ccal $ in \lemref{lem:CW}, we obtain $\Dapabs\Zttbar \in \Ccal$. Now observe that 
\[
\Dt\Dapabs\Ztbar = \Dapabs\Zttbar - \Real(\Dapbar\Ztbar)\Dapabs\Ztbar
\]
As $\Dapbar\Ztbar \in \Ccal$ we get that $\Real(\Dapbar\Ztbar) \in \Ccal$. Also as $\Dapabs\Ztbar \in \Wcal$, using \lemref{lem:CW} we obtain $ \Real(\Dapbar\Ztbar)\Dapabs\Ztbar \in \Ccal$. Hence $\Dt\Dapabs\Ztbar \in \Ccal$.
\medskip

\item $\dis \Dt\Aone \in \Linfty\cap\Hhalf$
\medskip\\
Proof: Recall that $\dis \Aone = 1 - \Imag[\Zt,\Hil]\Ztapbar $. This implies from \propref{prop:tripleidentity}
\begin{align*}
\Dt\Aone &= -\Imag\cbrac{\sqbrac{\Ztt,\Hil}\Ztapbar + \sqbrac{\Zt,\Hil}\Zttapbar - \sqbrac{\bvar, \Zt; \Ztapbar}}
\end{align*}
Hence $\dis \norm[\Linfty\cap\Hhalf]{\Dt\Aone}  \lesssim \norm[2]{\Ztapbar}\norm[2]{\Zttapbar} + \norm[\infty]{\bvarap}\norm[2]{\Ztapbar}^2$ from \propref{prop:commutator} and \propref{prop:triple}.
\medskip

\item $\Dt(\bvarap - \Dap\Zt - \Dapbar\Ztbar) \in \Linfty\cap\Hhalf$ and hence $\dis \Dt\bvarap \in \Hhalf, \pap\Dt \bvar \in \Hhalf $
\medskip\\
Proof: Using the formula of $\bvarap$ from \eqref{form:bvarapnew} we see that 
\[
\bvarap - \Dap\Zt - \Dapbar\Ztbar =  \Zt\brac*[\Big]{\pap \frac{1}{\Zap}} - \Dapbar\Ztbar -i\pap(\Id + \Hil)\cbrac{\Imag\brac{\frac{\Zt}{\Zap}}}
\]
Observe that $(\bvarap - \Dap\Zt - \Dapbar\Ztbar)$ is real valued and hence by applying $\Real(\Id - \Hil)$ we get
\[
\bvarap - \Dap\Zt - \Dapbar\Ztbar = \Real\cbrac{ \sqbrac{\Zt,\Hil}\brac*[\Big]{\pap \frac{1}{\Zap}} -\sqbrac{\frac{1}{\Zapbar},\Hil}\Ztapbar  } 
\]
Applying $\Dt$ and using \propref{prop:tripleidentity} we obtain
\begin{flalign*}
&& \Dt(\bvarap - \Dap\Zt - \Dapbar\Ztbar) &= \Real   \bigg\{\sqbrac{\Ztt,\Hil}\brac*[\Big]{\pap \frac{1}{\Zap}} + \sqbrac{\Zt,\Hil}\brac*[\Big]{\pap \Dt\frac{1}{\Zap}}  - \sqbrac{\bvar,\Zt; \pap\frac{1}{\Zap}} \\
&&& \qquad \quad  -\sqbrac{\Dt\frac{1}{\Zapbar},\Hil}\Ztapbar  -\sqbrac{\frac{1}{\Zapbar},\Hil}\Zttapbar  + \sqbrac{\bvar,\frac{1}{\Zapbar};\Ztapbar }   \bigg\}
\end{flalign*}
Hence from \propref{prop:commutator} and \propref{prop:triple} we get 
\begin{align*}
\norm[\Linfty\cap\Hhalf]{\Dt(\bvarap - \Dap\Zt - \Dapbar\Ztbar) } & \lesssim \norm[2]{\Zttapbar}\norm*[\Big][2]{\pap\frac{1}{\Zap}} + \norm[2]{\Ztapbar}\norm[2]{\pap\Dt\frac{1}{\Zap}} \\
& \quad + \norm[\infty]{\bvarap}\norm[2]{\Ztapbar}\norm*[\Big][2]{\pap\frac{1}{\Zap}}
\end{align*}
As $\Dt\Dap\Zt \in \Ccal$ and $\Dt\Dapbar\Ztbar \in \Ccal$, we get that $\Dt\bvarap \in \Hhalf$. Now as $
\pap\Dt\bvar = (\bvarap)^2 + \Dt\bvarap $ we get $\norm[\Hhalf]{\pap\Dt\bvar} \lesssim \norm[\Hhalf]{\bvarap}\norm[\infty]{\bvarap} + \norm[\Hhalf]{\Dt\bvarap}$
\medskip

\item $\dis \sigma^{\half}  \Zapabs^\half \pap\frac{1}{\Zap} \in \Linfty$,  $\dis \sigma^{\half}  \Zapabs^\half \pap\frac{1}{\Zapabs} \in \Linfty$, $\dis \frac{ \sigma^\half}{\Zapabs^\half}\pap\w \in \Linfty$, $\dis \sigma^\half\Zapabs^\half\Real\Th \in \Linfty$
\medskip\\
Proof: $\dis \sigma^{\half}  \Zapabs^\half \pap\frac{1}{\Zap} \in \Linfty$ as it part of the energy $\Esigmazero$. Using \eqref{form:RealImagTh}  we easily obtain $\dis \sigma^{\half}  \Zapabs^\half \pap\frac{1}{\Zapabs} \in \Linfty$ and $\dis \frac{ \sigma^\half}{\Zapabs^\half}\pap\w \in \Linfty$. Now from \eqref{form:RealImagTh} we have $\dis \Real\Th = -i\Dap\w$ and this implies that  $\dis \sigma^\half\Zapabs^\half\Real\Th \in \Linfty$
\medskip

\item $\dis \sigma^{\onebysix}  \Zapabs^\half \pap\frac{1}{\Zap} \in \Ltwo$,  $\dis \sigma^{\onebysix}  \Zapabs^\half \pap\frac{1}{\Zapabs} \in \Ltwo$, $\dis \frac{ \sigma^\onebysix}{\Zapabs^\half}\pap\w \in \Ltwo$
\medskip\\
Proof: $\dis \sigma^{\onebysix}  \Zapabs^\half \pap\frac{1}{\Zap} \in \Ltwo$ as it part of the energy $\Esigmazero$. Again using \eqref{form:RealImagTh} we can control the other terms.
\medskip

\item $\dis \sigma \pap\Th \in \Hhalf$
\medskip\\
Proof:  We first note that $\dis (\Zttbar -i)\Zap \in \Hhalf$ as it part of the energy $\Esigmaone$. But from the fundamental equation \eqref{form:Zttbar} we get
\[
(\Zttbar -i)\Zap= -i \Aone +\sigma \pap \Th
\]
We have already proven that $\Aone \in \Hhalf$ and hence $\sigma \pap\Th \in \Hhalf$
\medskip

\item $\dis \sigma^\twobythree\pap\Th \in \Ltwo$
\medskip\\
Proof: As $\dis \Th \in \Ltwo$ and $ \sigma\pap\Th \in \Hhalf$ we obtain the estimate from \lemref{lem:interpolation}. 
\medskip

\item $\dis \sigma^\twobythree \nobrac{\pap^2\frac{1}{\Zap}} \in \Ltwo$, $\dis \sigma^\twobythree \nobrac{\pap^2\frac{1}{\Zapabs}} \in \Ltwo$, $\dis \frac{\sigma^\twobythree}{\Zapabs}\pap^2\w \in \Ltwo$, $\dis \sigma^\twobythree\pap\Dapabs\w \in \Ltwo$
\medskip\\
Proof: Differentiating the equation \eqref{form:Th} we get
\[
\sigma^\twobythree \pap \Th = i\sigma^\twobythree \pap \brac{\frac{\Zap}{\Zapabs} \pap \frac{1}{\Zap}} -i\sigma^\twobythree \Real\cbrac{\pap \sqbrac{ \frac{\w}{\Zaphalf} ,\Hil}\brac{\Zaphalf\pap\frac{1}{\Zap}} }
\]
Hence from \propref{prop:commutator} we get
\[
 \norm[2]{\sigma^\twobythree \pap \brac{\frac{\Zap}{\Zapabs} \pap \frac{1}{\Zap}}} \lesssim \norm[2]{\sigma^\twobythree \pap \Th} + \norm*[\bigg][\infty]{\sigma^\half \pap \frac{\w}{\Zaphalf}}\norm[2]{\sigma^\onebysix \Zaphalf \pap \frac{1}{\Zap}} 
\]
From this and \eqref{form:RealImagTh} we get
\begingroup
\allowdisplaybreaks
\begin{align*}
 \lpar  \norm[2]{\sigma^\twobythree \nobrac{\pap^2\frac{1}{\Zap}}}  & \lesssim  \norm[2]{\sigma^\twobythree \pap \brac{\frac{\Zap}{\Zapabs} \pap \frac{1}{\Zap}}} + \norm*[\bigg][\infty]{\frac{\sigma^\half}{\Zapabs^\half}\pap\w}\norm[2]{\sigma^\onebysix \Zapabs^\half\pap\frac{1}{\Zap}} \\
  \norm[2]{\sigma^\twobythree \nobrac{\pap^2\frac{1}{\Zapabs}}} & \lesssim  \norm[2]{\sigma^\twobythree \pap \brac{\frac{\Zap}{\Zapabs} \pap \frac{1}{\Zap}}} \\
  \norm*[\bigg][2]{\frac{\sigma^\twobythree}{\Zapabs}\pap^2\w} & \lesssim  \norm[2]{\sigma^\twobythree \pap \brac{\frac{\Zap}{\Zapabs} \pap \frac{1}{\Zap}}} + \norm*[\bigg][\infty]{\frac{\sigma^\half}{\Zapabs^\half}\pap\w}\norm[2]{\sigma^\onebysix \Zapabs^\half\pap\frac{1}{\Zap}}
\end{align*}
\endgroup
and we easily obtain $\dis \sigma^\twobythree\pap\Dapabs\w \in \Ltwo$ from $\dis \frac{\sigma^\twobythree}{\Zapabs}\pap^2\w \in \Ltwo$ and we have
\[
\norm[2]{\sigma^\twobythree\pap\Dapabs\w} \lesssim  \norm*[\bigg][2]{\frac{\sigma^\twobythree}{\Zapabs}\pap^2\w} + \norm*[\bigg][\infty]{\frac{\sigma^\half}{\Zapabs^\half}\pap\w}\norm[2]{\sigma^\onebysix \Zapabs^\half\pap\frac{1}{\Zapabs}}
\]
\medskip

\item $\dis \sigma^\onebythree \Th \in \Linfty\cap \Hhalf$
\medskip\\
Proof: As $\dis \Th \in \Ltwo$ and $\sigma^\twobythree\pap\Th \in \Ltwo$ we have $\dis \sigma^\onebythree \Th \in  \Hhalf$ from \lemref{lem:interpolation}. Now as $\Th$ decays at infinity we have
\begin{align*}
\norm*[\big][\infty]{\sigma^\onebythree \Th}^2 = \norm*[\big][\infty]{\sigma^\twobythree \Th^2} \lesssim \sigma^\twobythree \int \abs{\pap (\Th^2)}\diff\ap \lesssim \norm*[2]{\Th}\norm*[2]{\sigma^\twobythree \pap\Th}
\end{align*}
\medskip

\item $\dis \sigma^\onebythree \pap \frac{1}{\Zap} \in \Linfty\cap\Hhalf$,  $\dis \sigma^\onebythree \pap \frac{1}{\Zapabs} \in \Linfty\cap\Hhalf$, $\dis \sigma^\onebythree \Dapabs\w \in \Linfty\cap\Hhalf$
\medskip\\
Proof: This is proved by exactly the same argument used above to show $\dis \sigma^\onebythree \Th \in \Linfty\cap \Hhalf$ 
\medskip

\item $\sigma \pap\Dap\Th \in \Ltwo$, $\sigma \Dapabs\pap\Th \in \Ltwo$, $\sigma \pap\Dapabs\Th \in \Ltwo$
\medskip\\
Proof: Taking a derivative in the fundamental equation \eqref{form:Zttbar} we get
\[
\Zttapbar = -i\Dap\Aone -i\Aone\pap\frac{1}{\Zap}   +\sigma \pap \Dap \Th
\]
Hence $\dis \norm[2]{\sigma \pap \Dap\Th} \lesssim \norm[2]{\Zttapbar} + \norm[2]{\Dapabs\Aone} + \norm[\infty]{\Aone}\norm*[\Big][2]{\pap\frac{1}{\Zap}}$. From this we get that $\dis \norm[2]{\sigma  \Dapabs\pap\Th} \lesssim \norm[2]{\sigma \pap \Dap\Th} + \norm*[\Big][\infty]{\sigma^\onebythree \pap \frac{1}{\Zap}}\norm*[\big][2]{\sigma^\twobythree \pap \Th}$. We can prove $\sigma \pap\Dapabs\Th \in \Ltwo$ similarly. 
\medskip

\item $\dis \frac{\sigma}{\Zapabs}\pap^3\frac{1}{\Zap} \in \Ltwo$, $\dis \frac{\sigma}{\Zapabs}\pap^3\frac{1}{\Zapabs} \in \Ltwo$, $\dis \frac{\sigma}{\Zapabs^2}\pap^3\w \in \Ltwo$
\medskip\\
Proof: We first observe that
\begin{align*}
\lpar \norm[2]{\frac{\sigma}{\Zapabs}\pap^2\brac*[\bigg]{\frac{\Zap}{\Zapabs}\pap\frac{1}{\Zap}} - \frac{\sigma}{\Zapbar}\pap^3\frac{1}{\Zap}} & \lesssim \norm*[\bigg][2]{\frac{\sigma^\twobythree}{\Zapabs}\pap^2\w}\norm[\infty]{\sigma^\onebythree \pap\frac{1}{\Zap}}  \\
& \quad + \norm*[\big][\infty]{\sigma^\onebythree \Dapabs\w}\norm*[\Big][2]{\sigma^\twobythree\pap^2\frac{1}{\Zap}}
\end{align*}
Hence the difference between them is controlled. This implies that we replace them with each other whenever we want. Now differentiating \eqref{form:Th} we get
\begin{align*}
\lpar \frac{\sigma }{\Zapabs}\pap^2\Th &= i \frac{\sigma}{\Zapabs}\pap^2\brac{ \frac{\Zap}{\Zapabs}\pap\frac{1}{\Zap}} + i\Real \cbrac{ \sqbrac{\frac{\sigma}{\Zapabs},\Hil}\pap^2\brac{ \frac{\Zap}{\Zapabs}\pap\frac{1}{\Zap}} } \\
& \quad -i\Real(\Id - \Hil)\cbrac{ \frac{\sigma}{\Zapabs}\pap^2\brac{ \frac{\Zap}{\Zapabs}\pap\frac{1}{\Zap}}}
\end{align*} 
Now we can replace $ \dis (\Id - \Hil)\cbrac{ \frac{\sigma}{\Zapabs}\pap^2\brac{ \frac{\Zap}{\Zapabs}\pap\frac{1}{\Zap}}}$ above with $\dis (\Id - \Hil)\cbrac{ \frac{\sigma}{\Zapbar}\pap^3\frac{1}{\Zap}}$ and rewrite it as $\dis  \sqbrac{\frac{\sigma}{\Zapbar},\Hil} \pap^3\frac{1}{\Zap}$. Hence from \propref{prop:commutator} we have
\begin{align*}
& \norm[2]{\frac{\sigma}{\Zapabs}\pap^2\brac{ \frac{\Zap}{\Zapabs}\pap\frac{1}{\Zap}}} \\
&   \lesssim \norm*[\Big][2]{\frac{\sigma }{\Zapabs}\pap^2\Th} + \norm*[\Big][\infty]{\sigma^\onebythree\pap\frac{1}{\Zap}}\cbrac{ \norm*[\Big][2]{\sigma^\twobythree \pap^2\frac{1}{\Zapabs}} + \norm*[\Big][2]{\sigma^\twobythree \pap^2\frac{1}{\Zapbar}}} \\
& \quad +   \norm*[\Big][2]{\frac{\sigma^\twobythree}{\Zapabs}\pap^2\w}\norm*[\Big][\infty]{\sigma^\onebythree \pap\frac{1}{\Zap}} + \norm*[\big][\infty]{\sigma^\onebythree \Dapabs\w}\norm*[\Big][2]{\sigma^\twobythree\pap^2\frac{1}{\Zap}}
\end{align*}
Hence $\dis \frac{\sigma}{\Zapabs}\pap^3\frac{1}{\Zap} \in \Ltwo$. By using \eqref{form:RealImagTh} we get that $\dis \frac{\sigma}{\Zapabs}\pap^3\frac{1}{\Zapabs} \in \Ltwo$,$\dis \frac{\sigma}{\Zapabs}\pap^2\Dap\w \in \Ltwo$ and so
\begin{align*}
\lpar \norm*[\bigg][2]{\frac{\sigma}{\Zapabs^2}\pap^3\w} & \lesssim \norm*[\bigg][2]{\frac{\sigma}{\Zapabs}\pap^2\Dap\w} +  \norm*[\Big][2]{\frac{\sigma^\twobythree}{\Zapabs}\pap^2\w}\norm*[\Big][\infty]{\sigma^\onebythree \pap\frac{1}{\Zap}}  \\
 & \quad + \norm*[\big][\infty]{\sigma^\onebythree \Dapabs\w}\norm*[\Big][2]{\sigma^\twobythree\pap^2\frac{1}{\Zap}} 
\end{align*}
\medskip

\item $\dis \frac{\sigma^\half}{\Zapabs^\half}\pap^2\frac{1}{\Zap} \in \Ltwo$, $\dis \frac{\sigma^\half}{\Zapabs^\half}\pap^2\frac{1}{\Zapabs} \in \Ltwo$, $\dis \frac{\sigma^\half}{\Zapabs^\threebytwo}\pap^2\w \in \Ltwo$ and $\dis \frac{\sigma^\half}{\Zapabs^\half}\pap\Th \in \Ltwo $
\medskip\\
Proof: $\dis \frac{\sigma^\half}{\Zapabs^\half}\pap^2\frac{1}{\Zap} \in \Ltwo$ as it part of the energy $\Esigmazero$. Now using \eqref{form:RealImagTh} we get
\begin{align*}
\norm[2]{\frac{\sigma^\half}{\Zapabs^\half}\pap^2\frac{1}{\Zapabs}} & \lesssim \norm[2]{\frac{\sigma^\half}{\Zapabs^\half}\pap^2\frac{1}{\Zap}} + \norm[\infty]{\frac{\sigma^\half}{\Zapabs^\half}\pap\w}\norm[2]{\pap\frac{1}{\Zap}} \\
\norm[2]{\frac{\sigma^\half}{\Zapabs^\threebytwo}\pap^2\w} & \lesssim \norm[2]{\frac{\sigma^\half}{\Zapabs^\half}\pap^2\frac{1}{\Zap}} + \norm[\infty]{\frac{\sigma^\half}{\Zapabs^\half}\pap\w}\norm[2]{\pap\frac{1}{\Zap}}
\end{align*}
Now differentiating the equation \eqref{form:Th} we get using \propref{prop:commutator}
\begin{align*}
\norm[2]{\frac{\sigma^\half}{\Zapabs^\half}\pap\Th }  \lesssim \norm[2]{\frac{\sigma^\half}{\Zapabs^\half}\pap^2\frac{1}{\Zap}} + \brac{\norm[\infty]{\frac{\sigma^\half}{\Zapabs^\half}\pap\w} + \norm[\infty]{\sigma^\half\Zapabs^\half\pap\frac{1}{\Zapabs}} }\norm[2]{\pap\frac{1}{\Zap}}
\end{align*}
\medskip

\item $\dis \sigma^{\half}  \Zapabs^\half \pap\frac{1}{\Zap} \in \Wcal$,  $\dis \sigma^{\half}  \Zapabs^\half \pap\frac{1}{\Zapabs} \in \Wcal$, $\dis \frac{ \sigma^\half}{\Zapabs^\half}\pap\w \in \Wcal$, $\dis \sigma^\half\Zapabs^\half\Real\Th \in \Wcal$
\medskip\\
Proof: We will only show that  $\dis \sigma^{\half}  \Zapabs^\half \pap\frac{1}{\Zap} \in \Wcal$ and the rest are proved similarly. As $\dis \sigma^{\half}  \Zapabs^\half \pap\frac{1}{\Zap} \in \Linfty$ we only need to show $\dis  \Dapabs\brac{ \sigma^{\half} \Zapabs^\half \pap\frac{1}{\Zap}} \in \Ltwo$. Now
\begin{align*}
\lpar \norm[2]{\Dapabs\brac{ \sigma^{\half} \Zapabs^\half \pap\frac{1}{\Zap}}}  \lesssim \norm*[\bigg][2]{ \frac{\sigma^\half}{\Zapabs^\half}\pap^2\frac{1}{\Zap}} + \norm[\infty]{\sigma^\half\Zapabs^\half\pap\frac{1}{\Zapabs}}\norm[2]{\pap\frac{1}{\Zap}}
\end{align*}
\medskip

\item $\dis \frac{\sigma^\fivebysix}{\Zapabs^\half}\pap\Th \in \Linfty\cap\Hhalf$
\medskip\\
Proof:  As $\dis \frac{\sigma^\fivebysix}{\Zapabs^\half}\pap\Th$ decays at infinity, we use  \propref{prop:LinftyHhalf} with $\dis w = \frac{\sigma^\onebysix}{\Zapabs^\half}$ to get
\[
\lpar \norm*[\bigg][\Linfty\cap\Hhalf]{\frac{\sigma^\fivebysix}{\Zapabs^\half}\pap\Th}^2 \lesssim \norm*[\big][2]{\sigma^\twobythree\pap\Th}\norm*[\big][2]{\sigma\pap\Dapabs\Th} + \norm*[\big][2]{\sigma^\twobythree\pap\Th}^2\norm*[\bigg][2]{\sigma^\onebysix \Zapabs^\half\pap\frac{1}{\Zapabs}}^2
\]
\medskip

\item $\dis \frac{\sigma^\fivebysix}{\Zapabs^\half}\pap^2\frac{1}{\Zap} \in \Linfty\cap\Hhalf$, $\dis \frac{\sigma^\fivebysix}{\Zapabs^\half}\pap^2\frac{1}{\Zapabs} \in \Linfty\cap\Hhalf$, $\dis \frac{\sigma^\fivebysix}{\Zapabs^\threebytwo}\pap^2\w \in \Linfty\cap\Hhalf$
\medskip\\
Proof: This is proved by exactly the same argument used above to show $\dis \frac{\sigma^\fivebysix}{\Zapabs^\half}\pap\Th \in \Linfty\cap\Hhalf$
\medskip

\item $\dis \frac{\sigma^\half}{\Zapabs^\threebytwo}\pap\Th \in \Ccal$
\medskip\\
Proof: It was proved earlier that $\dis \frac{\sigma^\half}{\Zapabs^\half}\pap\Th \in \Ltwo$. Also $\dis \frac{\sigma^\half}{\Zapabs^\threebytwo}\pap\Th \in \Hhalf$ as it part of the energy $\dis \Esigmathree$ 
\medskip

\item $\dis \frac{\sigma^\half}{\Zapabs^\threebytwo}\pap^2\frac{1}{\Zap} \in \Ccal$, $\dis \frac{\sigma^\half}{\Zapabs^\threebytwo}\pap^2\frac{1}{\Zapabs} \in \Ccal$, $\dis \frac{\sigma^\half}{\Zapabs^\fivebytwo}\pap^2\w \in \Ccal$
\medskip\\
Proof: As  $\dis \frac{\sigma^\half}{\Zapabs^\half}\pap^2\frac{1}{\Zap} \in \Ltwo$, $\dis \frac{\sigma^\half}{\Zapabs^\half}\pap^2\frac{1}{\Zapabs} \in \Ltwo$, $\dis \frac{\sigma^\half}{\Zapabs^\threebytwo}\pap^2\w \in \Ltwo$ we only have to prove the $\Hhalf$ estimates. Using \lemref{lem:CW} we get
\[
\lpar \norm*[\bigg][\Ccal]{\frac{\sigma^\half}{\Zapabs^\threebytwo}\pap\brac{\frac{\Zap}{\Zapabs}\pap\frac{1}{\Zap}} - \frac{\w\sigma^\half}{\Zapabs^\threebytwo}\pap^2\frac{1}{\Zap}} \lesssim \norm*[\bigg][\Wcal]{\frac{\sigma^\half}{\Zapabs^\half}\pap\w}\norm[\Ccal]{\Dapabs\frac{1}{\Zap}}
\]
Hence the difference between them is controlled. This implies that we replace them with each other whenever we want. Now by differentiating \eqref{form:Th} we get
\begin{align*}
\lpar \frac{\sigma^\half}{\Zapabs^\threebytwo}\pap\Th &= i\frac{\sigma^\half}{\Zapabs^\threebytwo}\pap\brac{\frac{\Zap}{\Zapabs}\pap\frac{1}{\Zap}} + i\Real\cbrac{ \sqbrac{\frac{\sigma^\half}{\Zapabs^\threebytwo},\Hil}\pap \brac{\frac{\Zap}{\Zapabs}\pap\frac{1}{\Zap}} } \\
& \quad -i\Real(\Id - \Hil)\cbrac{\frac{\sigma^\half}{\Zapabs^\threebytwo}\pap\brac{\frac{\Zap}{\Zapabs}\pap\frac{1}{\Zap}}}
\end{align*}
Now we replace $\dis (\Id - \Hil)\cbrac{\frac{\sigma^\half}{\Zapabs^\threebytwo}\pap\brac{\frac{\Zap}{\Zapabs}\pap\frac{1}{\Zap}}}$ above with $\dis (\Id - \Hil)\cbrac{\frac{\w\sigma^\half}{\Zapabs^\threebytwo}\pap^2\frac{1}{\Zap}}$ and rewrite it as $\dis \sqbrac{\frac{\w\sigma^\half}{\Zapabs^\threebytwo} ,\Hil} \pap^2\frac{1}{\Zap}$. Hence using \propref{prop:commutator} we have 
\begin{align*}
\lpar \norm*[\bigg][\Hhalf]{\frac{\sigma^\half}{\Zapabs^\threebytwo}\pap\brac{\frac{\Zap}{\Zapabs}\pap\frac{1}{\Zap}}} &\lesssim  \norm*[\bigg][\Hhalf]{\frac{\sigma^\half}{\Zapabs^\threebytwo}\pap\Th} + \norm*[\bigg][2]{\sigma^\half\pap^2\frac{1}{\Zapabs^\threebytwo}}\norm[2]{\pap\frac{1}{\Zap}} \\
& \quad + \norm*[\bigg][2]{\sigma^\half\pap^2\frac{\w}{\Zapabs^\threebytwo}}\norm[2]{\pap\frac{1}{\Zap}} +  \norm*[\bigg][\Wcal]{\frac{\sigma^\half}{\Zapabs^\half}\pap\w}\norm[\Ccal]{\Dapabs\frac{1}{\Zap}}
\end{align*}
Note that we can easily show $\dis \sigma^\half\pap^2\frac{1}{\Zapabs^\threebytwo} \in \Ltwo$, $\dis \sigma^\half\pap^2\frac{w}{\Zapabs^\threebytwo} \in \Ltwo$ by  using Leibniz rule and controlling each individual term. Hence $\dis \frac{\sigma^\half}{\Zapabs^\threebytwo}\pap\brac{\frac{\Zap}{\Zapabs}\pap\frac{1}{\Zap}} \in \Ccal$, $\dis \frac{\w\sigma^\half}{\Zapabs^\threebytwo}\pap^2\frac{1}{\Zap} \in \Ccal$. As $\wbar \in \Wcal$ by using \lemref{lem:CW} we get $\dis \frac{\sigma^\half}{\Zapabs^\threebytwo}\pap^2\frac{1}{\Zap} \in \Ccal$. Now using \eqref{form:RealImagTh} we easily get $\dis \frac{\sigma^\half}{\Zapabs^\threebytwo}\pap^2\frac{1}{\Zapabs} \in \Ccal$, $\dis \frac{\sigma^\half}{\Zapabs^\threebytwo}\pap\Dap\w \in \Ccal$. Hence by \lemref{lem:CW} we have
\[
\lpar \norm*[\bigg][\Ccal]{\frac{\sigma^\half}{\Zapabs^\fivebytwo}\pap^2\w} \lesssim \norm*[\bigg][\Ccal]{\frac{\sigma^\half}{\Zapabs^\threebytwo}\pap\Dap\w}\norm[\Wcal]{\w} + \norm[\Ccal]{\Dapabs\frac{1}{\Zap}}\norm*[\bigg][\Wcal]{\frac{\sigma^\half}{\Zapabs^\half}\pap\w}\norm[\Wcal]{\w}
\]
\medskip

\item $\dis \sigma \Dapbar\Dap\Th \in \Ccal$, $\dis \sigma \Dap^2\Th \in \Ccal $, $\dis \sigma \Dapabs^2\Th \in \Ccal $, $\dis \frac{\sigma}{\Zapabs^2}\pap^2\Th \in \Ccal$
\medskip\\
Proof: Applying the derivative $\Dapbar$ to the fundamental equation \eqref{form:Zttbar} we get
\[
\Dapbar\Zttbar  = -i \Aone\Dapbar\frac{1}{\Zap}  - \frac{i}{\Zapabs^2}\pap\Aone +\sigma \Dapbar \Dap \Th
\]
Hence using \lemref{lem:CW} we get
\[
 \norm[\Ccal]{\sigma\Dapbar\Dap\Th} \lesssim \norm[\Ccal]{\Dapbar\Zttbar} + \norm[\Ccal]{\Dapbar\frac{1}{\Zap}}\norm[\Wcal]{\Aone} + \norm*[\bigg][\Ccal]{\frac{1}{\Zapabs^2}\pap\Aone}
\]
Now as $\wbar \in \Wcal$, by \lemref{lem:CW} we get $\sigma\Dap^2\Th \in \Ccal$. Now we see that
\[
\sigma\Dapbar\Dap\Th = \sigma\brac{\w\Zapabs^\half\pap\frac{1}{\Zap}} \brac{\frac{1}{\Zapabs^\threebytwo}\pap\Th} + \frac{\sigma}{\Zapabs^2}\pap^2\Th
\]
Hence again by \lemref{lem:CW} we have
\[
\norm*[\bigg][\Ccal]{\frac{\sigma}{\Zapabs^2}\pap^2\Th} \lesssim \norm[\Ccal]{\sigma\Dapbar\Dap\Th} + \norm*[\bigg][\Ccal]{\frac{\sigma^\half}{\Zapabs^\threebytwo}\pap\Th}\norm[\Wcal]{\sigma^\half\Zapabs^\half\pap\frac{1}{\Zap}}\norm[\Wcal]{\w}
\]
By a similar argument we get $\sigma\Dapabs^2\Th \in \Ccal$
\medskip

\item $\dis \frac{\sigma}{\Zapabs^2}\pap^3\frac{1}{\Zap} \in \Ccal$, $\dis \frac{\sigma}{\Zapabs^2}\pap^3\frac{1}{\Zapabs} \in \Ccal$, $\dis \sigma\pap^3\frac{1}{\Zapabs^3} \in \Ccal$, $\dis \frac{\sigma}{\Zapabs^3}\pap^3\w \in \Ccal$
\medskip\\
Proof: As $\dis \frac{\sigma}{\Zapabs}\pap^3\frac{1}{\Zap} \in \Ltwo$, $\dis \frac{\sigma}{\Zapabs}\pap^3\frac{1}{\Zapabs} \in \Ltwo$, $\dis \frac{\sigma}{\Zapabs^2}\pap^3\w \in \Ltwo$ we only need to show the $\Hhalf$ estimates. Now observe that
\begin{align*}
\lpar \norm*[\bigg][\Ccal]{\frac{\sigma}{\Zapabs^2}\pap^2\brac{\frac{\Zap}{\Zapabs}\pap\frac{1}{\Zap}} - \frac{\w\sigma}{\Zapabs^2}\pap^3\frac{1}{\Zap}} & \lesssim \norm*[\bigg][\Ccal]{\frac{\sigma^\half}{\Zapabs^\fivebytwo}\pap^2\w}\norm[\Wcal]{\sigma^\half\Zapabs^\half\pap\frac{1}{\Zap}} \\
& \quad + \norm*[\bigg][\Wcal]{\frac{\sigma^\half}{\Zapabs^\half}\pap\w}\norm*[\bigg][\Ccal]{\frac{\sigma^\half}{\Zapabs^\threebytwo}\pap^2\frac{1}{\Zap}}
\end{align*}
Hence the difference between them is controlled. This implies that we replace them with each other whenever we want. Now differentiating the equation \eqref{form:Th} we get
\begin{align*}
\lpar \frac{\sigma}{\Zapabs^2}\pap^2\Th &= i\frac{\sigma}{\Zapabs^2}\pap^2\brac{\frac{\Zap}{\Zapabs}\pap\frac{1}{\Zap}} + i\Real\cbrac{ \sqbrac{\frac{\sigma}{\Zapabs^2},\Hil}\pap^2 \brac{\frac{\Zap}{\Zapabs}\pap\frac{1}{\Zap}} } \\
& \quad -i\Real(\Id - \Hil)\cbrac{\frac{\sigma}{\Zapabs^2}\pap^2\brac{\frac{\Zap}{\Zapabs}\pap\frac{1}{\Zap}} }
\end{align*}
Now we replace $\dis (\Id - \Hil)\cbrac{\frac{\sigma}{\Zapabs^2}\pap^2\brac{\frac{\Zap}{\Zapabs}\pap\frac{1}{\Zap}}}$ above with $\dis (\Id - \Hil)\cbrac{ \frac{\w\sigma}{\Zapabs^2}\pap^3\frac{1}{\Zap}}$ and rewrite it as $\dis \sqbrac{\frac{\w\sigma}{\Zapabs^2} ,\Hil} \pap^3\frac{1}{\Zap}$. Hence using \propref{prop:commutator} we have
\begin{flalign*}
\lpar & \norm*[\bigg][\Hhalf]{\frac{\sigma}{\Zapabs^2}\pap^2\brac{\frac{\Zap}{\Zapabs}\pap\frac{1}{\Zap}}}  \\
& \lesssim \norm*[\bigg][\Hhalf]{\frac{\sigma}{\Zapabs^2}\pap^2\Th} + \norm*[\bigg][2]{\sigma\pap^3\frac{1}{\Zapabs^2}}\norm[2]{\frac{\Zap}{\Zapabs}\pap\frac{1}{\Zap}}  + \norm*[\bigg][2]{\sigma\pap^3\frac{\w}{\Zapabs^2}}\norm[2]{\pap\frac{1}{\Zap}} \\
& \quad  + \norm*[\bigg][\Ccal]{\frac{\sigma^\half}{\Zapabs^\fivebytwo}\pap^2\w}\norm[\Wcal]{\sigma^\half\Zapabs^\half\pap\frac{1}{\Zap}} + \norm*[\bigg][\Wcal]{\frac{\sigma^\half}{\Zapabs^\half}\pap\w}\norm*[\bigg][\Ccal]{\frac{\sigma^\half}{\Zapabs^\threebytwo}\pap^2\frac{1}{\Zap}} 
\end{flalign*}
Note that we can easily show $\dis \sigma\pap^3\frac{1}{\Zapabs^2} \in \Ltwo$, $\dis \sigma\pap^3\frac{\w}{\Zapabs^2} \in \Ltwo$ by  using Leibniz rule and controlling each individual term. Hence $\dis \frac{\sigma}{\Zapabs^2}\pap^2\brac{\frac{\Zap}{\Zapabs}\pap\frac{1}{\Zap}} \in \Ccal$, $\dis  \frac{\w\sigma}{\Zapabs^2}\pap^3\frac{1}{\Zap} \in \Ccal$. As $\wbar \in \Wcal$ by using \lemref{lem:CW} we get $\dis \frac{\sigma}{\Zapabs^2}\pap^3\frac{1}{\Zap} \in \Ccal$. Now using \eqref{form:RealImagTh} we easily get $\dis \frac{\sigma}{\Zapabs^2}\pap^3\frac{1}{\Zapabs} \in \Ccal$, $\dis \frac{\sigma}{\Zapabs^2}\pap^2\Dap\w \in \Ccal$. Hence using \lemref{lem:CW} we have
\begingroup
\allowdisplaybreaks
\begin{align*}
\lpar \norm*[\bigg][\Ccal]{\frac{\sigma}{\Zapabs^3}\pap^3\w} & \lesssim  
\begin{aligned}[t]
&  \norm*[\bigg][\Ccal]{\frac{\sigma}{\Zapabs^2}\pap^2\Dap\w}\norm[\Wcal]{\w} + \norm*[\bigg][\Ccal]{\frac{\sigma^\half}{\Zapabs^\fivebytwo}\pap^2\w}\norm[\Wcal]{\sigma^\half\Zapabs^\half\pap\frac{1}{\Zap}}\norm[\Wcal]{\w} \\*
& + \norm*[\bigg][\Ccal]{\frac{\sigma^\half}{\Zapabs^\threebytwo}\pap^2\frac{1}{\Zap}}\norm*[\bigg][\Wcal]{\frac{\sigma^\half}{\Zapabs^\half}\pap\w}\norm[\Wcal]{\w}
\end{aligned}\\
\norm*[\bigg][\Ccal]{\sigma\pap^3\frac{1}{\Zapabs^3}} & \lesssim 
\begin{aligned}[t]
& \norm*[\bigg][\Ccal]{\frac{\sigma}{\Zapabs^2}\pap^3\frac{1}{\Zapabs}} + \norm[\Wcal]{\sigma^\half\Zapabs^\half\pap\frac{1}{\Zapabs}}\norm*[\bigg][\Ccal]{\frac{\sigma^\half}{\Zapabs^\threebytwo}\pap^2\frac{1}{\Zapabs}} \\*
& +  \norm[\Wcal]{\sigma^\half\Zapabs^\half\pap\frac{1}{\Zapabs}}^2\norm[\Ccal]{\frac{1}{\Zapabs}\pap\frac{1}{\Zapabs}}
\end{aligned}
\end{align*}
\endgroup
\medskip

\item $\dis \frac{\sigma^\half}{\Zapabs^\half}\pap\Ztapbar \in \Ltwo$, $\dis \sigma^\half\Zapabs^\half\pap\Dapabs\Ztbar \in \Ltwo$ and $\dis \sigma^\half\Zapabs^\half\pap\Dapbar\Ztbar \in \Ltwo$
\medskip\\
Proof: We have $\dis \frac{\sigma^\half}{\Zapabs^\half}\pap\Ztapbar \in \Ltwo$ as it part of the energy $\Esigmatwo$. Now observe that
\begin{align*}
\norm[2]{\sigma^\half\Zapabs^\half\pap\Dapabs\Ztbar} \lesssim \norm[\infty]{\sigma^\half\Zapabs^\half\pap\frac{1}{\Zapabs}}\norm[2]{\Ztapbar} + \norm[2]{ \frac{\sigma^\half}{\Zapabs^\half}\pap\Ztapbar}
\end{align*}
We prove $\dis \sigma^\half\Zapabs^\half\pap\Dapbar\Ztbar \in \Ltwo$ similarly.
\medskip

\item $\dis \frac{\sigma^\half}{\Zapabs^\fivebytwo}\pap^2\Ztapbar  \in \Ltwo$, $\dis \frac{\sigma^\half}{\Zapabs^\threebytwo}\pap^2\Dapbar\Ztbar \in \Ltwo$, $\dis \frac{\sigma^\half}{\Zapabs^\half}\pap\Dapabs\Dapbar\Ztbar \in \Ltwo$ and in the same way $\dis \frac{\sigma^\half}{\Zapabs^\half}\Dapabs^2\Ztapbar \in \Ltwo$, $\dis \frac{\sigma^\half}{\Zapabs^\half}\pap\Dapbar^2\Ztbar \in \Ltwo$
\medskip\\
Proof: Note that $\dis \frac{\sigma^\half}{\Zapabs^\half}\pap\Dapabs\Dapbar\Ztbar \in \Ltwo$ as it part of the energy $\Esigmafour$. Now we have
\begin{flalign*}
\lpar \norm*[\bigg][2]{\frac{\sigma^\half}{\Zapabs^\threebytwo}\pap^2\Dapbar\Ztbar} \lesssim \norm*[\bigg][2]{\frac{\sigma^\half}{\Zapabs^\half}\pap\Dapabs\Dapbar\Ztbar}   + \norm[\infty]{\sigma^\half\Zapabs^\half\pap\frac{1}{\Zapabs}}\norm[2]{\Dapabs\Dapbar\Ztbar}
\end{flalign*}
Similarly we see that
\begin{align*}
\lpar \norm*[\bigg][2]{\frac{\sigma^\half}{\Zapabs^\fivebytwo}\pap^2\Ztapbar} & \lesssim \norm*[\bigg][2]{\frac{\sigma^\half}{\Zapabs^\threebytwo}\pap^2\Dapbar\Ztbar} + \norm*[\bigg][2]{\frac{\sigma^\half}{\Zapabs^\half}\pap^2\frac{1}{\Zap}}\norm[\infty]{\Dapabs\Ztbar} \\
& \quad + \norm[\infty]{\sigma^\half\Zapabs^\half\pap\frac{1}{\Zapbar}}\norm[2]{\frac{1}{\Zapabs^2}\pap\Ztapbar}
\end{align*}
We also have
\begin{flalign*}
\lpar \norm*[\bigg][2]{\frac{\sigma^\half}{\Zapabs^\half}\Dapabs^2\Ztapbar} \lesssim \norm[\infty]{\sigma^\half\Zapabs^\half\pap\frac{1}{\Zapabs}}\norm*[\bigg][2]{\frac{1}{\Zapabs^2}\pap\Ztapbar} + \norm*[\bigg][2]{\frac{\sigma^\half}{\Zapabs^\fivebytwo}\pap^2\Ztapbar}
\end{flalign*}
The estimate for $\dis \frac{\sigma^\half}{\Zapabs^\half}\pap\Dapbar^2\Ztbar \in \Ltwo$ is shown in a similar way. 
\medskip

\item $\dis \frac{\sigma^\half}{\Zapabs^\threebytwo}\pap\Ztapbar \in \Wcal\cap\Ccal$, $\dis \frac{\sigma^\half}{\Zapabs^\half}\pap\Dapabs\Ztbar \in \Wcal\cap\Ccal$, $\dis \frac{\sigma^\half}{\Zapabs^\half}\pap\Dapbar\Ztbar \in \Wcal\cap\Ccal$ and also $\dis \frac{\sigma^\half}{\Zapabs^\half}\pap\Dap\Ztbar \in \Wcal\cap\Ccal$
\medskip\\
Proof: Note that $\dis \frac{\sigma^\half}{\Zapabs^\half }\pap\Ztapbar \in \Ltwo$ as it part of the energy $\Esigmaone$ and $\dis \frac{\sigma^\half}{\Zapabs^\threebytwo}\pap\Ztapbar \in \Hhalf$ as it part of the energy $\Esigmatwo$. Hence $\dis \frac{\sigma^\half}{\Zapabs^\threebytwo}\pap\Ztapbar \in \Ccal$. Now observe that 
\begin{align*}
\norm*[\Bigg][2]{\Dapabs\brac{ \frac{\sigma^\half}{\Zapabs^\threebytwo}\pap\Ztapbar}} \lesssim  \norm*[\Bigg][2]{\frac{\sigma^\half}{\Zapabs^\fivebytwo}\pap^2\Ztapbar} + \norm*[\bigg][\infty]{\sigma^\half\Zapabs^\half\pap\frac{1}{\Zapabs}}\norm*[\Bigg][2]{\frac{1}{\Zapabs^2}\pap\Ztapbar}
\end{align*}
Now as $\dis \frac{\sigma^\half}{\Zapabs^\threebytwo}\pap\Ztapbar$ decays at infinity, we use  \propref{prop:LinftyHhalf} with $\dis w = \frac{1}{\Zapabs}$ to get
\begin{align*}
\lpar  \norm*[\Bigg][\infty]{ \frac{\sigma^\half}{\Zapabs^\threebytwo}\pap\Ztapbar}^2  \lesssim  \norm*[\Bigg][2]{ \frac{\sigma^\half}{\Zapabs^\half}\pap\Ztapbar}\norm*[\Bigg][2]{\Dapabs\brac{ \frac{\sigma^\half}{\Zapabs^\threebytwo}\pap\Ztapbar}} 
\end{align*}
Hence we have proved that $\dis \frac{\sigma^\half}{\Zapabs^\threebytwo}\pap\Ztapbar \in \Wcal\cap\Ccal$. Now using \lemref{lem:CW} we see that
\begin{flalign*}
&& \norm*[\Bigg][\Wcal\cap\Ccal]{\frac{\sigma^\half}{\Zapabs^\half}\pap\Dapbar\Ztbar} \lesssim \enspace \norm*[\Bigg][\Wcal\cap\Ccal]{\frac{\sigma^\half}{\Zapabs^\threebytwo}\pap\Ztapbar}\norm[\Wcal]{\w}+\norm[\Wcal\cap\Ccal]{\Dapabs\Ztbar}\norm[\Wcal]{\sigma^\half\Zapabs^\half\pap\frac{1}{\Zap}}
\end{flalign*}
We prove $\dis \frac{\sigma^\half}{\Zapabs^\half}\pap\Dapabs\Ztbar \in \Wcal\cap\Ccal$,$\dis \frac{\sigma^\half}{\Zapabs^\half}\pap\Dap\Ztbar \in \Wcal\cap\Ccal$ similarly.
\medskip

\item $\dis \frac{\sigma^\onebysix}{\Zapabs^\threebytwo}\pap\Ztapbar \in \Ltwo$, $\dis \frac{\sigma^\onebysix}{\Zapabs^\half}\pap\Dapabs\Ztbar \in \Ltwo$, $\dis \frac{\sigma^\onebysix}{\Zapabs^\half}\pap\Dapbar\Ztbar \in \Ltwo$
\medskip\\
Proof: We interpolate between $\dis \frac{\sigma^\half}{\Zapabs^\half}\pap\Ztapbar \in \Ltwo$ and $\dis \frac{1}{\Zapabs^2}\pap\Ztapbar \in \Ltwo$. We simply decompose $ \dis \abs{\frac{\sigma^\onebysix}{\Zapabs^\threebytwo}\pap\Ztapbar} = \abs{ \frac{\sigma^\half}{\Zapabs^\half}\pap\Ztapbar}^\onebythree\abs{\frac{1}{\Zapabs^2}\pap\Ztapbar}^\twobythree$  and use Holder inequality to obtain
\begin{align*}
\norm[2]{\frac{\sigma^\onebysix}{\Zapabs^\threebytwo}\pap\Ztapbar} \lesssim \norm[2]{\frac{\sigma^\half}{\Zapabs^\half}\pap\Ztapbar}^\onebythree\norm[2]{\frac{1}{\Zapabs^2}\pap\Ztapbar}^\twobythree
\end{align*}
We also see that
\begin{align*}
\norm[2]{\frac{\sigma^\onebysix}{\Zapabs^\half}\pap\Dapabs\Ztbar} \lesssim \norm[2]{\frac{\sigma^\onebysix}{\Zapabs^\threebytwo}\pap\Ztapbar} + \norm[2]{\sigma^\onebysix\Zapabs^\half\pap\frac{1}{\Zapabs}}\norm[\infty]{\Dapabs\Ztbar}
\end{align*}
The proof of $\dis \frac{\sigma^\onebysix}{\Zapabs^\half}\pap\Dapbar\Ztbar \in \Ltwo$ is similar.
\medskip

\item $\dis \frac{\sigma^\onebythree}{\Zapabs}\pap\Ztapbar \in  \Ltwo$, $\dis \sigma^\onebythree\pap\Dapabs\Ztbar \in  \Ltwo$, $\dis \sigma^\onebythree\pap\Dapbar\Ztbar \in  \Ltwo$
\medskip\\
Proof: We observe that
\begin{align*}
\norm[2]{\frac{\sigma^\onebythree}{\Zapabs}\pap\Ztapbar}^2 \lesssim \norm[2]{\frac{\sigma^\half}{\Zapabs^\half}\pap\Ztapbar}\norm[2]{\frac{\sigma^\onebysix}{\Zapabs^\threebytwo}\pap\Ztapbar}
\end{align*}
Similarly we have
\begin{align*}
\norm[2]{\sigma^\onebythree\pap\Dapabs\Ztbar}^2 \lesssim \norm[2]{\sigma^\half\Zapabs^\half\pap\Dapabs\Ztbar}\norm[2]{\frac{\sigma^\onebysix}{\Zapabs^\half}\pap\Dapabs\Ztbar}
\end{align*}
We prove $\dis \sigma^\onebythree\pap\Dapbar\Ztbar \in  \Ltwo$ in the same way as above.
\medskip

\item $\dis \sigma^\onebysix \frac{\Ztapbar}{\Zapabs^\half} \in \Wcal$
\medskip\\
Proof: We use  \propref{prop:LinftyHhalf} with $\dis w = \frac{\sigma^\onebysix}{\Zapabs^\half}$ to get
\[
\lpar \norm[\infty]{\sigma^\onebysix \frac{\Ztapbar}{\Zapabs^\half}}^2 \lesssim \norm[2]{\Ztapbar}\norm[2]{\sigma^\onebythree\pap\Dapabs\Ztbar} + \norm[2]{\Ztapbar}^2\norm[2]{\sigma^\onebysix\Zapabs^\half\pap\frac{1}{\Zapabs}}^2
\]
We also have
\[
\lpar \norm[2]{\Dapabs\brac*[\bigg]{ \sigma^\onebysix \frac{\Ztapbar}{\Zapabs^\half}}} \lesssim \norm*[\bigg][2]{\frac{\sigma^\onebysix}{\Zapabs^\threebytwo}\pap\Ztapbar} + \norm[2]{\sigma^\onebysix\Zapabs^\half\pap\frac{1}{\Zapabs}}\norm[\infty]{\Dapabs\Ztbar}
\]
\medskip

\item $\dis \sigma^\onebysix\pap\Pa\brac{\frac{\Zt}{\Zaphalf}} \in \Linfty $
\medskip\\
Proof: We see that
\begin{align*}
 2\sigma^\onebysix\pap \Pa \brac{\frac{\Zt}{\Zaphalf}} & = \sigma^\onebysix(\Id - \Hil)\brac{\frac{\Ztap}{\Zaphalf}} + \sigma^\onebysix(\Id - \Hil)\brac{\Zt\pap\frac{1}{\Zaphalf}} \\
& = 2\sigma^\onebysix\frac{\Ztap}{\Zaphalf} + \sigma^\onebysix\sqbrac{\frac{1}{\Zaphalf},\Hil}\Ztap + \sigma^\onebysix\sqbrac{\Zt, \Hil}\brac*[\Bigg]{\pap\frac{1}{\Zaphalf} } 
\end{align*}
Hence using \propref{prop:commutator} we have
\begin{align*}
\norm[\infty]{\sigma^\onebysix\pap\Pa\brac{\frac{\Zt}{\Zaphalf}}} \lesssim \norm[\infty]{\sigma^\onebysix \frac{\Ztapbar}{\Zapabs^\half}} + \norm[2]{\sigma^\onebysix\Zapabs^\half\pap\frac{1}{\Zap}}\norm[2]{\Ztap}
\end{align*}
\medskip

\item $\dis \frac{\sigma^\onebythree}{\Zapabs^2}\pap\Ztapbar \in \Linfty\cap\Hhalf$
\medskip\\
Proof: We first observe that 
\[
\lpar \norm*[\bigg][2]{\pap\brac*[\Bigg]{\frac{\sigma^\half}{\Zapabs^\fivebytwo}\pap\Ztapbar}} \lesssim \norm[2]{\pap\frac{1}{\Zapabs}}\norm*[\bigg][\infty]{\frac{\sigma^\half}{\Zapabs^\threebytwo}\pap\Ztapbar} + \norm*[\bigg][2]{\frac{\sigma^\half}{\Zapabs^\fivebytwo}\pap^2\Ztapbar}
\]
We  now use  \propref{prop:LinftyHhalf} with $\dis w = \frac{\sigma^\onebysix}{\Zapabs^\half}$ to get
\begin{align*}
\lpar & \norm*[\bigg][\Linfty\cap\Hhalf]{\frac{\sigma^\onebythree}{\Zapabs^2}\pap\Ztapbar}^2 \\
&  \lesssim \norm*[\bigg][2]{\frac{\sigma^\onebysix}{\Zapabs^\threebytwo}\pap\Ztapbar}\norm*[\bigg][2]{\pap\brac*[\Bigg]{\frac{\sigma^\half}{\Zapabs^\fivebytwo}\pap\Ztapbar}} + \norm*[\bigg][2]{\frac{\sigma^\onebysix}{\Zapabs^\threebytwo}\pap\Ztapbar}^2\norm[2]{\sigma^\onebysix\Zapabs^\half\pap\frac{1}{\Zapabs}}^2
\end{align*}
\medskip

\item $\dis \sigma^\onebythree\pap\bvarap \in  \Ltwo$
\medskip\\
Proof:  Using the formula of $\bvarap$ from \eqref{form:bvarapnew} we see that 
\begin{align*}
\lpar (\Id -\Hil)\pap\bvarap  = (\Id - \Hil)\brac*[\big]{\pap\Dap\Zt} + (\Id - \Hil)\cbrac{\Ztap\pap\frac{1}{\Zap}} + (\Id - \Hil)\cbrac{\Zt\pap^2\frac{1}{\Zap}}
\end{align*}
Now we see that 
\begin{align*}
\Zt\pap^2\frac{1}{\Zap} = \frac{\Zt}{2}\brac{\Zaphalf\pap\frac{1}{\Zap}}^2 + \frac{\Zt}{\Zaphalf}\pap\brac{\Zaphalf\pap\frac{1}{\Zap}}
\end{align*}
hence 
\begin{align*}
(\Id - \Hil)\cbrac{\Zt\pap^2\frac{1}{\Zap}} = \frac{1}{2}\sqbrac{\Zt,\Hil}\brac{\Zaphalf\pap\frac{1}{\Zap}}^2 + \sqbrac{\Pa\brac{\frac{\Zt}{\Zaphalf}}}\pap\brac{\Zaphalf\pap\frac{1}{\Zap}}
\end{align*}
As $\bvarap $ is real valued, by taking real part of $ (\Id -\Hil)\pap\bvarap$ and using \propref{prop:commutator} we get
\begin{align*}
\norm*[\big][2]{\sigma^\onebythree\pap\bvarap} & \lesssim \norm*[\big][2]{\sigma^\onebythree\pap\Dap\Zt} + \cbrac{\norm[\infty]{\sigma^\onebysix\frac{\Ztap}{\Zapabs^\half}} + \norm[\infty]{\sigma^\onebysix\pap\Pa\brac{\frac{\Zt}{\Zaphalf}}} }\norm[2]{\sigma^\onebysix\Zapabs^\half\pap\frac{1}{\Zap}} \\
& \quad + \norm[2]{\Ztap}\norm[2]{\sigma^\onebysix\Zapabs^\half\pap\frac{1}{\Zap}}^2
\end{align*}
\medskip

\item $\dis \frac{\sigma^\onebysix}{\Zapabs^\half}\pap\bvarap \in \Ltwo$ 
\medskip\\
Proof: This is obtained by interpolating between $\dis \sigma^\onebythree\pap\bvarap \in \Ltwo$ and $\dis \Dapabs\bvarap \in \Ltwo$. We have
\[
\norm[2]{\frac{\sigma^\onebysix}{\Zapabs^\half}\pap\bvarap}^2 \lesssim \norm*[\big][2]{\sigma^\onebythree\pap\bvarap}\norm*[\big][2]{\Dapabs\bvarap}
\]
\medskip

\item $\dis \frac{\sigma^\half}{\Zapabs^\half}\pap\bvarap \in \Linfty$
\medskip\\
Proof: As $\bvarap$ is real valued we have from \eqref{form:IminusHbvarap} 
\begin{align*}
\bvarap = \Real\cbrac{ \sqbrac{\frac{1}{\Zap},\Hil}\Ztap + 2\Dap\Zt + \sqbrac{\Zt,\Hil}\brac*[\Big]{\pap \frac{1}{\Zap}}}
\end{align*}
Now taking the derivative $\dis \frac{\sigma^\half}{\Zapabs^\half}\pap $ and using \propref{prop:tripleidentity} we obtain
\begin{align*}
\frac{\sigma^\half}{\Zapabs^\half}\pap\bvarap = \Real &  \Biggl\{   \sqbrac{\frac{\sigma^\half}{\Zapabs^\half}\pap\frac{1}{\Zap}, \Hil }\Ztap + \sqbrac{\frac{1}{\Zap},\Hil}\pap\brac{\frac{\sigma^\half}{\Zapabs^\half}\Ztap}  \\
& \quad - \sqbrac{\frac{\sigma^\half}{\Zapabs^\half}, \frac{1}{\Zap} ; \Ztap} + \frac{2\sigma^\half}{\Zapabs^\half}\pap\Dap\Zt + \sqbrac{\frac{\sigma^\half}{\Zapabs^\half}\Ztap, \Hil}\brac{\pap\frac{1}{\Zap}} \\
& \quad + \sqbrac{\Zt, \Hil}\pap\brac{\frac{\sigma^\half}{\Zapabs^\half}\pap\frac{1}{\Zap}}  - \sqbrac{\frac{\sigma^\half}{\Zapabs^\half}, \Zt ; \pap\frac{1}{\Zap}} \Biggr\}
\end{align*}
Hence using \propref{prop:commutator} and \propref{prop:triple} we have
\begin{align*}
\norm[\infty]{\frac{\sigma^\half}{\Zapabs^\half}\pap\bvarap} & \lesssim \norm[\infty]{\sigma^\half\Zapabs^\half\pap\frac{1}{\Zapabs}}\norm[2]{\pap\frac{1}{\Zap}}\norm[2]{\Ztapbar} + \norm[2]{\frac{\sigma^\half}{\Zapabs^\half}\pap^2\frac{1}{\Zap}}\norm[2]{\Ztapbar} \\
& \quad + \norm[2]{\pap\frac{1}{\Zap}}\norm[2]{\frac{\sigma^\half}{\Zapabs^\half}\pap\Ztapbar} + \norm[\infty]{\frac{\sigma^\half}{\Zapabs^\half}\pap\Dap\Zt} 
\end{align*}
\medskip

\item $\dis \frac{\sigma^\half}{\Zapabs^\threebytwo}\pap^2\bvarap \in \Ltwo$, $\dis \frac{\sigma^\half}{\Zapabs^\half}\pap\Dapabs\bvarap \in \Ltwo$ and $\dis \frac{\sigma^\half}{\Zapabs^\half}\pap\Dap\bvarap \in \Ltwo$
\medskip\\
Proof:  We will first show that $\dis (\Id - \Hil)\cbrac*[\Bigg]{\frac{\sigma^\half}{\Zap^\threebytwo}\pap^2\bvarap} \in \Ltwo$.  Using the formula of $\bvarap$ from \eqref{form:bvarapnew} we see that 
\begin{align*}
\lpar (\Id - \Hil)\cbrac*[\Bigg]{\frac{\sigma^\half}{\Zap^\threebytwo}\pap^2\bvarap} = (\Id - \Hil) &  \Biggl\{\frac{\sigma^\half}{\Zap^\threebytwo}\pap^2\Dap\Zt + \brac*[\bigg]{\frac{\sigma^\half}{\Zap^\threebytwo}\pap\Ztap}\brac{\pap\frac{1}{\Zap}}  \\
& +  2(\Dap\Zt)\brac*[\bigg]{\frac{\sigma^\half}{\Zap^\half}\pap^2\frac{1}{\Zap}}\Biggr\} + (\Id - \Hil)\cbrac*[\bigg]{\brac{\frac{\Zt}{\Zap}} \frac{\sigma^\half}{\Zaphalf}\pap^3\frac{1}{\Zap}}
\end{align*}
Now
\begin{align*}
(\Id - \Hil)\cbrac*[\bigg]{\brac{\frac{\Zt}{\Zap}} \frac{\sigma^\half}{\Zaphalf}\pap^3\frac{1}{\Zap}} & = -\frac{1}{2}\sqbrac{\Zt,\Hil}\cbrac{\brac{\pap\frac{1}{\Zap}}\brac{\frac{\sigma^\half}{\Zaphalf}\pap^2\frac{1}{\Zap}} } \\
& \quad + \sqbrac{\Pa\brac{\frac{\Zt}{\Zap}},\Hil}\pap\brac{\frac{\sigma^\half}{\Zaphalf}\pap^2\frac{1}{\Zap}}
\end{align*}
Hence using \propref{prop:commutator} we have
\begin{align*}
&  \norm*[\Bigg][2]{(\Id - \Hil)\cbrac*[\Bigg]{\frac{\sigma^\half}{\Zap^\threebytwo}\pap^2\bvarap}} \\
& \lesssim \norm[2]{\frac{\sigma^\half}{\Zapabs^\threebytwo}\pap^2\Dap\Zt} + \norm[2]{\frac{1}{\Zapabs^2}\pap\Ztap}\norm[\infty]{\sigma^\half\Zapabs^\half\pap\frac{1}{\Zap}} \\
& \quad  + \norm[2]{\frac{\sigma^\half}{\Zapabs^\half}\pap^2\frac{1}{\Zap}}\cbrac{\norm[\infty]{\Dap\Zt} + \norm[2]{\Ztap}\norm[2]{\pap\frac{1}{\Zap}}} 
\end{align*}

Now lets come back to prove $\dis \frac{\sigma^\half}{\Zapabs^\threebytwo}\pap^2\bvarap \in \Ltwo$. We see that
\begin{align*}
\frac{\sigma^\half}{\Zapabs^\threebytwo}\pap^2\bvarap = \Real\cbrac*[\Bigg]{\frac{\sigma^\half\w^\threebytwo}{\Zap^\threebytwo}(\Id - \Hil)\pap^2\bvarap }
\end{align*}
Hence it is enough to show that $\dis \frac{\sigma^\half}{\Zap^\threebytwo}(\Id - \Hil)\pap^2\bvarap \in \Ltwo$. Now we have
\begin{align*}
\frac{\sigma^\half}{\Zap^\threebytwo}(\Id - \Hil)\pap^2\bvarap  = -\sqbrac{\frac{\sigma^\half}{\Zap^\threebytwo},\Hil}\pap^2\bvarap + (\Id - \Hil)\cbrac*[\Bigg]{\frac{\sigma^\half}{\Zap^\threebytwo}\pap^2\bvarap}
\end{align*}
From this and \propref{prop:commutator} we finally have the estimate
\begin{align*}
\norm[2]{ \frac{\sigma^\half}{\Zapabs^\threebytwo}\pap^2\bvarap} & \lesssim \norm[\infty]{\bvarap}\cbrac{\norm[2]{\frac{\sigma^\half}{\Zapabs^\half}\pap^2\frac{1}{\Zap}} + \norm[\infty]{\sigma^\half\Zapabs^\half\pap\frac{1}{\Zap}}\norm[2]{\pap\frac{1}{\Zap}} } \\
& \quad + \norm*[\Bigg][2]{(\Id - \Hil)\cbrac*[\Bigg]{\frac{\sigma^\half}{\Zap^\threebytwo}\pap^2\bvarap}}
\end{align*}
We also see that
\begin{align*}
\norm[2]{\frac{\sigma^\half}{\Zapabs^\half}\pap\Dapabs\bvarap} \lesssim \norm[\infty]{\sigma^\half\Zapabs^\half\pap\frac{1}{\Zapabs}}\norm[2]{\Dapabs\bvarap} + \norm[2]{ \frac{\sigma^\half}{\Zapabs^\threebytwo}\pap^2\bvarap}
\end{align*}
The other term $\dis \frac{\sigma^\half}{\Zapabs^\half}\pap\Dap\bvarap \in \Ltwo$ is obtained similarly.
\medskip

\item $\dis \frac{\sigma^\half}{\Zapabs^\half}\pap\Aone \in \Linfty$
\medskip\\
Proof: We know that $\Aone  = 1 - \Imag[\Zt,\Hil]\Ztapbar$ and hence using \propref{prop:tripleidentity} we have
\begin{align*}
\quad \frac{\sigma^\half}{\Zapabs^\half}\pap\Aone = -\Imag\cbrac{\sqbrac{\frac{\sigma^\half}{\Zapabs^\half}\Ztap,\Hil}\Ztapbar + \sqbrac{\Zt,\Hil}\pap\brac{\frac{\sigma^\half}{\Zapabs^\half}\Ztapbar} - \sqbrac{\frac{\sigma^\half}{\Zapabs^\half}, \Zt ; \Ztapbar} }
\end{align*}
Hence using \propref{prop:commutator} and \propref{prop:triple} we have
\begin{align*}
\norm[\infty]{\frac{\sigma^\half}{\Zapabs^\half}\pap\Aone} \lesssim \norm[\infty]{\sigma^\half\Zapabs^\half\pap\frac{1}{\Zapabs}}\norm[2]{\Ztapbar}^2 + \norm[2]{\frac{\sigma^\half}{\Zapabs^\half}\pap\Ztapbar}\norm[2]{\Ztapbar}
\end{align*}

\medskip

\item $\dis \frac{\sigma^\half}{\Zapabs^\threebytwo}\pap^2\Aone \in \Ltwo$
\medskip\\
Proof: Observe that $\dis  \frac{\sigma^\half}{\Zapabs^\threebytwo}\pap^2\Aone = \Real \cbrac*[\bigg]{\frac{\sigma^\half\w^\threebytwo}{(\Zap)^\threebytwo}(\Id - \Hil)\pap^2\Aone}$ and hence it is enough to show that  $\dis \frac{\sigma^\half}{(\Zap)^\threebytwo}(\Id - \Hil)\pap^2\Aone \in \Ltwo$. Now
\begin{align*}
\lpar \frac{\sigma^\half}{(\Zap)^\threebytwo}(\Id - \Hil)\pap^2\Aone = (\Id - \Hil)\cbrac*[\bigg]{\frac{\sigma^\half}{(\Zap)^\threebytwo}\pap^2\Aone} - \sqbrac*[\Bigg]{\frac{\sigma^\half}{(\Zap)^\threebytwo} ,\Hil}\pap^2\Aone
\end{align*}
Using the formula of $\Aone$ from \eqref{form:Aonenew} we see that  
\begin{align*}
\lpar (\Id - \Hil)\cbrac*[\bigg]{\frac{\sigma^\half}{(\Zap)^\threebytwo}\pap^2\Aone} & = i(\Id - \Hil)\cbrac*[\bigg]{\frac{\sigma^\half}{(\Zap)^\threebytwo}\cbrac*[\Big]{ (\pap\Ztap)(\Ztapbar) + 2(\Ztap)(\pap\Ztapbar)}}  \\
& + i\sqbrac{\Zt,\Hil}\cbrac*[\bigg]{\frac{\sigma^\half}{(\Zap)^\threebytwo}\pap^2\Ztapbar}
\end{align*}
With 
\begin{flalign*}
&& \sqbrac{\Zt,\Hil}\cbrac*[\bigg]{\frac{\sigma^\half}{(\Zap)^\threebytwo}\pap^2\Ztapbar} = \sqbrac{\Zt,\Hil}\pap\cbrac*[\bigg]{\frac{\sigma^\half}{(\Zap)^\threebytwo}\pap\Ztapbar} - \threebytwo\sqbrac{\Zt,\Hil}\cbrac*[\bigg]{\brac{\pap\frac{1}{\Zap}}\frac{\sigma^\half}{\Zaphalf}\pap\Ztapbar} 
\end{flalign*}
 Hence using \propref{prop:commutator} we have
\begin{flalign*}
\lpar \norm*[\bigg][2]{\frac{\sigma^\half}{\Zapabs^\threebytwo}\pap^2\Aone} & \lesssim \cbrac{\norm*[\bigg][2]{\frac{\sigma^\half}{\Zapabs^\half}\pap^2\frac{1}{\Zap}} + \norm[\infty]{\sigma^\half\Zapabs^\half\pap\frac{1}{\Zap}}\norm[2]{\pap\frac{1}{\Zap}}} \norm[\infty]{\Aone}  \\
& \quad  + \norm[2]{\Ztap}\cbrac{\norm*[\bigg][\infty]{\frac{\sigma^\half}{\Zapabs^\threebytwo}\pap\Ztapbar} + \norm[2]{\pap\frac{1}{\Zap}}\norm*[\bigg][2]{\frac{\sigma^\half}{\Zapabs^\half}\pap\Ztapbar } }
\end{flalign*}
\medskip

\item $\dis (\Id - \Hil)\Dt^2\Th \in \Ltwo$, $\dis (\Id - \Hil)\Dt^2\Ztapbar \in \Ltwo$, $\dis (\Id - \Hil)\Dt^2 \Dap\Ztbar \in \Hhalf$
\medskip\\
Proof: For a function $f$ satisfying $\Pa f = 0$ we use \propref{prop:tripleidentity} to get
\begin{align*}
(\Id - \Hil)\Dt^2f & = \sqbrac{\Dt,\Hil}\Dt f + \Dt\sqbrac{\Dt,\Hil}f \\
& = \sqbrac{\bvar,\Hil}\pap\Dt f + \Dt\sqbrac{\bvar,\Hil}\pap f \\
& = 2\sqbrac{\bvar,\Hil}\pap\Dt f + \sqbrac{\Dt\bvar,\Hil}\pap f - \sqbrac{\bvar, \bvar ; \pap f}
\end{align*}
Hence using \propref{prop:commutator} and \propref{prop:triple} we have
\begingroup
\allowdisplaybreaks
\begin{align*}
 \norm[2]{(\Id - \Hil)\Dt^2\Th}  & \lesssim \norm[\Hhalf]{\bvarap}\norm[2]{\Dt\Th} + \norm[\Hhalf]{\pap\Dt\bvar}\norm[2]{\Th} + \norm[\infty]{\bvarap}^2\norm[2]{\Th} \\
 \norm[2]{(\Id - \Hil)\Dt^2\Ztapbar} & \lesssim \norm[\Hhalf]{\bvarap}\norm[2]{\Dt\Ztapbar} + \norm[\Hhalf]{\pap\Dt\bvar}\norm[2]{\Ztapbar} + \norm[\infty]{\bvarap}^2\norm[2]{\Ztapbar}  \\
  \norm[\Hhalf]{(\Id - \Hil)\Dt^2\Dap\Ztbar} & \lesssim 
 \begin{aligned}[t]
 & \norm[\Hhalf]{\bvarap}\norm[\Hhalf]{\Dt\Dap\Ztbar} + \norm[\Hhalf]{\pap\Dt\bvar}\norm[\Hhalf]{\Dap\Ztbar}  \\
 & + \norm[\infty]{\bvarap}^2\norm[\Hhalf]{\Dap\Ztbar} 
 \end{aligned}
\end{align*}
\endgroup
\medskip

\item $\dis \sigma(\Id - \Hil)\Dapabs^3\Th \in \Ltwo $, $\dis \sigma(\Id - \Hil)\Dapabs^3\Ztapbar \in \Ltwo $, $\dis \sigma(\Id - \Hil)\Dapabs^3\Dap\Ztbar \in \Hhalf $
\medskip\\
Proof:  We use  \eqref{form:Dapabs^3f} for a function $f$ satisfying $\Pa f = 0$ to get 
\begin{align*}
\sigma(\Id - \Hil)\Dapabs^3 f & = \sigma(\Id - \Hil)\cbrac*[\bigg]{ \brac{\frac{1}{\Zapabs}\pap^2\frac{1}{\Zapabs}}\Dapabs f +  \brac{\pap\frac{1}{\Zapabs}}^2\Dapabs f } \\
& \quad  + \sigma\sqbrac{\pap\frac{1}{\Zapabs^3},\Hil}\pap^2 f + \sigma\sqbrac{\frac{1}{\Zapabs^3},\Hil}\pap^3 f
\end{align*}
Hence using \propref{prop:commutator} we have
\begingroup
\allowdisplaybreaks
\begin{flalign*}
 \quad \  \norm*[\big][2]{\sigma(\Id - \Hil)\Dapabs^3\Ztapbar}  \lesssim \enspace & \norm*[\bigg][\Hhalf]{\sigma\pap^3\frac{1}{\Zapabs^3}}\norm[2]{\Ztapbar} + \norm*[\bigg][\Ccal]{\frac{\sigma^\half}{\Zapabs^\threebytwo}\pap^2\frac{1}{\Zapabs}}\norm*[\bigg][\Ccal]{\frac{\sigma^\half}{\Zapabs^\threebytwo}\pap\Ztapbar} \\*
& + \norm*[\bigg][\infty]{\sigma^\half\Zapabs^\half\pap\frac{1}{\Zapabs}}^2\norm*[\bigg][2]{\frac{1}{\Zapabs^2}\pap\Ztapbar} \\
\norm*[\big][2]{\sigma(\Id - \Hil)\Dapabs^3\Th}  \lesssim \enspace & \norm*[\bigg][\Hhalf]{\sigma\pap^3\frac{1}{\Zapabs^3}}\norm[2]{\Th} + \norm*[\bigg][\Ccal]{\frac{\sigma^\half}{\Zapabs^\threebytwo}\pap^2\frac{1}{\Zapabs}}\norm*[\bigg][\Ccal]{\frac{\sigma^\half}{\Zapabs^\threebytwo}\pap\Th} \\*
&  + \norm[\Wcal]{\sigma^\half\Zapabs^\half\pap\frac{1}{\Zapabs}}\norm[\Ccal]{\frac{1}{\Zapabs}\pap\frac{1}{\Zapabs}}\norm*[\bigg][\Ccal]{\frac{\sigma^\half}{\Zapabs^\threebytwo}\pap\Th} 
\end{flalign*}
\endgroup
and also 
\begin{align*}
& \norm*[\big][\Hhalf]{\sigma(\Id - \Hil)\Dapabs^3\Dap\Ztbar} \\
& \lesssim \enspace  \norm*[\bigg][\Hhalf]{\sigma\pap^3\frac{1}{\Zapabs^3}}\norm[\Hhalf]{\Dap\Ztbar} + \norm*[\bigg][\Ccal]{\frac{\sigma^\half}{\Zapabs^\threebytwo}\pap^2\frac{1}{\Zapabs}}\norm*[\bigg][\Wcal]{\frac{\sigma^\half}{\Zapabs^\half}\pap\Dap\Ztbar}  \\*
& \quad + \norm[\Wcal]{\sigma^\half\Zapabs^\half\pap\frac{1}{\Zapabs}}\norm[\Ccal]{\frac{1}{\Zapabs}\pap\frac{1}{\Zapabs}}\norm*[\bigg][\Wcal]{\frac{\sigma^\half}{\Zapabs^\half}\pap\Dap\Ztbar}
\end{align*}
\medskip

\item $\dis \sqbrac{\Dt^2, \frac{1}{\Zap}}\Ztapbar \in \Ccal$, $\dis \sqbrac{\Dt^2, \frac{1}{\Zapbar}}\Ztapbar \in \Ccal$
\medskip\\
Proof: We will only show $\dis \sqbrac{\Dt^2, \frac{1}{\Zap}}\Ztapbar \in \Ccal$ and $\dis \sqbrac{\Dt^2, \frac{1}{\Zapbar}}\Ztapbar \in \Ccal$ is proved similarly. We recall from \eqref{form:DtoneoverZap} that $\dis  \Dt \frac{1}{\Zap} =  \frac{1}{\Zap}\cbrac{(\bvarap - \Dap\Zt - \Dapbar \Ztbar) + \Dapbar \Ztbar}$ and hence
\begin{align*}
\lpar \sqbrac{\Dt^2, \frac{1}{\Zap}}\Ztapbar &= \Ztapbar\Dt^2\frac{1}{\Zap} + 2(\Dt\Ztapbar)\Dt\frac{1}{\Zap} \\
& = \Ztapbar\Dt\cbrac{ \frac{1}{\Zap}\cbrac{(\bvarap - \Dap\Zt - \Dapbar \Ztbar) + \Dapbar \Ztbar}} \\
& \quad  + 2(\Dap\Zttbar - \bvarap\Dap\Ztbar)(\bvarap - \Dap\Zt) \\
& = (\Dap\Ztbar)(\bvarap - \Dap\Zt)^2 + (\Dap\Ztbar)\Dt(\bvarap - \Dap\Zt - \Dapbar\Ztbar) \\
& \quad + (\Dap\Ztbar)\Dt\Dapbar\Ztbar + 2(\Dap\Zttbar - \bvarap\Dap\Ztbar)(\bvarap - \Dap\Zt)
\end{align*}
Now using \propref{prop:Leibniz} we have the estimates
\begin{align*}
\lpar \norm[2]{\Zapabs(\Dap\Ztbar)\Dt(\bvarap - \Dap\Zt - \Dapbar\Ztbar)} & \lesssim \norm[2]{\Ztbarap}\norm[\infty]{\Dt(\bvarap - \Dap\Zt - \Dapbar\Ztbar)} \\
\norm[\Hhalf]{(\Dap\Ztbar)\Dt(\bvarap - \Dap\Zt - \Dapbar\Ztbar)} & \lesssim \norm[\Hhalf]{\Dap\Ztbar}\norm[\infty]{\Dt(\bvarap - \Dap\Zt - \Dapbar\Ztbar)} \\
& \quad +  \norm[\infty]{\Dap\Ztbar}\norm[\Hhalf]{\Dt(\bvarap - \Dap\Zt - \Dapbar\Ztbar)}
\end{align*}
This implies that $\dis (\Dap\Ztbar)\Dt(\bvarap - \Dap\Zt - \Dapbar\Ztbar) \in \Ccal$. Hence using \lemref{lem:CW} we have
\begin{align*}
\lpar \norm[\Ccal]{\sqbrac{\Dt^2, \frac{1}{\Zap}}\Ztapbar } & \lesssim  \norm[\Ccal]{\Dap\Ztbar}\norm[\Wcal]{\bvarap -\Dap\Zt}^2 + \norm[\Ccal]{(\Dap\Ztbar)\Dt(\bvarap - \Dap\Zt - \Dapbar\Ztbar)} \\
&  \quad + \brac{\norm[\Ccal]{\Dap\Zttbar} + \norm[\Wcal]{\bvarap}\norm[\Ccal]{\Dap\Ztbar}}\brac{\norm[\Wcal]{\bvarap} + \norm[\Wcal]{\Dap\Zt}} \\
& \quad + \norm[\Wcal]{\Dap\Ztbar}\norm[\Ccal]{\Dt\Dapbar\Ztbar}
\end{align*}
\medskip

\item $\dis \sqbrac{i\frac{\Aone}{\Zapabs^2}\pap, \frac{1}{\Zap}}\Ztapbar \in \Ccal$, $\dis \sqbrac{i\frac{\Aone}{\Zapabs^2}\pap, \frac{1}{\Zapbar}}\Ztapbar \in \Ccal$
\medskip\\
Proof: Observe that $\dis \sqbrac{i\frac{\Aone}{\Zapabs^2}\pap, \frac{1}{\Zap}}\Ztapbar = i\Aone(\Dapabs\Ztbar)\Dapabs\frac{1}{\Zap}$ and so using \lemref{lem:CW} we have
\[
\norm[\Ccal]{\sqbrac{i\frac{\Aone}{\Zapabs^2}\pap, \frac{1}{\Zap}}\Ztapbar} \lesssim \norm[\Wcal]{\Aone}\norm[\Wcal]{\Dapabs\Ztbar}\norm[\Ccal]{\Dapabs\frac{1}{\Zap}}
\]
The other term is proved similarly.
\medskip

\item  $\dis (\Id - \Hil)\sqbrac{i\sigma\Dapabs^3, \frac{1}{\Zap}}\Ztapbar \in \Hhalf$, $\dis (\Id - \Hil)\sqbrac{i\sigma\Dapabs^3, \frac{1}{\Zapbar}}\Ztapbar \in \Hhalf$ and we also have $\dis \Zapabs\sqbrac{i\sigma\Dapabs^3, \frac{1}{\Zap}}\Ztapbar \in \Ltwo$, $\dis \Zapabs\sqbrac{i\sigma\Dapabs^3, \frac{1}{\Zapbar}}\Ztapbar \in \Ltwo$
\medskip\\
Proof: We will only show $\dis (\Id - \Hil)\sqbrac{i\sigma\Dapabs^3, \frac{1}{\Zap}}\Ztapbar \in \Ccal$ and $\dis \Zapabs\sqbrac{i\sigma\Dapabs^3, \frac{1}{\Zap}}\Ztapbar \in \Ltwo$  and the other terms are proved similarly. Note that we are not making the stronger claim that $\dis \sqbrac{i\sigma\Dapabs^3, \frac{1}{\Zap}}\Ztapbar \in \Ccal$. This is not true and the use of $(\Id - \Hil)$ in the $\Hhalf$ estimate is essential. We have
\begin{align*}
\lpar \sqbrac{i\sigma\Dapabs^3, \frac{1}{\Zap}}\Ztapbar &= i\sigma\brac{\Dapabs^3\frac{1}{\Zap}}\Ztapbar + 3i\sigma\brac{\Dapabs^2\frac{1}{\Zap}}\Dapabs\Ztapbar \\
& \quad + 3i\sigma\brac{\Dapabs\frac{1}{\Zap}}\Dapabs^2\Ztapbar
\end{align*}
We control each term seperately:
\begin{enumerate}
\item We use the expansion in  \eqref{form:Dapabs^3f} to get
\begin{flalign*}
\lpar & \sigma\brac{\Dapabs^3\frac{1}{\Zap}}\Ztapbar  \\
& =  \sigma\brac{\frac{1}{\Zapabs}\pap^2\frac{1}{\Zapabs}}\brac{\Dapabs\frac{1}{\Zap}}\Ztapbar + \sigma\brac{\pap\frac{1}{\Zapabs}}^2\brac{\Dapabs\frac{1}{\Zap}}\Ztapbar \\
& \quad + 3\sigma\brac{\pap\frac{1}{\Zapabs}}\brac*[\bigg]{\frac{1}{\Zapabs^2}\pap^2\frac{1}{\Zap}}\Ztapbar + \sigma\brac*[\bigg]{\frac{1}{\Zapabs^3}\pap^3\frac{1}{\Zap}}\Ztapbar
\end{flalign*}
Hence using \lemref{lem:CW} we have the estimate
\begin{flalign*}
&& & \norm[\Ccal]{\sigma\brac{\Dapabs^3\frac{1}{\Zap}}\Ztapbar}  \\
&& & \lesssim \norm[\Wcal]{\Dapabs\Ztbar} 
\begin{aligned}[t]
& \Bigg\{\norm*[\bigg][\Ccal]{\frac{\sigma^\half}{\Zapabs^\threebytwo}\pap^2\frac{1}{\Zapabs}}\norm[\Wcal]{\sigma^\half\Zapabs^\half\pap\frac{1}{\Zap}} + \norm[\Ccal]{\Dapabs\frac{1}{\Zap}}\norm[\Wcal]{\sigma^\half\Zapabs^\half\frac{1}{\Zapabs}}^2 \\
& \quad + \norm*[\bigg][\Ccal]{\frac{\sigma^\half}{\Zapabs^\threebytwo}\pap^2\frac{1}{\Zap}}\norm[\Wcal]{\sigma^\half\Zapabs^\half\pap\frac{1}{\Zapabs}} + \norm*[\bigg][\Ccal]{\frac{\sigma}{\Zapabs^2}\pap^3\frac{1}{\Zap}} \Bigg\}
\end{aligned}
\end{flalign*}

\item We observe that 
\begin{flalign*}
\lpar \qq& \sigma\brac{\Dapabs^2\frac{1}{\Zap}}\Dapabs\Ztapbar = \sigma\cbrac*[\bigg]{\brac{\pap\frac{1}{\Zapabs}}\Dapabs\frac{1}{\Zap} + \frac{1}{\Zapabs^2}\pap^2\frac{1}{\Zap}  }\Dapabs\Ztapbar 
\end{flalign*}
Hence using \lemref{lem:CW}  we have the estimate
\begin{flalign*}
&& & \norm[\Ccal]{\sigma\brac{\Dapabs^2\frac{1}{\Zap}}\Dapabs\Ztapbar} \lesssim 
\begin{aligned}[t]
& \norm[\Wcal]{\sigma^\half\Zapabs^\half\frac{1}{\Zapabs}}\norm[\Ccal]{\Dapabs\frac{1}{\Zap}}\norm*[\bigg][\Wcal]{\frac{\sigma^\half}{\Zapabs^\threebytwo}\pap\Ztapbar} \\
& + \norm*[\bigg][\Ccal]{\frac{\sigma^\half}{\Zapabs^\threebytwo}\pap^2\frac{1}{\Zap}}\norm*[\bigg][\Wcal]{\frac{\sigma^\half}{\Zapabs^\threebytwo}\pap\Ztapbar}
\end{aligned} 
\end{flalign*}

\item We observe that
\begin{flalign*}
\lpar\qq \sigma\brac{\Dapabs\frac{1}{\Zap}}\Dapabs^2\Ztapbar  = \sigma\brac{\Dapabs\frac{1}{\Zap}} \cbrac*[\bigg]{\brac{\pap\frac{1}{\Zapabs}}\Dapabs\Ztapbar +\frac{1}{\Zapabs^2}\pap^2\Ztapbar }
\end{flalign*}
The first term is easily controlled using \lemref{lem:CW} 
\begin{align*}
& \norm[\Ccal]{ \sigma\brac{\Dapabs\frac{1}{\Zap}} \brac{\pap\frac{1}{\Zapabs}}\Dapabs\Ztapbar } \\
& \lesssim \norm[\Ccal]{\Dapabs\frac{1}{\Zap}}\norm[\Wcal]{\sigma^\half\Zapabs^\half\pap\frac{1}{\Zapabs}}\norm*[\bigg][\Wcal]{\frac{\sigma^\half}{\Zapabs^\threebytwo}\pap\Ztapbar}
\end{align*}
Hence we are only left with $\dis \sigma \brac{\Dapabs\frac{1}{\Zap}}\frac{1}{\Zapabs^2}\pap^2\Ztapbar$. We see that 
\begin{flalign*}
\lpar \qq\norm*[\bigg][2]{\sigma \Zapabs \brac{\Dapabs\frac{1}{\Zap}}\frac{1}{\Zapabs^2}\pap^2\Ztapbar} \lesssim \norm[\infty]{\sigma^\half\Zapabs^\half\pap\frac{1}{\Zap}}\norm*[\bigg][2]{\frac{\sigma^\half}{\Zapabs^\fivebytwo}\pap^2\Ztapbar}
\end{flalign*}
This conclude the proof of $\dis \Zapabs\sqbrac{i\sigma\Dapabs^3, \frac{1}{\Zap}}\Ztapbar \in \Ltwo$. To finish the $\Hhalf$ estimate we rewrite the term $\dis \frac{1}{\Zapabs^2}\pap^2\Ztapbar$ as $\dis \frac{\w^2}{\Zap^2}\pap^2\Ztapbar$ and commute one derivative outside to obtain
\begin{align*}
\lpar \qq (\Id - \Hil)\cbrac*[\bigg]{ \sigma \brac{\Dapabs\frac{1}{\Zap}}\frac{1}{\Zapabs^2}\pap^2\Ztapbar} &= -2(\Id - \Hil)\cbrac*[\bigg]{\sigma \brac{\pap\frac{1}{\Zap}}^2\frac{\w}{\Zapabs^2}\pap\Ztapbar } \\
& \quad + \sigma\sqbrac{\frac{\w}{\Zapbar}\pap\frac{1}{\Zap},\Hil}\pap\brac*[\bigg]{\frac{1}{\Zap^2}\pap\Ztapbar}
\end{align*}
We can bound each of the terms using \lemref{lem:CW} and \propref{prop:commutator}
\begin{flalign*}
 \lpar \qq \quad & \norm*[\bigg][\Ccal]{\sigma \brac{\pap\frac{1}{\Zap}}^2\frac{\w}{\Zapabs^2}\pap\Ztapbar} & \\
&  \lesssim \norm[\Ccal]{\Dapabs\frac{1}{\Zap}}\norm[\Wcal]{\sigma^\half\Zapabs^\half\pap\frac{1}{\Zap}}\norm*[\bigg][\Wcal]{\frac{\sigma^\half}{\Zapabs^\threebytwo}\pap\Ztapbar}\norm[\Wcal]{\w} 
\end{flalign*}
and also
\begin{flalign*}
\lpar \qq   & \norm*[\bigg][\Hhalf]{\sigma\sqbrac{\frac{\w}{\Zapbar}\pap\frac{1}{\Zap},\Hil}\pap\brac*[\bigg]{\frac{1}{\Zap^2}\pap\Ztapbar}}  \\
& \lesssim \norm*[\bigg][2]{\frac{1}{\Zap^2}\pap\Ztapbar}\Biggl\{
\begin{aligned}[t]
& \norm*[\bigg][2]{\frac{\sigma^\twobythree}{\Zapbar}\pap^2\w}\norm[\infty]{\sigma^\onebythree\pap\frac{1}{\Zap}} + \norm[2]{\Dapabs\w}\norm*[\bigg][\infty]{\sigma^\half\Zapabs^\half\pap\frac{1}{\Zap}}^2 \\
& + \norm*[\big][\infty]{\sigma^\onebythree\Dapbar\w}\norm[2]{\sigma^\twobythree\pap^2\frac{1}{\Zap}} + \norm[\infty]{\sigma^\onebythree\pap\frac{1}{\Zap}}\norm[2]{\sigma^\twobythree \pap^2\frac{1}{\Zap}}  \\
& + \norm[2]{\frac{\sigma}{\Zapbar}\pap^3\frac{1}{\Zap}} \Bigg\}
\end{aligned} 
\end{flalign*}
\end{enumerate}
\medskip

\item $\dis \Rone \in \Ccal$
\medskip\\
Proof: We recall from \eqref{form:Rone} the formula of $\Rone$
\begin{align*} 
\begin{split}  
\Rone & =  -2(\Dapbar\Ztbar)(\Dt\Dapbar\Ztbar) -2\sigma\Real(\Dap\Zt)\Dapbar\Dap\Th -\sigma(\Dapbar\Dap\Zt)\Dap\Th \\ 
& \quad  +i\sigma\brac{2i\Real \brac{\Dapabs \Th} + \brac{\Real \Th}^2} \Dapabs \Dapbar\Ztbar -\sigma \Real \brac{\Dapabs^2 \Th}\Dapbar\Ztbar  \\
& \quad   + i\sigma \brac{\Real \Th} \brac{ \Real \brac{\Dapabs \Th}}\Dapbar\Ztbar 
\end{split}
\end{align*}
All the terms are easily controlled using \lemref{lem:CW}
\begin{align*}
&  \norm[\Ccal]{\Rone}\\
& \lesssim  \norm[\Wcal]{\Dapbar\Ztbar}\norm[\Ccal]{\Dt\Dapbar\Ztbar} + \norm[\Wcal]{\Dap\Zt}\norm[\Ccal]{\sigma\Dapbar\Dap\Th} + \norm*[\bigg][\Wcal]{\frac{\sigma^\half}{\Zapabs^\half}\pap\Dap\Zt}\norm*[\bigg][\Ccal]{\frac{\sigma^\half}{\Zapabs^\threebytwo}\pap\Th}  \\
&  \quad + \cbrac*[\Bigg]{\norm*[\bigg][\Ccal]{\frac{\sigma^\half}{\Zapabs^\threebytwo}\pap\Th} + \norm[\Ccal]{\frac{\Th}{\Zapabs}}\norm[\Wcal]{\sigma^\half\Zapabs^\half\Real\Th}}\norm*[\bigg][\Wcal]{\frac{\sigma^\half}{\Zapabs^\half}\pap\Dapbar\Ztbar} \\
 & \quad  + \norm[\Ccal]{\sigma\Dapabs^2\Th}\norm[\Wcal]{\Dapbar\Ztbar} + \norm[\Wcal]{\sigma^\half\Zapabs^\half\Real \Th}\norm*[\bigg][\Ccal]{\frac{\sigma^\half}{\Zapabs^\threebytwo}\pap\Th}\norm[\Wcal]{\Dapbar\Ztbar}
\end{align*}
\medskip

\item $\dis \Jone \in \Linfty\cap\Hhalf$
\medskip\\
Proof: We recall from \eqref{form:Jone} the formula for $\Jone$
\begin{align*} 
\Jone &=  \Dt\Aone +  \Aone\brac{\bvarap - \Dap\Zt - \Dapbar\Ztbar} + \sigma \pap \Real \brac{\Id - \Hil} \cbrac{(\Dapabs + i\Real \Th) \Dapbar \Ztbar} \\
& \quad - \sigma\pap \Imag\brac{\Id - \Hil} \Dt\Th 
\end{align*}
We have already shown that $\dis \Dt\Aone \in \Linfty\cap \Hhalf$ and we have using \propref{prop:Leibniz}
\begin{align*}
\norm[\Linfty\cap\Hhalf]{\Aone\brac{\bvarap - \Dap\Zt - \Dapbar\Ztbar}} \lesssim & \norm[\Linfty\cap\Hhalf]{\Aone}\norm[\infty]{\bvarap - \Dap\Zt - \Dapbar\Ztbar} \\
& + \norm[\infty]{\Aone}\norm[\Linfty\cap\Hhalf]{\bvarap - \Dap\Zt - \Dapbar\Ztbar}
\end{align*}
Let us now control the other terms.
\begin{enumerate}
\item Observe that 
\begin{align*}
\lpar \quad \sigma\pap(\Id - \Hil)\cbrac{\Dapabs\Dapbar\Ztbar} & =  \sigma\pap(\Id - \Hil)\cbrac{\brac{\Dapabs\frac{1}{\Zapbar}}\Ztapbar + \frac{1}{\Zapabs\Zapbar}\pap\Ztapbar} \\
& = \sigma\pap\sqbrac{\Dapabs\frac{1}{\Zapbar} ,\Hil}\Ztapbar + \sigma\pap\sqbrac{\frac{1}{\Zapabs\Zapbar},\Hil}\pap\Ztapbar
\end{align*}
Hence using \propref{prop:commutator} we have
\begin{align*}
 & \norm[\Linfty\cap\Hhalf]{\sigma\pap(\Id - \Hil)\cbrac{\Dapabs\Dapbar\Ztbar}} && \\
 & \lesssim \norm[2]{\Ztapbar}
\begin{aligned}[t]
& \Bigg\{\norm[2]{\frac{\sigma}{\Zapabs}\pap^3\frac{1}{\Zapbar}} + \norm[2]{\sigma^\twobythree\pap^2\frac{1}{\Zapabs}}\norm[\infty]{\sigma^\onebythree\pap\frac{1}{\Zapbar}} \\
& \quad +  \norm[2]{\sigma^\twobythree\pap^2\frac{1}{\Zapbar}}\norm[\infty]{\sigma^\onebythree\pap\frac{1}{\Zapabs}}\Bigg\}
\end{aligned}
\end{align*}
\medskip

\item We note that $i\Real\Th = \Dap\w$ and hence we have
\begin{align*}
\sigma\pap(\Id - \Hil)\cbrac{(i\Real\Th)\Dapbar\Ztbar} & = \sigma\pap(\Id - \Hil)\cbrac{(\Dap\w)\Dapbar\Ztbar} \\
& = \sigma\pap\sqbrac{\frac{1}{\Zapabs^2}\pap\w,\Hil}\Ztapbar
\end{align*}
From this and \propref{prop:commutator} we obtain
\begin{flalign*}
&&& \norm[\Linfty\cap\Hhalf]{\sigma\pap(\Id - \Hil)\cbrac{(i\Real\Th)\Dapbar\Ztbar}} \\
&&& \lesssim \norm[2]{\Ztapbar}
\begin{aligned}[t]
& \Biggl\{\norm*[\bigg][2]{\frac{\sigma}{\Zapabs^2}\pap^3\w} + \norm*[\bigg][2]{\frac{\sigma^\twobythree}{\Zapabs}\pap^2\w}\norm[\infty]{\sigma^\onebythree\pap\frac{1}{\Zapabs}} +  \norm[2]{\sigma^\twobythree\pap^2\frac{1}{\Zapabs}}\norm[\infty]{\sigma^\onebythree\Dapabs\w} \\
& \quad  + \norm[\infty]{\sigma^\half\Zapabs^\half\pap\frac{1}{\Zapabs}}^2\norm[2]{\Dapabs\w} \Biggl\}
\end{aligned}
\end{flalign*}
\medskip

\item We see that as $\Pa \Th = 0 $ we have
\begin{align*}
\sigma\pap(\Id - \Hil) \Dt\Th = \sigma\pap\sqbrac{\Dt,\Hil}\Th = \sigma\pap\sqbrac{\bvar,\Hil}\pap\Th
\end{align*}
Hence using \propref{prop:commutator} we have
\[
\norm[\Linfty\cap\Hhalf]{\sigma\pap(\Id - \Hil) \Dt\Th} \lesssim \norm*[\big][2]{\sigma^\onebythree\pap\bvarap}\norm*[\big][2]{\sigma^\twobythree\pap\Th}
\]

\end{enumerate}
\medskip

\item $\dis \Dapabs\Jone \in \Ltwo$ and hence $\dis \Jone \in \Wcal$
\medskip\\
Proof: As $\Jone$ is real valued we have
\[
\Dapabs\Jone = \Real\cbrac{\frac{\w}{\Zap}(\Id - \Hil)\pap\Jone} = \Real\cbrac{\w(\Id - \Hil)\Dap\Jone - \w\sqbrac{\frac{1}{\Zap},\Hil}\pap\Jone}
\]
We recall the equation of $\Ztapbar$ from \eqref{eq:Ztbarap}
\begin{align*}
\begin{split}
& \brac{\Dt^2 +i\frac{\Aone}{\Zapabs^2}\pap  -i\sigma\Dapabs^3} \Ztbarap \\
 & =  \Rone\Zapbar -i\brac{\pap\frac{1}{\Zap}} \Jone -i\Dap \Jone - \Zapbar\sqbrac{\Dt^2 +i\frac{\Aone}{\Zapabs^2}\pap -i\sigma\Dapabs^3, \frac{1}{\Zapbar}} \Ztapbar
\end{split}
\end{align*}
By applying $(\Id - \Hil)$ to the above equation we get
\begin{align*}
\qq & \norm[2]{(\Id - \Hil)\Dap\Jone} \\
& \lesssim \norm[2]{(\Id - \Hil)\Dt^2\Ztapbar} + \norm[\infty]{\Aone}\norm*[\bigg][2]{\frac{1}{\Zapabs^2}\pap\Ztapbar} + \norm[2]{\sigma(\Id - \Hil)\Dapabs^3\Ztapbar} \\
& \quad + \norm[2]{\Rone\Zapabs} + \norm[2]{\pap\frac{1}{\Zap}}\norm[\infty]{\Jone} + \norm*[\bigg][2]{\Zapabs\sqbrac{\Dt^2 +i\frac{\Aone}{\Zapabs^2}\pap -i\sigma\Dapabs^3, \frac{1}{\Zapbar}} \Ztapbar}
\end{align*}
Hence using \propref{prop:commutator} we easily have
\[
\norm[2]{\Dapabs\Jone} \lesssim \norm[2]{(\Id - \Hil)\Dap\Jone} + \norm[2]{\pap\frac{1}{\Zap}}\norm[\infty]{\Jone}
\]
\medskip

\item $\dis \Rtwo \in \Ltwo$
\medskip\\
Proof: We recall from \eqref{form:Rtwo} the formula for $\Rtwo$
\begin{align*}
\begin{split}
\Rtwo &=  -2i\brac{\Dapbar \Ztbar}\brac{\Dapabs \Dapbar \Ztbar} + \brac{\Real \Th }\cbrac{ \brac{\Dapbar \Ztbar}^2 +i\Dapbar\brac{\frac{\Aone}{\Zap}} +i\sigma\brac{\Real\Th} \Dapabs\Th} \\
& \quad +\sigma \Real\brac{\Dapabs\Th} \Dapabs\Th + \brac{\Dapabs\frac{\Aone}{\Zapabs}}\brac{\frac{\Zap}{\Zapabs}\pap \frac{1}{\Zap}} + \Dapabs\brac{\frac{1}{\Zapabs^2}\pap \Aone}  \\
& \quad +  (\Id + \Hil) \Imag\cbrac{\Real(\Dapbar\Ztbar)\Dapabs\Dapbar\Ztbar  -i\Real (\Dt \Th)\Dapbar\Ztbar}\\
& \quad + \frac{\Aone}{\Zapabs^2}\pap\Real (\Id - \Hil) \brac{\frac{\Zap}{\Zapabs} \pap \frac{1}{\Zap}} 
\end{split} 
\end{align*}
Most of the terms are easily controlled and using \lemref{lem:CW} we have
\begin{flalign*}
&&& \norm*[\bigg][2]{\Rtwo - \frac{\Aone}{\Zapabs^2}\pap\Real (\Id - \Hil) \brac{\frac{\Zap}{\Zapabs} \pap \frac{1}{\Zap}}} \\
&&& \lesssim \norm[\infty]{\Dapbar\Ztbar}\brac*[\Big]{\norm[2]{\Dapabs\Dapbar\Ztbar} + \norm[2]{\Dt\Th}} + \norm[2]{\Th}\norm[\infty]{\Dapbar\Ztbar}^2  + \norm[\Ccal]{\frac{\Th}{\Zapabs}}\norm[\Ccal]{\Dapabs\frac{1}{\Zap}}\norm[\infty]{\Aone} \\
&&& \quad + \norm[2]{\Th}\norm*[\bigg][\infty]{\frac{1}{\Zapabs^2}\pap\Aone} + \norm[\Ccal]{\frac{\Th}{\Zapabs}}\norm*[\bigg][\Ccal]{\frac{\sigma^\half}{\Zapabs^\threebytwo}\pap\Th}\norm[\infty]{\sigma^\half\Zapabs^\half\Real\Th} + \norm*[\bigg][\Ccal]{\frac{\sigma^\half}{\Zapabs^\threebytwo}\pap\Th}^2 \\
&&& \quad + \norm*[\bigg][\infty]{\frac{1}{\Zapabs^2}\pap\Aone}\norm[2]{\pap\frac{1}{\Zap}} + \norm[\Ccal]{\Dapabs\frac{1}{\Zapabs}}\norm[\Ccal]{\Dapabs\frac{1}{\Zap}}\norm[\infty]{\Aone} + \norm*[\bigg][\Wcal]{\frac{1}{\Zapabs^2}\pap\Aone} 
\end{flalign*}
We now control the last term. We have
\begin{align*}
\frac{\Aone}{\Zapabs^2}\pap\Real (\Id - \Hil) \brac{\frac{\Zap}{\Zapabs} \pap \frac{1}{\Zap}} & = \Real(\Id - \Hil)\cbrac*[\bigg]{\frac{\Aone}{\Zapabs^2}\pap \brac{\frac{\Zap}{\Zapabs}\pap\frac{1}{\Zap}}} \\
& \quad - \Real\cbrac{\sqbrac{\frac{\Aone}{\Zapabs^2},\Hil}\pap \brac{\frac{\Zap}{\Zapabs}\pap\frac{1}{\Zap}}}
\end{align*}
The first term can be written as
\begin{align*}
(\Id - \Hil)\cbrac*[\bigg]{\frac{\Aone}{\Zapabs^2}\pap \brac{\frac{\Zap}{\Zapabs}\pap\frac{1}{\Zap}}} &= \sqbrac{\frac{\Aone}{\Zapabs^2}\pap\w, \Hil}\pap\frac{1}{\Zap} + \sqbrac{\frac{\Aone\w}{\Zapabs^2},\Hil}\pap^2\frac{1}{\Zap} 
\end{align*}
Hence using \propref{prop:commutator} and \lemref{lem:CW} we have
\begin{flalign*}
&&& \norm*[\bigg][2]{\frac{\Aone}{\Zapabs^2}\pap\Real (\Id - \Hil) \brac{\frac{\Zap}{\Zapabs} \pap \frac{1}{\Zap}}} \\
&&& \lesssim \norm*[\bigg][2]{\pap\frac{1}{\Zap}}\cbrac*[\bigg]{\norm[\Wcal]{\Aone}\norm*[\bigg][\Ccal]{\frac{1}{\Zapabs^2}\pap\w}  + \norm*[\bigg][\Ccal]{\frac{1}{\Zapabs^2}\pap\Aone}\norm[\Wcal]{\w} + \norm[\Ccal]{\Dapabs\frac{1}{\Zapabs}}\norm[\Wcal]{\Aone}\norm[\Wcal]{\w}}
\end{flalign*}
\medskip

\item $\dis \Jtwo \in \Ltwo$
\medskip\\
Proof: Let us recall the equation of $\Th$ from \eqref{eq:Th}
\[
 \brac{\Dt^2 +i\frac{\Aone}{\Zapabs^2}\pap  -i\sigma\Dapabs^3} \Th = \Rtwo +i\Jtwo
\]
Observe that $\Jtwo$ is real valued. Hence by applying $\Imag (\Id - \Hil)$ to the above equation and using \propref{prop:commutator} and \lemref{lem:CW} we have
\begin{align*}
\norm[2]{\Jtwo} & \lesssim \norm[2]{(\Id - \Hil)\Dt^2\Th} + \brac{\norm*[\bigg][\Ccal]{\frac{1}{\Zapabs^2}\pap\Aone} + \norm[\Wcal]{\Aone}\norm[\Ccal]{\Dapabs\frac{1}{\Zapabs}}}\norm[2]{\Th} \\
& \quad + \norm*[\big][2]{\sigma(\Id - \Hil)\Dapabs^3\Th} + \norm[2]{\Rtwo}
\end{align*}
\medskip

\item $\dis \sigma\sqbrac{\frac{1}{\Zap},\Hil}\Dapabs^3\Ztapbar \in \Hhalf$, $\dis \sigma\sqbrac{\frac{1}{\Zapbar},\Hil}\Dapabs^3\Ztapbar \in \Hhalf$
\medskip\\
Proof: We will only prove $\dis \sigma\sqbrac{\frac{1}{\Zap},\Hil}\Dapabs^3\Ztapbar \in \Hhalf$ and the other one is proved exactly in the same way. We first observe that
\begin{align*}
\frac{\sigma}{\Zap}\pap^2\brac{\frac{1}{\Zapbar}\Dapabs\Ztapbar} &= \frac{\sigma}{\Zap}\pap\cbrac{\brac{\pap\frac{1}{\Zapbar}}\Dapabs\Ztapbar + \w\Dapabs^2\Ztapbar} \\
& = \sigma\brac{\pap\frac{1}{\Zapbar}}\wbar\Dapabs^2\Ztapbar + \sigma\brac{\frac{1}{\Zap}\pap^2\frac{1}{\Zapbar}}\Dapabs\Ztapbar \\
& \quad + \sigma(\Dap\w)\Dapabs^2\Ztapbar + \sigma\Dapabs^3\Ztapbar
\end{align*}
Hence
\begin{flalign*}
\enspace \   & \norm[2]{\frac{\sigma}{\Zap}\pap^2\brac{\frac{1}{\Zapbar}\Dapabs\Ztapbar} - \sigma\Dapabs^3\Ztapbar } \\
& \lesssim \brac{\norm*[\bigg][\infty]{\sigma^\half\Zapabs^\half\pap\frac{1}{\Zapbar}} + \norm*[\bigg][\infty]{\frac{\sigma^\half}{\Zapabs^\half}\pap\w}}\norm*[\bigg][2]{\frac{\sigma^\half}{\Zapabs^\half}\Dapabs^2\Ztapbar} \\
& + \norm[2]{\sigma^\twobythree\pap^2\frac{1}{\Zapbar}}\norm[\infty]{\frac{\sigma^\onebythree}{\Zapabs^2}\pap\Ztapbar}
\end{flalign*}
Now we have
\begin{align*}
 \sigma\sqbrac{\frac{1}{\Zap},\Hil}\Dapabs^3\Ztapbar &= \sqbrac{\frac{1}{\Zap},\Hil}\cbrac{\sigma\Dapabs^3\Ztapbar - \frac{\sigma}{\Zap}\pap^2\brac{\frac{1}{\Zapbar}\Dapabs\Ztapbar}}\\
& \quad + \sigma\sqbrac{\frac{1}{\Zap},\Hil}\frac{1}{\Zap}(\Ph + \Pa)\pap^2\brac{\frac{1}{\Zapbar}\Dapabs\Ztapbar}
\end{align*}
We can control each of the terms
\begin{enumerate}
\item The first term is easily controlled using \propref{prop:commutator}
\begin{flalign*}
\lpar \qq \quad & \norm[\Hhalf]{\sqbrac{\frac{1}{\Zap},\Hil}\cbrac{\sigma\Dapabs^3\Ztapbar - \frac{\sigma}{\Zap}\pap^2\brac{\frac{1}{\Zapbar}\Dapabs\Ztapbar}}} &\\
& \lesssim \norm[2]{\pap\frac{1}{\Zap}}\norm[2]{\frac{\sigma}{\Zap}\pap^2\brac{\frac{1}{\Zapbar}\Dapabs\Ztapbar} - \sigma\Dapabs^3\Ztapbar }
\end{flalign*}

\item We have
\[
\lpar \sigma\sqbrac{\frac{1}{\Zap},\Hil}\frac{1}{\Zap}\Ph \pap^2\brac{\frac{1}{\Zapbar}\Dapabs\Ztapbar} = \sigma\sqbrac{\frac{1}{\Zap^2},\Hil} \pap^2\Ph\brac{\frac{1}{\Zapbar}\Dapabs\Ztapbar}
\]
and hence we obtain using \propref{prop:commutator}
\begin{flalign*}
\lpar \qq \quad & \norm[\Hhalf]{\sigma\sqbrac{\frac{1}{\Zap},\Hil}\frac{1}{\Zap}\Ph \pap^2\brac{\frac{1}{\Zapbar}\Dapabs\Ztapbar}} & \\
& \lesssim \norm*[\bigg][2]{\frac{1}{\Zapabs^2}\pap\Ztapbar}\cbrac{\norm[2]{\frac{\sigma}{\Zap}\pap^3\frac{1}{\Zap}} + \norm[\infty]{\sigma^\onebythree\pap\frac{1}{\Zap}}\norm[2]{\sigma^\twobythree\pap^2\frac{1}{\Zap}}  }
\end{flalign*}

\item We see that
\begin{flalign*}
&&\frac{2\sigma}{\Zap}\Pa \pap^2\brac{\frac{1}{\Zapbar}\Dapabs\Ztapbar} & = -\sigma\sqbrac{\frac{1}{\Zap},\Hil}\pap^2\brac{\frac{1}{\Zapbar}\Dapabs\Ztapbar} + \sigma(\Id - \Hil)\Dapabs^3\Ztapbar \\
&&& \quad + (\Id -\Hil)\cbrac{\frac{\sigma}{\Zap}\pap^2\brac{\frac{1}{\Zapbar}\Dapabs\Ztapbar} - \sigma\Dapabs^3\Ztapbar}
\end{flalign*}
Hence using \propref{prop:commutator} we have
\begin{align*}
& \norm[\Hhalf]{\sigma\sqbrac{\frac{1}{\Zap},\Hil}\frac{1}{\Zap}\Pa \pap^2\brac{\frac{1}{\Zapbar}\Dapabs\Ztapbar}} \\
& \lesssim \norm[2]{\pap\frac{1}{\Zap}}
\begin{aligned}[t]
& \Biggl\{ \norm[2]{\sigma^\twobythree\pap^2\frac{1}{\Zap}}\norm*[\bigg][\infty]{\frac{\sigma^\onebythree}{\Zapabs^2}\pap\Ztapbar} + \norm[2]{\sigma(\Id - \Hil)\Dapabs^3\Ztapbar} \\
& \quad + \norm[2]{\frac{\sigma}{\Zap}\pap^2\brac{\frac{1}{\Zapbar}\Dapabs\Ztapbar} - \sigma\Dapabs^3\Ztapbar} \Biggr\}
\end{aligned}
\end{align*}
\end{enumerate}
\medskip  
  
\item $\dis (\Id - \Hil)\Dt^2 \Dapbar\Ztbar \in \Hhalf$
\medskip\\
Proof: We have already shown that $\dis (\Id - \Hil)\Dt^2 \Dap\Ztbar \in \Hhalf$. Hence
\begin{align*}
(\Id - \Hil)\cbrac{\Dt^2 \Dap\Ztbar} = (\Id - \Hil)\cbrac{\sqbrac{\Dt^2,\frac{1}{\Zap}}\Ztbarap} + \sqbrac{\frac{1}{\Zap},\Hil}\Dt^2\Ztapbar + \frac{1}{\Zap}(\Id - \Hil)\Dt^2\Ztapbar
\end{align*}
Let us recall the equation of  $\Ztapbar$ from \eqref{eq:Ztbarap} 
\begin{align*}
\begin{split}
& \brac{\Dt^2 +i\frac{\Aone}{\Zapabs^2}\pap  -i\sigma\Dapabs^3} \Ztbarap \\
 & =  \Rone\Zapbar -i\brac{\pap\frac{1}{\Zap}} \Jone -i\Dap \Jone - \Zapbar\sqbrac{\Dt^2 +i\frac{\Aone}{\Zapabs^2}\pap -i\sigma\Dapabs^3, \frac{1}{\Zapbar}} \Ztapbar
\end{split}
\end{align*}
By replacing $\Dt^2\Ztapbar$ with all the other terms from the above equation and using \propref{prop:commutator} we get
\begin{align*}
\enspace & \norm[\Hhalf]{\sqbrac{\frac{1}{\Zap},\Hil}\Dt^2\Ztapbar} & \\
& \lesssim \norm[\Hhalf]{\sigma\sqbrac{\frac{1}{\Zap},\Hil}\Dapabs^3\Ztapbar} + \norm*[\bigg][2]{\pap\frac{1}{\Zap}}\cbrac{  \norm[2]{\Rone\Zapabs} + \norm[2]{\pap\frac{1}{\Zap}}\norm[\infty]{\Jone} + \norm[2]{\Dapabs\Jone} } \\
&  \quad  + \norm[2]{\pap\frac{1}{\Zap}}\norm*[\bigg][2]{\Zapabs\sqbrac{\Dt^2 +i\frac{\Aone}{\Zapabs^2}\pap -i\sigma\Dapabs^3, \frac{1}{\Zapbar}} \Ztapbar} \\
& \quad +  \norm[\infty]{\Aone}\norm[2]{\pap\frac{1}{\Zap}}\norm*[\bigg][2]{\frac{1}{\Zapabs^2}\pap\Ztapbar}
\end{align*}
We can similarly prove that $\dis \sqbrac{\frac{1}{\Zapbar},\Hil}\Dt^2\Ztapbar \in \Hhalf$. Using this we have
\begin{align*}
\norm[\Hhalf]{ \frac{1}{\Zap}(\Id - \Hil)\Dt^2\Ztapbar} & \lesssim \norm[\Ccal]{\sqbrac{\Dt^2,\frac{1}{\Zap}}\Ztbarap} + \norm[\Hhalf]{\sqbrac{\frac{1}{\Zap},\Hil}\Dt^2\Ztapbar} \\
& \quad + \norm[\Hhalf]{(\Id - \Hil)\Dt^2 \Dap\Ztbar}
\end{align*}
As $\dis (\Id - \Hil)\Dt^2\Ztapbar \in \Ltwo$, this implies that $\dis  \frac{1}{\Zap}(\Id - \Hil)\Dt^2\Ztapbar \in \Ccal$. Now as $\w \in \Wcal$, by using \lemref{lem:CW} we get that $\dis  \frac{1}{\Zapbar}(\Id - \Hil)\Dt^2\Ztapbar \in \Ccal$. Now 
\begin{align*}
(\Id - \Hil)\cbrac{\Dt^2 \Dapbar\Ztbar} = (\Id - \Hil)\cbrac{\sqbrac{\Dt^2,\frac{1}{\Zapbar}}\Ztbarap} + \sqbrac{\frac{1}{\Zapbar},\Hil}\Dt^2\Ztapbar + \frac{1}{\Zapbar}(\Id - \Hil)\Dt^2\Ztapbar
\end{align*}
Hence we obtain
\begin{align*}
\norm[\Hhalf]{(\Id - \Hil)\Dt^2 \Dapbar\Ztbar} & \lesssim \norm[\Ccal]{\sqbrac{\Dt^2,\frac{1}{\Zapbar}}\Ztbarap} + \norm[\Hhalf]{\sqbrac{\frac{1}{\Zapbar},\Hil}\Dt^2\Ztapbar}  \\
& \quad +  \norm[\Hhalf]{ \frac{1}{\Zapbar}(\Id - \Hil)\Dt^2\Ztapbar}
\end{align*}
\medskip

\item $\dis \sigma(\Id - \Hil)\Dapabs^3\Dapbar\Ztbar \in \Hhalf $
\medskip\\
Proof: We have already shown that $\dis \sigma(\Id - \Hil)\Dapabs^3 \Dap\Ztbar \in \Hhalf$. Hence
\begin{align*}
\norm[\Hhalf]{ \frac{\sigma}{\Zap}(\Id - \Hil)\Dapabs^3\Ztapbar} &  \lesssim \norm[\Hhalf]{\sigma\sqbrac{\frac{1}{\Zap},\Hil}\Dapabs^3\Ztapbar} + \norm[\Hhalf]{\sigma(\Id - \Hil)\cbrac{\sqbrac{\Dapabs^3,\frac{1}{\Zap}}\Ztbarap}}   \\
& \quad + \norm[\Hhalf]{\sigma(\Id - \Hil)\Dapabs^3 \Dap\Ztbar} 
\end{align*}
As $\dis \sigma(\Id - \Hil)\Dapabs^3\Ztapbar \in \Ltwo$, this implies that $\dis  \frac{\sigma}{\Zap}(\Id - \Hil)\Dapabs^3\Ztapbar \in \Ccal$. Now as $\w \in \Wcal$, by using \lemref{lem:CW} we get that $\dis  \frac{\sigma}{\Zapbar}(\Id - \Hil)\Dapabs^3\Ztapbar \in \Ccal$. Hence
\begin{align*}
 \norm[\Hhalf]{\sigma(\Id - \Hil)\Dapabs^3 \Dapbar\Ztbar}  &  \lesssim  \norm[\Hhalf]{\sigma(\Id - \Hil)\cbrac{\sqbrac{\Dapabs^3,\frac{1}{\Zapbar}}\Ztbarap}} + \norm[\Hhalf]{\sigma\sqbrac{\frac{1}{\Zapbar},\Hil}\Dapabs^3\Ztapbar}  \\
& \quad +  \norm[\Hhalf]{ \frac{\sigma}{\Zapbar}(\Id - \Hil)\Dapabs^3\Ztapbar}
\end{align*}
\medskip

\item $\dis \frac{1}{\Zapabs^2}\pap\Jone \in \Hhalf$ and hence $\dis \frac{1}{\Zapabs^2}\pap\Jone \in \Ccal$
\medskip\\
Proof:  Let us recall the equation of $\Dapbar\Ztbar$ from \eqref{eq:DapbarZtbar}
\begin{align*} 
 \brac{\Dt^2 +i\frac{\Aone}{\Zapabs^2}\pap  -i\sigma\Dapabs^3} \Dapbar\Ztbar  =  \Rone -i\brac{\Dapbar\frac{1}{\Zap}} \Jone -i\frac{1}{\Zapabs^2}\pap \Jone 
\end{align*}
We see that 
\[
i\frac{\Aone}{\Zapabs^2}\pap\Dapbar\Ztbar = \brac{\frac{2i\w\Aone}{\Zapabs^2}\pap\w}\Dap\Ztbar + \frac{i\w^2\Aone}{\Zapabs^2}\pap\Dap\Ztbar
\]

Now observe that $\dis \frac{1}{\Zapabs^2}\pap\Jone $ is real valued. Hence by applying $\Imag (\Id - \Hil)$ to the equation of $\Dapbar\Ztbar$ and using \lemref{lem:CW} and \propref{prop:commutator} we get
\begin{align*}
\quad \norm[\Hhalf]{\frac{1}{\Zapabs^2}\pap\Jone} & \lesssim  \norm[\Ccal]{\Dapbar\frac{1}{\Zap}}\norm[\Wcal]{\Jone} +   \norm[\Hhalf]{(\Id - \Hil)\Dt^2\Dapbar\Ztbar} + \norm*[\big][\Hhalf]{\sigma(\Id - \Hil)\Dapabs^3\Dapbar\Ztbar}  \\
& \quad + \norm[2]{\Dap^2\Ztbar}\brac{\norm[\infty]{\Aone}\norm[2]{\pap\frac{1}{\Zapbar}} + \norm[\infty]{\Aone}\norm[2]{\Dapabs\w} + \norm[2]{\Dapabs\Aone}}\\
& \quad   + \norm[\Wcal]{\Dap\Ztbar}\norm[\Wcal]{\w}\norm[\Wcal]{\Aone}\norm[\Ccal]{\frac{1}{\Zapabs^2}\pap\w} + \norm[\Ccal]{\Rone}
\end{align*}
\end{enumerate} 
\medskip

\subsection{\texorpdfstring{Closing the energy estimate \nopunct}{}}\label{sec:closeEsigma}   \hspace*{\fill} \medskip

We are now ready to close the energy $\Esigma$. To simplify the calculations we will use the following notation: If $a(t), b(t) $ are functions of time we write $a \approx b$ if there exists a universal non-negative polynomial $P$ with $\abs{a(t)-b(t)} \leq P(\Esigma(t))$. Observe that $\approx$ is an equivalence relation. With this notation proving \thmref{thm:aprioriEsigma} is equivalent to showing $ \frac{d\Esigma(t)}{dt} \approx 0$.

\begin{lem} \label{lem:timederiv}
Let $T>0$ and let $f,\bvar \in C^2([0,T), H^2(\Rsp))$ with $\bvar$ being real valued. Let $\Dt = \pt + \bvar\pap$. Then 
\begin{enumerate}[leftmargin =*, align=left]
\item $\dis \frac{\diff }{\diff t} \int f \diff \ap = \int \Dt f \diff\ap + \int \bvarap f \diff\ap$
\item $\dis \abs*[\Big]{ \frac{d}{dt}\int \abs{f}^2 \diff \ap - 2\Real \int \bar{f} (\Dt f) \diff \ap} \lesssim \norm[2]{f}^2 \norm[\infty]{\bvarap}$

\item $\dis \abs{ \frac{d}{dt}\int (\papabs\bar{f})f \diff\ap- 2\Real \cbrac{\int (\papabs\bar{f})\Dt f \diff \ap}} \lesssim   \norm[\Hhalf]{f}^2 \norm[\infty]{\bvarap}$
\end{enumerate}
\end{lem}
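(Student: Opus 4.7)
Parts (1) and (2) follow from integration by parts. For (1), substitute $\pt f = \Dt f - \bvar\pap f$ under the integral and use $\int \bvar \pap f \diff\ap = -\int \bvarap f\diff\ap$, which is legitimate since the product $\bvar f$ lies in $H^2(\Rsp) \hookrightarrow C_0(\Rsp)$ and therefore decays at infinity. For (2), differentiating under the integral gives $\frac{d}{dt}\int\abs{f}^2\diff\ap = 2\Real\int \bar f\,\pt f\diff\ap$, and the error term satisfies
\[
2\Real\int \bar f\,\bvar \pap f\diff\ap \;=\; \int \bvar \pap(\abs{f}^2)\diff\ap \;=\; -\int \bvarap\abs{f}^2\diff\ap,
\]
so it is bounded by $\norm[\infty]{\bvarap}\norm[2]{f}^2$.

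The real content is in (3). Using Plancherel, $\int(\papabs \bar f)\,f\diff\ap = \norm[\Hhalf]{f}^2$, and the self-adjointness of $\papabs$ (together with the fact that $\papabs$ is a real Fourier multiplier commuting with complex conjugation) yields
\[
\frac{d}{dt}\int(\papabs \bar f)\,f\diff\ap \;=\; 2\Real\int(\papabs \bar f)\,\pt f\diff\ap.
\]
Substituting $\pt = \Dt - \bvar\pap$ reduces the task to bounding the error term $\mathcal E := 2\Real\int(\papabs \bar f)\,\bvar \pap f\diff\ap$ by $\norm[\infty]{\bvarap}\norm[\Hhalf]{f}^2$.

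My plan for controlling $\mathcal E$ is to split $\papabs = \papabs^{1/2}\papabs^{1/2}$ and transfer one factor across via self-adjointness:
\[
\mathcal E \;=\; 2\Real\int(\papabs^{1/2}\bar f)\,\bvar\,\papabs^{1/2}\pap f\diff\ap \;+\; 2\Real\int(\papabs^{1/2}\bar f)\,[\papabs^{1/2},\bvar]\pap f\diff\ap.
\]
Setting $u = \papabs^{1/2}f$ and using that $\papabs^{1/2}$ commutes with both $\pap$ and with complex conjugation, the first integrand becomes $\bvar\,\bar u\,\pap u$, so after taking the real part and one integration by parts the first term equals $-\int \bvarap\abs{u}^2\diff\ap$, which is bounded by $\norm[\infty]{\bvarap}\norm[\Hhalf]{f}^2$. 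The second term is controlled via Cauchy--Schwarz by $\norm[\Hhalf]{f}\,\norm[2]{[\papabs^{1/2},\bvar]\pap f}$, and the main obstacle will be to supply the Calder\'on-type commutator bound
\[
\norm[2]{[\papabs^{1/2},\bvar]\pap f}\;\lesssim\; \norm[\infty]{\bvarap}\,\norm[\Hhalf]{f}.
\]
This half-derivative commutator estimate is a variant of the commutator bounds collected in \propref{prop:commutator}; once established (or cited from the appendix), it immediately closes (3).
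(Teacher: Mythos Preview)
Your argument is correct and matches the paper's proof essentially line for line: the paper also splits $\papabs = \papabs^{1/2}\papabs^{1/2}$, moves one factor across, and obtains exactly the two terms you describe, namely $\int \bvarap\abs{\papabs^{1/2}f}^2\diff\ap$ and $-2\Real\int(\papabs^{1/2}\bar f)\,[\papabs^{1/2},\bvar]\pap f\diff\ap$, then appeals to \propref{prop:commutator} (specifically item 4, with $\pap f = \papabs^{1/2}(-i\Hil\papabs^{1/2}f)$ and $\norm[BMO]{\papabs\bvar}\lesssim\norm[\infty]{\bvarap}$) for the commutator bound you identify.
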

\begin{proof}
The first two follow directly from the fact that $\Dt = \pt + \bvar\pap$. For the third estimate we observe that  
\begin{align*}
 & \frac{d}{dt}\int (\papabs\bar{f})f \diff\ap \\
 & = 2\Real\cbrac{\int (\papabs\bar{f})\pt f \diff\ap} \\
 & = 2\Real\cbrac{\int (\papabs\bar{f})\Dt f \diff\ap} - 2\Real\cbrac{\int (\papabs\bar{f})(\bvar\pap f) \diff\ap} \\
 & = 2\Real\cbrac{\int (\papabs\bar{f})\Dt f \diff\ap} -2\Real \cbrac{\int \brac{\papabs^\half\bar{f}} \brac{\sqbrac{\papabs^\half,\bvar}\pap f} \diff \ap}  + \nobrac{\int \abs{\papabs^\half f}^2 \bvarap \diff \ap }
\end{align*}
The estimate now follows from \propref{prop:commutator}.
\end{proof}
This lemma helps us move the time derivative inside the integral as a material derivative. We will now control the time derivative of the energy. 
\smallskip

\subsubsection{Controlling $\Esigmazero$ \nopunct} \hspace*{\fill} \medskip

Recall that
\[
 \Esigmazero =  \norm[\infty]{\sigma^{\half} \Zapabs^\half \pap\frac{1}{\Zap}}^2  +   \norm[2]{\sigma^\onebysix\Zapabs^\half\pap\frac{1}{\Zap}}^6  +  \norm[2]{\pap\frac{1}{\Zap}}^2  + \norm[2]{\frac{\sigma^\half}{\Zapabs^\half}\pap^2\frac{1}{\Zap}}^2
\]
We control the terms individually
\begin{enumerate}[leftmargin =*, align=left, label=\arabic*)]

\item As mentioned in  \remref{rem:aprioriEsigma} we will substitute the time derivative with the upper Dini derivative for the $\Linfty$ term. 
\begin{align*}
\lpar & \limsup_{s \to 0^+} \frac{\norm*[\Big][\infty]{ \sigma^{\half}  \Zapabs^\half \pap\frac{1}{\Zap}}^2(t+s) -  \norm*[\Big][\infty]{  \sigma^{\half} \Zapabs^\half \pap\frac{1}{\Zap}}^2(t)  }{s} \\
& = 2\cbrac{\norm*[\Big][\infty]{  \sigma^{\half} \Zapabs^\half \pap\frac{1}{\Zap}}(t)} \limsup_{s \to 0^+} \frac{ \norm*[\Big][\infty]{  \sigma^{\half}\Zapabs^\half \pap\frac{1}{\Zap}}(t+s) -  \norm*[\Big][\infty]{ \sigma^{\half} \Zapabs^\half \pap\frac{1}{\Zap}}(t)  }{s} 
\end{align*}
\medskip\\
Now as $\dis \norm*[\Big][\infty]{  \sigma^{\half} \Zapabs^\half \pap\frac{1}{\Zap}}(t)$ is part of the energy we only need to concentrate on the second term. As $\pt (f(\cdot,t)\compose \h) = (\Dt f(\cdot,t)) \compose \h$ we use  \propref{prop:DtLinfty} to get
\begin{align*}
& \limsup_{s \to 0^+} \frac{ \norm*[\Big][\infty]{  \sigma^{\half}\Zapabs^\half \pap\frac{1}{\Zap}}(t+s) -  \norm*[\Big][\infty]{ \sigma^{\half} \Zapabs^\half \pap\frac{1}{\Zap}}(t)  }{s}  \\
& = \limsup_{s \to 0^+} \frac{ \norm*[\Big][\infty]{  \brac{\sigma^{\half}\Zapabs^\half \pap\frac{1}{\Zap}} \compose \h}(t+s) -  \norm*[\Big][\infty]{\brac{ \sigma^{\half} \Zapabs^\half \pap\frac{1}{\Zap}}\compose \h}(t)  }{s} \\
& \leq \norm*[\bigg][\infty]{\Dt\brac*[\bigg]{\sigma^\half\Zapabs^\half \pap\frac{1}{\Zap}}}(t)
\end{align*}
Recall from \eqref{form:DtZapabs} that $\Dt \Zapabs =  \Zapabs \brac*[\big]{\Real(\Dap\Zt) - \bvarap}$ and hence we have
\begin{align*}
\Dt\brac*[\bigg]{\Zapabs^\half \pap\frac{1}{\Zap}} & = \frac{1}{2} \brac*[\big]{\Real(\Dap\Zt) - \bvarap}\brac*[\bigg]{\Zapabs^\half \pap\frac{1}{\Zap}} - \bvarap\Zapabs^\half \pap\frac{1}{\Zap} \\
& \quad + \Zapabs^\half \pap\Dt\frac{1}{\Zap}
\end{align*}
Now as $\dis  \Dt \frac{1}{\Zap}  = \frac{1}{\Zap}( \bvarap - \Dap\Zt )$ we obtain
\begin{align*}
\Dt\brac*[\bigg]{\Zapabs^\half \pap\frac{1}{\Zap}} & = \frac{1}{2} \brac*[\big]{\Real(\Dap\Zt) - \bvarap - 2\Dap\Zt}\brac*[\bigg]{\Zapabs^\half \pap\frac{1}{\Zap}}  \\
& \quad +  \Zapabs^\half \Dap ( \bvarap - \Dap\Zt)
\end{align*}
Hence 
\begin{align*}
\norm*[\bigg][\infty]{\Dt\brac*[\bigg]{\sigma^\half\Zapabs^\half \pap\frac{1}{\Zap}}} & \lesssim \brac{\norm[\infty]{\Dap\Zt} + \norm[\infty]{\bvarap}}\norm[\infty]{\sigma^\half\Zapabs^\half\pap\frac{1}{\Zap}}  \\
& \quad + \norm*[\bigg][\infty]{\frac{\sigma^\half}{\Zapabs^\half}\pap\bvarap} + \norm*[\bigg][\infty]{\frac{\sigma^\half}{\Zapabs^\half}\pap\Dap\Zt } \\
& \lesssim P(\Esigma)
\end{align*}

\item 
By using the calculation above we first obtain 
\begin{align*}
\norm*[\bigg][2]{\Dt\brac*[\bigg]{\sigma^\onebysix\Zapabs^\half \pap\frac{1}{\Zap}}} & \lesssim \brac{\norm[\infty]{\Dap\Zt} + \norm[\infty]{\bvarap}}\norm[2]{\sigma^\onebysix\Zapabs^\half\pap\frac{1}{\Zap}}  \\
& \quad + \norm*[\bigg][2]{\frac{\sigma^\onebysix}{\Zapabs^\half}\pap\bvarap} + \norm*[\bigg][2]{\frac{\sigma^\onebysix}{\Zapabs^\half}\pap\Dap\Zt }
\end{align*}
Hence by using \lemref{lem:timederiv} we get
\begin{align*}
& \frac{d}{dt} \norm[2]{\sigma^\onebysix \Zapabs^\half\pap\frac{1}{\Zap}}^6 \\
& \lesssim \norm[\infty]{\bvarap}\norm[2]{\sigma^\onebysix \Zapabs^\half\pap\frac{1}{\Zap}}^6 + \norm[2]{\sigma^\onebysix \Zapabs^\half\pap\frac{1}{\Zap}}^5\norm[2]{ \Dt\brac{\sigma^\onebysix\Zapabs^\half\pap\frac{1}{\Zap}}} \\
& \lesssim P(\Esigma)
\end{align*}

\item  By using \lemref{lem:timederiv} we obtain
\begin{align*}
\frac{d}{dt} \norm[2]{\pap\frac{1}{\Zap}}^2 \lesssim \norm[\infty]{\bvarap}\norm[2]{\pap\frac{1}{\Zap}}^2 + \norm[2]{\pap\frac{1}{\Zap}}\norm[2]{\Dt\pap\frac{1}{\Zap}} \lesssim P(\Esigma)
\end{align*}

\item We first note from \eqref{form:DtoneoverZap} that
\begin{align*}
\Dt\pap\frac{1}{\Zap} = \Dap\bvarap - \Dap^2\Zt - \brac{\pap\frac{1}{\Zap}}\Dap\Zt
\end{align*}
From this and \eqref{form:DtZapabs} we see that
\begin{align*}
\quad \norm[2]{\Dt\brac{\frac{\sigma^\half}{\Zapabs^\half}\pap^2\frac{1}{\Zap}}} & \lesssim \brac{\norm[\infty]{\bvarap} + \norm[\infty]{\Dap\Zt}}\norm[2]{\frac{\sigma^\half}{\Zapabs^\half}\pap^2\frac{1}{\Zap}} + \norm[2]{\frac{\sigma^\half}{\Zapabs^\half}\pap\Dap\bvarap} \\
& \quad + \norm[2]{\frac{\sigma^\half}{\Zapabs^\half}\pap\Dap^2\Zt} + \norm[2]{\pap\frac{1}{\Zap}}\norm[\infty]{\frac{\sigma^\half}{\Zapabs^\half}\pap\Dap\Zt}
\end{align*}
Hence by using \lemref{lem:timederiv} we get
\begin{align*}
& \frac{d}{dt} \norm[2]{\frac{\sigma^\half}{\Zapabs^\half}\pap^2\frac{1}{\Zap}}^2 \\
& \lesssim \norm[\infty]{\bvarap}\norm[2]{\frac{\sigma^\half}{\Zapabs^\half}\pap^2\frac{1}{\Zap}}^2 + \norm[2]{\frac{\sigma^\half}{\Zapabs^\half}\pap^2\frac{1}{\Zap}}\norm[2]{ \Dt\brac{\frac{\sigma^\half}{\Zapabs^\half}\pap^2\frac{1}{\Zap}}} \\
& \lesssim P(\Esigma)
\end{align*}

\end{enumerate}
\medskip

\subsubsection{Controlling $\Esigmaone$ \nopunct} \hspace*{\fill} \medskip

Recall that 
\begin{align*}
 \Esigmaone = \norm[\Hhalf]{(\Zttbar -i)\Zap}^2 + \norm[2]{\sqrt{\Aone}\Ztapbar}^2 + \norm[2]{\frac{\sigma^\half}{\Zapabs^\half}\pap\Ztapbar}^2
\end{align*}
We will first simplify the time derivative of each of the individual terms before combining them. 
\begin{enumerate}[leftmargin =*, align=left, label=\arabic*)]
\item 
As $\bvarap, \Hil\bvarap \in \Linfty$, by using \lemref{lem:timederiv} we get
\begin{align*}
\qquad \lpar  \frac{d}{dt} \int \abs{\papabs^\half \cbrac{ (\Zttbar -i)\Zap }}^2\difff\ap \approx 2\Real \int \cbrac{\papabs \brac{ (\Ztt +i)\Zapbar }}\Dt\brac{(\Zttbar -i)\Zap}  \difff\ap
\end{align*}
Now from \eqref{form:Zttbar} we have
\begin{align*}
 \Dt\brac{(\Zttbar -i)\Zap} &= \Ztttbar\Zap + (\Dap\Zt -\bvarap)(\Zttbar-i)\Zap \\
 & = \Ztttbar\Zap + (\Dap\Zt -\bvarap)(-i\Aone + \sigma\pap\Th)
\end{align*}
and using \propref{prop:Leibniz} we observe that
\begin{align*}
& \norm[\Hhalf]{(\Dap\Zt -\bvarap)(-i\Aone + \sigma\pap\Th)} \\
& \lesssim \brac{\norm[\Linfty\cap\Hhalf]{\Dap\Zt} + \norm[\Linfty\cap\Hhalf]{\bvarap} }\norm[\Linfty\cap\Hhalf]{\Aone} + \brac{\norm*[\big][2]{\sigma^\onebythree\pap\Dap\Zt} + \norm*[\big][2]{\sigma^\onebythree\pap\bvarap} }\norm*[\big][2]{\sigma^\twobythree\pap\Th} \\
& \quad + \brac{\norm[\infty]{\Dap\Zt} + \norm[\infty]{\bvarap}}\norm[\Hhalf]{\sigma\pap\Th}
\end{align*}
Hence we have
\[
\qquad \lpar  \frac{d}{dt} \int \abs{\papabs^\half \cbrac{ (\Zttbar -i)\Zap }}^2\difff\ap \approx 2\Real \int (\Ztttbar \Zap)\papabs \brac{ (\Ztt + i)\Zapbar } \difff\ap
\]

\item
As $\bvarap, \Aone, \Dt\Aone \in \Linfty$ and $\Ztapbar, \Zttbarap \in \Ltwo$ we get
\begin{flalign*}
&& \frac{d}{dt}\int \Aone \abs{ \Ztapbar }^2\difff\ap & = \int (\bvarap\Aone + \Dt\Aone)\abs{\Ztapbar}^2 \diff\ap + 2\Real \int \Aone \Ztapbar \brac{-\bvarap\Ztap + \Zttap} \diff \ap \\
&&& \approx 0
\end{flalign*}
Now observe from \propref{prop:Leibniz} that
\begin{align*}
\norm[\Hhalf]{\Aone\Dapbar\Ztbar} \lesssim \norm[\Linfty\cap\Hhalf]{\Aone}\norm[\Linfty\cap\Hhalf]{\Dapbar\Ztbar}
\end{align*}
Hence we have
\begin{align*}
\frac{d}{dt}\int \Aone \abs{ \Ztapbar }^2\difff\ap \approx 2\Real \int \brac{i\Aone \Dapbar\Ztbar}\papabs\brac{(\Ztt+i)\Zapbar} \diff\ap
\end{align*}

\item 
By \lemref{lem:timederiv} we get
\[
 \sigma \frac{d}{dt} \int \abs*[\Big]{\frac{1}{\Zapabs^\half}\pap \Ztapbar }^2\difff\ap \approx 2 \sigma\Real \int \brac{\frac{1}{\Zapabs^\half}\pap \Ztapbar }\Dt\brac{\frac{1}{\Zapabs^\half}\pap \Ztap } \difff\ap
\]
Using \eqref{form:DtZapabs} we obtain
\begin{align*}
\sigma^\half \Dt\brac{\frac{1}{\Zapabs^\half}\pap \Ztap } &= \brac{ -\frac{3}{2}\bvarap - \frac{1}{2}\Real(\Dap\Zt) }\brac{\frac{\sigma^\half}{\Zapabs^\half}\pap\Ztap} \\
& \quad - \brac{\frac{\sigma^\half}{\Zapabs^\half}\pap\bvarap}\Ztap + \frac{\sigma^\half}{\Zapabs^\half}\pap\Zttap
\end{align*}
As $\dis \bvarap, \Real(\Dap\Zt), \frac{\sigma^\half}{\Zapabs^\half}\pap\bvarap \in \Linfty$ and $\dis \Ztap, \frac{\sigma^\half}{\Zapabs^\half}\pap\Ztap \in\Ltwo$ we have
\[
 \sigma \frac{d}{dt} \int \abs*[\Big]{\frac{1}{\Zapabs^\half}\pap \Ztapbar }^2\difff\ap \approx 2 \sigma\Real \int \brac{\frac{1}{\Zapabs^\half}\pap \Ztapbar }\brac{\frac{1}{\Zapabs^\half}\pap \Zttap } \difff\ap
\] 
Now
\begin{flalign*}
&&\frac{\sigma^\half}{\Zapabs^\half}\pap \Zttap &= \frac{\sigma^\half}{\Zapabs^\half}\pap^2\brac{\frac{1}{\Zapbar}(\Ztt+i)\Zapbar} \\
&&& = \brac{\frac{\sigma^\half}{\Zapabs^\half}\pap^2\frac{1}{\Zapbar}}(\Ztt+i)\Zapbar + 2\brac{\frac{\sigma^\half}{\Zapabs^\half}\pap\frac{1}{\Zapbar}}\pap\cbrac{(\Ztt+i)\Zapbar} \\
&&& \quad + \frac{\sigma^\half}{\Zapabs^\half\Zapbar}\pap^2\cbrac{(\Ztt+i)\Zapbar}
\end{flalign*}
Using $(\Ztt+i)\Zapbar = i\Aone + \sigma\pap\Thbar$ from \eqref{form:Zttbar} we obtain 
\begin{align*}
\lpar & \norm[2]{ \brac{\frac{\sigma^\half}{\Zapabs^\half}\pap^2\frac{1}{\Zapbar}}(\Ztt+i)\Zapbar}  \lesssim \norm[2]{\frac{\sigma^\half}{\Zapabs^\half}\pap^2\frac{1}{\Zapbar}}\norm[\infty]{\Aone}  + \norm[\infty]{\frac{\sigma^\fivebysix}{\Zapabs^\half}\pap^2\frac{1}{\Zapbar}}\norm*[\big][2]{\sigma^\twobythree\pap\Th} \\
& \norm[2]{\brac{\frac{\sigma^\half}{\Zapabs^\half}\pap\frac{1}{\Zapbar}}\pap\cbrac{(\Ztt+i)\Zapbar}} \lesssim \norm[\infty]{\sigma^\half\Zapabs^\half\pap\frac{1}{\Zap}}\brac{\norm*[\big][2]{\Dapabs\Aone} + \norm[2]{\frac{\sigma}{\Zapabs}\pap^2\Th }}
\end{align*}
Hence 
\[
\lpar \quad  \sigma \frac{d}{dt} \int \abs*[\Big]{\frac{1}{\Zapabs^\half}\pap \Ztapbar }^2\difff\ap \approx 2 \sigma\Real \int \brac{\frac{1}{\Zapabs^\half}\pap \Ztapbar }\brac{ \frac{1}{\Zapabs^\half\Zapbar}\pap^2\cbrac{(\Ztt+i)\Zapbar} } \difff\ap
\]
Now using $\pap= i\papabs + (\Id + \Hil)\pap $ and $(\Ztt + i)\Zapbar = i\Aone + \sigma\pap\Thbar$ we obtain
\begin{align*}
\frac{\sigma^\half}{\Zapabs^\half\Zapbar}\pap^2\cbrac{(\Ztt+i)\Zapbar}  & = \frac{i\sigma^\half}{\Zapabs^\half\Zapbar}\pap\papabs\cbrac{(\Ztt+i)\Zapbar} \\
& \quad + \frac{i\sigma^\half}{\Zapabs^\half\Zapbar}(\Id + \Hil)\pap^2\Aone
\end{align*}
By commuting the Hilbert transform outside and using \propref{prop:commutator} we obtain
\begin{align*}
\lpar \quad \enspace \norm[2]{\frac{\sigma^\half}{\Zapabs^\threebytwo}(\Id + \Hil)\pap^2\Aone} & \lesssim \norm[\infty]{\Aone}\brac{\norm[2]{\frac{\sigma^\half}{\Zapabs^\half}\pap^2\frac{1}{\Zapabs}} + \norm[2]{\pap\frac{1}{\Zapabs}}\norm[\infty]{\sigma^\half\Zapabs^\half\pap\frac{1}{\Zapabs}} } \\
& \quad + \norm[2]{\frac{\sigma^\half}{\Zapabs^\threebytwo}\pap^2\Aone }
\end{align*}
Hence 
\[
\lpar \quad  \sigma \frac{d}{dt} \int \abs*[\Big]{\frac{1}{\Zapabs^\half}\pap \Ztapbar }^2\difff\ap \approx 2 \Real \int \cbrac{-i\sigma\pap\brac{\frac{1}{\Zapbar}\Dapabs\Ztapbar}}\papabs\brac{(\Ztt+i)\Zapbar  } \difff\ap
\]

\item Now by combining all the three terms we obtain
\begin{align*}
\frac{d}{dt} \Esigmaone \approx 2\Real \int \cbrac{ \Ztttbar\Zap + i\Aone\Dapbar\Ztbar -i\sigma\pap\brac{\frac{1}{\Zapbar}\Dapabs\Ztapbar}}\papabs\brac{(\Ztt+i)\Zapbar  } \difff\ap
\end{align*}
Recall from \eqref{eq:ZtZap} that
\begin{align*} 
\begin{split}
& \Ztttbar\Zap + i\Aone\Dapbar\Ztbar -i\sigma\pap\brac{\frac{1}{\Zapbar}\Dapabs\Ztapbar  } \\
& = i\sigma\pap\cbrac{\brac{\Dapabs \frac{1}{\Zapbar}}\Ztapbar } -\sigma(\Dap\Zt)\pap\Th -\sigma\pap\cbrac{(\Real\Th)\Dapbar\Ztbar} -i\Jone
\end{split}
\end{align*}
Hence it is sufficient to show that each of the terms on the right hand side is in $\Hhalf$. We have already shown that $\Jone \in \Hhalf$. We also have from \propref{prop:Leibniz} and \lemref{lem:CW}
\begin{align*}
\norm[\Hhalf]{\sigma(\Dap\Zt)\pap\Th} & \lesssim \norm*[\big][2]{\sigma^\onebythree\pap\Dap\Zt}\norm*[\big][2]{\sigma^\twobythree\pap\Th} + \norm[\infty]{\Dap\Zt}\norm[\Hhalf]{\sigma\pap\Th} \\
\norm[\Hhalf]{\sigma\pap\cbrac{(\Real\Th)\Dapbar\Ztbar}} & \lesssim \norm*[\big][2]{\sigma^\onebythree\pap\Dapbar\Ztbar}\norm*[\big][2]{\sigma^\twobythree\pap\Th} + \norm[\infty]{\Dapbar\Ztbar}\norm[\Hhalf]{\sigma\pap\Th} \\
& \quad + \norm*[\big][\Wcal]{\sigma^\half\Zapabs^\half\Real \Th}\norm*[\bigg][\Ccal]{\frac{\sigma^\half}{\Zapabs^\half}\pap\Dapbar\Ztbar}
\end{align*}
Now observe that
\begin{align*}
i\sigma\pap\cbrac{\brac{\Dapabs \frac{1}{\Zapbar}}\Ztapbar } & = i\sigma\brac{\pap\frac{1}{\Zapabs}}\brac{\pap\frac{1}{\Zapbar}}\Ztapbar + i\sigma\brac{\pap\frac{1}{\Zapbar}}\Dapabs\Ztapbar \\
& \quad + i\sigma\brac{\pap^2\frac{1}{\Zapbar}}\Dapabs\Ztbar
\end{align*}
We have the estimates from \lemref{lem:CW}
\begin{align*}
\norm[\Hhalf]{\sigma\brac{\pap\frac{1}{\Zapabs}}\brac{\pap\frac{1}{\Zapbar}}\Ztapbar} & \lesssim \norm[\Wcal]{\sigma^\half\Zapabs^\half\pap\frac{1}{\Zapabs}}\norm[\Wcal]{\sigma^\half\Zapabs^\half\pap\frac{1}{\Zapbar}}\norm[\Ccal]{\Dapabs\Ztbar} \\
\norm[\Hhalf]{\sigma\brac{\pap\frac{1}{\Zapbar}}\Dapabs\Ztapbar} & \lesssim \norm[\Wcal]{\sigma^\half\Zapabs^\half\pap\frac{1}{\Zapbar}}\norm*[\bigg][\Ccal]{\frac{\sigma^\half}{\Zapabs^\threebytwo}\pap\Ztapbar}
\end{align*}
For the last term we use  \propref{prop:Hhalfweight} with $\dis f = \sigma^\twobythree\pap^2\frac{1}{\Zapbar}, w = \frac{\sigma^\onebysix}{\Zapabs^\half}, h = \frac{\sigma^\onebysix}{\Zapabs^\half}\Ztapbar$. Hence
\begin{align*}
& \norm[\Hhalf]{\sigma\brac{\pap^2\frac{1}{\Zapbar}}\Dapabs\Ztbar} \\
& \lesssim \norm*[\bigg][\Hhalf]{\frac{\sigma^\fivebysix}{\Zapabs^\half}\pap^2\frac{1}{\Zapbar}}\norm*[\bigg][\infty]{\frac{\sigma^\onebysix}{\Zapabs^\half}\Ztapbar} + \norm[2]{\sigma^\twobythree\pap^2\frac{1}{\Zapbar}}\norm[2]{\sigma^\onebythree\pap\Dapabs\Ztbar} \\
& \quad + \norm[2]{\sigma^\twobythree\pap^2\frac{1}{\Zapbar}}\norm[2]{\sigma^\onebysix\Zapabs^\half\pap\frac{1}{\Zapabs}}\norm*[\bigg][\infty]{\frac{\sigma^\onebysix}{\Zapabs^\half}\Ztapbar}
\end{align*}
This completes the proof of $\dis \frac{d}{dt} \Esigmaone(t) \lesssim P(\Esigma(t))$
\end{enumerate}
\medskip

\subsubsection{Controlling $\Esigmatwo$ and $\Esigmathree$ \nopunct}\label{sec:controlEsigmatwothree} \hspace*{\fill} \medskip

Recall that both $\Esigmatwo$ and $\Esigmathree$ are of the form
\begin{align*}
E_{\sigma,i} = \norm[2]{\Dt f}^2 + \norm[\Hhalf]{\sqrt{\Aone}\frac{f}{\Zapabs}}^2 + \norm[\Hhalf]{\frac{\sigma^\half}{\Zapabs^\threebytwo}\pap f}^2
\end{align*}
Where $f = \Ztapbar$ for $i=2$ and $f=\Th$ for $i=3$. Also note that $\Ph f = f$ for these choices of $f$. We will simplify the time derivative of each of the terms individually before combining them.
\begin{enumerate}[leftmargin =*, align=left, label=\arabic*)]
\item 
As $\bvarap \in \Linfty$ we have from \lemref{lem:timederiv}
\[
\frac{d}{dt} \int \abs{\Dt f}^2 \diff\ap \approx 2\Real \int (\Dt^2 f)(\Dt \bar{f}) \diff\ap
\]

\item 
By using \lemref{lem:timederiv} we have
\[
\frac{d}{dt}  \int \abs*[\bigg]{ \papabs^\half \brac*[\bigg]{\frac{\sqrt{\Aone}}{\Zapabs} f}}^2\difff\ap \approx  2\Real \int  \cbrac{\papabs\brac*[\bigg]{\frac{\sqrt{\Aone}}{\Zapabs} f}} \Dt\brac*[\bigg]{\frac{\sqrt{\Aone}}{\Zapabs} \bar{f}} \difff\ap
\]
Observe from \eqref{form:DtZapabs} that
\begin{align*}
 \Dt\brac*[\bigg]{\frac{\sqrt{\Aone}}{\Zapabs} \bar{f}}  = \cbrac{\frac{\Dt\Aone}{2\Aone} + \bvarap - \Real(\Dap\Zt) }\frac{\sqrt{\Aone}}{\Zapabs}\bar{f} + \frac{\sqrt{\Aone}}{\Zapabs}\Dt\bar{f}
\end{align*}
We note that for  $f = \Ztapbar$ or $f = \Th$ we have $\dis \frac{f}{\Zapabs} \in \Ccal$. Hence by \lemref{lem:CW} we have
\begin{align*}
\lpar \quad \norm[\Hhalf]{ \cbrac{\frac{\Dt\Aone}{2\Aone} + \bvarap - \Real(\Dap\Zt) }\frac{\sqrt{\Aone}}{\Zapabs}\bar{f}} \lesssim \norm[\Ccal]{\frac{f}{\Zapabs}}\norm[\Wcal]{\sqrt{\Aone}}
\begin{aligned}[t]
& \biggl\{\norm[\Wcal]{\Dt\Aone}\norm[\Wcal]{\frac{1}{\Aone}} \\
& \quad  + \norm[\Wcal]{\bvarap} + \norm[\Wcal]{\Dap\Zt} \biggr\}
\end{aligned}
\end{align*}
Hence
\[
\frac{d}{dt}  \int \abs*[\bigg]{ \papabs^\half \brac*[\bigg]{\frac{\sqrt{\Aone}}{\Zapabs} f}}^2\difff\ap \approx  2\Real \int  \cbrac{\frac{\sqrt{\Aone}}{\Zapabs}\papabs\brac*[\bigg]{\frac{\sqrt{\Aone}}{\Zapabs} f}} (\Dt\bar{f}) \diff\ap
\]
We simplify further using $\papabs = i\Hil\pap$ and $\Hil f = f$
\begin{align*}
&\frac{\sqrt{\Aone}}{\Zapabs}\papabs\brac*[\bigg]{\frac{\sqrt{\Aone}}{\Zapabs} f} \\
& = i\sqbrac{\frac{\sqrt{\Aone}}{\Zapabs} ,\Hil}\pap\brac{\frac{\sqrt{\Aone}}{\Zapabs}f} + i\Hil\cbrac{\frac{\sqrt{\Aone}}{\Zapabs}\pap\brac{\frac{\sqrt{\Aone}}{\Zapabs}}f + \frac{\Aone}{\Zapabs^2}\pap f  } \\
& =  i\sqbrac{\frac{\sqrt{\Aone}}{\Zapabs} ,\Hil}\pap\brac{\frac{\sqrt{\Aone}}{\Zapabs}f} + i\Hil\cbrac*[\Bigg]{\frac{1}{2}\brac*[\Bigg]{\frac{1}{\Zapabs^2}\pap\Aone}f + \Aone\brac{\Dapabs\frac{1}{\Zapabs}}f  } \\
& \quad - i\sqbrac{\frac{\Aone}{\Zapabs^2},\Hil}\pap f + i\frac{\Aone}{\Zapabs^2}\pap f
\end{align*} 
Hence by \lemref{lem:CW}, \propref{prop:commutator} and using $\Aone \geq 1$ we have  
\begin{align*}
\qq \lpar & \norm*[\bigg][2]{\frac{\sqrt{\Aone}}{\Zapabs}\papabs\brac*[\bigg]{\frac{\sqrt{\Aone}}{\Zapabs} f} -  i\frac{\Aone}{\Zapabs^2}\pap f} \\
& \lesssim \brac{\norm[2]{\Dapabs\Aone} + \norm[\infty]{\sqrt{\Aone}}\norm[2]{\pap\frac{1}{\Zapabs}} }\norm[\Hhalf]{\frac{\sqrt{\Aone}}{\Zapabs}f} +  \norm[\Wcal]{\Aone}\norm[\Ccal]{\Dapabs\frac{1}{\Zapabs}}\norm[\Ccal]{\frac{f}{\Zapabs}}  \\
& \quad + \norm[2]{f}\norm*[\bigg][\Linfty\cap\Hhalf]{\frac{1}{\Zapabs^2}\pap\Aone} 
\end{align*}
As $\Dt f \in \Ltwo$ this shows that 
\[
\frac{d}{dt}  \int \abs*[\bigg]{ \papabs^\half \brac*[\bigg]{\frac{\sqrt{\Aone}}{\Zapabs} f}}^2\difff\ap \approx  2\Real \int  \brac*[\bigg]{i\frac{\Aone}{\Zapabs^2}\pap f} (\Dt\bar{f}) \diff\ap
\]

\item
By \lemref{lem:timederiv} we have
\[
\lpar \quad  \sigma \frac{d}{dt} \int  \abs*[\bigg]{\papabs^\half \brac*[\Bigg]{\frac{1}{\Zapabs^\threebytwo} \pap f} }^2\difff\ap \approx 2\sigma \Real \int  \cbrac*[\Bigg]{\papabs \brac*[\Bigg]{\frac{1}{\Zapabs^\threebytwo} \pap f}}\Dt \brac*[\Bigg]{\frac{1}{\Zapabs^\threebytwo} \pap \bar{f}} \diff\ap 
\]
We note that 
\begin{align*}
\lpar \sigma^\half\Dt\brac{\frac{1}{\Zapabs^\threebytwo} \pap \bar{f}} = \sigma^\half\brac{\frac{1}{2}\bvarap - \frac{3}{2}\Real(\Dap\Zt)}\brac{\frac{1}{\Zapabs^\threebytwo} \pap \bar{f}} + \frac{\sigma^\half}{\Zapabs^\threebytwo}\pap\Dt \bar{f}
\end{align*}
As $\dis \frac{\sigma^\half}{\Zapabs^\threebytwo}\pap f \in \Ccal$ for $f= \Ztapbar$ or $f = \Th$ we use \lemref{lem:CW} to obtain
\[
\lpar \quad  \norm[\Hhalf]{\sigma^\half\brac{\frac{1}{2}\bvarap - \frac{3}{2}\Real(\Dap\Zt)}\brac{\frac{1}{\Zapabs^\threebytwo} \pap \bar{f}} } \lesssim \brac{\norm[\Wcal]{\bvarap} + \norm[\Wcal]{\Dap\Zt}}\norm[\Ccal]{\frac{\sigma^\half}{\Zapabs^\threebytwo}\pap f}
\]
Hence
\[
\lpar \quad  \sigma \frac{d}{dt} \int  \abs*[\bigg]{\papabs^\half \brac*[\Bigg]{\frac{1}{\Zapabs^\threebytwo} \pap f} }^2\difff\ap \approx -2\sigma \Real \int  \pap\cbrac*[\Bigg]{\frac{1}{\Zapabs^\threebytwo}\papabs \brac*[\Bigg]{\frac{1}{\Zapabs^\threebytwo} \pap f}}\Dt\bar{f} \diff\ap
\]
Now using $\papabs = i\Hil\pap$ we see that
\begin{align*}
\sigma\pap\cbrac*[\Bigg]{\frac{1}{\Zapabs^\threebytwo}\papabs \brac*[\Bigg]{\frac{1}{\Zapabs^\threebytwo} \pap f}} & = i\sigma\pap\sqbrac{\frac{1}{\Zapabs^\threebytwo},\Hil}\pap \brac*[\Bigg]{\frac{1}{\Zapabs^\threebytwo} \pap f} \\
& \quad + i\sigma\Hil\pap\cbrac*[\Bigg]{\frac{1}{\Zapabs^\threebytwo}\pap \brac*[\Bigg]{\frac{1}{\Zapabs^\threebytwo} \pap f}}
\end{align*}
Using \propref{prop:commutator} we have the estimate
\begin{align*}
\lpar \quad & \norm[2]{\sigma\pap\sqbrac{\frac{1}{\Zapabs^\threebytwo},\Hil}\pap \brac*[\Bigg]{\frac{1}{\Zapabs^\threebytwo} \pap f} } \\
& \lesssim \norm[\Hhalf]{\frac{\sigma^\half}{\Zapabs^\threebytwo} \pap f} \cbrac{\norm[2]{\frac{\sigma^\half}{\Zapabs^\half}\pap^2\frac{1}{\Zapabs}} + \norm[\infty]{\sigma^\half\Zapabs^\half\pap\frac{1}{\Zapabs}}\norm[2]{\pap\frac{1}{\Zapabs}}}
\end{align*}
By using the expansion in \eqref{form2:Dapabs^3f} for $f=\Ztapbar$ we get
\begin{align*}
\lpar \qq & \norm[2]{\sigma\Dapabs^3\Ztapbar - \sigma\pap\cbrac*[\Bigg]{\frac{1}{\Zapabs^\threebytwo}\pap \brac*[\Bigg]{\frac{1}{\Zapabs^\threebytwo} \pap \Ztapbar}}} \\
& \lesssim \norm[\infty]{\sigma^\half\Zapabs^\half\pap\frac{1}{\Zapabs}}\norm[2]{\frac{\sigma^\half}{\Zapabs^\fivebytwo}\pap^2\Ztapbar} + \norm[\infty]{\sigma^\half\Zapabs^\half\pap\frac{1}{\Zapabs}}^2\norm[2]{\frac{1}{\Zapabs^2}\pap\Ztapbar} \\
& \quad + \norm[2]{\frac{\sigma^\half}{\Zapabs^\half}\pap^2\frac{1}{\Zapabs}}\norm[\infty]{\frac{\sigma^\half}{\Zapabs^\threebytwo}\pap\Ztapbar}
\end{align*}
Similarly using  the expansion in \eqref{form2:Dapabs^3f} for $f=\Th$ and using \lemref{lem:CW} we obtain
\begin{align*}
\lpar \qq & \norm[2]{\sigma\Dapabs^3\Th - \sigma\pap\cbrac*[\Bigg]{\frac{1}{\Zapabs^\threebytwo}\pap \brac*[\Bigg]{\frac{1}{\Zapabs^\threebytwo} \pap \Th}}} \\
& \lesssim \norm[\Ccal]{\Dapabs\frac{1}{\Zapabs}}\norm[\Ccal]{\frac{\sigma}{\Zapabs^2}\pap^2\Th} + \norm[\infty]{\sigma^\half\Zapabs^\half\pap\frac{1}{\Zapabs}}\norm[\Ccal]{\Dapabs\frac{1}{\Zapabs}}\norm[\Ccal]{\frac{\sigma^\half}{\Zapabs^\threebytwo}\pap\Th} \\
& \quad + \norm[\Ccal]{\frac{\sigma^\half}{\Zapabs^\threebytwo}\pap^2\frac{1}{\Zapabs}}\norm[\Ccal]{\frac{\sigma^\half}{\Zapabs^\threebytwo}\pap\Th}
\end{align*}
Using these we now have
\[
\lpar \quad  \sigma \frac{d}{dt} \int  \abs*[\bigg]{\papabs^\half \brac*[\Bigg]{\frac{1}{\Zapabs^\threebytwo} \pap f} }^2\difff\ap \approx -2 \Real \int \brac{i\sigma\Hil\Dapabs^3 f} \Dt\bar{f} \diff\ap
\]
But we have already shown that $\sigma(\Id - \Hil)\Dapabs^3 f \in \Ltwo$ for both $f = \Ztapbar$ and $f = \Th$. Hence we finally have
\[
\lpar \quad  \sigma \frac{d}{dt} \int  \abs*[\bigg]{\papabs^\half \brac*[\Bigg]{\frac{1}{\Zapabs^\threebytwo} \pap f} }^2\difff\ap \approx 2 \Real \int \brac{-i\sigma\Dapabs^3 f} \Dt\bar{f} \diff\ap
\]

\item Combining all three terms we obtain for $i=2,3$
\[
\frac{d}{dt} E_{\sigma,i} \approx 2 \Real \int \brac{ \Dt^2 f +i\frac{\Aone}{\Zapabs^2}\pap f  -i\sigma\Dapabs^3 f}(\Dt \bar{f}) \diff\ap
\]
For $f = \Ztapbar$ we obtain from  \eqref{eq:Ztbarap}
\begin{align*}
\begin{split}
& \brac{\Dt^2 +i\frac{\Aone}{\Zapabs^2}\pap  -i\sigma\Dapabs^3} \Ztbarap \\
 & =  \Rone\Zapbar -i\brac{\pap\frac{1}{\Zap}} \Jone -i\Dap \Jone - \Zapbar\sqbrac{\Dt^2 +i\frac{\Aone}{\Zapabs^2}\pap -i\sigma\Dapabs^3, \frac{1}{\Zapbar}} \Ztapbar
\end{split}
\end{align*}
We have already shown that $\Rone \in \Ccal$, $\Jone \in \Wcal$, $\dis \pap\frac{1}{\Zap} \in \Ltwo$ and the last term in $\Ltwo$. Now for $f = \Th$ we have from \eqref{eq:Th}
\[
 \brac{\Dt^2 +i\frac{\Aone}{\Zapabs^2}\pap  -i\sigma\Dapabs^3} \Th = \Rtwo +i\Jtwo
\]
In this case too we have shown that $\Rtwo, \Jtwo \in \Ltwo$. Hence this shows that 
\[
\frac{d}{dt}E_{\sigma,i}(t) \lesssim P(\Esigma(t)) \qq \text{ for } i=2,3
\]

\end{enumerate}
\medskip

\subsubsection{Controlling $\Esigmafour$ \nopunct} \hspace*{\fill} \medskip

Recall that 
\begin{align*}
 \Esigmafour = \norm[\Hhalf]{\Dt\Dapbar\Ztbar}^2 + \norm[2]{\sqrt{\Aone}\Dapabs\Dapbar\Ztbar}^2 + \norm[2]{\frac{\sigma^\half}{\Zapabs^\half}\pap\Dapabs\Dapbar\Ztbar}^2
\end{align*}
As before we first simplify the terms individually before combining them.

\begin{enumerate}[leftmargin =*, align=left, label=\arabic*)]
\item 
By \lemref{lem:timederiv} and the fact that $\papabs = i\Hil\pap$ we have
\begin{align*}
\frac{d}{dt}  \int \abs*[\Big]{\papabs^\half \brac{\Dt \Dapbar \Ztbar}}^2\difff\ap & \approx 2\Real \int (\Dt^2\Dapbar\Ztbar)\papabs(\Dt\Dap\Zt) \diff \ap \\
& \approx 2\Real \int (\Hil\Dt^2\Dapbar\Ztbar)\cbrac{-i\pap(\Dt\Dap\Zt)} \diff \ap
\end{align*}
But we have shown that $(\Id - \Hil)\Dt^2\Dapbar\Ztbar \in \Hhalf$. Hence we have
\[
\frac{d}{dt}  \int \abs*[\Big]{\papabs^\half \brac{\Dt \Dapbar \Ztbar}}^2\difff\ap  \approx 2\Real \int (\Dt^2\Dapbar\Ztbar)\cbrac{-i\pap(\Dt\Dap\Zt)} \diff \ap
\]

\item
By \lemref{lem:timederiv} and as $\bvarap \in \Linfty$ we have
\begin{align*}
\frac{d}{dt} \int \Aone \abs{\Dapabs \Dapbar \Ztbar}^2\difff\ap & \approx \int \brac{\frac{\Dt\Aone}{\Aone}}\Aone\abs{\Dapabs \Dapbar \Ztbar}^2 \diff\ap \\
& \quad + 2\Real \int \Aone \brac{\Dapabs\Dapbar\Ztbar}\Dt\brac{\Dapabs\Dap\Zt} \diff\ap
\end{align*}
As $\dis \frac{\Dt\Aone}{\Aone} \in \Linfty$, the first term is controlled. We now see that
\[
\Dt\Dapabs\Dap\Zt = -\Real(\Dap\Zt)\Dapabs\Dap\Zt + \Dapabs\Dt\Dap\Zt
\]
Now as $\Real(\Dap\Zt) \in \Linfty$ we obtain 
\[
\frac{d}{dt} \int \Aone \abs{\Dapabs \Dapbar \Ztbar}^2\difff\ap \approx 2\Real \int \brac*[\bigg]{i\frac{\Aone}{\Zapabs^2}\pap\Dapbar\Ztbar}\cbrac{-i\pap(\Dt\Dap\Zt)} \diff\ap
\]

\item
By \lemref{lem:timederiv} and as $\bvarap\in \Linfty$ we have
\begin{align*}
& \sigma \frac{d}{dt}\int \abs*[\bigg]{\frac{1}{\Zapabs^\half}\pap \Dapabs \Dapbar \Ztbar }^2\difff\ap \\
 & \approx 2\sigma\Real \int \cbrac{\frac{1}{\Zapabs^\half}\pap \Dapabs \Dapbar \Ztbar}\Dt\cbrac{\frac{1}{\Zapabs^\half}\pap \Dapabs \Dap \Zt} \diff \ap
\end{align*}
We see that
\begin{align*}
\lpar \quad \sigma^\half\Dt\cbrac{\frac{1}{\Zapabs^\half}\pap \Dapabs \Dap \Zt} & = \brac{-\frac{3}{2}\Real(\Dap\Zt) -  \frac{\bvarap}{2}}\cbrac{\frac{\sigma^\half}{\Zapabs^\half}\pap\Dapabs\Dap\Zt } \\
& -\Real\brac{\frac{\sigma^\half}{\Zapabs^\half}\pap\Dap\Zt}\brac{\Dapabs\Dap\Zt} + \frac{\sigma^\half}{\Zapabs^\half}\pap\Dapabs\Dt\Dap\Zt
\end{align*}
As $\Dap\Zt, \bvarap \in \Linfty$, the first term is controlled in $\Ltwo$. The second term is also  in $\Ltwo$ as we have $\dis \frac{\sigma^\half}{\Zapabs^\half}\pap\Dap\Zt \in \Linfty$ and $\Dapabs\Dap\Zt \in \Ltwo$. Hence we have
\[
 \sigma \frac{d}{dt}\int \abs*[\bigg]{\frac{1}{\Zapabs^\half}\pap \Dapabs \Dapbar \Ztbar }^2\difff\ap \approx 2\Real \int \brac{-i\sigma\Dapabs^3\Dapbar\Ztbar}\cbrac{-i\pap(\Dt\Dap\Zt)} \diff\ap
\]

\item Combining the three terms we get
\[
\frac{d}{dt} \Esigmafour \approx 2\Real \int  \brac*[\bigg]{ \Dt^2 \Dapbar\Ztbar +i\frac{\Aone}{\Zapabs^2}\pap \Dapbar\Ztbar  -i\sigma\Dapabs^3 \Dapbar\Ztbar}\cbrac{-i\pap(\Dt\Dap\Zt)} \diff\ap
\]
From equation  \eqref{eq:DapbarZtbar} we see that
\[
 \brac{\Dt^2 +i\frac{\Aone}{\Zapabs^2}\pap  -i\sigma\Dapabs^3} \Dapbar\Ztbar  =  \Rone -i\brac{\Dapbar\frac{1}{\Zap}} \Jone -i\frac{1}{\Zapabs^2}\pap \Jone 
\]
But we have already shown that $\dis \Rone,\frac{1}{\Zapabs^2}\pap\Jone \in \Hhalf$ and the second term is controlled in $\Hhalf$ by using \lemref{lem:CW}
\[
\norm[\Hhalf]{\brac{\Dapbar\frac{1}{\Zap}} \Jone} \lesssim \norm[\Ccal]{\Dapbar\frac{1}{\Zap}}\norm[\Wcal]{\Jone}
\]
Hence we have
\[
\lpar \frac{d}{dt} \Esigmafour(t) \lesssim P(\Esigma(t))
\]
This concludes the proof of \thmref{thm:aprioriEsigma}
\end{enumerate}
\medskip

\section{Equivalence of Energy and relation with Sobolev norm}\label{sec:equivalence}
\medskip

In this section we prove the equivalence of the energies $\Esigma(t)$ and $\Ecalsigma(t)$. We also explain the relation of the energy $\Ecalsigma(t)$ to the Sobolev norm of the solution.

\begin{prop}\label{prop:equivEsigma} There exists universal polynomials $P_1, P_2$ with non-negative coefficients so that if $(\Z,\Zt)(t)$ is a smooth solution to the water wave equation \eqref{eq:systemone} for $\sigma \geq 0$ in the time interval $[0,T]$ satisfying $(\Zap-1,\frac{1}{\Zap} - 1, \Zt) \in \Linfty([0,T], H^{s+\half}(\Rsp)\times H^{s+\half }(\Rsp)\times H^{s }(\Rsp))$ for all $s\geq 3$,  then for all $t \in [0,T]$ we have
\begin{align*}
\Esigma(t) \leq P_1(\Ecalsigma(t)) \quad \tx{ and }\quad  \Ecalsigma(t) \leq P_2(\Esigma(t))
\end{align*}
\end{prop}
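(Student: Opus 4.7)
The plan is to prove the two inequalities separately, using two complementary structural features of the problem. For $\Ecalsigma(t) \leq P_2(\Esigma(t))$, essentially all the work has been done in \secref{sec:quantEsigma}: every individual summand of $\Ecalsigmaone$ and $\Ecalsigmatwo$ is either one of the spatial entries of $\Esigmazero$, dominated pointwise using $\Aone \geq 1$, or explicitly verified in the controlled-quantities catalog. The one non-catalog term, $\norm[\Hhalf]{\sigma\pap\Th}$, is recovered from the fundamental equation \eqref{form:Zttbar} rewritten as $\sigma\pap\Th = (\Zttbar-i)\Zap + i\Aone$, combined with the $\Esigmaone$ bound on $\norm[\Hhalf]{(\Zttbar-i)\Zap}$ and the $\Hhalf$ estimate of $\Aone$ coming from $\Aone - 1 = -\Imag[\Zt,\Hil]\Ztapbar$ together with the commutator bounds in \propref{prop:commutator}.

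The more substantial direction is $\Esigma(t) \leq P_1(\Ecalsigma(t))$, where the task is to convert every occurrence of $\Dt$, $\Ztt$ or $\Aone$ into purely spatial data. Three structural identities do the job: (i) $(\Zttbar-i)\Zap = -i\Aone + \sigma\pap\Th$ from \eqref{form:Zttbar}, which immediately controls the $\Esigmaone$-term $\norm[\Hhalf]{(\Zttbar-i)\Zap}$ by $\norm[\Hhalf]{\Aone} + \norm[\Hhalf]{\sigma\pap\Th}$; (ii) the formula $\Th = i\frac{\Zap}{\Zapabs}\pap\frac{1}{\Zap} - i\Real(\Id-\Hil)(\frac{\Zap}{\Zapabs}\pap\frac{1}{\Zap})$ from \eqref{form:Th}, which reduces all $\Th$-quantities to derivatives of $\frac{1}{\Zap}$ plus near-antiholomorphic errors; and (iii) the material-derivative formulas \eqref{form:DtTh}, \eqref{form:Dt^2Th} together with the commutator $\Dt\pap = \pap\Dt - \bvarap\pap$, which replace $\Dt\Ztapbar$ and $\Dt\Th$ by $\Zttapbar - \bvarap\Ztapbar$ and $i(\Dapabs+i\Real\Th)\Dapbar\Ztbar$ modulo $(\Id-\Hil)$-controlled lower-order terms. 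The $\sqrt{\Aone}$-weighted pieces are handled by pulling out $\norm[\infty]{\sqrt{\Aone}}$, bounded via $\norm[\infty]{\Aone} \lesssim 1 + \norm[2]{\Ztapbar}^2$, while the various weighted derivatives of $\Ztapbar$, $\Th$ and $\Dapbar\Ztbar$ are reduced to $\Ecalsigma$-controlled quantities via the $\Ccal$/$\Wcal$ calculus of \lemref{lem:CW}, exactly as in \secref{sec:quantEsigma}, but now with $\Ecalsigma$ supplying the hypothesis.

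The main obstacle is the top-order $\Hhalf$-estimate $\norm[\Hhalf]{\Dt\Dapbar\Ztbar}$ in $\Esigmafour$. Writing $\Dt\Dapbar\Ztbar = \Dapbar\Zttbar - (\Dapbar\Ztbar)^2 = \Dapbar\{-i\Aone/\Zap + \sigma\Dap\Th\} - (\Dapbar\Ztbar)^2$, one must commute $\Dapbar$ past the weight $1/\Zap$ and past $\sigma\Dap$ so that every factor not carrying the top spatial derivative lies in $\Wcal$ or $\Ccal$, so that \lemref{lem:CW} closes the estimate; the resulting commutator errors are controlled by $\Ecalsigma$-catalog items such as $\frac{\sigma}{\Zapabs^2}\pap^3\frac{1}{\Zap}$, $\sigma\Dapabs^2\Th$ and $\frac{\sigma^\half}{\Zapabs^\threebytwo}\pap^2\frac{1}{\Zap}$. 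Modulo this bookkeeping, the full proof is a systematic application of the commutator and triple estimates in \secref{sec:appendix} combined with the catalog of \secref{sec:quantEsigma}, with the spatial energy $\Ecalsigma$ replacing $\Esigma$ throughout.
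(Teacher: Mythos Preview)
Your proposal is correct and follows the same route as the paper's proof. Two small corrections: in the direction $\Ecalsigma \leq P_2(\Esigma)$, the term $\norm[\Hhalf]{\sigma\pap\Th}$ is already in the catalog of \secref{sec:quantEsigma} (it is item 23, proved exactly as you describe); the only summands of $\Ecalsigma$ not literally in the catalog are $\norm[\Hhalf]{\frac{\sigma^\half}{\Zap^{3/2}}\pap^2\frac{1}{\Zap}}$ and $\norm[\Hhalf]{\frac{\sigma}{\Zap^2}\pap^3\frac{1}{\Zap}}$, which differ from the controlled quantities only by a factor of $\w$ and are handled by $\w\in\Wcal$ and \lemref{lem:CW}. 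In the direction $\Esigma \leq P_1(\Ecalsigma)$, note that for the $\Hhalf$ pieces in $\Esigmatwo$ and $\Esigmathree$ you cannot simply pull out $\norm[\infty]{\sqrt{\Aone}}$; one needs $\sqrt{\Aone}\in\Wcal$ (i.e.\ $\Dapabs\Aone\in L^2$) together with $\frac{\Th}{\Zapabs},\frac{\Ztapbar}{\Zapabs}\in\Ccal$ and \lemref{lem:CW}---which is precisely the $\Ccal/\Wcal$ calculus you invoke in the next clause, so this is a matter of phrasing rather than a gap.
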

\begin{proof}
From now on we will suppress the time variable. Let us first prove that $ \Ecalsigma \leq P_2(\Esigma)$. Note that from \secref{sec:quantEsigma} we already have most of the terms of $\Ecalsigma$ controlled. The terms which are not controlled namely $\dis \frac{\sigma^\half}{\Zap^\threebytwo}\pap^2\frac{1}{\Zap}$ and $\dis \frac{\sigma}{\Zap^2}\pap^3\frac{1}{\Zap}$ can be easily controlled in $\Hhalf$ as we have $\w \in \Wcal$ and we have already shown that $\dis \frac{\sigma^\half}{\Zapabs^\threebytwo}\pap^2\frac{1}{\Zap} \in \Ccal$, $\dis \frac{\sigma}{\Zapabs^2}\pap^3\frac{1}{\Zap} \in \Ccal$ and using \lemref{lem:CW}

Let us now show that $\Esigma \leq P_1(\Ecalsigma)$. 
\begin{enumerate}[leftmargin =*, align=left]
\item As we have $\Ztapbar \in \Ltwo$, we see from \secref{sec:quantEsigma} that $\Aone \in \Linfty\cap\Hhalf$. Hence we have that $(\Zttbar -i)\Zap \in \Hhalf$ by using equation \eqref{form:Zttbar}. We now show that $\Dap\Ztbar \in \Linfty$. Observe that
\begin{align*}
\pap (\Dap\Ztbar)^2 = 2(\Ztapbar)(\Dap^2\Ztbar) = 2(\Ztapbar)\brac{\pap\frac{1}{\Zap}}\Dap\Ztbar + 2(\Ztapbar)\brac{\frac{1}{\Zap^2}\pap\Ztapbar}
\end{align*}
Hence we have
\begin{align*}
\norm[\infty]{\Dap\Ztbar}^2 \leq 2\norm[2]{\Ztapbar}\norm[2]{\pap\frac{1}{\Zap}}\norm[\infty]{\Dap\Ztbar} + 2\norm[2]{\Ztapbar}\norm[2]{\frac{1}{\Zap^2}\pap\Ztapbar}
\end{align*}
Now using the inequality $ab \leq \frac{a^2}{2\epsilon} + \frac{\epsilon b^2}{2}$ on the first term, we obtain $\Dap\Ztbar \in \Linfty$.

\item Following the proof in \secref{sec:quantEsigma}  we now have the terms $\Dapabs\Dapbar\Ztbar \in \Ltwo$, $\dis \pap\frac{1}{\Zapabs} \in \Ltwo$, $\w \in \Wcal$,  $\Dapbar\Ztbar \in \Wcal\cap\Ccal$, $\dis \pap\Pa\brac{\frac{\Zt}{\Zap}} \in \Linfty$, $\Aone \in \Wcal$, $\bvarap \in \Wcal$, $\frac{1}{\Zapabs^2}\pap\Aone \in \Wcal\cap\Ccal$. We also see that $\Th \in \Ltwo$, $\Dt\Th \in \Ltwo$ by using the formula \eqref{form:Th} and \eqref{form:DtTh}. By following the proof of $\dis \Dap\frac{1}{\Zap} \in \Ccal$ in \secref{sec:quantEsigma},  we easily obtain $\dis \Dapabs\frac{1}{\Zap} \in \Ccal$ and $\dis \frac{\Th}{\Zapabs}\in \Ccal$. Hence we have $\dis \frac{\sqrt{\Aone}}{\Zapabs}\Th \in \Ccal$ and $\dis \frac{\sqrt{\Aone}}{\Zapabs}\Ztapbar \in \Ccal$ from \lemref{lem:CW}. 

\item Again by following the approach in \secref{sec:quantEsigma} we automatically have $\dis \sigma^\half\Zapabs^\half\pap\frac{1}{\Zap} \in \Wcal$, $\sigma^\twobythree\pap\Th \in \Ltwo$, $\dis \sigma^\twobythree\pap^2\frac{1}{\Zap} \in \Ltwo$, $\sigma^\onebythree \Th \in \Linfty\cap\Hhalf$, $\dis \sigma^\onebythree\pap\frac{1}{\Zap} \in \Linfty\cap\Hhalf$ etc. Hence we now have $\dis \frac{\sigma}{\Zapabs}\pap^2\Th \in \Ltwo$, $\sigma\pap\Dap\Th \in \Ltwo$ by following the proof of $\dis \frac{\sigma}{\Zapabs}\pap^3\frac{1}{\Zap} \in \Ltwo$ in \secref{sec:quantEsigma}. In particular we now have $\Dt\Ztapbar \in \Ltwo$ by using equation \eqref{form:Zttbar}. 

\item By following the proof of $\dis \frac{\sigma^\half}{\Zapabs^\threebytwo}\pap^2\frac{1}{\Zap} \in \Ccal$ in \secref{sec:quantEsigma} we see that $\dis \frac{\sigma^\half}{\Zapabs^\threebytwo}\pap\Th \in \Ccal$. Similarly by following the proof of $\dis \frac{\sigma^\half}{\Zapabs^\fivebytwo}\pap^2\Ztapbar \in \Ltwo$ in \secref{sec:quantEsigma} we also obtain $\dis \frac{\sigma^\half}{\Zapabs^\half}\pap\Dapabs\Dapbar\Ztbar \in \Ltwo$. We use  \propref{prop:LinftyHhalf} with $\dis f = \frac{\sigma^\half}{\Zapabs^\threebytwo}\pap\Ztapbar$ and $\dis w = \frac{1}{\Zapabs}$ to obtain $\dis \frac{\sigma^\half}{\Zapabs^\threebytwo}\pap\Ztapbar \in \Ccal$. 

\item As $\w \in \Wcal$ we have $\dis \frac{\sigma}{\Zapabs^2}\pap^3\frac{1}{\Zap} \in \Ccal$. Hence by following the proof of $\dis \frac{\sigma}{\Zapabs^2}\pap^3\frac{1}{\Zap} \in \Ccal$ in \secref{sec:quantEsigma} we obtain $\dis \frac{\sigma}{\Zapabs^2}\pap^2\Th \in \Ccal$, $\sigma\Dapbar\Dap\Th \in \Ccal$ etc. Hence by using equation \eqref{form:Zttbar} we now have $\Dapbar\Zttbar \in \Ccal$ and hence $\Dt\Dapbar\Ztbar \in \Ccal$. This finishes the proof of \propref{prop:equivEsigma}

\end{enumerate}
\end{proof}

We now explain the relation between the energy $\Ecalsigma(t)$ and the Sobolev norm of the data. 

\begin{lem}\label{lem:equivsobolev}  Let $(\Z,\Zt)(t)$ be a smooth solution to the water wave equation \eqref{eq:systemone} for $\sigma \geq 0$ in the time interval $[0,T^*]$ for $T^*>0$, satisfying $(\Zap-1,\frac{1}{\Zap} - 1, \Zt) \in \Linfty([0,T^*], H^{s+\half}(\Rsp)\times H^{s+\half }(\Rsp)\times H^{s }(\Rsp))$ for all $s\geq 3$. Then we have the following estimates
\begin{enumerate}
\item For $\sigma>0$ there exists universal polynomials $P_1, P_2$ with non-negative coefficients so that 
for each $t \in [0,T^*]$ we have
\begin{align*}
\norm[H^2]{\Ztapbar}(t) + \norm[H^{2.5}]{\pap\Zap}(t) \leq P_1\brac{\Ecalsigma(t) + \norm[\infty]{\Zap}(t) + \frac{1}{\sigma}} \quad \tx{and}\\
 \Ecalsigma(t) \leq P_2\brac{\norm[H^2]{\Ztapbar}(t) +  \norm[H^{2.5}]{\pap\Zap}(t) + \norm[\infty]{\frac{1}{\Zap}}(t) + \sigma}
\end{align*}
\item For $\sigma\geq 0$ there exists universal polynomials $P_3, P_4$ with non-negative coefficients so that for each $t \in [0,T^*]$ we have
\begin{align*}
\norm[H^1]{\Ztapbar}(t) + \norm[H^{\half}]{\pap\Zap}(t) \leq P_3\brac{\Ecalsigma(t)\vert_{\sigma=0} + \norm[\infty]{\Zap}(t)} \quad \tx{and}\\
 \Ecalsigma(t)\vert_{\sigma=0} \leq P_4\brac{\norm[H^1]{\Ztapbar}(t) +  \norm[H^{\half}]{\pap\Zap}(t) + \norm[\infty]{\frac{1}{\Zap}}(t) }
\end{align*}
\item  There exists a universal increasing function $F:[0,\infty) \to [0,\infty)$ so that if $0\leq T\leq T^*$ and we define
\begin{align*}
A & = \sup_{t\in[0,T]} \cbrac{\norm[H^{3.5}]{\Zap -1}(t) + \norm[H^{3.5}]{\frac{1}{\Zap} - 1}(t) + \norm[H^3]{\Zt}(t)} <\infty \\
B & = \sup_{t\in[0,T]} \cbrac{\norm[H^{1.5}]{\Zap -1}(t) + \norm[H^{1.5}]{\frac{1}{\Zap} - 1}(t) + \norm[H^2]{\Zt}(t)} <\infty  \\
D & = \norm[2]{\frac{1}{\Zap} - 1}(0) + \norm[2]{\Zt}(0) + \sup_{t\in[0,T]}\Ecalsigma(t) < \infty
\end{align*}
Then for $\sigma>0$
\begin{align*}
A \leq F\brac{D + \norm[\infty]{\Zap}(0) + T +  \sigma + \frac{1}{\sigma}}
\end{align*}
and for $\sigma\geq 0$
\begin{align*}
B \leq F\brac{D + \norm[\infty]{\Zap}(0) + T +  \sigma + 1}
\end{align*}
\end{enumerate}

\end{lem}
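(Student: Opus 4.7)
The proof proposal proceeds by treating each of the three parts in turn, with the common thread that $\Ecalsigma$ is built from weighted Sobolev norms whose weights are polynomial in $\Zap^{\pm 1}$ and $\sigma$, so that converting to and from standard Sobolev norms is a matter of inverting weights (using $\Linfty$ control of $\Zap$, $1/\Zap$, and powers of $1/\sigma$) and applying Leibniz plus standard product estimates in $H^s$.

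For parts (1) and (2), the forward direction $\Ecalsigma \leq P_2(\cdot)$ is routine: each summand of $\Ecalsigma$ has the form $\norm[X]{w\,\pap^k f}$ with $f \in \{1/\Zap,\Zt,\Th\}$, and by expanding with Leibniz and using $\pap(1/\Zap)=-(\pap\Zap)/\Zap^2$, $\Th \approx i (\Zap/\Zapabs)\pap(1/\Zap)$, and the algebra estimates for $H^{1/2}$, each term is bounded by a polynomial in $\norm[H^{2.5}]{\pap\Zap}$, $\norm[H^2]{\Ztapbar}$, $\norm[\infty]{1/\Zap}$, and $\sigma$ (respectively $\norm[H^{1/2}]{\pap\Zap}$, $\norm[H^1]{\Ztapbar}$, $\norm[\infty]{1/\Zap}$ for part (2)). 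For the reverse direction, I would extract the top-order standard Sobolev regularity from the most-weighted surface-tension term: from $\norm[\Hhalf]{(\sigma/\Zap^2)\pap^3(1/\Zap)} \leq \Ecalsigmaone^{1/2}$ one multiplies by $\Zap^2/\sigma$ using the product estimate $\norm[\Hhalf]{gh}\lesssim \norm[\infty]{g}\norm[\Hhalf]{h}+\norm[BMO]{h}\norm[\Hhalf]{g}$ together with $\norm[\infty]{\Zap}$ and bounds on $\pap\Zap\in H^{0.5}$ already derived from lower-order terms, yielding $\pap^3(1/\Zap)\in\Hhalf$; the intermediate-order and $\Ltwo$-level terms follow similarly, giving $1/\Zap-1\in H^{3.5}$. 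Passing from $1/\Zap$ to $\Zap$ is then done by induction on the derivative count using $\pap^k\Zap = -\Zap^2\pap^k(1/\Zap) + (\text{lower-order products})$ and the same $H^s$-algebra estimates, with $\norm[\infty]{\Zap}$ entering multiplicatively. The $H^2$ bound on $\Ztapbar$ is obtained the same way from the $\norm[2]{(\sigma^{1/2}/\Zap^{5/2})\pap^2\Ztapbar}$ and $\norm[2]{(1/\Zap^2)\pap\Ztapbar}$ terms in $\Ecalsigmatwo$. Part (2) is the same chain with all $\sigma$-factored terms dropped, producing the lower-regularity equivalence.

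For part (3), the strategy is propagation of regularity. The hypothesis gives $\sup_{[0,T]}\Ecalsigma(t)\leq D$, and parts (1)--(2) convert this pointwise-in-time into
\[
\norm[H^{2.5}]{\pap\Zap}(t)+\norm[H^2]{\Ztapbar}(t) \leq P_1\Bigl(D+\norm[\infty]{\Zap}(t)+\tfrac{1}{\sigma}\Bigr),
\]
so the task reduces to controlling $\norm[\infty]{\Zap}(t)$ and the base $\Ltwo$ norms of $\Zap-1$, $1/\Zap-1$, and $\Zt$ for $t\in[0,T]$. The pointwise bounds on $\Zap$ and $1/\Zap$ are propagated from $t=0$ via the ODEs $\Dt\Zap=\Ztap-\bvarap\Zap$ and $\Dt(1/\Zap)=(1/\Zap)(\bvarap-\Dap\Zt)$: composing with the Lagrangian flow $\h(\cdot,t)$ (well-defined since $\bvarap\in\Linfty$ from \secref{sec:quantEsigma}) yields $\pt(\Zap\circ\h) = (\Dt\Zap)\circ\h$, and Gronwall in time together with the uniform $\Linfty$ bound on $\bvarap$, $\Dap\Zt$, $\Ztap$ coming from $\Ecalsigma$ gives $\norm[\infty]{\Zap}(t)\leq \norm[\infty]{\Zap}(0)\exp(T\,P(\Ecalsigma))$, and similarly for $\norm[\infty]{1/\Zap}(t)$. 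For the base $\Ltwo$ norms, use the same material-derivative equations with $\norm[2]{\Ztapbar}$ already controlled to get $\norm[2]{(1/\Zap-1)}(t)\leq \norm[2]{(1/\Zap-1)}(0)+T\cdot P(D)$, and similarly for $\Zt$ (using $\Ztt\in \Linfty$ type bounds from \eqref{form:Zttbar} and $\Ecalsigma$). Combining everything produces the increasing function $F$.

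The main obstacle is the time-propagation step in part (3): one must make sure the sup-norms of $\Zap$ and $1/\Zap$, which appear in the weight inversions of parts (1)--(2), remain finite on $[0,T]$, since $\Ecalsigma$ by itself does not directly control them (it controls $\pap\Zap$ in $L^2$ etc., which is not enough to bound $\norm[\infty]{\Zap}$ without combining with initial data). The key identity is that the Lagrangian flow preserves $\Ltwo$ up to the Jacobian $\hal$, and the Jacobian itself is controlled by $\norm[\infty]{\bvarap}\in L^\infty_t$. Once this is in hand the dependence of $F$ on $\norm[\infty]{\Zap}(0)$, $T$, $\sigma$, $1/\sigma$, and $D$ appears naturally from the Gronwall estimates and the explicit form of $P_1$ in part (1) (or $P_3$ in part (2) for the $B$-bound).
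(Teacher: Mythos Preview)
Your proposal follows essentially the same route as the paper's proof: parts (1)--(2) are handled by inverting the weights $\Zap^{\pm k}$ and $\sigma^{\pm j}$ via Leibniz/product estimates in $H^s$, and part (3) proceeds by first propagating $\norm[\infty]{\Zap}(t)$ and $\norm[\infty]{1/\Zap}(t)$ via Gronwall on the multiplicative ODE $\Dt\Zap=(\Dap\Zt-\bvarap)\Zap$, then propagating the base $\Ltwo$ norms, and finally invoking parts (1)--(2).

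One point deserves more care than your sketch gives it. In part (3) you write that $\norm[2]{1/\Zap-1}(t)\leq \norm[2]{1/\Zap-1}(0)+T\cdot P(D)$ and ``similarly for $\Zt$'', treating them independently with linear-in-$T$ growth. In fact these two quantities are \emph{coupled}: to pass from $\Dt$ to $\pt$ you need $\norm[\infty]{\bvar}$, and since $\bvar=\Real(\Id-\Hil)(\Zt/\Zap)$ one has $\norm[2]{\bvar}\lesssim \norm[2]{\Zt}\norm[\infty]{1/\Zap}$ and hence $\norm[\infty]{\bvar}\lesssim \norm[2]{\Zt}+C(\Ecalsigma)$; conversely, from \eqref{form:Zttbar} one gets $\norm[2]{\Dt\Ztbar}\lesssim \norm[2]{1/\Zap-1}+\norm[\infty]{1/\Zap}\norm[2]{\Aone-1}+C(\Ecalsigma)$, and $\norm[2]{\Aone-1}$ in turn involves $\norm[\infty]{\Zt}\lesssim\norm[2]{\Zt}+\norm[2]{\Ztap}$. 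The paper therefore sets $f(t)=\norm[2]{1/\Zap-1}^2+\norm[2]{\Zt}^2+1$ and derives the closed inequality $f'\leq C(M)f$, giving exponential (not linear) dependence on $T$. Also, your remark about ``$\Ztt\in\Linfty$ type bounds'' is not quite right---what is actually used is $\Dt\Ztbar\in\Ltwo$ via the equation. These are refinements rather than a change of strategy; once the coupling is set up, the argument closes exactly as you outlined.
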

\begin{rmk}\label{rem:equivsobolev}
For $\sigma>0$ if the interface is non-self intersecting and if $\Ecalsigma$ is well defined with $\Ecalsigma  <\infty$, then we in fact have $\Zap \in \Linfty$ but the norm $\norm[\infty]{\Zap}$ depends on $\sigma^{-\onebythree}$ and the rate at which $\Zap \to 1$ as $\abs{\ap} \to \infty$. To see this observe that in the proof of \thmref{thm:aprioriEsigma} we showed that $\norm*[\infty]{\sigma^\onebythree \kap} \leq C(\Ecalsigma)$ and hence the curvature $\kap \in \Linfty$. Therefore by the Kellogg-Warschawski theorem (see chapter 3 of \cite{Po92}), the derivative of the Riemann mapping extends continuously to the boundary and hence $\Zap \in \Linfty_{loc}$. As $\Zap \to 1$ when $\abs{\ap} \to \infty$, we have that $\Zap \in \Linfty$. Hence by part 1 of the above lemma we have $\Ztapbar \in H^2(\Rsp)$ and $\pap\Zap \in H^{2.5}(\Rsp)$. Hence for $\sigma>0$ the condition $\Ecalsigma<\infty$ is essentially equivalent to the condition that the solution is in a suitable Sobolev space.   
\end{rmk}

\begin{proof}
We prove each part seperately. 

\begin{enumerate}[leftmargin =*, align=left]

\item  Let $\sigma>0$ and assume that $\dis \Ecalsigma + \norm[\infty]{\Zap}  <\infty$. Hence we have that $\Ztapbar \in \Ltwo$ and we have
\begin{align*}
\norm[2]{\pap^2\Ztapbar} \lesssim \frac{1}{\sigma^\half}\norm[\infty]{\Zap}^\fivebytwo\norm*[\Bigg][2]{\frac{\sigma^\half}{\Zap^\fivebytwo}\pap^2\Ztapbar} \quad \tx{ and }\quad \norm[2]{\Dapabs\Zap} \lesssim \norm[\infty]{\Zap}\norm[2]{\pap\frac{1}{\Zap}}
\end{align*}  
Hence $\Ztapbar \in H^2$ and as $\Zap \in \Linfty$ we obtain $\Zap \in \Wcal$. Hence from \lemref{lem:CW} we have
\begin{align*}
\norm[\Hhalf]{\pap^3\frac{1}{\Zap}} \lesssim \frac{1}{\sigma}\norm[\Wcal]{\Zap}^2\norm[\Ccal]{\frac{\sigma}{\Zap^2}\pap^3\frac{1}{\Zap}}
\end{align*}
Hence $\dis \pap\frac{1}{\Zap} \in H^{2.5}$. As $\Zap \in \Linfty$, we clearly have $\pap\Zap \in \Ltwo$ as. Now for $s \geq 1 $ we see from \propref{prop:Leibniz} that
\begin{align*}
\norm[2]{\papabs^s \pap\Zap} & = \norm[2]{\papabs^s \brac{\Zap^2\pap\frac{1}{\Zap}}} \\
& \lesssim \norm[2]{\papabs^s \Zap}\norm[\infty]{\Zap}\norm[\infty]{\pap\frac{1}{\Zap}} + \norm[\infty]{\Zap}^2\norm[2]{\papabs^s\pap\frac{1}{\Zap}}
\end{align*}
Using this for $s=1,2, 2.5$ sequentially we obtain $\pap\Zap \in H^{2.5}$.  

\item Now assume that $\sigma>0 $ and $\dis \norm[H^2]{\Ztapbar} +  \norm[H^{2.5}]{\pap\Zap} + \norm[\infty]{\frac{1}{\Zap}} < \infty$. We first observe that $\Ecalsigmatwo$ is easily controlled and that $\dis \sigma^\onebysix\Zap^\half\pap\frac{1}{\Zap} \in \Ltwo$, $\dis \sigma^\half\Zap^\half\pap\frac{1}{\Zap} \in \Linfty$. Now we have
\begin{align*}
\norm[2]{\pap\frac{1}{\Zap}} \lesssim \norm[\infty]{\frac{1}{\Zap}}^2\norm[2]{\pap\Zap}
\end{align*}
and hence for $s\geq1$ we have from \propref{prop:Leibniz}
\begin{align*}
\norm[2]{\papabs^s\pap\frac{1}{\Zap}} & = \norm[2]{\papabs^s\brac{\frac{1}{\Zap^2}\pap\Zap}} \\
& \lesssim \norm[2]{\papabs^s\frac{1}{\Zap}}\norm[\infty]{\frac{1}{\Zap}}\norm[\infty]{\pap\Zap} + \norm[\infty]{\frac{1}{\Zap}}^2\norm[2]{\papabs^s\pap\Zap}
\end{align*}
Using this for $s=1,2,2.5$ sequentially we obtain $\dis \pap\frac{1}{\Zap} \in H^{2.5}$. Hence we easily see that $\dis \frac{\sigma^\half}{\Zap^\half}\pap^2\frac{1}{\Zap} \in \Ltwo$ and $\dis \frac{\sigma}{\Zap}\pap^3\frac{1}{\Zap} \in \Ltwo$. We also have from \propref{prop:Leibniz}
\begin{align*}
\norm[\Hhalf]{\frac{1}{\Zap}\pap\frac{1}{\Zap}} & \lesssim \norm[\infty]{\frac{1}{\Zap}}\norm[\Hhalf]{\pap\frac{1}{\Zap}} + \norm[2]{\pap\frac{1}{\Zap}}^2 \\
\norm[\Hhalf]{\frac{\sigma^\half}{\Zap^\threebytwo}\pap^2\frac{1}{\Zap}} & \lesssim \sigma^\half\norm[\infty]{\frac{1}{\Zap}}^\threebytwo\norm[\Hhalf]{\pap^2\frac{1}{\Zap}} + \sigma^\half\norm[\infty]{\frac{1}{\Zap}}^\half\norm[2]{\pap\frac{1}{\Zap}}\norm[2]{\pap^2\frac{1}{\Zap}}
\end{align*}
and similarly
\begin{align*}
\norm[\Hhalf]{\frac{\sigma}{\Zap^2}\pap^3\frac{1}{\Zap}} \lesssim \sigma\norm[\infty]{\frac{1}{\Zap}}^2\norm[\Hhalf]{\pap^3\frac{1}{\Zap}} + \sigma\norm[\infty]{\frac{1}{\Zap}}\norm[2]{\pap\frac{1}{\Zap}}\norm[2]{\pap^3\frac{1}{\Zap}}
\end{align*}
We are only left with $\sigma\pap\Th$. We first observe that as $\Zap = e^{\f + i\g}$ we have 
\begin{align*}
\pap\Zap = \Zap\pap(f+ ig) \quad \tx{ and } \quad \pap^2\Zap = \Zap\cbrac{\pap(\f + i\g)}^2 + \Zap \pap^2(f+i\g)
\end{align*}
and hence we have
\begin{align*}
\norm[\Ltwo\cap\Linfty]{\pap\frac{\Zap}{\Zapabs}} & = \norm[\Ltwo\cap\Linfty]{\pap g} \lesssim \norm[\infty]{\frac{1}{\Zap}}\norm[\Ltwo\cap\Linfty]{\pap\Zap} \\
\norm[2]{\pap^2\frac{\Zap}{\Zapabs}} & = \norm[2]{\pap(e^{i\g}\pap\g)} \\
& \lesssim \norm[2]{\pap\g}\norm[\infty]{\pap\g} + \norm[\infty]{\frac{1}{\Zap}}\norm[2]{\pap^2\Zap} + \norm[\infty]{\frac{1}{\Zap}}^2\norm[\infty]{\pap\Zap}\norm[2]{\pap\Zap}
\end{align*}
From this we see using \propref{prop:Leibniz}
\begin{align*}
\norm[2]{\papabs^{\threebytwo}\brac{\frac{\Zap}{\Zapabs}\pap\frac{1}{\Zap}}} \lesssim \norm[\Hhalf]{\pap^2\frac{1}{\Zap}} + \norm[2]{\pap^2\frac{\Zap}{\Zapabs}}\norm[2]{\pap\frac{1}{\Zap}}
\end{align*}
Hence $\sigma\pap\Th \in \Hhalf$ by using the formula \eqref{form:Th}.

\item Now let $\sigma \geq 0$ and assume $\Ecalsigma\vert_{\sigma =0} + \norm[\infty]{\Zap} < \infty$. We clearly see that $\Ztapbar \in H^1$ and $\pap\Zap \in \Ltwo$. By the argument shown earlier, we also have $\Zap \in \Wcal$. Hence from \lemref{lem:CW} we have
\begin{align*}
\norm[\Hhalf]{\pap\frac{1}{\Zap}} \lesssim \norm[\Wcal]{\Zap}\norm[\Ccal]{\frac{1}{\Zap}\pap\frac{1}{\Zap}}
\end{align*}
Now we see from \propref{prop:Leibniz}
\begin{align*}
\norm[\Hhalf]{\pap\Zap}  = \norm[\Hhalf]{\Zap^2\pap\frac{1}{\Zap}}  \lesssim \norm[\infty]{\Zap}^2\norm[\Hhalf]{\pap\frac{1}{\Zap}} + \norm[\infty]{\Zap}\norm[2]{\pap\Zap}\norm[2]{\pap\frac{1}{\Zap}}
\end{align*}

\item Let $\sigma \geq 0$ and assume that $\Ztapbar \in H^1, \pap\Zap \in H^\half $ and $\dis \frac{1}{\Zap} \in \Linfty$. We easily see that $\dis \Ztapbar \in \Ltwo, \frac{1}{\Zap^2}\pap\Ztapbar \in \Ltwo$ and $\dis \pap\frac{1}{\Zap} \in \Ltwo$. We also have from \propref{prop:Leibniz}
\begin{align*}
\norm[\Hhalf]{\pap\frac{1}{\Zap}} \lesssim \norm[\infty]{\frac{1}{\Zap}}^2\norm[\Hhalf]{\pap\Zap} + \norm[\infty]{\frac{1}{\Zap}}\norm[2]{\pap\frac{1}{\Zap}}\norm[2]{\pap\Zap}
\end{align*}
and hence again using \propref{prop:Leibniz} we have
\begin{align*}
\norm[\Hhalf]{\frac{1}{\Zap}\pap\frac{1}{\Zap}} \lesssim \norm[2]{\pap\frac{1}{\Zap}}^2 + \norm[\infty]{\frac{1}{\Zap}}\norm[\Hhalf]{\pap\frac{1}{\Zap}}
\end{align*}
\item Now assume that $\sigma\geq 0$ and let 
\begin{align*}
D = \norm[2]{\frac{1}{\Zap} - 1}(0) + \norm[2]{\Zt}(0) + \sup_{t\in [0,T]}\Ecalsigma(t) 
\end{align*}
Define
\begin{align*}
M = D + \norm[\infty]{\Zap}(0) + T + \sigma + 1 
\end{align*}
In the following $C(M)$ will denote a constant depending only on $M$.  As $\Ecalsigma(0) \leq D$ we see that $ \norm[2]{\pap\frac{1}{\Zap}}(0) \leq C(M)$ and hence $\norm[\infty]{\frac{1}{\Zap}}(0) \leq C(M)$. 

Now the evolution equation \eqref{eq:systemone} gives us
\begin{align*}
(\pt + \bvar\pap)\Zap = \Ztap -\bvarap \Zap = \brac{\Dap\Zt - \bvarap}\Zap
\end{align*}
Hence for all $0\leq t \leq T$ we have the estimate
\begin{align*}
\norm[\infty]{\Zap}(t) & \leq \norm[\infty]{\Zap}(0) \exp\cbrac{{\int_0^t \brac{\norm[\infty]{\Dap\Zt}(s) + \norm[\infty]{\bvarap}(s)} \diff s}}
\end{align*}
As $\norm[\infty]{\Dap\Zt}$ and $\norm[\infty]{\bvarap}$ are controlled by $\Ecalsigma$, we see that $\sup_{t\in [0,T]}\norm[\infty]{\Zap}(t) \leq C(M)$. By a similar argument we also obtain $\sup_{t\in [0,T]}\norm[\infty]{\frac{1}{\Zap}}(t)\leq C(M)$

We now control $\norm[2]{\frac{1}{\Zap} - 1}(t)$ and $\norm[2]{\Zt}(t)$. To do this define
\begin{align*}
f(t) = \norm[2]{\frac{1}{\Zap} - 1}^2(t) + \norm[2]{\Zt}^2(t) + 1
\end{align*}
Observe that $f(0) \leq C(M)$. We first see some of the quantities controlled by $f(t)$. 
\begin{enumerate}
\item Using the formula \eqref{form:bvar} we see that
\begin{align*}
\norm[2]{\bvar} \lesssim \norm[2]{\Zt}\norm[\infty]{\frac{1}{\Zap}} \lesssim C(M)f^\half 
\end{align*}
hence using the estimate $\norm[\Hhalf]{\bvarap} \leq C(\Esigma)$ from \secref{sec:quantEsigma} and \propref{prop:equivEsigma} we have
\begin{align*}
\norm[\infty]{\bvar} + \norm[2]{\bvarap} \lesssim \norm[2]{\bvar} + \norm[\Hhalf]{\bvarap} \lesssim C(M)f^\half + C(M) \lesssim C(M)f^\half
\end{align*}
\item Using the formula  $\Aone = 1 - \Imag\sqbrac{\Zt,\Hil}\Ztapbar $ from \eqref{eq:mainvariables} we see that
\begin{align*}
\norm[2]{\Aone - 1} \lesssim \norm[\infty]{\Zt}\norm[2]{\Ztap} \lesssim \brac{\norm[2]{\Zt} + \norm[2]{\Ztap}}\norm[2]{\Ztap} \lesssim C(M)f^\half + C(M) \lesssim C(M)f^\half
\end{align*}
\item Using \eqref{form:DtoneoverZap} we see that
\begin{align*}
\norm[2]{(\pt + \bvar\pap)\frac{1}{\Zap}} & \lesssim \norm[2]{\frac{1}{\Zap}(\bvarap - \Dap\Zt)} \\
& \lesssim \norm[\infty]{\frac{1}{\Zap}}\norm[2]{\bvarap} + \norm[\infty]{\frac{1}{\Zap}}^2\norm[2]{\Ztap} \\
& \lesssim C(M)f^\half + C(M) \\
& \lesssim C(M)f^\half
\end{align*}
\item From \eqref{form:Zttbar} we see that 
\begin{align*}
\norm[2]{(\pt + \bvar\pap)\Zt} & \lesssim \norm[2]{i-i\frac{\Aone}{\Zap} + \sigma\Dap\Th} \\
& \lesssim \norm[2]{\frac{1}{\Zap} - 1} + \norm[\infty]{\frac{1}{\Zap}}\norm[2]{\Aone - 1} + \sigma^\onebythree\norm[\infty]{\frac{1}{\Zap}}\norm[2]{\sigma^\twobythree\pap\Th}\\
& \lesssim f^\half + C(M)f^\half + C(M)\sigma^\onebythree \\
& \lesssim C(M)f^\half
\end{align*}
where we have used the fact that $\Ecalsigma$ controls $\sigma^\twobythree\pap\Th \in \Ltwo$ from \secref{sec:quantEsigma}. 
\end{enumerate}
Hence we see that
\begin{align*}
\pt f & \lesssim f^\half\cbrac{\norm[2]{\pt\frac{1}{\Zap}}  + \norm[2]{\pt\Zt}  } \\
& \lesssim f^\half\cbrac{\norm[2]{(\pt + \bvar\pap)\frac{1}{\Zap}} + \norm[\infty]{\bvar}\norm[2]{\pap\frac{1}{\Zap}} + \norm[2]{(\pt + \bvar\pap)\Zt} + \norm[\infty]{\bvar}\norm[2]{\Ztap} } \\
& \lesssim C(M)f
\end{align*}
Hence $f(t)$ remains bounded on $[0,T]$ and we have
\begin{align*}
\sup_{t\in[0,T]}\cbrac{\norm[2]{\frac{1}{\Zap} - 1}(t) + \norm[2]{\Zt}(t)} \leq C(M)
\end{align*}

Now using  $\sup_{t\in [0,T]}\norm[2]{\frac{1}{\Zap} - 1} \leq C(M)$ and the fact that $\sup_{t\in [0,T]} \norm[\infty]{\Zap}(t) \leq C(M)$ we see that $\sup_{t\in [0,T]}\norm[2]{\Zap - 1} \leq C(M)$.  Now using part 1 and 2 of this lemma we easily obtain 
\begin{align*}
\sup_{t\in[0,T]} \cbrac{\norm[H^{3.5}]{\Zap -1}(t) + \norm[H^{3.5}]{\frac{1}{\Zap} - 1}(t) + \norm[H^3]{\Zt}(t)} \leq C\brac{M + \frac{1}{\sigma}}
\end{align*}
for $\sigma>0$ and 
\begin{align*}
 \sup_{t\in[0,T]} \cbrac{\norm[H^{1.5}]{\Zap -1}(t) + \norm[H^{1.5}]{\frac{1}{\Zap} - 1}(t) + \norm[H^2]{\Zt}(t)} \leq C(M)
\end{align*}
for $\sigma\geq 0$ thereby proving the lemma
\end{enumerate}
\end{proof}

\section{Existence in Sobolev spaces}\label{sec:existence}

\smallskip

In this section we prove the existence of solutions in Sobolev spaces for $\sigma>0$ with the results being \thmref{thm:existencesobolev} and also \corref{cor:existencesobolev}. This existence result is then used to complete the proof of \thmref{thm:existence} in \secref{sec:proof}. The existence proof is standard and follows the general approach of \cite{Am03, CaCoFeGaGo12}. Even though \cite{CaCoFeGaGo12} already has an existence result in conformal mapping coordinates for the water wave equation with surface tension, we require a much stronger result than the one provided there. First we need to have lower regularity on the initial data in Sobolev spaces, we need lesser restrictions on the lower order terms  and we also need a blow up criterion not depending on the chord arc constant of the interface. This existence result is of independent interest as we do not use the vorticity formulation using the Birkhoff-Rott integral as was done in \cite{Am03, CaCoFeGaGo12}. We do not assume that the interface is a graph nor that it is non-self intersecting (as was explained in \secref{sec:systemone}). In this section we fix $\sigma>0$ and constants appearing in the computations may depend on $\sigma$.

\subsection{\texorpdfstring{A priori estimates for exact solutions \nopunct}{}}\hspace*{\fill} \medskip

In order to prove existence for system \eqref{eq:systemone}, it is more convenient to work with an equivalent system in the variables $(\g, \vvar)$ 
\begin{align}\label{eq:systemtwo}
\begin{split}
\cvar & = e^{-i\Hil \g} \\
\w & = e^{i\g}\\
\bstar & = 2i\Hil(\cvar\vvar) + i \sqbrac{\cvar^2, \Hil} \brac{\frac{\vvar}{\cvar}} \\
\avar & = i\cvar\Hil\brac{\frac{\vvar}{\cvar} } \\
\dvar & = -i e^{i\g} \cvar (\Id - \Hil) \brac{\frac{\vvar}{\cvar}} \\
\Aonestar & =  1 - \Imag \sqbrac{\dvar,\Hil} \pap\bar\dvar \\
\etwo & = \Real(\w) - \Aonestar \cvar + \sigma \Imag\cbrac{\sqbrac{\cvar,\Hil} \pap(\Id + \Hil) (\cvar \pap \g) }\\
\pt\g & = -(\cvar\pap)\vvar + \avar(\cvar\pap)\g -\bstar\pap\g \\
\pt\vvar & = -i\sigma \Hil (\cvar\pap)^2 \g - \avar (\cvar\pap)\vvar  -\bstar\pap\vvar + \avar^2 (\cvar\pap)\g   + \etwo
\end{split}
\end{align}
To get system \eqref{eq:systemtwo} from system \eqref{eq:systemone} we use the following transformation
\begin{align}\label{eq:systemonetwo}
\begin{split}
\g & = \Imag(\log( \Zap)) \\
\vvar & = \Imag \brac{\frac{\Zap}{\Zapabs}\Ztbar}
\end{split}
\end{align}
and the following to get system \eqref{eq:systemone} from system \eqref{eq:systemtwo}
\begin{align}\label{eq:systemtwoone}
\begin{split}
\Zap & = e^{i (\Id + \Hil)\g} \\
\Zt & = \dvar
\end{split}
\end{align}
Let us now prove that these two systems are equivalent. 
\begin{lem}\label{lem:systemequiv}
Let $s\geq 3$ and $T\geq 0$. Then $(\Z, \Zt)(t)$ solves \eqref{eq:systemone} with $(\Zap -1, \frac{1}{\Zap} -1,\Zt) \in \Linfty([0,T], H^{s+\half}(\Rsp)\times H^{s+\half}(\Rsp)\times H^s(\Rsp))$ if and only if $(\g,\vvar)(t)$ solves \eqref{eq:systemtwo} along with $(\g,\vvar) \in \Linfty([0,T], H^{s+\half}(\Rsp)\times H^s(\Rsp))$, where the transformations between them are given by \eqref{eq:systemonetwo} and  \eqref{eq:systemtwoone}
\end{lem}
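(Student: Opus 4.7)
The plan is to verify the equivalence as a direct change-of-variables computation, handled in three tasks: (i) preservation of Sobolev regularity under the transformations \eqref{eq:systemonetwo} and \eqref{eq:systemtwoone}, (ii) identification of the auxiliary quantities $\cvar,\w,\bstar,\avar,\dvar,\Aonestar,\etwo$ in \eqref{eq:systemtwo} with natural objects from \eqref{eq:systemone}, and (iii) matching of the evolution equations in both directions.

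For the regularity step, in the forward direction I would note that $\Zap$ is bounded above and bounded away from $0$ in $\Linfty$ (since $1/\Zap - 1 \in H^{s+\half} \hookrightarrow \Linfty$ when $s\geq 3$), so $\log\Zap \in H^{s+\half}$ by a Moser composition estimate, hence $\g = \Imag\log\Zap \in H^{s+\half}$, and then $\vvar = \Imag(\w\Ztbar) \in H^s$. In the reverse direction, the formula $\Zap = e^{i(\Id+\Hil)\g}$ with $(\Id+\Hil)\g \in H^{s+\half}$ vanishing at infinity yields $\Zap - 1, 1/\Zap - 1 \in H^{s+\half}$ by another Moser estimate, and $\Zt = \dvar \in H^s$ from the explicit formula.

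The core of the proof is the algebraic identification. The essential fact is that $\log\Zap$, $\Zap - 1$, $1/\Zap - 1$, and $\Ztbar$ are boundary values of holomorphic functions on $\Pminus$ vanishing at infinity, so each is fixed by $\Hil$. Separating $\Hil\log\Zap = \log\Zap$ into real and imaginary parts (using that $\Hil$ sends real to imaginary in this convention) produces $\log\Zapabs = i\Hil\g$, whence $\cvar = e^{-i\Hil\g} = 1/\Zapabs$ and $\w = e^{i\g} = \Zap/\Zapabs$. Since $(\Zap - 1)\Ztbar$ is a product of holomorphic boundary values, $(\Id - \Hil)(\Zap\Ztbar) = 0$, while the antiholomorphic $\Zapbar\Zt$ satisfies $(\Id - \Hil)(\Zapbar\Zt) = 2\Zapbar\Zt$; using $\vvar\Zapabs = \Imag(\Zap\Ztbar) = (\Zap\Ztbar - \Zapbar\Zt)/(2i)$, the defining formula for $\dvar$ collapses to $\Zt$. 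Analogous manipulations identify $\bstar$ with $\bvar = \Real(\Id-\Hil)(\Zt/\Zap)$ from \eqref{eq:systemone} and $\Aonestar$ with $\Aone$.

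With the identifications in place, the evolution equations are matched by differentiating $\g = \Imag\log\Zap$ and $\vvar = \Imag(\w\Ztbar)$, substituting $\pt\Zap = \Ztap - \bvarap\Zap - \bvar\pap\Zap$ and using the fundamental equation \eqref{form:Zttbar} to handle $\pt\Ztbar$, rewriting all spatial derivatives as $\cvar\pap = \Dapabs$, and taking imaginary parts; the reverse implication runs the same computation backwards. The main obstacle I foresee is the bookkeeping needed to identify the residual $\etwo$: it must absorb precisely the terms produced when one projects the $\pt\vvar$ equation and extracts the main term $-i\sigma\Hil(\cvar\pap)^2\g$, and verifying this identity requires careful handling of the commutators $[\cvar^2,\Hil]$ and $[\cvar,\Hil]$ together with repeated use of the holomorphic/antiholomorphic cancellations noted above.
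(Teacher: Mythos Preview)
Your outline matches the paper's proof closely: both establish regularity via composition estimates, identify the auxiliary quantities by exploiting that $\log\Zap$ and $\Ztbar$ are boundary values of holomorphic functions (so $\cvar=1/\Zapabs$, $\dvar=\Zt$, $\bstar=\bvar$, $\Aonestar=\Aone$), and then verify the evolution equations by direct computation. The one point where your description understates the work is the reverse direction for the velocity equation. Since $\vvar$ only records $\Imag(\Zap\Ztbar)/\Zapabs$, the $\pt\vvar$ equation hands you only $\Imag\{\Zap(\pt+\bvar\pap)\Ztbar\}$, and recovering the full complex equation $(\pt+\bvar\pap)\Ztbar = i - i\Aone/\Zap + \sigma\Dap\Th$ is not literally ``running the forward computation backwards.'' The paper does this by applying $i(\Id+\Hil)$ to the imaginary-part identity and then separately computing the antiholomorphic piece via the projection identity $\Pa\{\Zap(\pt+\bvar\pap)\Ztbar\}=\Pa\{\Zt\Ztbarap\}$, which follows from $\Pa\Ztbar=0$ and the commutator relation $\Pa\{(\pt+\bvar\pap)f\}=\Pa\{(\Zt/\Zap)\pap f\}$ for holomorphic $f$. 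This holomorphic-completion step is the only substantive ingredient your sketch omits, though the tools you already invoke (holomorphic/antiholomorphic cancellations) are exactly what is needed to carry it out.
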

\begin{proof}
Step 1: We first assume that $(\Z, \Zt)(t)$ solves \eqref{eq:systemone} and show that $(\g,\vvar)(t)$ solves  \eqref{eq:systemtwo}.
\begin{enumerate}
\item Now if $(\Zap -1, \frac{1}{\Zap} -1,\Zt) \in \Linfty([0,T], H^{s+\half}\times H^{s+\half}\times H^s)$ for $s\geq 3$ then for any $t\in[0,T]$ we see that $\norm[\infty]{\Zap}(t) + \norm[\infty]{\frac{1}{\Zap}}(t) + \norm[2]{\abs{\Zap} - 1}(t) \leq M < \infty $
for some $M>0$. Now as $\Zap -1 \in H^{s+\half}(\Rsp)$ we observe that $\Psizp$ extends continuously to $\Pminusbar$ and hence $\log(\Psizp)$ also extends continuously to the boundary. Hence it makes sense to talk about the function $\log(\Zap)$.  
Observe that if $C_1>0$ then
\begin{align*}
c_1 \abs{z} \leq \abs{e^z -1} \leq c_2 \abs{z} \qq \tx{ for all } z\in \Rsp, \abs{z} \leq C_1
\end{align*}
for some $c_1,c_2 >0$ depending only on $C_1$. Now as $\abs{\Zap} = e^{\Real(\log(\Zap))}$ we see that $\Real(\log(\Zap)) \in \Ltwo$. Hence we see that $\Imag(\log(\Zap)) \in \Ltwo$ and hence $\g \in \Ltwo$ and $\Zap = e^{i(\Id + \Hil)\g}$. Now using \eqref{eq:systemonetwo} and the formula $\pap g = \Imag\brac{\frac{1}{\Zap}\pap\Zap}$ we see that $(\g, \vvar) \in \Linfty([0,T], H^{s+\half}(\Rsp)\times H^s(\Rsp))$.

\item  Observe from \eqref{eq:systemonetwo} that $\cvar  = e^{-i\Hil \g} = \frac{1}{\Zapabs}$ and $\w =  e^{i\g} = \frac{\Zap}{\Zapabs}$. We also have
\begin{align*}
\avar =  i\cvar\Hil\brac{\frac{\vvar}{\cvar}} =  i\cvar\Hil\brac{\Imag\brac{\Ztbar\Zap}} = c\Real\brac{\Hil\brac{\Ztbar\Zap}} =  \Real \brac{\w\Ztbar}
\end{align*}
\item Observe 
\begin{align*}
\bstar  = 2i\Hil(\cvar\vvar) + i \sqbrac{c^2, \Hil} \brac{\frac{\vvar}{\cvar}} =  i\Hil(\cvar\vvar) + i\cvar^2\Hil \brac{\frac{\vvar}{\cvar}} =  i\Hil(\cvar\vvar) + \cvar\avar
\end{align*}
Hence we see that
\begin{align*}
\bstar =  i\Hil(\cvar\vvar) + \cvar\avar =  i\Hil\brac{\Imag\brac{\frac{\Ztbar}{\Zapbar}}} + \Real\brac{\frac{\Ztbar}{\Zapbar}} =  \Real (\Id + \Hil) \brac{\frac{\Ztbar}{\Zapbar}}  = \bvar
\end{align*}
\item Now observe that
\begin{align*}
\dvarbar  =  i e^{-i\g} \cvar (\Id + \Hil) \brac{\frac{\vvar}{\cvar}} = \frac{i}{\Zap} (\Id + \Hil) \Imag\brac{\Ztbar\Zap}  = \Ztbar
\end{align*}
\item Hence we also have $\Aonestar = \Aone$
\item We can now see that
\begin{align*}
(\pt + \bstar\pap)\g &= \Imag\brac{(\pt + \bstar\pap)\log(\Zap)} \\
&= \Imag\brac{\frac{1}{\Zap}(\pt + \bvar\pap)\Zap} \\
&= \Imag\brac{\frac{\Ztap}{\Zap}} \\
& = -\Imag\brac{\cvar\w\pap\Ztbar} \\
&= \Imag\brac{\Ztbar\cvar\pap(\w)} - \Imag\brac{c\pap(\w\Ztbar)} \\
&= \Imag\brac{i\brac{c\pap\g} \w\Ztbar} -\cvar\pap\vvar \\
& =  -\cvar\pap\vvar + \avar(\cvar\pap)\g
\end{align*}

\item We see that
\begin{align*}
& (\pt + \bstar\pap) \vvar \\
& = \Imag\cbrac{(\pt + \bstar\pap)\brac{\w\Ztbar}} \\
& = \Imag\cbrac{i\w\Ztbar (\pt + \bstar\pap)\g} + \Imag\cbrac{\w(\pt + \bstar\pap)\Ztbar} \\
& = \avar\cbrac{ -(\cvar\pap)\vvar + \avar(\cvar\pap)\g} + \Imag\cbrac{\Hil\brac{\w(\pt + \bstar\pap)\Ztbar}} +  \Imag\cbrac{(\Id - \Hil)\brac{\w(\pt + \bstar\pap)\Ztbar}} \\
\end{align*}
Observe that
\begin{align*}
 \Imag\cbrac{\Hil\brac{\w(\pt + \bstar\pap)\Ztbar}} &=\Imag\cbrac{\Hil\brac{\w(\pt + \bvar\pap)\Ztbar -i\w}} + \Imag\cbrac{\Hil(i\w) } \\
 &= -i\Hil\Real\cbrac{\w(\pt + \bvar\pap)\Ztbar -i\w} + \Real(\Hil \w) \\
 &= -i\sigma\Hil(\cvar\pap)^2\g  + \Real(\Hil \w)
\end{align*}
and we also have
\begin{align*}
& \Imag\cbrac{(\Id - \Hil)\brac{\w(\pt + \bstar\pap)\Ztbar}} \\
&= \Imag (\Id - \Hil)\cbrac{i\w - \frac{\Aone}{\Zapabs} + \frac{\sigma}{\Zapabs}\pap(\Id + \Hil)\brac{\frac{1}{\Zapabs}\pap\g}} \\
&= \Real(\Id-\Hil)(\w) - \frac{\Aone}{\Zapabs} +\sigma\Imag\cbrac{\sqbrac{\frac{1}{\Zapabs},\Hil}\pap(\Id + \Hil)\brac{\frac{1}{\Zapabs}\pap\g}}
\end{align*}
We can rewrite $\etwo$ as
\begin{align*}
\etwo & = \Real(\w) - \Aonestar \cvar + \sigma \Imag\cbrac{\sqbrac{\cvar,\Hil} \pap(\Id + \Hil) (\cvar \pap \g) }\\
& = \Real\brac{\w} - \frac{\Aone}{\Zapabs} + \sigma\Imag\cbrac{\sqbrac{\frac{1}{\Zapabs},\Hil} \pap(\Id + \Hil) \brac{\frac{1}{\Zapabs}\pap \g} }
\end{align*}
Hence combining these we get 
\begin{align*}
(\pt + \bstar\pap) \vvar & =  -i\sigma \Hil (\cvar\pap)^2 \g - \avar (\cvar\pap)\vvar + \avar^2 (\cvar\pap)\g + \etwo
\end{align*}

\end{enumerate}
Step 2:  We now assume that $(\g,\vvar)(t)$ solves \eqref{eq:systemtwo} and show that $(\Z, \Zt)(t)$ solves  \eqref{eq:systemone}. 
\begin{enumerate}
\item Observe that if $C_1>0$ then 
\begin{align*}
 \abs{e^z -1} \leq c_2 \abs{z} \qq \tx{ for all } z\in \Csp, \abs{z} \leq C_1
\end{align*}
where $c_2$ depends only on $C_1$. Hence via a similar calculation from step 1 and using \eqref{eq:systemtwoone} we see that if $(\g,\vvar) \in \Linfty([0,T], H^{s+\half}(\Rsp)\times H^s(\Rsp))$ then $(\Zap -1, \frac{1}{\Zap} -1,\Zt) \in \Linfty([0,T], H^{s+\half}(\Rsp)\times H^{s+\half}(\Rsp)\times H^s(\Rsp))$. We also observe that in this case we have $\log(\Psizp) = K_{-y} \conv (i(\Id + \Hil)\g)$ and hence $\log(\Psizp)$ is well defined. Hence we easily obtain 
\begin{align*}
\lim_{c \to \infty} \sup_{\abs{\zp}\geq c}\cbrac{\abs{\Psizp(\zp) - 1} + \abs{\U(\zp)}}  = 0
\qquad \tx{ and } \quad \Psizp(\zp) \neq 0 \quad \tx{ for all } \zp \in \Pminus 
\end{align*}
\item We again have $\cvar  = e^{-i\Hil \g} = \frac{1}{\Zapabs}$  and $\w =  e^{i\g} = \frac{\Zap}{\Zapabs}$.  Also
\begin{align*}
\Ztbar\Zap  = \dvarbar\Zap = \cbrac{i e^{-i\g}\cvar(\Id + \Hil)\brac{\frac{\vvar}{\cvar}}}\Zap = i (\Id + \Hil)(\vvar \Zapabs)
\end{align*}
Hence by taking imaginary parts we get
\begin{align*}
\Imag(\Ztbar\Zap) = \vvar\Zapabs
\end{align*}
and hence we have $\vvar = \Imag(\w\Ztbar)$. Also observe that $\dvarbar = \frac{i}{\Zap}(\Id + \Hil)\brac{\vvar\Zapabs}$ and hence $\Ztbar$ is the boundary value of a holomorphic function. 
\item Hence now from step 1 we automatically have $\avar = \Real(\w\Ztbar)$, $\bstar = \bvar$ and  $\Aonestar = \Aone$.
\item Observe that
\begin{align*}
\bstar = \bvar = \Real(\Id - \Hil)\brac{\frac{\Zt}{\Zap}} = \Pa\brac{\frac{\Zt}{\Zap}} + \Ph\brac{\frac{\Ztbar}{\Zapbar}}
\end{align*}
and hence we have
\begin{align*}
i\sqbrac{\bvar,\Hil}\g_\ap & = -i\sqbrac{\bvar,\Hil}\Imag\brac{\Zap\pap\frac{1}{\Zap}} \\
& = -\Imag\cbrac{i\sqbrac{\bvar,\Hil}\brac{\Zap\pap\frac{1}{\Zap}}} \\
& = -\Imag\cbrac{i\sqbrac{\frac{\Zt}{\Zap},\Hil}\brac{\Zap\pap\frac{1}{\Zap}}} \\
& = -\Imag\cbrac{i(\Id - \Hil)\brac{\Zt\pap\frac{1}{\Zap}}} \\
& = -\Real(\Id - \Hil)\brac{\Zt\pap\frac{1}{\Zap}}
\end{align*}
\item Now from \eqref{eq:systemtwo} we have
\begin{align*}
(\pt + \bstar\pap)\g =  -\cvar\pap\vvar + \avar(\cvar\pap)\g
\end{align*}
Now by again using the computation in step 1 we see that 
\begin{align*}
(\pt + \bstar\pap)\g =  -\cvar\pap\vvar + \avar(\cvar\pap)\g =  \Imag\brac{\frac{\Ztap}{\Zap}}
\end{align*}
and we have
\begin{align*}
(\pt + \bvar\pap)\Zap & = (\pt + \bvar\pap)e^{i(\Id + \Hil)\g} \\
& = \Zap \cbrac{i\sqbrac{\bvar,\Hil}\g_\ap + i(\Id + \Hil)(\pt + \bvar\pap)\g}\\
& = \Zap\cbrac{-\Real(\Id - \Hil)\brac{\Zt\pap\frac{1}{\Zap}} + i(\Id + \Hil)\Imag\brac{\frac{\Ztap}{\Zap}}} \\
& = \Zap\cbrac{-\Real(\Id - \Hil)\brac{\Zt\pap\frac{1}{\Zap}} + \frac{\Ztap}{\Zap} - \Real(\Id - \Hil)\brac{\frac{\Ztap}{\Zap}}} \\
& = \Ztap - \Zap\bvarap
\end{align*}
\item Observe that if $f$ is function satisfying $\Pa f = 0$, then we have
\begin{align*}
\Pa\cbrac{(\pt + \bvar\pap)f} = \half\sqbrac{\bvar,\Hil}\pap f = \half \sqbrac{\frac{\Zt}{\Zap},\Hil}\pap f = \Pa \brac{\frac{\Zt}{\Zap}\pap f}
\end{align*}
Hence we see that
\begin{align*}
\Pa\cbrac{\Zap (\pt + \bvar\pap)\Ztbar} & = \Pa\cbrac{\Zap \Pa\cbrac{(\pt + \bvar\pap)\Ztbar}} \\
& = \Pa\cbrac{\Zap \Pa\cbrac{\frac{\Zt}{\Zap}\pap\Ztbar}} \\
& = \Pa\cbrac{\Zt\Ztbarap}
\end{align*}
\item Now as $\Imag(\w\Ztbar) = \vvar$, apply $\pt + \bvar \pap$ to this equation to get
\begin{align*}
\Imag\brac{\w (\pt + \bvar\pap)\Ztbar } + \Imag\brac{i\cbrac{(\pt + \bvar\pap)g}\w\Ztbar  } = -i\sigma \Hil (\cvar\pap)^2 \g - \avar (\cvar\pap)\vvar  + \avar^2 (\cvar\pap)\g   + \etwo
\end{align*}
But we know that $(\pt + \bvar\pap)g =  -(\cvar\pap)\vvar + \avar(\cvar\pap)\g   $ and that $\Real(\w\Ztbar) = \avar$. Hence
\begin{align*}
\Imag\brac{\w (\pt + \bvar\pap)\Ztbar } = -i\sigma\Hil(\cvar\pap)^2\g +  \Real(\w) - \Aone \cvar + \sigma \Imag\cbrac{\sqbrac{\cvar,\Hil} \pap(\Id + \Hil) (\cvar \pap \g) }
\end{align*}
Now observe that
\begin{align*}
&\Hil(\cvar\pap)^2\g + i\Imag\cbrac{\sqbrac{\cvar,\Hil} \pap(\Id + \Hil) (\cvar \pap \g) } \\
& = \Hil(\cvar\pap)^2\g + i\Imag(\Id - \Hil)\cbrac{(\cvar\pap)(\Id + \Hil)(\cvar\pap\g)} \\
& = \Hil(\cvar\pap)^2\g -\Hil (\cvar\pap)^2\g + (\cvar\pap)\Hil(\cvar\pap)g \\
& = \nobrac{\frac{1}{\Zapabs}\pap}\Hil\brac{\frac{1}{\Zapabs}\pap \g}
\end{align*}
Hence by multiplying both sides by $\Zapabs = \frac{1}{\cvar}$ we get
\begin{align*}
\Imag(\Zap (\pt + \bvar\pap)\Ztbar ) = \Real(\Zap) - \Aone -i\sigma\pap\Hil\brac{\frac{1}{\Zapabs}\pap \g}
\end{align*}
Now apply $i(\Id + \Hil)$ to both sides
\begin{align*}
& \Zap (\pt + \bvar\pap)\Ztbar  - \Real(\Id - \Hil)\cbrac{\Zap (\pt + \bvar\pap)\Ztbar } \\
&= i(\Id + \Hil)\cbrac{ \Real(\Zap) - \Aone} + \sigma\pap(\Id + \Hil)\brac{\frac{1}{\Zapabs}\pap \g}
\end{align*}
Now observe that
\begin{align*}
i(\Id + \Hil)\Real(\Zap) = i(\Id + \Hil)\Real(\Zap - 1) + i(\Id + \Hil) 1 = i(\Zap -1) + i = i\Zap
\end{align*}
and we also have
\begin{align*}
-i(\Id + \Hil)\Aone & = -i\Aone -i\Hil\Aone \\
& = -i\Aone -i\Hil\cbrac{1-\Imag(\Id -\Hil)(\Zt\Ztapbar)} \\
& = -i\Aone - \Real(\Id - \Hil)(\Zt\Ztapbar)
\end{align*}
Hence we have
\begin{align*}
& \Zap (\pt + \bvar\pap)\Ztbar  - \Real(\Id - \Hil)\cbrac{\Zap (\pt + \bvar\pap)\Ztbar } \\
& = i\Zap -i\Aone - \Real(\Id - \Hil)(\Zt\Ztapbar) + \sigma\pap(\Id + \Hil)\brac{\frac{1}{\Zapabs}\pap \g}
\end{align*}
But we have already shown that $(\Id - \Hil)\cbrac{\Zap (\pt + \bvar\pap)\Ztbar } = (\Id - \Hil)(\Zt\Ztapbar) $. Hence
\begin{align*}
 \Zap (\pt + \bvar\pap)\Ztbar =  i\Zap -i\Aone + \sigma\pap(\Id + \Hil)\brac{\frac{1}{\Zapabs}\pap \g}
\end{align*}
Now dividing by $\Zap$ we finally get
\begin{align*}
 (\pt + \bvar\pap)\Ztbar = i -i\frac{\Aone}{\Zap} + \frac{\sigma}{\Zap}\pap(\Id + \Hil)\brac{\frac{1}{\Zapabs}\pap \g}
\end{align*}
\end{enumerate}
\end{proof}

As the proof establishes that $\bstar = \bvar$ and $\Aonestar = \Aone$, we will from now on use the variables $\bvar,\Aone$ instead of $\bstar, \Aonestar$ in the system \eqref{eq:systemtwo}. We now prove a priori estimates for \eqref{eq:systemtwo}. Let $N\geq 0$ and define the energy 
\begin{align}\label{eq:energy}
\begin{split}
\Ecalthreefive & = \frac{1}{2}\norm[H^{2.5}]{\g}^2 + \half\norm[H^2]{\vvar}^2 \\
\Ecalfourfivei &= \half\norm[2]{\frac{1}{\cvar^\half}\cbrac{(\cvar\pap)^{3+i}\vvar - \avar(\cvar\pap)^{3+i}\g }}^2 + \frac{\sigma}{2}\norm[\Hhalf]{(\cvar\pap)^{3+i}\g}^2 \\
\Ecal &= \Ecalthreefive + \sum_{i=0}^N \Ecalfourfivei
\end{split}
\end{align}
We also define $K(t) = \norm[H^{2.5}]{\g(\cdot,t)} + \norm[H^2]{\vvar(\cdot,t)}$. Then we have
\begin{prop}\label{prop:apriori}
Fix $N\geq 0$ and let $(\g,\vvar)(t)$ be a smooth solution to \eqref{eq:systemtwo} in the time interval $[0,T]$ with  $(\g,\vvar) \in C([0,T], H^{s+\half}\times H^s)$ for all $s\geq 0$. Then there exists a polynomial $C = C(t)$ with non-negative coefficients depending only on $\sigma$ so that for any $t\in[0,T)$ we have
\begin{align*}
\frac{\diff \Ecal (t)}{\diff t} \leq C(K(t))\Ecal(t)
\end{align*}
\end{prop}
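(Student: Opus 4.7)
The plan is to prove that $\Ecalfourfivei$ is the natural energy for a quasilinear dispersive wave system satisfied by the combination
\[
f_i \ :=\ (\cvar\pap)^{3+i}\vvar - \avar(\cvar\pap)^{3+i}\g,
\]
and then to obtain the bound by running the energy estimate for this system, together with a standard Sobolev estimate for $\Ecalthreefive$. Throughout, I would write $D = \Dapabs = \cvar\pap$ and $\Dt = \pt + \bvar\pap$ (using $\bstar = \bvar$ and $\Aonestar = \Aone$, proved in \lemref{lem:systemequiv}). In these variables the system \eqref{eq:systemtwo} reads
\[
\Dt\g = -D\vvar + \avar D\g, \qquad \Dt\vvar = -i\sigma\Hil D^{2}\g - \avar D\vvar + \avar^{2} D\g + \etwo,
\]
so that $f_i \approx -D^{2+i}\Dt\g$ at leading order, $\Dt(D^{3+i}\g) \approx -Df_i$, and a direct computation (commuting $D^{3+i}$ past $\Dt$ and past $\avar$) yields the quasilinear wave system
\[
\Dt(D^{3+i}\g) + Df_i = R'_i, \qquad \Dt f_i + i\sigma\Hil D^{5+i}\g = R_i,
\]
with $R_i, R'_i$ gathering commutator and strictly lower-order contributions.

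Before any energy estimate, first I would control the auxiliary quantities $\cvar = e^{-i\Hil\g}$, $\w = e^{i\g}$, $\avar$, $\bvar$, $\dvar$, $\Aone$ and $\etwo$ in the relevant Sobolev spaces by $C(K(t))$. This reduces to $H^{s}$-boundedness of $\Hil$, composition with entire functions on $H^{s}$ for $s>1/2$, and the commutator and paraproduct estimates collected in the appendix (\propref{prop:commutator}, \propref{prop:Leibniz}, \propref{prop:triple}). The lower-order energy $\Ecalthreefive$ is then a standard $H^{5/2}\times H^{2}$ estimate for \eqref{eq:systemtwo}: apply $\papabs^{5/2}$ to the $\g$-equation and $\papabs^{2}$ to the $\vvar$-equation, pair each with its own conjugate, note that the principal skew-adjoint dispersive term $-i\sigma\Hil D^{2}\g$ vanishes in the real part after integration by parts (since $\papabs = i\Hil\pap$), and estimate the remaining commutators by $C(K(t))\Ecalthreefive(t)$.

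For each $\Ecalfourfivei$, differentiate in time using the weighted analogue of \lemref{lem:timederiv}. Using $\Dt\cvar^{-1} = -\cvar^{-1}(\bvarap - \Real(\Dap\Zt))$ the weight contributes a term already controlled by $C(K)\Ecalfourfivei$, and what remains is
\[
\frac{d}{dt}\Ecalfourfivei \ \approx\ \Real\!\int \cvar^{-1}\overline{f_i}\,\Dt f_i\, d\ap \ +\ \sigma\,\Real\!\int \papabs(D^{3+i}\g)\,\overline{\Dt(D^{3+i}\g)}\, d\ap.
\]
Substituting the wave system above, the two highest-order $\sigma$-contributions combine into
\[
-\sigma\Real\!\int \cvar^{-1}\overline{f_i}\,(i\Hil D^{5+i}\g)\, d\ap \ -\ \sigma\Real\!\int \papabs(D^{3+i}\g)\,\overline{Df_i}\, d\ap,
\]
and I would show, using $\papabs = i\Hil\pap$ together with explicit expansions of $[\Hil,\cvar^{-1}]$ and $[D,\Hil]$, that this sum equals a multilinear commutator bounded by $C(K(t))\Ecal(t)$. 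The remaining error contributions from $R_i$, $R'_i$ and from the weight derivatives are controlled by $C(K(t))\Ecal(t)$ via the appendix estimates, since they each involve at most the derivative count of one factor of $\Ecalfourfivei^{1/2}$ times coefficients bounded by $C(K(t))$.

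The hard part will be verifying the cancellation of the two leading $\sigma$-terms in the preceding display and bounding uniformly in $0\le i\le N$ the multilinear commutators it produces. Because the weight $\cvar^{-1}=\Zapabs$ depends nonlinearly on $\g$ and every factor of $D = \cvar\pap$ contributes its own $\cvar$, each commutator between $D^{j}$, $\Hil$ and $\cvar^{-1}$ generates a cascade of lower-derivative terms that must all be absorbed into $C(K)\Ecal$. The structure of this cancellation parallels the $\sigma$-cancellations carried out in \secref{sec:controlEsigmatwothree} for $\Esigmatwo$, $\Esigmathree$ and $\Esigmafour$, but in Sobolev spaces and with the Taylor-sign-free variable $\g$; the polynomial dependence $C(K(t))\Ecal(t)$ reflects the repeated application of Moser and commutator inequalities needed to close the estimate.
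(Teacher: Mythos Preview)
Your approach is essentially the same as the paper's: derive the quasilinear system for $f_i=(\cvar\pap)^{3+i}\vvar-\avar(\cvar\pap)^{3+i}\g$ and $(\cvar\pap)^{3+i}\g$, then close $\Ecalfourfivei$ by showing the two top-order $\sigma$--terms cancel via $\papabs=i\Hil\pap$. Two points are worth flagging.

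First, your handling of $\Ecalthreefive$ is not quite right. The energy $\Ecalthreefive=\tfrac12\norm[H^{2.5}]{\g}^2+\tfrac12\norm[H^2]{\vvar}^2$ does not have the coupled structure needed for a self-contained skew-adjoint cancellation: the top term in $\pt\g$ is $-D\vvar$, not a dispersive $\sigma$-term, and $\int\papabs^{5}\g\,(-\cvar\pap\vvar)$ has no partner to cancel against. The paper simply bounds $\frac{d}{dt}\Ecalthreefive$ (and $\frac{d}{dt}\Ecalfourfivei$ for $i<N$) by $C(K)\Ecal$ rather than $C(K)\Ecalthreefive$, using the extra regularity coming from the highest piece $\EcalfourfiveN$: since $\norm[H^{N+3.5}]{\g}+\norm[H^{N+3}]{\vvar}\le C(K)\Ecal^{1/2}$, the pairings $\langle\g,\pt\g\rangle_{H^{2.5}}$ and $\langle\vvar,\pt\vvar\rangle_{H^2}$ are controlled directly, no cancellation needed. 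Only $\EcalfourfiveN$ requires the delicate argument.

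Second, you overstate the difficulty of the top-order cancellation. With the paper's form $\Dt f_N\approx_{\Ltwo}-i\sigma D^{2}\Hil D^{3+N}\g$ (which differs from your $-i\sigma\Hil D^{5+N}\g$ only by the commutator $[D^{2},\Hil]$, a single lower-order term), the two $\sigma$-contributions combine \emph{exactly}: since $\cvar^{-1}D^{2}=\pap D$ and $\pap\Hil=-i\papabs$, one integration by parts gives
\[
\int \cvar^{-1}f_N\,(-i\sigma D^{2}\Hil D^{3+N}\g)\,d\ap
=\sigma\int(\pap f_N)\,\cvar\,\papabs D^{3+N}\g\,d\ap
=\sigma\int(\papabs D^{3+N}\g)\,Df_N\,d\ap,
\]
which cancels the second term identically. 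There is no cascade of commutators to chase; once the quasilinear identities in Step~2 are established modulo $C(K)\Ecal^{1/2}$, the closing step is one line.
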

\begin{proof}
The proof is divided into 3 steps.  We will freely use \lemref{lem:interpolationmult} to simplify the computations. 

\textbf{Step 1:} We first find quantities which can be controlled by the energy. We will use the notation $C(K) = C(K(t))$. Now 
\begin{enumerate}
\item From the definition of $K$ and $\cvar$, we have
\begin{align*}
\norm[\infty]{\g} + \norm[\infty]{\cvar} + \norm[\infty]{\frac{1}{\cvar}} \leq C(K)
\end{align*}
Hence from the definitions we easily see that 
\begin{align*}
\norm[H^{1.5}]{\cvar_\ap} +  \norm[H^1]{\pt \g}  +  \norm[H^1]{\pt \cvar} + \norm[H^2]{\bvar} + \norm[H^2]{\avar} + \norm[H^\half]{\pt\bvar} + \norm[H^\half]{\pt \avar} \leq C(K)
\end{align*}

\item Observe that $\cvar_\ap = (-i\Hil\g_\ap) \cvar$ and hence from $\Ecalthreefive$ we see that
\begin{align*}
\norm[H^{2.5}]{\g} + \norm[H^{1.5}]{\cvar_\ap} + \norm[H^{1.5}]{\pap\brac{\frac{1}{\cvar}}} \leq C(K)\Ecal^\half
\end{align*}
and hence
\begin{align*}
\norm[H^\half]{(\cvar\pap)^i \g} \leq C(K)\Ecal^\half \qq \tx{ for } i=1,2
\end{align*}
For $i\geq 3$
\begin{align*}
\norm[2]{\frac{1}{\cvar^\half}(\cvar\pap)^i \g}^2 = \int \frac{1}{\cvar} \brac{(\cvar\pap)^i \g} \brac{(\cvar\pap)^i \g} \diff \ap \lesssim \norm[\Hhalf]{(\cvar\pap)^{i-1}\g} \norm[\Hhalf]{(\cvar\pap)^{i}\g}
\end{align*}
Hence we see that $\norm[2]{(\cvar\pap)^i \g} \leq C(K)\Ecal^\half$ for $1\leq i \leq N+3$. Now using \corref{cor:commutatoreasy} we get $\norm[2]{\cvar^i\pap^i \g} \leq C(K)\Ecal^\half$ for $1\leq i \leq N+3$. Hence dividing by $\cvar^i$ and using $\cvar_\ap = (-i\Hil\g_\ap) \cvar$ we get $\norm[H^{N+3}]{\g} + \norm[H^{N+2}]{\cvar_\ap} \leq C(K)\Ecal^\half$. Now by using the fact that $\norm[\Hhalf]{(\cvar\pap)^{N+3}\g} \leq C(K) \Ecal^\half$ and by using  \corref{cor:commutatoreasy} we get $\norm[\Hhalf]{\cvar^{N+3}\pap^{N+3}\g} \leq C(K) \Ecal^\half$. Hence we have 
\begin{align*}
\norm[H^{N+3.5}]{\g} \leq C(K)\Ecal^\half
\end{align*}
Now observe that for $z \in \Csp$ we have $\abs{e^z - 1} \leq C_2\abs{z}$ for all $\abs{z} \leq C_1$, where $C_2$ depends only on $C_1$. Hence we have
\begin{align*}
\abs{\cvar - 1} + \abs{\w - 1} = \abs{e^{-i\Hil\g} - 1} + \abs{e^{i\g} - 1} \leq C(K)\brac{\abs{\Hil\g} + \abs{\g}}
\end{align*}
Using this and the fact that $\w_\ap = \pap(e^{i\g}) = i\w\g_\ap$  and $\cvar_\ap = (-i\Hil\g_\ap) \cvar$ we have
\begin{align*}
\norm[H^{N+3.5}]{\cvar -1} + \norm[H^{N+3.5}]{\frac{1}{\cvar} -1} +  \norm[H^{N+3.5}]{\w - 1} \leq C(K)\Ecal^\half
\end{align*}
\item From the definition of $\avar$ we have $\norm[\infty]{\avar} \leq C(K)$. Now by using the fact that $\norm[2]{(\cvar\pap)^{3+i}\g} \leq C(K)\Ecal^\half$ for all $0\leq i\leq N$, using the energy $\Ecalfourfivei$ we now have $\norm[2]{(\cvar\pap)^{3+i}\vvar} \leq C(K)\Ecal^\half$ for all $0\leq i\leq N$. Hence by using \corref{cor:commutatoreasy} repeatedly we get 
\begin{align*}
\norm[H^{N+3}]{\vvar} \leq C(K)\Ecal^\half
\end{align*}
Note that with these estimates one can also easily get the estimate $\norm[\Hhalf]{(\cvar\pap)^{2+i}\vvar} \leq C(K)\Ecal^\half$ for all $0\leq i\leq N$. Now using the definition of $\avar, \dvar, \bvar, \Aone$ and $\etwo$ we easily get using \propref{prop:commutator}
\begin{align*}
\norm[H^{N+3}]{\avar}  + \norm[H^{N+3}]{\dvar} + \norm[H^{N+3}]{\bvar} + \norm[H^{N+3}]{\Aone - 1} + \norm[H^{N+3}]{\etwo}  \leq C(K)\Ecal^\half
\end{align*}
Now by the equations we get 
\begin{align*}
\norm[H^{N+2}]{\pt \g} + \norm[H^{N+1.5}]{\pt \vvar}  \leq C(K)\Ecal^\half
\end{align*}
Also observe that $\pt \cvar = \cbrac{-i\Hil(\pt\g)}\cvar$. Hence we now get $\norm[H^{N+2}]{\pt\cvar} \leq C(K)\Ecal^\half $.
\end{enumerate}
\textbf{Step 2:} We now establish some identities which will be useful to prove the energy estimate. We define $\Dt = \pt + \bvar\pap$. We define the following notation:  If $a,b:\Rsp\times[0,T] \to \Csp$ are functions we write $a \approx_{\Ltwo} b$ if there exists a polynomial $C(t)$  with non-negative coefficients depending only on $\sigma$ such that $\norm[2]{a-b} \leq C(K)\Ecal(t)^\half $. Observe that $\approx_{\Ltwo}$ is an equivalence relation.  
\begin{enumerate}
\item Let us compute $\sqbrac{\Dt,\cvar\pap}$. We see that
\begin{align*}
\sqbrac{\Dt,\cvar\pap} = \sqbrac{\pt + \bvar\pap,\cvar\pap} = \brac{\frac{\cvar_t + \bvar\cvar_\ap - \cvar\bvar_\ap}{\cvar}}\cvar\pap
\end{align*}
Now using the above formula for $\cvar_t$ and the definition of $\bstar$ we have
\begin{align}\label{form:errorone}
\begin{split}
& \brac{\frac{\cvar_t + \bvar\cvar_\ap - \cvar\bvar_\ap}{\cvar}} \\
& =  \nobrac{-i\Hil\cbrac{ -(\cvar\pap)\vvar + \avar(\cvar\pap)\g -\bvar\pap\g}} + \bvar\frac{\cvar_\ap}{\cvar} - \cbrac{2i\Hil(\cvar\vvar_\ap + \cvar_\ap \vvar) + i\pap\sqbrac{\cvar^2,\Hil}\brac{\frac{\vvar}{\cvar}}} \\
& =  -i\Hil(\cvar\vvar_\ap)  + \cbrac{-i\Hil\cbrac{   \avar(\cvar\pap)\g -\bvar\pap\g} + \bvar\frac{\cvar_\ap}{\cvar} - \cbrac{2i\Hil( \cvar_\ap \vvar) + i\pap\sqbrac{\cvar^2,\Hil}\brac{\frac{\vvar}{\cvar}}}} \\
& = -i\Hil(\cvar\vvar_\ap) + \errorone
\end{split}
\end{align}
where $\errorone$ is defined as the term in the bracket and we observe that $\norm[H^{N+2.5}]{\errorone} \leq C(K)\Ecal^\half$. Hence we have
\begin{align*}
\sqbrac{\Dt,\cvar\pap} = \cbrac{-i\Hil(\cvar\vvar_\ap) + \errorone}\cvar\pap 
\end{align*}
\item Observe from the definition of $\avar$
\begin{align*}
\avar = i\cvar\Hil\brac{\frac{\vvar}{\cvar}} = i\Hil\vvar + i\sqbrac{\cvar,\Hil}\brac{\frac{\vvar}{\cvar}}
\end{align*}
Hence we see that 
\begin{align*}
\cvar\avar_\ap = i\Hil(\cvar\vvar_\ap) + i\sqbrac{\cvar,\Hil}\vvar_\ap + i\cvar\pap\sqbrac{\cvar,\Hil}\brac{\frac{\vvar}{\cvar}}
\end{align*}
and hence we have $\norm[H^{N+2.5}]{\cvar\avar_\ap - i\Hil(\cvar\vvar_\ap)} \leq C(K)\Ecal^\half$. Now 
\begin{align}\label{form:errortwo}
\begin{split}
& \Dt(\cvar\pap)\g\\
& = \sqbrac{\Dt,\cvar\pap}\g + \cvar\pap\Dt\g \\
& = \cbrac{-i\Hil(\cvar\vvar_\ap) + \errorone}\cvar\g_\ap + \cvar\pap\cbrac{-\cvar\vvar_\ap + \avar\cvar\g_\ap} \\
& = -(\cvar\pap)^2\vvar + \avar(\cvar\pap)^2\g + \nobrac{\cbrac{\cvar\avar_\ap - i\Hil(\cvar\vvar_\ap)}\cvar\g_\ap + (\errorone)(\cvar\g_\ap)}  \\
& = -(\cvar\pap)^2\vvar + \avar(\cvar\pap)^2\g + \cbrac*[\bigg]{\cbrac{ i\sqbrac{\cvar,\Hil}\vvar_\ap + i\cvar\pap\sqbrac{\cvar,\Hil}\brac{\frac{\vvar}{\cvar}}}\cvar\g_\ap + (\errorone)(\cvar\g_\ap)}  \\
& =  -(\cvar\pap)^2\vvar + \avar(\cvar\pap)^2\g + \errortwo
\end{split}
\end{align}
where $\errortwo$ is defined as the term in the bracket and we observe that $\norm[H^{N+2.5}]{\errortwo} \leq C(K)\Ecal^\half$

\item Using the estimates from above and \propref{prop:Leibniz} we have
\begin{align*}
& \papabs^\half\Dt(\cvar\pap)^{3+N} g \\
& \approxLtwo \papabs^\half(\cvar\pap)^{2+N}\Dt(\cvar\pap)g \\
&  \approxLtwo \papabs^\half(\cvar\pap)^{2+N}\cbrac{-(\cvar\pap)^2\vvar + \avar(\cvar\pap)^2 g} \\
& \approxLtwo - \papabs^\half(\cvar\pap)\cbrac{(\cvar\pap)^{3+N}\vvar - \avar(\cvar\pap)^{3+N} g}
\end{align*}

\item Using the above estimates and \propref{prop:commutator} we have
\begin{align*}
& \Dt\cbrac{(\cvar\pap)^{3+N}\vvar - \avar(\cvar\pap)^{3+N}\g} \\
& \approxLtwo (\cvar\pap)^{3+N}\Dt\vvar - \avar(\cvar\pap)^{3+N}\Dt\g \\
& \approxLtwo (\cvar\pap)^{3+N}\cbrac{-i\sigma\Hil(\cvar\pap)^2\g - \avar(\cvar\pap)\vvar + \avar^2(\cvar\pap)\g } - \avar(\cvar\pap)^{3+N}\cbrac{-(\cvar\pap)\vvar + \avar (\cvar\pap)\g} \\
& = -i\sigma(\cvar\pap)^{3+N}\Hil(\cvar\pap)^2\g - \sqbrac{(\cvar\pap)^{3+N},\avar}(\cvar\pap)\vvar - \avar(\cvar\pap)^{4+N}\vvar  + \sqbrac{(\cvar\pap)^{3+N},\avar^2}(\cvar\pap)\g \\
& \quad + \avar^2(\cvar\pap)^{4+N}\g  + \avar(\cvar\pap)^{4+N}\vvar - \avar\sqbrac{(\cvar\pap)^{3+N},\avar}(\cvar\pap)\g -\avar^2(\cvar\pap)^{4+N}\g \\
& \approxLtwo  -i\sigma(\cvar\pap)^{3+N}\Hil(\cvar\pap)^2\g \\
& \approxLtwo -i\sigma(\cvar\pap)^2\Hil(\cvar\pap)^{3+N}\g
\end{align*}
\end{enumerate}
\medskip
\textbf{Step 3:} We now prove the energy estimate. Observe that controlling the time derivative of $\Ecalthreefive$ and $\Ecalfourfivei$ for $0\leq i < N$ is immediate. Hence we now control the time derivative of the highest term in the energy namely $\EcalfourfiveN$. To simplify the calculations we will use the following notation: If $a(t), b(t) $ are functions of time we write $a \approx b$ if there exists a non-negative polynomial $C(t)$ with coefficients depending only on $\sigma$ so that $\abs{a(t)-b(t)} \leq C(K(t))\Ecal(t)$. Observe that $\approx$ is an equivalence relation. With this notation proving \propref{prop:apriori} is equivalent to showing $ \frac{d\Ecal(t)}{dt} \approx 0$. Hence now by using \lemref{lem:timederiv} we have
\begin{align*}
\frac{\diff \EcalfourfiveN}{\diff t} & \approx \int \cbrac{\frac{1}{\cvar^\half}\cbrac{(\cvar\pap)^{3+N}\vvar - \avar(\cvar\pap)^{3+N}\g }}\Dt\cbrac{\frac{1}{\cvar^\half}\cbrac{(\cvar\pap)^{3+N}\vvar - \avar(\cvar\pap)^{3+N}\g }} \diff \ap \\
& \quad + \sigma\int \brac{\papabs (\cvar\pap)^{3+N}\g} \Dt(\cvar\pap)^{3+N}\g \diff \ap
\end{align*}
Observe that $\Dt \cvar = (-i\Dt\Hil\g)\cvar$. Hence
\begin{align*}
\frac{\diff \EcalfourfiveN}{\diff t} & \approx \int \cbrac{\frac{1}{\cvar}\cbrac{(\cvar\pap)^{3+N}\vvar - \avar(\cvar\pap)^{3+N}\g }}\nobrac{\Dt\cbrac{(\cvar\pap)^{3+N}\vvar - \avar(\cvar\pap)^{3+N}\g }} \diff \ap \\
& \quad + \sigma\int \brac{\papabs (\cvar\pap)^{3+N}\g} \Dt(\cvar\pap)^{3+N}\g \diff \ap
\end{align*}
Now using the computation from step 2 we get
\begin{align*}
\frac{\diff \EcalfourfiveN}{\diff t} & \approx \int \cbrac{\frac{1}{\cvar}\cbrac{(\cvar\pap)^{3+N}\vvar - \avar(\cvar\pap)^{3+N}\g }}\nobrac{\cbrac{ -i\sigma(\cvar\pap)^2\Hil(\cvar\pap)^{3+N}\g }} \diff \ap \\
& \quad - \sigma\int \brac{\papabs (\cvar\pap)^{3+N}\g}  (\cvar\pap)\cbrac{(\cvar\pap)^{3+N}\vvar - \avar(\cvar\pap)^{3+N}g} \diff \ap \\
& \approx 0
\end{align*}
where at the last step we used that  $\papabs = i\Hil\pap$. This proves the a priori estimate. 
\end{proof}

We now prove a priori estimate for the difference of two solutions for system \eqref{eq:systemtwo}. This will prove uniqueness of the solution. Let $(\gone, \vvarone)(t)$ and $(\gvartwo,\vvartwo)(t)$ be two solutions of \eqref{eq:systemtwo}. We will use a subscript to denote which solution we are talking about. For example 
\begin{align*}
\cvarone = e^{-i\Hil\gone} \qquad \cvartwo = e^{-i\Hil\gtwo}
\end{align*}
and similarly for other variables as well. For $\Aone$ and $\etwo$ we will use the notation $(\Aone)_1, (\Aone)_2 $ and $(\etwo)_1, (\etwo)_2$ respectively. Now define the energy
\begin{align}\label{eq:energydelta}
\begin{split}
\EcalDeltatwo &= \half\norm[H^1]{\gone - \gtwo}^2 + \norm[H^\half]{\vvarone - \vvartwo}^2 \\
\EcalDeltatwofive &= \half\norm[2]{\frac{1}{\cvarone^\half}\cbrac{(\cvarone\pap)(\vvarone - \vvartwo)}}^2 + \frac{\sigma}{2}\norm[\Hhalf]{(\cvarone\pap)(\gone - \gtwo)}^2 \\
\EcalDeltathree &= \half\norm[2]{\papabs^\half \cbrac{(\cvarone\pap)\vvarone - (\cvartwo\pap)\vvartwo} - \avarone\papabs^\half\cbrac{(\cvarone\pap)\gone - (\cvartwo\pap)\gtwo }}^2 \\
& \quad + \frac{\sigma}{2}\norm[2]{\frac{1}{\cvarone^\half}\cbrac{(\cvarone\pap)^2\gone - (\cvartwo\pap)^2\gtwo }}^2  \\
\EcalDelta &= \EcalDeltatwo + \EcalDeltatwofive + \EcalDeltathree
\end{split}
\end{align}
\begin{prop}\label{prop:aprioridelta}
Let $(\gone,\vvarone)(t)$ and $(\gtwo,\vvartwo)(t)$ be two solutions of \eqref{eq:systemtwo} in the time interval $[0,T]$ with $(\g_i,\vvar_i) \in C^l([0,T], H^{3.5 - \threebytwo l}\times H^{3 - \threebytwo l})$ for $l=0,1$, for both $i=1,2$. Let $M>0$ be a constant so that for any $t\in[0,T]$
\begin{align*}
 \norm[H^{3.5}]{\gone(\cdot,t)} + \norm[H^3]{\vvarone(\cdot,t)} +  \norm[H^{3.5}]{\gtwo(\cdot,t)} + \norm[H^3]{\vvartwo(\cdot,t)} \leq M
\end{align*}
Then there exists a constant $C(M) >0$ depending only on $M$ and $\sigma$ such that for all $t\in [0,T)$ we have
\begin{align*}
\frac{\diff \EcalDelta(t)}{\diff t} \leq C(M)\EcalDelta(t)
\end{align*}
\end{prop}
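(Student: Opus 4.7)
The plan is to adapt the strategy from \propref{prop:apriori} to the differences $\delta\g = \gone - \gtwo$ and $\delta\vvar = \vvarone - \vvartwo$, using the material derivative $\Dt = \pt + \bvarone\pap$ associated to solution~1 as the canonical time operator. First I would subtract the two copies of \eqref{eq:systemtwo} and rewrite every occurrence of $\cvartwo, \avartwo, \bvartwo, (\Aone)_2, (\etwo)_2$ as the corresponding solution-1 quantity minus a difference. This produces equations of the schematic form
\begin{align*}
\Dt\delta\g &= -\cvarone\pap\delta\vvar + \avarone\cvarone\pap\delta\g + R_1, \\
\Dt\delta\vvar &= -i\sigma\Hil(\cvarone\pap)^2\delta\g - \avarone\cvarone\pap\delta\vvar + \avarone^2\cvarone\pap\delta\g + R_2,
\end{align*}
where $R_1, R_2$ collect inhomogeneous contributions built from coefficient differences paired with derivatives of $\gtwo, \vvartwo$. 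The telescoping identity $\cvarone - \cvartwo = -i\int_0^1 e^{-i\Hil(\tau\gone + (1-\tau)\gtwo)}\Hil\delta\g \, d\tau$ together with analogous formulas for $\avar, \bvar, \Aone, \etwo$, combined with the product and commutator estimates of \propref{prop:commutator} and \propref{prop:Leibniz}, bound every coefficient difference linearly in $\delta\g, \delta\vvar$, so that $\norm[H^1]{R_1} + \norm[\Hhalf]{R_2} \leq C(M)\sqrt{\EcalDelta}$ with all powers of the $H^{3.5}\times H^3$ norms absorbed into $C(M)$.

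With these equations in hand, the estimate for $\EcalDeltatwo$ is routine: one tests the first equation against $\delta\g$ in the $H^1$ pairing and the second against $\delta\vvar$ in the $\Hhalf$ pairing. The principal cross terms cancel after integration by parts up to commutator errors controlled by $\bvarone, \cvarone, \avarone \in \Linfty\cap\Hhalf$ uniformly in $M$, and the surface-tension term $-i\sigma\Hil(\cvarone\pap)^2\delta\g$ paired against $\delta\vvar$ in $\Hhalf$ is converted by duality into a form absorbed by the $H^1$ piece of $\EcalDeltatwo$. Applying one further $\cvarone\pap$ to both difference equations and using the commutator formula \eqref{form:errorone} together with the identity \eqref{form:errortwo} then handles $\EcalDeltatwofive$ in exactly the way the highest-order energy was closed in Step~2--Step~3 of the proof of \propref{prop:apriori}: the top-order cancellation between $-i\sigma(\cvarone\pap)\Hil(\cvarone\pap)^2\delta\g$ appearing in $\Dt[(\cvarone\pap)\delta\vvar - \avarone(\cvarone\pap)\delta\g]$ and the $\sigma$-half of the energy is algebraically identical to the single-solution case.

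The main obstacle is closing $\EcalDeltathree$, because this energy is not built from the natural difference quantities $(\cvarone\pap)\delta\vvar$ and $(\cvarone\pap)^2\delta\g$ but rather from the full differences $(\cvarone\pap)\vvarone - (\cvartwo\pap)\vvartwo$ and $(\cvarone\pap)^2\gone - (\cvartwo\pap)^2\gtwo$. To reconcile these I would use the algebraic identities
\begin{align*}
(\cvarone\pap)\vvarone - (\cvartwo\pap)\vvartwo &= (\cvarone\pap)\delta\vvar + (\cvarone-\cvartwo)\pap\vvartwo, \\
(\cvarone\pap)^2\gone - (\cvartwo\pap)^2\gtwo &= (\cvarone\pap)^2\delta\g + (\cvarone\pap)[(\cvarone-\cvartwo)\pap\gtwo] + (\cvarone-\cvartwo)\pap[(\cvartwo\pap)\gtwo],
\end{align*}
to show that $\EcalDeltathree$ is equivalent, up to an error of size $C(M)\EcalDelta$, to the natural $\delta$-energy $\half\norm[2]{\papabs^{1/2}[(\cvarone\pap)\delta\vvar - \avarone(\cvarone\pap)\delta\g]}^2 + \frac{\sigma}{2}\norm[2]{\cvarone^{-1/2}(\cvarone\pap)^2\delta\g}^2$. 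The cross-terms with a factor $\cvarone - \cvartwo$ require control of $\pap\vvartwo$ and $\pap[(\cvartwo\pap)\gtwo]$ in $\Linfty\cap\Hhalf$-type norms paired against $\delta\g$ in essentially $H^{5/2}$; this is the tightest regularity balance in the whole argument and is the precise reason the hypothesis forces $\g_i\in H^{3.5}$, $\vvar_i\in H^3$ with uniform bound $M$. Once the equivalence is established, the highest-order $\delta$-energy identity is derived exactly as in Step~3 of the proof of \propref{prop:apriori}, and the conclusion $\frac{d\EcalDelta}{dt} \leq C(M)\EcalDelta$ follows from Cauchy--Schwarz and the remainder bounds above.
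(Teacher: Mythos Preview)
Your overall strategy is sound, and the treatment of $\EcalDeltatwo$ and $\EcalDeltatwofive$ is essentially what the paper does. However, your plan for $\EcalDeltathree$ contains a genuine half-derivative gap.

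When you write $\Dt\delta\vvar = -i\sigma\Hil(\cvarone\pap)^2\delta\g + \cdots + R_2$, the remainder $R_2$ necessarily contains the surface-tension cross term $-i\sigma\Hil\{(\cvarone\pap)^2\gtwo - (\cvartwo\pap)^2\gtwo\}$. To close your ``natural $\delta$-energy'' at the $\EcalDeltathree$ level you must pair $\papabs^{1/2}\zeta_\delta$ against $\papabs^{1/2}\Dt\zeta_\delta$, which forces the error $(\cvarone\pap)R_2$ into $H^{1/2}$, i.e.\ $(\cvarone\pap)^2\gtwo - (\cvartwo\pap)^2\gtwo \in H^{3/2}$. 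But from $\cvarone-\cvartwo\in H^2$ and $\gtwo\in H^{3.5}$ one only gets $H^1$; the limiting term is $\pap(\cvarone^2-\cvartwo^2)\cdot\pap\gtwo$, a product of an $H^1$ and an $H^{2.5}$ function, hence only $H^1$. So the closure fails by exactly half a derivative, and your bound $\norm[\Hhalf]{R_2}\le C(M)\sqrt{\EcalDelta}$, while correct, is one order too weak. The equivalence of $\EcalDeltathree$ with your natural $\delta$-energy at each fixed time is fine, but it does not transfer to the time derivative for precisely this reason.

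This is why the paper defines $\EcalDeltathree$ with the \emph{full} differences $(\cvarone\pap)\vvarone-(\cvartwo\pap)\vvartwo$ and $(\cvarone\pap)^2\gone-(\cvartwo\pap)^2\gtwo$ rather than with $\delta$-quantities. Working directly with $\zeta$ as in \eqref{eq:zeta}, the paper computes $\Dtone\zeta$ by first passing from $\Dtone$ to $\Dttwo$ on the solution-2 pieces (the error $(\bvarone-\bvartwo)\pap[(\cvartwo\pap)\vvartwo]$ is harmless) and then invoking the single-solution identity \emph{separately} for each $i$. The top-order outputs $-i\sigma\papabs^{1/2}(\cvar_i\pap)\Hil(\cvar_i\pap)^2\g_i$ recombine into $-i\sigma\papabs^{1/2}(\cvarone\pap)\Hil\{(\cvarone\pap)^2\gone-(\cvartwo\pap)^2\gtwo\}$ up to an error that only involves \emph{one} factor of $\cvarone-\cvartwo$ against $\pap(\cvartwo\pap)^2\gtwo\in H^{1/2}$, which does land in $L^2$. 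This full-difference top term then cancels exactly against the time derivative of the second half of $\EcalDeltathree$. The point is that the dangerous surface-tension piece is never isolated as a remainder; it stays inside the main cancellation.
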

\begin{proof}
We define $\Dtone = \pt + \bvarone\pap$ and $\Dttwo = \pt + \bvartwo\pap$. We will freely use \lemref{lem:interpolationmult} to simplify the computations.

\textbf{Step 1:} We first find quantities which can be controlled by $M$ and $\EcalDelta$ in the time interval $[0,T]$. 
\begin{enumerate}
\item We know that $ \norm[H^{3.5}]{\gone(\cdot,t)} + \norm[H^3]{\vvarone(\cdot,t)} +  \norm[H^{3.5}]{\gtwo(\cdot,t)} + \norm[H^3]{\vvartwo(\cdot,t)} \leq M$. As both $(\gone, \vvarone)$ and $(\gtwo, \vvartwo)$ solve  \eqref{eq:systemtwo} we can use the same estimates from step 1 of the proof of \propref{prop:apriori} corresponding to $N=0$. Hence for all $t\in[0,T]$ we have for $i=1,2$
\begin{align*}
\norm[H^{3.5}]{\cvar_i -1} + \norm[H^{3.5}]{\frac{1}{\cvar_i} -1}+  \norm[H^{3.5}]{\w_i - 1} &\leq C(M)\\
\norm[H^{3}]{\avar_i}  + \norm[H^{3}]{\dvar_i} + \norm[H^{3}]{\bvar_i} + \norm[H^{3}]{(\Aone)_i - 1} + \norm[H^{3}]{(\etwo)_i} & \leq C(M)\\
\norm[H^{2}]{\pt \g_i} + \norm[H^{1.5}]{\pt \vvar_i} + \norm[H^{2}]{\pt\cvar_i} & \leq C(M)
\end{align*}

\item From $\EcalDeltatwo$ we have
\begin{align*}
\norm[H^1]{\gone - \gtwo} + \norm[H^\half]{\vvarone - \vvartwo} \leq C(M)\EcalDelta^\half 
\end{align*}
Now from $\EcalDeltatwofive$ and bounds on $\cvarone$ we have
\begin{align*}
\norm[H^1]{\vvarone - \vvartwo} + \norm[\Hhalf]{\cvarone\pap(\gone - \gtwo)} \leq C(M)\EcalDelta^\half
\end{align*}
Now using \propref{prop:Leibniz} we have
\begin{align*}
\norm[\Hhalf]{\pap(\gone - \gtwo)} \lesssim  \norm[\Hhalf]{\cvarone\pap(\gone - \gtwo)}\norm[\infty]{\frac{1}{\cvarone}} +  \norm[2]{\cvarone\pap(\gone - \gtwo)}\norm[2]{\pap\frac{1}{\cvarone}}
\end{align*}
Hence $\norm[H^{1.5}]{\gone - \gtwo} \leq C(M)\EcalDelta^\half$. Hence from a similar argument to step 1 of \propref{prop:apriori}  we also obtain $\norm[H^{1.5}]{\cvarone - \cvartwo} \leq C(M)\EcalDelta^\half$. 

\item Observe that
\begin{align*}
\nobrac{(\cvarone\pap)^2\gone - (\cvartwo\pap)^2\gtwo } & = (\cvarone - \cvartwo)\pap \cbrac{(\cvarone\pap)\gone} + \cvartwo\pap\cbrac{(\cvarone - \cvartwo)\pap\gone} \\
& \quad  + \cbrac{(\cvartwo\pap)\cvartwo}\pap(\gone - \gtwo) + \cvartwo^2\pap^2(\gone - \gtwo) 
\end{align*}
Now using the estimate $\norm[H^{1.5}]{\cvarone - \cvartwo} \leq C(M)\EcalDelta^\half$ and the fact that $\norm[H^{3.5}]{\g_i} + \norm[H^{3.5}]{\cvar_i - 1} + \norm[H^{3.5}]{\frac{1}{\cvar_i} - 1} \leq C(M)$ for $i=1,2$ we immediately obtain from $\EcalDeltathree$
\begin{align*}
\norm[H^2]{\gone - \gtwo} \leq C(M)\EcalDelta^\half
\end{align*}
From the above estimate for $\cvarone - \cvartwo$ and the fact that $\norm[H^{3.5}]{\gtwo} \leq C(M)$ we see that
\begin{align*}
\norm[H^\half]{(\cvarone\pap) \gone - (\cvartwo\pap)\gtwo} \leq C(M)\EcalDelta^\half
\end{align*}
Hence from using $\norm[H^3]{\avarone} \leq C(M)$ and $\EcalDeltathree$ we obtain 
\begin{align*}
\norm[H^\half]{(\cvarone\pap) \vvarone - (\cvartwo\pap)\vvartwo} \leq C(M)\EcalDelta^\half
\end{align*}
Hence again by using the estimate for $\cvarone - \cvartwo$ we get
\begin{align*}
\norm[H^{1.5}]{\vvarone - \vvartwo} \leq C(M)\EcalDelta^\half
\end{align*}

\item Again by a similar computation from step 1 of \propref{prop:apriori} we get
\begin{align*}
\norm[H^2]{\cvarone - \cvartwo} + \norm[H^2]{\wone - \wtwo} \leq C(M)\EcalDelta^\half
\end{align*}
Now using the definitions of $\avar, \dvar, \bvar, \Aone$ and $\etwo$ we easily get using \propref{prop:commutator} 
\begin{align*}
&\norm[H^{1.5}]{\avarone - \avartwo} + \norm[H^{1.5}]{\dvarone - \dvartwo} + \norm[H^{1.5}]{\bvarone - \bvartwo} + \norm[H^{1.5}]{(\Aone)_1 - (\Aone)_2} + \norm[H^{1.5}]{(\etwo)_1 - (\etwo)_2} \\
& \leq C(M)\EcalDelta^\half
\end{align*}
Similarly from the equations of $\pt \g $ and $\pt \vvar$ we get 
\begin{align*}
\norm[H^\half]{\pt(\gone - \gtwo)} + \norm[2]{\pt (\vvarone - \vvartwo)} \leq C(M)\EcalDelta^\half
\end{align*}
Also observe that $\pt \cvar = \cbrac{-i\Hil(\pt\g)}\cvar$. Hence $\norm[H^\half]{\pt (\cvarone - \cvartwo)} \leq C(M)\EcalDelta^\half$

\end{enumerate}

\medskip
\textbf{Step 2:} We now prove some estimates required to prove the energy estimate. We define the following notation:  If $a,b:\Rsp\times[0,T] \to \Csp$ are functions we write $a \approx_{\Ltwo} b$ if there exists a constant $C(M)$ depending only on $M$ and $\sigma$ such that $\norm[2]{a-b} \leq C(M)\EcalDelta(t)^\half$. Observe that $\approx_{\Ltwo}$ is an equivalence relation.
\begin{enumerate}
\item  For the sake of convenience we define 
\begin{align}\label{eq:zeta}
\zeta(\ap, t) = \papabs^\half\cbrac{(\cvarone\pap)\vvarone - (\cvartwo\pap)\vvartwo} - \avarone\papabs^\half\cbrac{(\cvarone\pap)\gone - (\cvartwo\pap)\gtwo}
\end{align}
Using \propref{prop:commutator} and repeated use of the esimates from step 1 we see that
\begin{align*}
& \Dtone \zeta  \\
& \approx_{\Ltwo} \papabs^\half\Dtone  \cbrac{(\cvarone\pap)\vvarone  - (\cvartwo\pap)\vvartwo}  -\avarone\papabs^\half\Dtone \cbrac{(\cvarone\pap)\gone - (\cvartwo\pap)\gtwo} \\
& \approx_{\Ltwo} \papabs^\half\cbrac{\Dtone(\cvarone\pap)\vvarone - \Dttwo(\cvartwo\pap)\vvartwo} - \avarone\papabs^\half\cbrac{\Dtone(\cvarone\pap)\gone - \Dttwo(\cvartwo\pap)\gtwo} \\
& \approx_{\Ltwo} \papabs^\half\cbrac{(\cvarone\pap)\Dtone\vvarone - (\cvartwo\pap)\Dttwo\vvartwo} - \avarone\papabs^\half\cbrac{\Dtone(\cvarone\pap)\gone - \Dttwo(\cvartwo\pap)\gtwo} \\
& \approx_{\Ltwo} \cbrac{\papabs^\half(\cvarone\pap)\Dtone\vvarone - \avarone \papabs^\half\Dtone(\cvarone\pap)\gone} -  \cbrac{\papabs^\half(\cvartwo\pap)\Dttwo\vvartwo - \avartwo \papabs^\half\Dttwo(\cvartwo\pap)\gtwo}
\end{align*}
Now we use the equations \eqref{eq:systemtwo} and the formulae \eqref{form:errortwo} and \eqref{form:errorone} to obtain
\begin{align*}
& \papabs^\half(\cvarone\pap)\Dtone\vvarone  - \avarone\papabs^\half\Dtone(\cvarone\pap)\gone \\
& = \papabs^\half(\cvarone\pap)\cbrac{-i\sigma\Hil(\cvarone\pap)^2\gone - \avarone(\cvarone\pap)\vvarone + \avarone^2(\cvarone\pap)\gone + (\etwo)_1}  \\
& \quad -  \avarone\papabs^\half\cbrac{ - (\cvarone\pap)^2\vvarone + \avarone(\cvarone\pap)^2\gone + (\errortwo)_1} \\
& = -i\sigma\papabs^\half(\cvarone\pap)\Hil(\cvarone\pap)^2\gone - \papabs^\half\cbrac{(\cvarone\pap\avarone) \cvarone\pap\vvarone} - \sqbrac{\papabs^\half, \avarone}(\cvarone\pap)^2\vvarone \\
& \quad + \papabs^\half\cbrac{(2\avarone\cvarone\pap\avarone) \cvarone\pap\gone} + \sqbrac{\papabs^\half,\avarone}\cbrac{\avarone(\cvarone\pap)^2\gone} + \papabs^\half\cbrac{(\cvarone\pap)(\etwo)_1} \\
& \quad - \avarone\papabs^\half(\errortwo)_1
\end{align*}
A similar equation is true for the 2nd solution as well. Hence using \propref{prop:commutator} and the estimates from step 1 we see that 
\begin{align*}
 \Dtone \zeta & \approx_{\Ltwo} -i\sigma\papabs^\half\cbrac{ (\cvarone\pap)\Hil (\cvarone\pap)^2\gone - (\cvartwo\pap)\Hil (\cvartwo\pap)^2\gtwo } \\
 & \approx_{\Ltwo}  -i\sigma\papabs^\half (\cvarone\pap) \Hil\cbrac{ (\cvarone\pap)^2\gone - (\cvartwo\pap)^2\gtwo }
\end{align*}

\item Using the estimates from step 1 we observe that
\begin{align*}
 \Dtone \cbrac{(\cvarone\pap)^2\gone - (\cvartwo\pap)^2\gtwo}  & \approx_{\Ltwo} \Dtone(\cvarone\pap)^2\gone - \Dttwo(\cvartwo\pap)^2\gtwo \\
 & \approx_{\Ltwo} (\cvarone\pap)\Dtone(\cvarone\pap)\gone -  (\cvartwo\pap)\Dttwo(\cvartwo\pap)\gtwo \\
 & \approx_{\Ltwo} \cvarone\pap\cbrac{\Dtone(\cvarone\pap)\gone - \Dttwo(\cvartwo\pap)\gtwo}
\end{align*}
Now we use the formulae \eqref{form:errortwo} and \eqref{form:errorone} to obtain
\begin{align*}
 & \Dtone \cbrac{(\cvarone\pap)^2\gone - (\cvartwo\pap)^2\gtwo} \\
 & \approx_{\Ltwo} \cvarone\pap\cbrac{ -(\cvarone\pap)^2\vvarone + \avarone(\cvarone\pap)^2\gone + (\cvartwo\pap)^2\vvartwo - \avartwo(\cvartwo\pap)^2\gtwo  } \\
 & \approx_{\Ltwo} -\cvarone\pap\cbrac*[\Big]{ (\cvarone\pap)\cbrac{(\cvarone\pap)\vvarone - (\cvartwo\pap)\vvartwo} - \avarone(\cvarone\pap)\cbrac{(\cvarone\pap)\gone - (\cvartwo\pap)\gtwo}}
\end{align*}
\item Using \propref{prop:commutator} we observe that
\begin{align*}
& -i\sigma\Hil\pap\cbrac{\cvarone\papabs^\half (\zeta)} \\
& = -i\sigma\Hil\pap\cbrac{\cvarone\papabs^\half \cbrac{ \papabs^\half\cbrac{(\cvarone\pap)\vvarone - (\cvartwo\pap)\vvartwo} - \avarone\papabs^\half\cbrac{(\cvarone\pap)\gone - (\cvartwo\pap)\gtwo}}} \\
& \approx_{\Ltwo} -i\sigma\Hil\pap\cbrac*[\Big]{\cvarone\papabs\cbrac{(\cvarone\pap)\vvarone - (\cvartwo\pap)\vvartwo} - \avarone\cvarone\papabs\cbrac{(\cvarone\pap)\gone - (\cvartwo\pap)\gtwo}}
\end{align*}
Now as $\papabs = i\Hil\pap$ by repeated use of \propref{prop:commutator} and the estimates from step 1 we obtain 
\begin{align*}
& -i\sigma\Hil\pap\cbrac{\cvarone\papabs^\half (\zeta)} \\
& \approx_{\Ltwo} \sigma\pap\cbrac*[\Big]{(\cvarone\pap)\cbrac{(\cvarone\pap)\vvarone - (\cvartwo\pap)\vvartwo} - \avarone(\cvarone\pap)\cbrac{(\cvarone\pap)\gone - (\cvartwo\pap)\gtwo}}
\end{align*}
\end{enumerate}

\medskip
\textbf{Step 3:} We now prove the energy estimate. It is easy to see that the time derivative of $\EcalDeltatwo$ is easily controlled. Hence we now control the time derivative of the energies $\EcalDeltatwofive$ and $\EcalDeltathree$. To simplify the calculations as usual we will use the following notation: If $a(t), b(t) $ are functions of time we write $a \approx b$ if there exists a constant $C(M)$ depending only on $M$ and $\sigma$ such that $\abs{a(t)-b(t)} \leq C(M)\EcalDelta(t)$. Observe that $\approx$ is an equivalence relation. With this notation proving \propref{prop:aprioridelta} is equivalent to showing $ \frac{d\EcalDelta(t)}{dt} \approx 0$. 

\begin{enumerate}
\item Observe that
\begin{align*}
\frac{\diff \EcalDeltatwofive}{\diff t} & = \int \frac{1}{\cvarone}\cbrac{(\cvarone\pap)(\vvarone - \vvartwo)} \pt \cbrac{(\cvarone\pap)(\vvarone - \vvartwo)} \diff \ap \\
& \quad + \sigma\int \cbrac{\papabs(\cvarone\pap)(\gone - \gtwo)} \pt\cbrac{(\cvarone\pap)(\gone - \gtwo)} \diff \ap \\
& \approx -\int \pap\cbrac{(\cvarone\pap)(\vvarone - \vvartwo)} \pt(\vvarone - \vvartwo)\diff \ap \\
& \quad + \sigma \int \cbrac{\papabs(\cvarone\pap)(\gone - \gtwo)} \cbrac{(\cvarone\pap)\pt (\gone - \gtwo)} \diff \ap
\end{align*}
Now we use the equations for $\pt \g$ and $\pt \vvar$ to obtain 
\begin{align*}
\frac{\diff \EcalDeltatwofive}{\diff t} & \approx  i\sigma \int \pap\cbrac{(\cvarone\pap)(\vvarone - \vvartwo)} \Hil \cbrac{(\cvarone\pap)^2\gone - (\cvartwo\pap)^2\gtwo} \diff \ap \\
& \quad - \sigma \int \cbrac{\papabs(\cvarone\pap)(\gone - \gtwo)} \cbrac{(\cvarone\pap)\cbrac{ (\cvarone\pap)\vvarone - (\cvartwo\pap)\vvartwo }} \diff \ap
\end{align*}
Now we use the estimates from step 1 to obtain
\begin{align*}
\frac{\diff \EcalDeltatwofive}{\diff t} & \approx  i\sigma \int \pap\cbrac{(\cvarone\pap)(\vvarone - \vvartwo)} \Hil \cbrac{(\cvarone\pap)^2(\gone - \gtwo)} \diff \ap \\
& \quad - \sigma \int \cbrac{\papabs(\cvarone\pap)(\gone - \gtwo)} \cbrac{(\cvarone\pap)^2(\vvarone - \vvartwo)} \diff \ap \\
& = -i\sigma\int \pap\cbrac{(\cvarone\pap)(\vvarone - \vvartwo)}\cbrac{\sqbrac{\cvarone,\Hil}\pap\cbrac{(\cvarone\pap(\gone - \gtwo))}} \diff \ap \\
& \approx 0
\end{align*}
where we used \propref{prop:commutator} in the last step.

\item Now we control $\EcalDeltathree$. Using \lemref{lem:timederiv} and the estimates from step 2 we obtain
\begin{align*}
\frac{\diff \EcalDeltathree}{\diff t} & \approx  \int (\zeta)(\Dtone \zeta) \diff \ap + \sigma\int \frac{1}{\cvarone}\cbrac{(\cvarone\pap)^2\gone - (\cvartwo\pap)^2\gtwo} \Dtone \cbrac{(\cvarone\pap)^2\gone - (\cvartwo\pap)^2\gtwo} \diff \ap \\
& \approx -i\sigma\int (\zeta)\cbrac{\papabs^\half (\cvarone\pap) \Hil\cbrac{ (\cvarone\pap)^2\gone - (\cvartwo\pap)^2\gtwo }} \diff \ap \\
& \quad - \sigma \int \cbrac{(\cvarone\pap)^2\gone - (\cvartwo\pap)^2\gtwo}\pap\Big\{ (\cvarone\pap)\cbrac{(\cvarone\pap)\vvarone - (\cvartwo\pap)\vvartwo} \\
& \qquad \qquad \quad - \avarone(\cvarone\pap)\cbrac{(\cvarone\pap)\gone - (\cvartwo\pap)\gtwo}\Big\} \diff\ap \\
& \approx 0
\end{align*}
thereby proving the energy estimate. 
\end{enumerate}
\end{proof}
 
\smallskip

\subsection{\texorpdfstring{A priori estimates for approximate solutions \nopunct}{}}\hspace*{\fill} \medskip

We now work with a mollified system. Fix $\delta,\ep\geq0$ as parameters and let $\phi$ be a smooth bump function satisfying $\phi(\ap) \geq 0$ for all $\ap\in\Rsp$ and $\int \phi(\ap) \diff\ap = 1$.  For $s>0$ let $\phi_s(\ap) = \frac{1}{s}\phi\brac*[\big]{\frac{\ap}{s}}$. Consider the smoothing operator $\Jdel$ defined via $\Jdel(f) = f * \phi_\delta$ for $\del>0$ and $\Jdel(f) = f$ for $\del =0$. Consider the following system in the variables $(\g, \vvar)$ 
\begin{align}\label{eq:systemtwodelep}
\begin{split}
\cvar & = e^{-i\Hil \g} \\
\w & = e^{i\g}\\
\bvar & = 2i\Hil(\cvar\vvar) + i \sqbrac{\cvar^2, \Hil} \brac{\frac{\vvar}{\cvar}} \\
\avar & = i\cvar\Hil\brac{\frac{\vvar}{\cvar} } \\
\dvar & = -i e^{i\g} \cvar (\Id - \Hil) \brac{\frac{\vvar}{\cvar}} \\
\Aone & =  1 - \Imag \sqbrac{\dvar,\Hil} \pap\bar\dvar \\
\etwo & = \Real(\w) - \Aone \cvar + \sigma \Imag\cbrac{\sqbrac{\cvar,\Hil} \pap(\Id + \Hil) (\cvar \pap \g) }\\
\pt\g & = \Jdel^2\cbrac{ -(\cvar\pap)\vvar + \avar(\cvar\pap)\g -\bvar\pap\g} \\
\pt\vvar & = \Jdel^2\cbrac{ -i\sigma \Hil (\cvar\pap)^2 \g - \avar (\cvar\pap)\vvar  -\bvar\pap\vvar + \avar^2 (\cvar\pap)\g   + \etwo} -\ep\papabs\vvar
\end{split}
\end{align}
The evolution equations of $\g$ and $\vvar$ have changed so that now we have a smoothing term $\Jdel^2$ in both the equations. We also have a dissipative term for the time evolution equation for $\vvar$. This is very similar to the system used in \cite{CaCoFeGaGo12}.  

To prove existence for this system we will need some estimates.
\begin{lemma}\label{lem:commutatordel}
Let $f,g \in \Scalsp(\Rsp)$ and let $0 < \del,\del_1,\del_2 \leq 1$. Then we have
\begin{enumerate}
\item $\norm[2]{J_{\del_1}(f) - J_{\del_2}f} \lesssim \max\{\del_1^s,\del_2^s\}\norm[H^s]{f}$ \quad for $0< s\leq 1$
\item $\norm[2]{\sqbrac{\Jdel,f}\pap g} \lesssim \del\norm[H^2]{f}\norm[2]{g_\ap}$ 
\item $\norm[2]{\sqbrac{\Jdel,f}\pap g} \lesssim \norm[\infty]{f_\ap}\norm[2]{g}$ 
\item $\norm[2]{\papabs^\half\sqbrac{\Jdel,f}\pap g} \lesssim \norm[\infty]{f_\ap}\norm[\Hhalf]{g}$
\end{enumerate}
where the constants in the estimates are independent of $\del$.
\end{lemma}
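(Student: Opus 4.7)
The plan is to handle the four estimates in order, exploiting throughout that $\Jdel$ is convolution with $\phi_\del$ (equivalently, a Fourier multiplier with symbol $\hat\phi(\del\xi)$ uniformly bounded by $1$), and repeatedly using the kernel identity
\begin{equation*}
[\Jdel,f]\pap g(\ap) = \int \phi_\del(\ap-\bp)\bigl(f(\bp)-f(\ap)\bigr)\pap g(\bp)\,d\bp.
\end{equation*}
The scaling $\phi_\del(u)=\del^{-1}\phi(u/\del)$ ensures $\|\phi_\del\|_{\Lone}=1$, $\bigl\||\phi_\del(\cdot)(\cdot)|\bigr\|_{\Lone}\lesssim \del$, and $\bigl\||\phi_\del'(\cdot)(\cdot)|\bigr\|_{\Lone}$ is bounded by a universal constant independent of $\del$.

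For (1), I pass to the Fourier side: $\widehat{\Jdelone f-\Jdeltwo f}(\xi)=\bigl(\hat\phi(\del_1\xi)-\hat\phi(\del_2\xi)\bigr)\hat f(\xi)$. Since $\hat\phi$ is Lipschitz with $\hat\phi(0)=1$, one has $|\hat\phi(\del_1\xi)-\hat\phi(\del_2\xi)|\lesssim \min\bigl(1,\max(\del_1,\del_2)|\xi|\bigr)\leq \max(\del_1,\del_2)^s|\xi|^s$ for every $0<s\leq 1$, and Plancherel produces the claimed estimate.

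For (2), the mean value theorem gives $|f(\bp)-f(\ap)|\leq \norm[\infty]{f_\ap}|\ap-\bp|$, so from the kernel identity and Young's inequality,
\begin{equation*}
\norm[2]{[\Jdel,f]\pap g}\lesssim \norm[\infty]{f_\ap}\,\del\,\norm[2]{\pap g},
\end{equation*}
and the 1D Sobolev embedding $\norm[\infty]{f_\ap}\lesssim \norm[H^2]{f}$ yields the result. For (3), integration by parts in $\bp$ in the kernel identity gives
\begin{equation*}
[\Jdel,f]\pap g(\ap) = \int \phi_\del'(\ap-\bp)\bigl(f(\bp)-f(\ap)\bigr)g(\bp)\,d\bp - \Jdel(f_\ap g)(\ap).
\end{equation*}
The second term is controlled in $\Ltwo$ by $\norm[\infty]{f_\ap}\norm[2]{g}$ using that $\Jdel$ is an $\Ltwo$-contraction; the first is estimated by the mean value bound combined with Young's inequality, exploiting the scale-independent $\Lone$-norm of $|\phi_\del'(u)u|$ recorded above.

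The main work is (4), where the half-derivative gain on the left must be balanced by a matching gain on $g$. My strategy is to prove the $\dot H^1$ companion of (3) and then interpolate. Writing
\begin{equation*}
\pap[\Jdel,f]\pap g = [\Jdel,f_\ap]\pap g + [\Jdel,f]\pap^2 g,
\end{equation*}
the second summand is controlled by applying (3) with $g$ replaced by $\pap g$, giving $\norm[\infty]{f_\ap}\norm[2]{\pap g}$; the first is bounded by the triangle inequality and the $\Ltwo$-contractivity of $\Jdel$, giving $2\norm[\infty]{f_\ap}\norm[2]{\pap g}$. Combined with (3), this shows that the linear operator $g\mapsto [\Jdel,f]\pap g$ (with $f$ fixed) has operator norm $\lesssim \norm[\infty]{f_\ap}$ both on $\Ltwo$ and on $\dot H^1$, with constants independent of $\del$. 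Complex interpolation between these two endpoints then yields boundedness on $\Hhalf$ with the same type of estimate, which is precisely (4).
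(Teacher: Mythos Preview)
Your argument is correct. Parts (1) and (3) are essentially identical to the paper's proof. For (2) you work in physical space via the mean-value inequality and Young's inequality, while the paper works on the Fourier side and bounds $\hat\phi(\del\xi)-\hat\phi(\del\eta)$ by $\del|\xi-\eta|$; both are equally elementary and give the same estimate.

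The real divergence is in (4). The paper proceeds algebraically: writing
\[
\papabs^{1/2}[\Jdel,f]\pap g
= \Jdel[\papabs^{1/2},f]\pap g + [\Jdel,f]\pap(\papabs^{1/2}g) - [\papabs^{1/2},f]\pap(\Jdel g),
\]
it applies (3) to the middle term and a Kato--Ponce type commutator estimate $\norm[2]{[f,\papabs^{1/2}]\papabs^{1/2}h}\lesssim\norm[\infty]{f_\ap}\norm[2]{h}$ (from \propref{prop:commutator}) to the outer two. Your route instead establishes the $\dot H^1$ endpoint $\norm[2]{\pap[\Jdel,f]\pap g}\lesssim\norm[\infty]{f_\ap}\norm[2]{\pap g}$ by the Leibniz-type identity $\pap[\Jdel,f]=[\Jdel,f_\ap]+[\Jdel,f]\pap$ and then interpolates with (3). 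Your approach is more self-contained in that it avoids the fractional commutator lemma, at the cost of invoking complex interpolation of the scale $\dot H^s$; the paper's approach is a one-line reduction but leans on an external Kato--Ponce estimate. Both yield constants independent of $\del$.
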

\begin{proof}
We prove each of them separately. 
\begin{enumerate}
\item It is enough to assume that $\del_2 =0$. Using the Fourier transform we have
\begin{align*}
\norm[2]{\Jdel(f) - f} \lesssim & \norm[\Ltwo(\diff\xi)]{\brac*{\hat{\phi}(\del\xi) -1}\hat{f}(\xi)} \\
& \lesssim \norm[\Ltwo(\diff \xi)]{\hat{f}(\xi)1_{\abs{\del\xi}\geq 1}} + \norm[\Ltwo(\diff\xi)]{\brac*{\hat{\phi}(\del\xi) -\hat{\phi}(0)}\hat{f}(\xi)1_{\abs{\del\xi}\leq 1}} \\
& \lesssim \del^s\norm[H^s]{f}
\end{align*}
\item Observe that
\begin{align*}
\norm[2]{\sqbrac{\Jdel,f}\pap g} & \lesssim \norm[\Ltwo(\diff\xi)]{\int \hat{f}(\xi-\eta)\eta\hat{g}(\eta)\sqbrac{\hat{\phi}(\del\xi) - \hat{\phi}(\del\eta)} \diff \eta} \\
& \lesssim \del \norm[\Ltwo(\diff\xi)]{\int (\xi - \eta)\hat{f}(\xi-\eta)\eta\hat{g}(\eta)\sqbrac{\frac{\hat{\phi}(\del\xi) - \hat{\phi}(\del\eta)}{(\del\xi - \del\eta)}} \diff \eta} \\
& \lesssim \del\norm[\Lone(\diff\xi)]{\xi\hat{f}(\xi)}\norm[\Ltwo(\diff\xi)]{\xi\hat{g}(\xi)} \\
& \lesssim \del\norm[H^2]{f}\norm[2]{g_\ap}
\end{align*}
\item We see that
\begin{align*}
 \brac{\sqbrac{\Jdel,f}\pap g}(\ap) = \int \phi_\del(\bp) \brac{f(\ap - \bp) - f(\ap)} g_\ap (\ap - \bp) \diff \bp
\end{align*}
As $ g_\ap (\ap-\bp) = -\pbp g(\ap - \bp)$. Hence if we define $\chi(\bp) = \phi '(\bp)\bp$ then
\begin{align*}
 \brac{\sqbrac{\Jdel,f}\pap g}(\ap) & = \int \pbp\cbrac*[\big]{ \phi_\del(\bp) \brac{f(\ap - \bp) - f(\ap)}} g(\ap - \bp) \diff \bp\\
 & = \int \cbrac{\chi_\del(\bp) \brac{ \frac{f(\ap - \bp) - f(\ap)}{\bp} } - \phi_\del(\bp)f_\bp(\bp) } g(\ap-\bp) \diff \bp
\end{align*}
from which the estimate follows. 
\item Observe that
\begin{align*}
\papabs^\half\sqbrac{\Jdel,f}\pap g & = \sqbrac{\papabs^\half\Jdel,f}\pap g - \sqbrac{\papabs^\half,f}\pap(\Jdel g)\\
& = \Jdel\sqbrac{\papabs^\half,f}\pap g + \sqbrac{\Jdel,f}\pap \brac{\papabs^\half g} - \sqbrac{\papabs^\half,f}\pap(\Jdel g)
\end{align*}
The estimate now follows from \propref{prop:commutator}.
\end{enumerate}
\end{proof}

We will also require estimates similar to \lemref{lem:timederiv} but adapted to this system.  
\begin{lem} \label{lem:timederivdel}
Let $T>0$ and let $f,\bvar \in C^1([0,T), H^2(\Rsp))$ with $\bvar$ being real valued. Define the operator $\Dtdel = \pt + \Jdel^2 (\bvar\pap)$. Then we have the estimate
\begin{enumerate}[leftmargin =*, align=left]
\item $\dis \abs*[\Big]{ \frac{d}{dt}\int \abs{f}^2 \diff \ap - 2\Real \int \bar{f} (\Dtdel f) \diff \ap} \lesssim \norm[2]{f}^2 \norm[\infty]{\bvarap}$

\item $\dis \abs{ \frac{d}{dt}\int (\papabs\bar{f})f \diff\ap- 2\Real \cbrac{\int (\papabs\bar{f})\Dtdel f \diff \ap}} \lesssim   \norm[\Hhalf]{f}^2 \norm[\infty]{\bvarap}$
\end{enumerate}
\end{lem}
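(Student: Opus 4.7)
The plan is to substitute $\pt = \Dtdel - \Jdel^2(\bvar\pap)$ into each time derivative and exploit three elementary properties of the mollifier: (a) $\Jdel$ is self-adjoint on $\Ltwo$, (b) $\Jdel$ commutes with every constant-coefficient Fourier multiplier (in particular with $\papabs^{1/2}$), and (c) $\norm[2]{\Jdel f} \leq \norm[2]{f}$ and $\norm[\Hhalf]{\Jdel f} \leq \norm[\Hhalf]{f}$ uniformly in $\del$. This reduces both estimates to the same integration-by-parts identity used in \lemref{lem:timederiv}, now applied to $\Jdel f$ instead of $f$, plus a harmless commutator term.

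For part (1), I would write
\begin{align*}
\frac{d}{dt}\int \abs{f}^2\diff\ap - 2\Real\int \bar{f}(\Dtdel f)\diff\ap
= -2\Real\int \bar{f}\,\Jdel^2(\bvar\pap f)\diff\ap.
\end{align*}
Moving one factor of $\Jdel$ across using self-adjointness gives $-2\Real\int (\Jdel\bar{f})\bvar\pap(\Jdel f)\diff\ap$, and a single integration by parts turns this into $\int \bvarap\abs{\Jdel f}^2\diff\ap$, which is bounded by $\norm[\infty]{\bvarap}\norm[2]{f}^2$ since $\Jdel$ is an $\Ltwo$ contraction.

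For part (2), the same substitution and the commutation of $\papabs$ with $\Jdel$ yield
\begin{align*}
\frac{d}{dt}\int (\papabs\bar{f})f\diff\ap - 2\Real\int (\papabs\bar{f})(\Dtdel f)\diff\ap
= -2\Real\int \papabs^{1/2}(\Jdel\bar{f})\,\papabs^{1/2}\sqbrac{\bvar\pap(\Jdel f)}\diff\ap.
\end{align*}
Splitting $\papabs^{1/2}[\bvar\pap(\Jdel f)] = \sqbrac{\papabs^{1/2},\bvar}\pap(\Jdel f) + \bvar\pap\papabs^{1/2}(\Jdel f)$, the main term integrates by parts to $\int \bvarap\abs{\papabs^{1/2}\Jdel f}^2\diff\ap \leq \norm[\infty]{\bvarap}\norm[\Hhalf]{f}^2$, while the commutator remainder is controlled by $\norm[2]{\sqbrac{\papabs^{1/2},\bvar}\pap(\Jdel f)} \lesssim \norm[\infty]{\bvarap}\norm[\Hhalf]{\Jdel f} \lesssim \norm[\infty]{\bvarap}\norm[\Hhalf]{f}$ via \propref{prop:commutator}; pairing with $\norm[2]{\papabs^{1/2}\Jdel\bar{f}} \leq \norm[\Hhalf]{f}$ gives the claimed bound.

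There is no real obstacle: once one observes that $\Jdel$ commutes with $\papabs^{1/2}$ and contracts every Sobolev norm involved, all estimates are uniform in $\del\in[0,1]$ and the proof mirrors the unmollified case of \lemref{lem:timederiv}. The only subtlety worth noting is that one must commute $\Jdel$ all the way inside \emph{before} doing the integration by parts, since $\bvar\pap$ does not commute with $\Jdel$; this is exactly what self-adjointness of $\Jdel$ and the commutator decomposition above accomplish.
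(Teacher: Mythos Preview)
Your overall strategy is the right one and matches the paper's (which simply says to repeat \lemref{lem:timederiv} using \lemref{lem:commutatordel}), but there is a genuine slip in both displayed identities. The claim ``moving one factor of $\Jdel$ across using self-adjointness gives $-2\Real\int (\Jdel\bar{f})\,\bvar\pap(\Jdel f)\diff\ap$'' is false: self-adjointness only yields $-2\Real\int (\Jdel\bar{f})\,\Jdel(\bvar\pap f)\diff\ap$. Passing from $\Jdel(\bvar\pap f)$ to $\bvar\pap(\Jdel f)$ costs the commutator $\sqbrac{\Jdel,\bvar}\pap f$, which is not zero. The same omission occurs in your displayed equality for part (2): the right-hand side should contain $\papabs^{1/2}\Jdel(\bvar\pap f)$, not $\papabs^{1/2}\!\sqbrac{\bvar\pap(\Jdel f)}$.

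This missing commutator is precisely where \lemref{lem:commutatordel} enters, and it is the only new ingredient beyond \lemref{lem:timederiv}. For part (1) you pair $\Jdel\bar f$ against $\sqbrac{\Jdel,\bvar}\pap f$ and use estimate (3) of \lemref{lem:commutatordel}, namely $\norm[2]{\sqbrac{\Jdel,\bvar}\pap f}\lesssim \norm[\infty]{\bvarap}\norm[2]{f}$. For part (2), after distributing $\papabs^{1/2}$, the extra term is $\int \papabs^{1/2}(\Jdel\bar f)\,\papabs^{1/2}\sqbrac{\Jdel,\bvar}\pap f\diff\ap$, and estimate (4) of \lemref{lem:commutatordel} gives $\norm[2]{\papabs^{1/2}\sqbrac{\Jdel,\bvar}\pap f}\lesssim \norm[\infty]{\bvarap}\norm[\Hhalf]{f}$. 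Your closing remark correctly flags that $\bvar\pap$ and $\Jdel$ do not commute, but the resolution is these mollifier--multiplier commutator bounds, not self-adjointness alone; once you insert them, the rest of your argument goes through uniformly in $\del$.
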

\begin{proof}
The proof is very similar to the proof of \lemref{lem:timederiv} with the only difference being that we now also use  \lemref{lem:commutatordel}. 
\end{proof}

The energy for this system $\Ecal(t)$ is again defined by \eqref{eq:energy} where $N\geq 0$
\begin{align}\label{eq:energydel}
\begin{split}
\Ecalthreefive & = \frac{1}{2}\norm[H^{2.5}]{\g}^2 + \half\norm[H^2]{\vvar}^2 \\
\Ecalfourfivei &= \half\norm[2]{\frac{1}{\cvar^\half}\cbrac{(\cvar\pap)^{3+i}\vvar - \avar(\cvar\pap)^{3+i}\g }}^2 + \frac{\sigma}{2}\norm[\Hhalf]{(\cvar\pap)^{3+i}\g}^2 \\
\Ecal &= \Ecalthreefive + \sum_{i=0}^N \Ecalfourfivei
\end{split}
\end{align}
We again define $K(t) = \norm[H^{2.5}]{\g(\cdot,t)} + \norm[H^2]{\vvar(\cdot,t)}$. We now have
\begin{prop}\label{prop:aprioridelep}
Fix $N\geq 0$ and let $(\g,\vvar)(t)$ be a smooth solution to \eqref{eq:systemtwodelep} with parameters $(\del,\ep)$ in the time interval $[0,T]$ with  $(\g,\vvar) \in C([0,T], H^{s+\half}\times H^s)$ for all $s\geq 0$. 
\begin{enumerate}
\item If $0\leq \del \leq 1$ and $0<\ep\leq 1$, then there exists a polynomial $C_\ep = C_\ep(t)$ with non-negative coefficients depending only on $\sigma,\ep$ and independent of $\del$ so that for any $t\in[0,T)$ we have
\begin{align*}
\frac{\diff \Ecal(t)}{\diff t} \leq C_\ep(K(t))\Ecal(t)
\end{align*}
\item  If $\del = 0$ and $0\leq \ep \leq 1$, then there exists a polynomial $C = C(t)$ with non-negative coefficients depending only on $\sigma$ and independent of $\ep$ so that for any $t\in[0,T)$ we have
\begin{align*}
\frac{\diff \Ecal(t)}{\diff t} \leq C(K(t))\Ecal(t)
\end{align*}
\end{enumerate}
\end{prop}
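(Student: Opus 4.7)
The proof will run parallel to that of \propref{prop:apriori} with three modifications to accommodate the smoothing operator $\Jdel$ and the viscous term $-\ep\papabs\vvar$. First, because the algebraic definitions of $\cvar,\w,\bvar,\avar,\dvar,\Aone,\etwo$ are unchanged, Step 1 of the proof of \propref{prop:apriori} applies verbatim and provides the $C(K)\Ecal^{1/2}$ bounds on all auxiliary Sobolev norms, with $C$ depending only on $\sigma$. The only adjustment is that the evolution equations now read $\Dtdel\g = \Jdel^2\{-(\cvar\pap)\vvar + \avar(\cvar\pap)\g\}$ and $\Dtdel\vvar = \Jdel^2\{-i\sigma\Hil(\cvar\pap)^2\g - \avar(\cvar\pap)\vvar + \avar^2(\cvar\pap)\g + \etwo\} - \ep\papabs\vvar$, where $\Dtdel = \pt + \Jdel^2(\bvar\pap)$.

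Second, I reproduce the commutator algebra of Step 2 for the modified material derivative. The analogs of \eqref{form:errorone} and \eqref{form:errortwo} acquire additional errors of the form $[\Jdel,\bvar]\pap(\cdot)$ and $(\Id-\Jdel^2)(\cdot)$ acting on principal terms; all of these are controlled uniformly in $\del$ by \lemref{lem:commutatordel}. After pushing $(\cvar\pap)^{2+N}$ through the equations as before, I obtain the two key congruences
\begin{align*}
\papabs^\half \Dtdel(\cvar\pap)^{3+N}\g &\approxLtwo -\papabs^\half(\cvar\pap)\Jdel^2\bigl\{(\cvar\pap)^{3+N}\vvar - \avar(\cvar\pap)^{3+N}\g\bigr\}, \\
\Dtdel\bigl\{(\cvar\pap)^{3+N}\vvar - \avar(\cvar\pap)^{3+N}\g\bigr\} &\approxLtwo -i\sigma(\cvar\pap)^2\Jdel^2\Hil(\cvar\pap)^{3+N}\g - \ep(\cvar\pap)^{3+N}\papabs\vvar,
\end{align*}
where the $\approxLtwo$ error is at most $C_\ep(K)\Ecal^{1/2}$ for part (1) and $C(K)\Ecal^{1/2}$ for part (2).

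Third, using \lemref{lem:timederivdel} in place of \lemref{lem:timederiv} to move $\pt$ inside the $L^2$ inner products as $\Dtdel$, the time derivative of $\EcalfourfiveN$ reduces to the same structural pairing as in Step 3 of the proof of \propref{prop:apriori}. The cancellation between the two principal contributions now requires commuting $\Jdel^2$ across the pairing: since $\Jdel$ is self-adjoint, commutes with $\Hil$, and admits the uniform-in-$\del$ commutator estimates of \lemref{lem:commutatordel}, one can split $\Jdel^2 = \Jdel \cdot \Jdel$ and move one factor to each side of the inner product; the antisymmetry of $\Hil$ then yields the surface-tension cancellation up to lower-order errors of size $C(K)\Ecal$. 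The viscous piece contributes $-\ep\int \tfrac{1}{\cvar}\{(\cvar\pap)^{3+N}\vvar - \avar(\cvar\pap)^{3+N}\g\}(\cvar\pap)^{3+N}\papabs\vvar\,\diff\ap$, which in part (2) has nonpositive principal part and is thus harmless with a bound independent of $\ep$, while in part (1) it combines with the $[\Jdel,\cdot]$ defects to close the estimate at the cost of a factor of $\ep^{-1}$. The energies $\Ecalthreefive$ and the lower $\Ecalfourfivei$ for $i<N$ are handled identically with simpler algebra.

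The principal obstacle is ensuring that the surface-tension cancellation survives the insertion of $\Jdel^2$: convolution against a positive bump does not commute with the weighted derivative $\cvar\pap$ or with multiplication by $\avar$, so every regrouping spawns commutators. It is precisely the $\del$-independence of the bounds in \lemref{lem:commutatordel} that keeps all these defects controlled by $C(K)\Ecal$, and the absence of $\Jdel$ in part (2) is what makes the resulting bound simultaneously independent of $\ep$.
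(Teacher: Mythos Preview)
Your outline follows the paper's approach closely, but there is a genuine gap in part (1). You claim that the $\approxLtwo$ errors in the key congruences of Step~2 are bounded by $C_\ep(K)\Ecal^{1/2}$; this is not achievable. When you commute $(\cvar\pap)^{3+N}$ past $\Jdel^2$ and then apply $\papabs^{1/2}$ (as in the first congruence), the worst commutator term is of the form $\papabs^{1/2}[\cvar,\Jdel]\pap(\cvar\pap)^{3+N}\vvar$. By \lemref{lem:commutatordel}(4) this is bounded by $\norm[\infty]{\cvar_\ap}\norm[\Hhalf]{(\cvar\pap)^{3+N}\vvar}$, which is essentially $\norm[H^{3.5+N}]{\vvar}$ --- half a derivative \emph{more} than the energy controls (the energy gives only $\norm[H^{3+N}]{\vvar}\le C(K)\Ecal^{1/2}$). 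No constant, $\ep$-dependent or otherwise, can bound $\norm[H^{3.5+N}]{\vvar}$ by $\Ecal^{1/2}$.

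What the paper does instead is allow the Step~2 errors to be of size $C(K)\{\Ecal^{1/2}+\norm[H^{3.5+N}]{\vvar}\}$ with $C$ depending only on $\sigma$. After pairing against the energy quantities in Step~3, this produces a cross term $C(K)\Ecal^{1/2}\norm[H^{3.5+N}]{\vvar}$. Meanwhile the viscous contribution is not merely ``nonpositive in principal part'' but gives a genuinely coercive term: after commuting, it yields $-\ep\int \cvar^{5+2N}\abs{\papabs^{1/2}\pap^{3+N}\vvar}^2\diff\ap \le -c\,\ep\,\norm[H^{3.5+N}]{\vvar}^2$. Young's inequality then absorbs the cross term into this dissipation, and \emph{this} is where the factor $\ep^{-1}$ enters $C_\ep$. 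Your closing remark that the viscous piece ``combines with the $[\Jdel,\cdot]$ defects'' is the right intuition, but it is inconsistent with your earlier claim that the defects are already $C_\ep(K)\Ecal^{1/2}$: if they were, nothing would need absorbing. You should rewrite Step~2 to track $\norm[H^{3.5+N}]{\vvar}$ explicitly and then make the absorption precise in Step~3. Part (2) is essentially correct, though note that the viscous term also produces a cross term $\ep C(K)\Ecal^{1/2}\norm[\Hhalf]{\cvar^{-1/2}(\cvar\pap)^{3+N}\vvar}$ which must likewise be absorbed (harmlessly, since $\ep\le 1$).
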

\begin{proof}
The proof of this proposition is similar to the proof of \propref{prop:apriori} and we will mostly focus on the changes that we need to make. As before we will freely use \lemref{lem:interpolationmult} to simplify the computations. 

\medskip
\textbf{Step 1:} The quantities controlled by the energy are the same as in \propref{prop:apriori}. We collect the estimates below. This applies for all $0\leq \del,\ep\leq 1$
\begin{enumerate}
\item We have $\norm[\infty]{\g} + \norm[\infty]{\cvar} + \norm[\infty]{\frac{1}{\cvar}} \leq C(K)$ and hence
\begin{align*}
\norm[H^{1.5}]{\cvar_\ap} +  \norm[H^1]{\pt \g}  +  \norm[H^1]{\pt \cvar} + \norm[H^2]{\bvar} + \norm[H^2]{\avar} + \norm[H^\half]{\pt\bvar} + \norm[H^\half]{\pt \avar} \leq C(K)
\end{align*}
\item $\norm[H^{N+3.5}]{\g} + \norm[H^{N+3.5}]{\cvar -1} + \norm[H^{N+3.5}]{\frac{1}{\cvar} -1} +  \norm[H^{N+3.5}]{\w - 1} \leq C(K)\Ecal^\half$
\item We have 
\begin{align*}
\norm[H^{N+3}]{\vvar} + \norm[H^{N+3}]{\avar}  + \norm[H^{N+3}]{\dvar} + \norm[H^{N+3}]{\bvar} + \norm[H^{N+3}]{\Aone - 1} + \norm[H^{N+3}]{\etwo} \leq C(K)\Ecal^\half
\end{align*}
and hence
\begin{align*}
\norm[H^{N+2}]{\pt \g} + \norm[H^{N+1.5}]{\pt \vvar} + \norm[H^{N+2}]{\pt\cvar} \leq C(K)\Ecal^\half
\end{align*}
\end{enumerate}

\medskip
\textbf{Step 2:} We now establish some identities involving $\Dtdel = \pt + \Jdel^2(\bvar\pap)$ similar to the ones obtained in the proof of \propref{prop:apriori} for the case of $0\leq \del\leq 1$ and $0<\ep\leq 1$. As a lot of calculations are similar we will skip some details. We define the following notation:  If $a,b:\Rsp\times[0,T] \to \Csp$ are functions we write $a \approx_{\Ltwo} b$ if there exists a polynomial $C(t)$  with non-negative coefficients depending only on $\sigma$ such that $\norm[2]{a-b} \leq C(K)\cbrac{\Ecal(t)^\half + \norm[H^{3.5+N}]{\vvar}}$. Observe that $\approx_{\Ltwo}$ is an equivalence relation.  
\begin{enumerate}
\item If $f_1,f_2$ are two functions then
\begin{align*}
\Dtdel(f_1 f_2) & = f_1(\pt f_2) + f_2(\pt f_1) + \Jdel^2\cbrac{f_1(\bvar\pap)f_2 + f_2(\bvar\pap)f_1} \\
& = f_1 (\Dtdel f_2) + f_2(\Dtdel f_1) + \sqbrac{\Jdel^2,f_1}(\bvar\pap)f_2 +  \sqbrac{\Jdel^2,f_2}(\bvar\pap)f_1
\end{align*}
and we observe that 
\begin{align*}
\sqbrac{\Jdel^2,f_1}(\bvar\pap)f_2 & = \sqbrac{\Jdel^2,f_1\bvar}\pap f_2 - f_1\sqbrac{\Jdel^2,\bvar}\pap f_2 \\
& = \Jdel\sqbrac{\Jdel,f_1\bvar}\pap f_2 + \sqbrac{\Jdel,f_1 \bvar}\pap(\Jdel f_2) - f_1 \Jdel\sqbrac{\Jdel,\bvar}\pap f_2 - f_1\sqbrac{\Jdel,\bvar}\pap(\Jdel f_2)
\end{align*}
We similarly have an expansion for $\sqbrac{\Jdel^2,f_2}(\bvar\pap)f_1$ as well.
\item Observe that
\begin{align*}
\sqbrac{\Dtdel, \cvar\pap} g & = \cvar \sqbrac{\Dtdel,\pap} g + \sqbrac{\Dtdel,\cvar} g_\ap \\
& = -\cvar\Jdel^2\cbrac{\bvar_\ap g_\ap} + (\Dtdel \cvar)g_\ap + \sqbrac{\Jdel^2,g_\ap} (\bvar\pap) \cvar + \sqbrac{\Jdel^2,\cvar}(\bvar\pap)g_\ap
\end{align*}
\item Using the above identities and \lemref{lem:commutatordel} we have
\begin{align*}
& \papabs^\half \Dtdel (\cvar\pap)^{3+N} g \\
& \approxLtwo \papabs^\half(\cvar\pap)^{3+N}\Dtdel g \\
& = \papabs^\half (\cvar\pap)^{3+N} \Jdel^2 \cbrac{ -(\cvar\pap)\vvar + \avar (\cvar\pap)g} \\
& \approxLtwo \papabs^\half\Jdel^2 (\cvar\pap)^{3+N}\cbrac{ -(\cvar\pap)\vvar + \avar(\cvar\pap)g} \\
& \approxLtwo -\papabs^\half\Jdel^2(\cvar\pap)\cbrac{ (\cvar\pap)^{3+N}\vvar - \avar(\cvar\pap)^{3+N} g}
\end{align*}
\item Again by using the above identities and \lemref{lem:commutatordel} we have
\begin{align*}
& \Dtdel \cbrac{(\cvar\pap)^{3+N}\vvar -\avar(\cvar\pap)^{3+N}g} \\
& \approxLtwo (\cvar\pap)^{3+N}\Dtdel \vvar -\avar(\cvar\pap)^{3+N}\Dtdel g\\
& \approxLtwo (\cvar\pap)^{3+N} \Jdel^2\cbrac{ -i\sigma\Hil(\cvar\pap)^2 g - \avar(\cvar\pap)\vvar + \avar^2(\cvar\pap)g} -\ep(\cvar\pap)^{3+N}\papabs\vvar \\
& \quad -\avar(\cvar\pap)^{3+N}\Jdel^2\cbrac{-(\cvar\pap)\vvar + \avar(\cvar\pap)g} \\
& \approxLtwo -i\sigma(\cvar\pap)^2\Hil(\cvar\pap)^{1+N}\Jdel^2(\cvar\pap)^2 g - \ep(\cvar\pap)^{3+N}\papabs\vvar
\end{align*}

\end{enumerate}

\medskip
\textbf{Step 3:} We now prove the energy estimate for the case of $0\leq \del\leq 1$ and $0<\ep\leq 1$. As before controlling the time derivative of $\Ecalthreefive$ and $\Ecalfourfivei$ for $0\leq i < N$ is immediate. Hence we now control the time derivative of the highest term in the energy namely $\EcalfourfiveN$. To simplify the calculations we will use the following notation: If $a(t), b(t) $ are functions of time we write $a \approx b$ if there exists a non-negative polynomial $C(t)$ with coefficients depending only on $\sigma$ so that $\abs{a(t)-b(t)} \leq C(K(t))\Ecal^\half(t)\cbrac{\Ecal^\half(t) + \norm[H^{3.5+N}]{\vvar}}$. Observe that $\approx$ is an equivalence relation. Hence now by using \lemref{lem:timederivdel} and doing a similar computation as done in \propref{prop:apriori} we obtain
\begin{align*}
& \frac{\diff \EcalfourfiveN}{\diff t} \\
& \approx \int \cbrac{\frac{1}{\cvar}\cbrac{(\cvar\pap)^{3+N}\vvar - \avar(\cvar\pap)^{3+N}\g }}\nobrac{\Dtdel\cbrac{(\cvar\pap)^{3+N}\vvar - \avar(\cvar\pap)^{3+N}\g }} \diff \ap \\
& \quad + \sigma\int \brac{\papabs (\cvar\pap)^{3+N}\g} \Dtdel(\cvar\pap)^{3+N}\g \diff \ap \\
& \approx \int \cbrac{\frac{1}{\cvar}\cbrac{(\cvar\pap)^{3+N}\vvar - \avar(\cvar\pap)^{3+N}\g }}\nobrac{\cbrac{-i\sigma(\cvar\pap)^2\Hil(\cvar\pap)^{1+N}\Jdel^2(\cvar\pap)^2 g - \ep(\cvar\pap)^{3+N}\papabs\vvar }} \diff \ap \\
& \quad - \sigma\int \brac{\papabs (\cvar\pap)^{3+N}\g} \Jdel^2(\cvar\pap)\cbrac{ (\cvar\pap)^{3+N}\vvar - \avar(\cvar\pap)^{3+N} g} \diff \ap \\
& \approx \int \cbrac{\frac{1}{\cvar}\cbrac{(\cvar\pap)^{3+N}\vvar - \avar(\cvar\pap)^{3+N}\g }}\nobrac{\cbrac{-i\sigma(\cvar\pap)^2\Hil \Jdel^2(\cvar\pap)^{3+N} g - \ep(\cvar\pap)^{3+N}\papabs\vvar }} \diff \ap \\
& \quad - \sigma\int \brac{\papabs (\cvar\pap)^{3+N}\g} \Jdel^2(\cvar\pap)\cbrac{ (\cvar\pap)^{3+N}\vvar - \avar(\cvar\pap)^{3+N} g} \diff \ap \\
& \approx -\ep\int \frac{1}{\cvar} \cbrac{(\cvar\pap)^{3+N}\vvar}(\cvar\pap)^{3+N}\papabs\vvar \diff\ap \\
& \approx -\ep\int \cvar^{5+2N}\abs{\papabs^\half \pap^{3+N}\vvar}^2 \diff\ap
\end{align*}
Hence we have the inequality
\begin{align*}
 \frac{\diff \EcalfourfiveN}{\diff t} \leq C(K)\Ecal(t) + C(K)\Ecal^\half(t)\norm[H^{3.5+N}]{\vvar} -\ep\norm[\infty]{\frac{1}{\cvar}}^{-(5+2N)}\norm[H^{3.5+N}]{\vvar}^2
\end{align*}
As $\ep>0$ the estimate follows. 

\medskip
\textbf{Step 4:} We now prove the energy estimate for the case of $\del = 0$ and $0\leq \ep\leq 1$. We will use the following notation: If $a(t), b(t) $ are functions of time we write $a \approx b$ if there exists a non-negative polynomial $C(t)$ with coefficients depending only on $\sigma$ so that $\abs{a(t)-b(t)} \leq C(K(t))\Ecal(t)$. Observe that $\approx$ is an equivalence relation. Hence now by using \lemref{lem:timederivdel} and doing a similar computation as done in \propref{prop:apriori} we obtain
\begin{align*}
& \frac{\diff \EcalfourfiveN}{\diff t} \\
& \approx \int \frac{1}{\cvar}\cbrac{(\cvar\pap)^{3+N}\vvar - \avar(\cvar\pap)^{3+N}\g}\cbrac{-\ep(\cvar\pap)^{3+N}\papabs\vvar} \diff\ap \\
& \approx -\ep\norm[\Hhalf]{\frac{1}{\cvar^\half}(\cvar\pap)^{3+N}\vvar}^2 + \ep\int\papabs^\half\cbrac{\avar\cvar^\half(\cvar\pap)^{3+N}\g}\papabs^\half\cbrac{\frac{1}{\cvar^\half}(\cvar\pap)^{3+N}\vvar} \diff \ap
\end{align*}
Hence we have the inequality
\begin{align*}
 \frac{\diff \EcalfourfiveN}{\diff t} \leq C(K)\Ecal(t) + \ep C(K)\Ecal^\half(t)\norm[\Hhalf]{\frac{1}{\cvar^\half}(\cvar\pap)^{3+N}\vvar} -\ep\norm[\Hhalf]{\frac{1}{\cvar^\half}(\cvar\pap)^{3+N}\vvar}^2
\end{align*}
as $0\leq \ep\leq 1$ the estimate follows. 
\end{proof}

We now prove the a priori estimate for the difference of two solutions for system \eqref{eq:systemtwodelep}. Let $(\gone, \vvarone)(t)$ and $(\gvartwo,\vvartwo)(t)$ be two solutions of \eqref{eq:systemtwodelep} with parameters $(\del_1,\ep_1)$ and $(\del_2,\ep_2)$ respectively. The energy for the difference $\EcalDelta(t)$ is once again defined by \eqref{eq:energydelta}. We now have
\begin{prop}\label{prop:apriorideltadelep}
Let $(\gone,\vvarone)(t)$ and $(\gtwo,\vvartwo)(t)$ be two solutions of \eqref{eq:systemtwo} with parameters $(\del_1,\ep_1)$ and $(\del_2,\ep_2)$ respectively in the time interval $[0,T]$. Assume that $(\g_i,\vvar_i) \in C^l([0,T], H^{3.5 - \threebytwo l}\times H^{3 - \threebytwo l})$ for $l=0,1$, for both $i=1,2$. Let $M>0$ be a constant so that for any $t\in[0,T]$
\begin{align*}
 \norm[H^{3.5}]{\gone(\cdot,t)} + \norm[H^3]{\vvarone(\cdot,t)} +  \norm[H^{3.5}]{\gtwo(\cdot,t)} + \norm[H^3]{\vvartwo(\cdot,t)} \leq M
\end{align*}
\begin{enumerate}
\item If $0\leq \del_1,\del_2\leq 1$ and $0 < \ep = \ep_1 = \ep_2 \leq 1$, then  for all $t\in [0,T)$ we have
\begin{align*}
\frac{\diff \EcalDelta(t)}{\diff t} \leq C_\ep(M)\brac{\EcalDelta(t) + \max\cbrac{\del_1^\half,\del_2^\half}}
\end{align*}
where $C_\ep(M)$ is a constant depending on $M,\ep$ and $\sigma$.
\item If $\del_1 = \del_2 = 0$ and $0\leq \ep_1,\ep_2\leq 1$, then for all $t\in [0,T)$ we have
\begin{align*}
\frac{\diff \EcalDelta(t)}{\diff t} \leq C(M)\brac{\EcalDelta(t) + \max\cbrac{\ep_1,\ep_2}}
\end{align*}
where $C(M)$ is a constant depending only on $M$ and $\sigma$.
\end{enumerate}
\end{prop}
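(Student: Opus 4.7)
The plan is to mirror the proof of \propref{prop:aprioridelta}, extended to accommodate the mollifiers $J_{\del_i}$ and the dissipation $-\ep_i\papabs\vvar_i$ appearing in \eqref{eq:systemtwodelep}. As before we split the argument into three steps: (i) establishing auxiliary bounds on $\cvar_i,\w_i,\avar_i,\bvar_i,\dvar_i,(\Aone)_i,(\etwo)_i$ and their differences, (ii) deriving operator identities for the modified material derivative $\Dtdel_i = \pt + J_{\del_i}^2(\bvar_i\pap)$, and (iii) closing the energy estimate for $\EcalDelta$. Step (i) is identical to the one in \propref{prop:aprioridelta} because the uniform bound $M$ plus the linear structure of the definitions of all auxiliary variables (which do not see $J_{\del_i}$) give the same Sobolev estimates; in particular $\norm[H^{1.5}]{\bvarone - \bvartwo} \lesssim C(M)\EcalDelta^\half$ and analogues for $\avar, \dvar, \Aone, \etwo$ hold. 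The differences $\norm[H^\half]{\pt(\gone-\gtwo)} + \norm[2]{\pt(\vvarone-\vvartwo)} \leq C(M)\EcalDelta^\half + C(M)\cdot(\text{error terms from }\del_i,\ep_i)$, where the error terms are controlled using \lemref{lem:commutatordel} and the uniform $M$-bound on higher Sobolev norms; these error terms are what produce the right-hand side $\max\{\del_1^\half,\del_2^\half\}$ in Case~1 and $\max\{\ep_1,\ep_2\}$ in Case~2.

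For step (ii), the new ingredient is to handle differences of the form $\Dtdel_1 F_1 - \Dtdel_2 F_2$. Writing
\begin{align*}
\Dtdel_1 F_1 - \Dtdel_2 F_2 = \Dtdel_1(F_1 - F_2) + J_{\del_1}^2(\bvarone\pap) F_2 - J_{\del_2}^2(\bvartwo \pap) F_2
\end{align*}
and splitting $J_{\del_1}^2 - J_{\del_2}^2 = (J_{\del_1} - J_{\del_2})J_{\del_1} + J_{\del_2}(J_{\del_1} - J_{\del_2})$, the difference of mollifiers is controlled by \lemref{lem:commutatordel}~(1) at the regularity $s=\half$: since $F_2$, $\bvar_2 \pap F_2$, etc.\ lie in $H^{1/2}$ uniformly (by the $M$-bound and step (i)), this produces errors of order $\max\{\del_1^\half,\del_2^\half\}$. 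Commutators of $J_{\del_i}$ with $\cvar_i\pap$ and $\avar_i$ will be absorbed using parts (2)--(4) of \lemref{lem:commutatordel} exactly as in \propref{prop:aprioridelep}. Thus one derives, for instance,
\begin{align*}
\Dtdel_1\zeta \approxLtwo -i\sigma\papabs^\half(\cvarone\pap)J_{\del_1}^2\Hil\cbrac{(\cvarone\pap)^2\gone - (\cvartwo\pap)^2\gtwo} + \tx{errors of order } \max\{\del_1^\half,\del_2^\half\},
\end{align*}
where $\zeta$ is as in \eqref{eq:zeta}, and analogous identities hold for $\Dtdel_1\cbrac{(\cvarone\pap)^2\gone - (\cvartwo\pap)^2\gtwo}$.

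For step (iii), we apply \lemref{lem:timederivdel} and integrate by parts in the usual pattern used in \propref{prop:aprioridelep}~step~3. In Case~1 (common $\ep>0$), the cancellation structure between the two highest-order terms of $\EcalDeltathree$ (the $\zeta$ piece and the surface-tension piece) is preserved modulo two sources of error: commutators with $J_{\del_1}^2$ which are absorbed via \lemref{lem:commutatordel}, and the $\del_1 \neq \del_2$ differences contributing $C_\ep(M)\max\{\del_1^\half,\del_2^\half\}$; the $-\ep$ dissipation produces the nonpositive term $-\ep\norm[\Hhalf]{\text{highest-order difference}}^2$ plus cross terms which can be absorbed by Cauchy--Schwarz since $\ep>0$ is fixed (this is where $C_\ep$ picks up its $\ep$-dependence). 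In Case~2 ($\del_i=0$, varying $\ep_i$), no mollifier errors appear; instead the difference of dissipation terms is $-\ep_1\papabs\vvarone + \ep_2\papabs\vvartwo = -(\ep_1-\ep_2)\papabs\vvarone - \ep_2\papabs(\vvarone-\vvartwo)$, and after pairing with the appropriate multiplier in the energy, the first piece is bounded by $C(M)\max\{\ep_1,\ep_2\}$ using the uniform $H^3$-bound on $\vvarone$, while the second piece, being a negative definite quadratic form in $\vvarone - \vvartwo$ up to lower-order commutators, can be either absorbed or bounded by $C(M)\EcalDelta$ after integration by parts.

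The main obstacle is Case~1: one must show that all the lower-order terms appearing from commuting $J_{\del_i}$ with $\cvar_i\pap,\avar_i$, and with differences of these variables, remain controlled by $C_\ep(M)\EcalDelta$ and do not accumulate extra dependence on $\del_i^{-1}$. The key observation is that parts (3) and (4) of \lemref{lem:commutatordel} are \emph{$\del$-independent}, so commutators of $J_{\del}$ with bounded-derivative coefficients always yield an operator bounded uniformly in $\del$; the only places where a power of $\del_i$ actually appears are the differences $J_{\del_1}^2 - J_{\del_2}^2$, which produce the advertised $\max\{\del_1^\half,\del_2^\half\}$ factor. Once this bookkeeping is done carefully, the energy estimate closes exactly as in \propref{prop:aprioridelta}.
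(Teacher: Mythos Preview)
Your overall three-step plan matches the paper's proof, and your handling of the $\del$-difference errors via \lemref{lem:commutatordel} is on the right track. However, in Case~1 you miss the precise mechanism that makes the estimate close, and in Case~2 you overcomplicate matters.

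\textbf{Case 1.} The crucial bookkeeping device you omit is that the $\approx_{L^2}$ equivalence must carry an extra term $\norm[H^2]{\vvarone-\vvartwo}$, which is \emph{not} bounded by $\EcalDelta^{1/2}$. This term arises inevitably when you compute $\Dtdelone\zeta$: the dissipation contributes $-\ep\papabs^{1/2}\{(\cvarone\pap)\papabs\vvarone-(\cvartwo\pap)\papabs\vvartwo\}$, and simplifying this to the principal piece $-\ep\cvarone\pap\papabs^{3/2}(\vvarone-\vvartwo)$ produces commutator errors that are only bounded by $C(M)\norm[H^2]{\vvarone-\vvartwo}$. The paper therefore works with the equivalence $a\approx b$ meaning $|a-b|\leq C(M)\{\EcalDelta+\EcalDelta^{1/2}\norm[H^2]{\vvarone-\vvartwo}+\max\{\del_1^{1/2},\del_2^{1/2}\}\}$. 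After the cancellation between the two pieces of $\EcalDeltathree$, what remains is $-\ep\int\cvarone^2|\pap^2(\vvarone-\vvartwo)|^2\,\diff\ap$, an $H^2$ (not $\Hhalf$ as you wrote) negative term. This is exactly the right strength to absorb the $C(M)\EcalDelta^{1/2}\norm[H^2]{\vvarone-\vvartwo}$ errors via Young's inequality, producing the $\ep$-dependent constant $C_\ep(M)$. Your identity for $\Dtdelone\zeta$ also omits this explicit dissipation term; without keeping it and tracking $\norm[H^2]{\vvarone-\vvartwo}$, the argument does not close.

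\textbf{Case 2.} Your decomposition $-\ep_1\papabs\vvarone+\ep_2\papabs\vvartwo=-(\ep_1-\ep_2)\papabs\vvarone-\ep_2\papabs(\vvarone-\vvartwo)$ creates an unnecessary difficulty: the second piece carries too many derivatives on $\vvarone-\vvartwo$ to be bounded by $C(M)\EcalDelta$, and trying to make it negative-definite would force $\ep$-dependent constants, contradicting the claim. The paper instead observes that each $\ep_i$-term separately satisfies $\norm[2]{\papabs^{1/2}(\cvar_i\pap)(\ep_i\papabs\vvar_i)}\lesssim\ep_iC(M)$ directly from the uniform $H^3$ bound on $\vvar_i$, so the entire dissipation contribution is bounded by $C(M)\max\{\ep_1,\ep_2\}$ with no absorption needed, and the rest of the proof is literally that of \propref{prop:aprioridelta}.
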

\begin{proof}
The proof of this proposition is similar to the proof of \propref{prop:aprioridelta} and we will mostly focus on the changes that we need to make. We will first do the computation for for the first case namely $0\leq \del_1,\del_2 \leq 1$ and $0<\ep \leq 1$.We will freely use \lemref{lem:interpolationmult}  and \lemref{lem:commutatordel}  to simplify the computations.

\textbf{Step 1:} As before we first control the quantities controlled by the energy. This is essentially the same as done in step 1 of the proof of \propref{prop:aprioridelta} so we will just summarize the estimates. This applies for all $0\leq \del_1,\del_2, \ep_1, \ep_2 \leq 1$. We again note that $C(M)$ will denote a constant depending only on $M$ and $\sigma$. 
\begin{align*}
\norm[H^{3.5}]{\cvar_i -1} + \norm[H^{3.5}]{\frac{1}{\cvar_i} -1}+  \norm[H^{3.5}]{\w_i - 1} &\leq C(M)\\
\norm[H^{3}]{\avar_i}  + \norm[H^{3}]{\dvar_i} + \norm[H^{3}]{\bvar_i} + \norm[H^{3}]{(\Aone)_i - 1} + \norm[H^{3}]{(\etwo)_i} & \leq C(M)\\
\norm[H^{2}]{\pt \g_i} + \norm[H^{1.5}]{\pt \vvar_i} + \norm[H^{2}]{\pt\cvar_i} & \leq C(M)
\end{align*}
and now the difference of quantities
\begin{align*}
\norm[H^2]{\gone - \gtwo} + \norm[H^{1.5}]{\vvarone - \vvartwo} + \norm[H^2]{\cvarone - \cvartwo} + \norm[H^2]{\wone - \wtwo} & \leq C(M)\EcalDelta^\half \\
\norm[H^{1.5}]{\avarone - \avartwo} + \norm[H^{1.5}]{\dvarone - \dvartwo} + \norm[H^{1.5}]{\bvarone - \bvartwo} & \leq C(M)\EcalDelta^\half \\
\norm[H^{1.5}]{(\Aone)_1 - (\Aone)_2}  + \norm[H^{1.5}]{(\etwo)_1 - (\etwo)_2} & \leq C(M)\EcalDelta^\half \\
\end{align*}
For the difference of time derivatives, the estimate becomes a little different
\begin{align*}
\norm[H^\half]{\pt(\gone - \gtwo)} + \norm[2]{\pt (\vvarone - \vvartwo)} + \norm[H^\half]{\pt (\cvarone - \cvartwo)} & \leq C(M)\cbrac{\EcalDelta^\half + \max\cbrac{\del_1,\del_2} + \max\cbrac{\ep_1,\ep_2}}
\end{align*}

\medskip
\textbf{Step 2:} We now establish some estimates for the case of $0\leq \del_1,\del_2 \leq 1$ and $0<\ep = \ep_1 = \ep_2\leq 1$. We define the following notation:  If $a,b:\Rsp\times[0,T] \to \Csp$ are functions we write $a \approx_{\Ltwo} b$ if there exists a constant $C(M)$   depending only on $M$ and $\sigma$ such that $\norm[2]{a-b} \leq C(M)\cbrac{\EcalDelta(t)^\half + \norm[H^{2}]{\vvarone - \vvartwo} + \max\cbrac{\del_1^\half,\del_2^\half}}$. Observe that $\approx_{\Ltwo}$ is an equivalence relation. We define $\Dtdelone = \pt + \Jdelone^2(\bvarone\pap)$ and $\Dtdeltwo = \pt + \Jdeltwo^2(\bvartwo\pap)$
\begin{enumerate}
\item Recalling the definition of $\zeta(\ap,t)$ from \eqref{eq:zeta} and by an analogous computation done in \propref{prop:aprioridelta} and using \lemref{lem:commutatordel} we see that
\begin{align*}
& \Dtdelone \zeta \\
& \approxLtwo -i\sigma\papabs^\half(\cvarone\pap)\Hil\cbrac{\Jdelone^2(\cvarone\pap)^2\gone - \Jdeltwo^2(\cvartwo\pap)^2\gtwo} \\
& \quad  - \ep\papabs^\half\cbrac{(\cvarone\pap)\papabs\vvarone - (\cvartwo\pap)\papabs\vvartwo} \\
& \approxLtwo  -i\sigma\papabs^\half(\cvarone\pap)\Hil\cbrac{\Jdelone^2(\cvarone\pap)^2\gone - \Jdeltwo^2(\cvartwo\pap)^2\gtwo} -\ep\cvarone\pap\papabs^\threebytwo(\vvarone - \vvartwo)
\end{align*}
\item We also have
\begin{align*}
& \Dtdelone\cbrac{(\cvarone\pap)^2\gone - (\cvartwo\pap)^2\gtwo} \\
& \approxLtwo -\cvarone\pap\cbrac{(\cvarone\pap)\cbrac{\Jdelone^2(\cvarone\pap)\vvarone - \Jdeltwo^2(\cvartwo\pap)\vvartwo} - \avarone(\cvarone\pap)\cbrac{\Jdelone^2(\cvarone\pap)\gone - \Jdeltwo^2(\cvartwo\pap)\gtwo}}
\end{align*}
\item Similarly we have
\begin{align*}
& -i\sigma\Hil\pap\cbrac{\cvarone\papabs^\half \zeta} \\
& \approxLtwo \sigma\pap\cbrac{(\cvarone\pap)\cbrac{(\cvarone\pap)\vvarone - (\cvartwo\pap)\vvartwo} - \avarone(\cvarone\pap)\cbrac{(\cvarone\pap)\gone - (\cvartwo\pap)\gtwo}}
\end{align*}
\end{enumerate}

\medskip
\textbf{Step 3:} We now prove the energy estimate for the case of $0\leq \del_1,\del_2 \leq 1$ and $0<\ep\leq 1$. We only control the highest order energy as the lower order ones are easily controlled. To simplify the calculations we will use the following notation: If $a(t), b(t) $ are functions of time we write $a \approx b$ if there exists a constant $C(M)$ depending only on $M$ and $\sigma$ so that $\abs{a(t)-b(t)} \leq C(M)\cbrac{\EcalDelta(t) + \EcalDelta^\half(t)\norm[H^{2}]{\vvarone - \vvartwo} + \max\cbrac{\del_1^\half,\del_2^\half}}$.  Hence now by using \lemref{lem:timederivdel},  \lemref{lem:commutatordel} and doing a similar computation as done in \propref{prop:apriori} we obtain
\begin{align*}
& \frac{\diff \EcalDeltathree}{\diff t} \\
& \approx \int (\zeta)\cbrac{-i\sigma\papabs^\half (\cvarone\pap) \Hil\cbrac{\Jdelone^2(\cvarone\pap)^2\gone - \Jdeltwo^2(\cvartwo\pap)^2\gtwo } -\ep\cvarone\pap\papabs^\threebytwo(\vvarone - \vvartwo)} \diff \ap  \\
& \quad - \sigma \int \cbrac{(\cvarone\pap)^2\gone - (\cvartwo\pap)^2\gtwo}\pap\Big\{ (\cvarone\pap)\cbrac{\Jdelone^2(\cvarone\pap)\vvarone - \Jdeltwo^2(\cvartwo\pap)\vvartwo} \\
& \qquad \qquad \quad - \avarone(\cvarone\pap)\cbrac{\Jdelone^2(\cvarone\pap)\gone - \Jdeltwo^2(\cvartwo\pap)\gtwo}\Big\} \diff\ap \\
& \approx -i\sigma\int (\zeta)\cbrac{\papabs^\half (\cvarone\pap) \Hil\Jdelone^2\cbrac{(\cvarone\pap)^2\gone - (\cvartwo\pap)^2\gtwo }} \diff \ap   -\ep \int (\zeta)\cbrac{\cvarone\pap\papabs^\threebytwo(\vvarone - \vvartwo)}\diff\ap \\
& \quad - \sigma \int \cbrac{(\cvarone\pap)^2\gone - (\cvartwo\pap)^2\gtwo}\pap\Jdelone^2\Big\{ (\cvarone\pap)\cbrac{(\cvarone\pap)\vvarone - (\cvartwo\pap)\vvartwo} \\
& \qquad \qquad \quad - \avarone(\cvarone\pap)\cbrac{(\cvarone\pap)\gone - (\cvartwo\pap)\gtwo}\Big\} \diff\ap \\
& \approx -\ep\int \papabs^\half\cbrac{(\cvarone\pap)\vvarone - (\cvartwo\pap)\vvartwo} \cbrac{\cvarone\pap\papabs^\threebytwo(\vvarone - \vvartwo)}\diff\ap \\
& \approx -\ep\int \cvarone^2\abs*[\Big]{\pap^2(\vvarone - \vvartwo)}^2 \diff\ap
\end{align*}
Hence we have the estimate
\begin{align*}
\frac{\diff\EcalDeltathree}{\diff t} \leq C(M)\EcalDelta(t) + C(M)\EcalDelta^\half(t)\norm[H^2]{\vvarone - \vvartwo} + C(M)\max\cbrac{\del_1^\half, \del_2^\half} - \ep\norm[\infty]{\frac{1}{\cvarone}}^{-2}\norm[H^2]{\vvarone - \vvartwo}^2 
\end{align*}
As $\ep >0$  the estimate follows. 

\medskip
\textbf{Step 4:} The proof of the energy estimate for $\del_1 = \del_2 = 0$ and $0\leq \ep_1 , \ep_2 \leq 1$ is exactly the same as proved in \propref{prop:aprioridelta} as all the terms involving $\ep_1,\ep_2$ can be controlled by  $C(M)\max\cbrac{\ep_1,\ep_2}$.  
\end{proof}

We are now in a position to prove the local existence in Sobolev spaces. 
\begin{thm}\label{thm:existencesobolev}
For $\sigma>0$ we have the following
\begin{enumerate}
\item Let $s\geq 3$ and let $(\g,\vvar)(0) \in H^{s+\half}(\Rsp)\times H^s(\Rsp)$. Then there exists a time $T>0$ so that the initial value problem to \eqref{eq:systemtwo} has a unique solution $(\g,\vvar) \in C^l([0,T], H^{s+\half - \threebytwo l}\times H^{s-\threebytwo l})$ for $l = 0,1$. Moreover if $T_{max}$ is the maximum time of existence then either $T_{max}=\infty$ or $T_{max} < \infty$ with
\begin{align*}
\sup_{t\in [0,T_{max})}\cbrac{ \norm[H^{3.5}]{\g(\cdot,t)} + \norm[H^3]{\vvar(\cdot,t)}} = \infty
\end{align*}
\item If $(\g_1,\vvar_1)(t)$ and $(\g_2,\vvar_2)(t)$ are two solutions of \eqref{eq:systemtwo} in $[0,T]$ with 
\begin{align*}
\sup_{t\in [0,T)} \cbrac{\norm[H^{3.5}]{\g_1(\cdot,t)} + \norm[H^3]{\vvar_1(\cdot,t)} + \norm[H^{3.5}]{\g_2(\cdot,t)} + \norm[H^3]{\vvar_2(\cdot,t)} } = M <\infty
\end{align*}
Then there is constant $C(M)$ depending only on $M$ and $\sigma$ such that
\begin{align*}
& \sup_{t\in [0,T)}\cbrac{ \norm[H^{2}]{\g_1(\cdot,t) - \g_2(\cdot,t)} + \norm[H^{1.5}]{\vvar_1(\cdot,t) -\vvar_2(\cdot,t) }} \\
& \leq  e^{C(M)T}\cbrac{\norm[H^{2}]{\g_1(\cdot,0) - \g_2(\cdot,0)} + \norm[H^{1.5}]{\vvar_1(\cdot,0) -\vvar_2(\cdot,0) }}
\end{align*}
\end{enumerate}
\end{thm}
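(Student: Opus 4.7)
The plan is to construct solutions via a two-parameter vanishing-mollifier / vanishing-viscosity scheme based on the regularized system \eqref{eq:systemtwodelep}, exploiting the apriori estimates Propositions \ref{prop:aprioridelep} and \ref{prop:apriorideltadelep} that have been set up precisely for this purpose. For data in $H^{s+1/2} \times H^s$ with $s \geq 3$, choose $N$ in the definition of $\mathcal{E}$ large enough that the energy controls this Sobolev norm. With $\delta, \epsilon \in (0,1]$ fixed, the mollifier $J_\delta$ renders every nonlinear term of \eqref{eq:systemtwodelep} locally Lipschitz on $H^{s+1/2} \times H^s$, while $-\epsilon |\partial_{\alpha'}|$ generates a contraction semigroup on $H^s$; converting the system to its mild form via Duhamel's principle, the contraction mapping theorem then produces a unique local solution $(\gamma^{\delta,\epsilon}, v^{\delta,\epsilon}) \in C^l([0,T_{\delta,\epsilon}]; H^{s+1/2-3l/2} \times H^{s-3l/2})$ for $l=0,1$. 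The apriori estimate Proposition \ref{prop:aprioridelep}(1), being uniform in $\delta$, combined with a standard continuation argument extends the solution to a time $T_\epsilon > 0$ independent of $\delta$ with $\sup_{[0,T_\epsilon]} \mathcal{E} \leq C(\mathcal{E}(0), \epsilon)$.

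Next I pass to the limits $\delta \to 0$ and then $\epsilon \to 0$. By Proposition \ref{prop:apriorideltadelep}(1) and Gronwall, two solutions $(\gamma^{\delta_i,\epsilon}, v^{\delta_i,\epsilon})$ satisfy $\mathcal{E}_\Delta(t) \leq C_\epsilon(M)\max(\delta_1^{1/2}, \delta_2^{1/2})$ on $[0, T_\epsilon]$, so the family is Cauchy in $C([0, T_\epsilon]; H^2 \times H^{3/2})$. Interpolation with the uniform $H^{s+1/2} \times H^s$ bound upgrades this to convergence in $C([0,T_\epsilon]; H^{s+1/2 - \eta} \times H^{s - \eta})$ for every $\eta > 0$, which suffices to identify each nonlinear term of \eqref{eq:systemtwodelep} in the limit; weak-$\ast$ lower semicontinuity transfers the high-norm bound to the limit $(\gamma^\epsilon, v^\epsilon)$, a solution of the system with $\delta = 0$ and $\epsilon > 0$. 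The second limit is cleaner: Proposition \ref{prop:aprioridelep}(2) is uniform in $\epsilon$ and yields a common time of existence $T > 0$ depending only on $\mathcal{E}(0)$, while Proposition \ref{prop:apriorideltadelep}(2) gives $\mathcal{E}_\Delta(t) \leq C(M)\max(\epsilon_1, \epsilon_2)$, so $(\gamma^\epsilon, v^\epsilon)$ is Cauchy in $C([0,T]; H^2 \times H^{3/2})$ and converges to a solution $(\gamma, v) \in L^\infty([0,T]; H^{s+1/2} \times H^s)$ of \eqref{eq:systemtwo}. Continuity in time at the top regularity is recovered by a Bona--Smith type argument: mollify the initial data, run the construction at one extra derivative of regularity, and use uniqueness to identify the smoother solutions with mollifications of $(\gamma, v)$.

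Part (2) of the theorem, together with uniqueness of solutions to \eqref{eq:systemtwo}, follows at once from Proposition \ref{prop:aprioridelta} (taking $\delta_i = \epsilon_i = 0$) and Gronwall's inequality applied to $\mathcal{E}_\Delta$, yielding the stated $e^{C(M)T}$ bound in $H^2 \times H^{3/2}$. The blow-up alternative comes from the fact that the polynomial $C(K(t))$ in Proposition \ref{prop:apriori} depends only on $K(t) = \|\gamma(t)\|_{H^{2.5}} + \|v(t)\|_{H^2}$: if $K$ stays bounded on $[0, T_{\max})$, Gronwall keeps $\mathcal{E}$ bounded up to $T_{\max}$, and the local existence argument applied to the data at $T_{\max} - \eta$ (with $\eta$ small) extends the solution past $T_{\max}$, contradicting maximality.

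The main technical obstacle will be the first limit $\delta \to 0$: the difference estimate Proposition \ref{prop:apriorideltadelep}(1) controls only the low-order energy $\mathcal{E}_\Delta$, essentially an $H^2 \times H^{3/2}$ norm, while the equation is posed at regularity $H^{s+1/2} \times H^s$ with $s \geq 3$. Combining this low-norm Cauchyness with the uniform high-norm bounds and with time-regularity $\partial_t(\gamma^{\delta,\epsilon}, v^{\delta,\epsilon}) \in L^\infty(H^{s-1})$, via interpolation or Aubin--Lions compactness, is what allows one to identify every nonlocal piece of \eqref{eq:systemtwodelep}, in particular the commutator expressions appearing in $b$, $A_1$ and $e_2$, when passing to the limit. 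The $\epsilon \to 0$ limit is analogous but easier, since the dissipation simply vanishes from the equation.
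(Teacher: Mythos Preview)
Your proposal is correct and follows essentially the same two-parameter $(\delta,\epsilon)$ mollifier/viscosity scheme as the paper, invoking Propositions~\ref{prop:aprioridelep} and~\ref{prop:apriorideltadelep} in exactly the intended way. Two minor deviations: the paper first mollifies the initial data by $J_\epsilon$ so that the approximate solutions are genuinely smooth (as required in the hypotheses of those propositions), and for continuity at the top regularity it appeals to the time-reversibility of \eqref{eq:systemtwo} in the style of Ambrose rather than a Bona--Smith argument; both are standard and either choice works.
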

\begin{proof}
The proof of this result is essentially the same as the proof of Theorem 5.6 in Ambrose \cite{Am03}. There are a few minor changes that need to be made which we now describe.
\begin{enumerate}
\item First observe that even though the result in \cite{Am03} is for periodic solutions, the existence proof does not use compactness to prove existence. So the fact that we are working on $\Rsp$ makes no difference.
\item First fix $\ep,\del >0$ and consider the mollified initial data $(\g^{\ep,\del},\vvar^{\ep,\del})(0) = (J_\ep( \g(0)), J_\ep (\vvar(0)))$. For this smooth initial data we first prove a local existence result to the system \eqref{eq:systemtwodelep} by a standard  Picard iteration scheme by writing the corresponding integral equation and treating the $J_\del^2$ terms in the equations as forcing terms. We hence obtain smooth solutions $(\g^{\del,\ep}, \vvar^{\del,\ep})(t)$ to \eqref{eq:systemtwodelep} for $t\in[0,T]$ where $T$ depends on $\ep, \del$. Then by using the first estimate from \propref{prop:aprioridelep} we see that in fact $T$ is independent of $\del$. 
Hence using \propref{prop:apriorideltadelep} and by following the approach of Ambrose \cite{Am03} we obtain unique smooth solutions $(\g^\ep,\vvar^\ep)(t)$ to the system \eqref{eq:systemtwodelep} for $\del=0$ in the time interval $[0,T]$ where T depends on $\ep$.  
\item Now by the second a priori estimate in \propref{prop:aprioridelep} for $\del =0$, we see that $T$ is in fact independent of $\ep$ and now by using  \propref{prop:apriorideltadelep} and by following the approach of Ambrose \cite{Am03} we obtain a unique solution $(\g,\vvar) \in \Linfty([0,T], H^{s+\half}\times H^{s})$ to the system \eqref{eq:systemtwo} with $(\pt\g, \pt\vvar) \in \Linfty([0,T], H^{s -1}\times H^{s-\threebytwo})$. Then by following the approach of  Ambrose \cite{Am03} using the time reversible nature of the system  \eqref{eq:systemtwo} we obtain  $(\g,\vvar) \in C^l([0,T], H^{s+\half - \threebytwo l}\times H^{s-\threebytwo l})$ for $l = 0,1$.
\item The blow up criterion follows from  \propref{prop:aprioridelep} and part two of the theorem follows from  \propref{prop:aprioridelta}
\end{enumerate}
\end{proof}
\begin{cor}\label{cor:existencesobolev}
For $\sigma>0$ we have the following
\begin{enumerate}
\item Let $s\geq 3$ and let $(\Zap-1,\frac{1}{\Zap} - 1, \Zt)(0) \in H^{s+\half}(\Rsp)\times H^{s+\half}(\Rsp)\times H^s(\Rsp)$. Then there exists a time $T>0$ so that the initial value problem to \eqref{eq:systemone} has a unique solution $(\Z,\Zt)(t)$ satisfying $(\Zap-1,\frac{1}{\Zap} - 1, \Zt) \in C^l([0,T], H^{s+\half - \threebytwo l}(\Rsp)\times H^{s+\half - \threebytwo l}(\Rsp)\times H^{s - \threebytwo l}(\Rsp))$ for $l = 0,1$. Moreover if $T_{max}$ is the maximum time of existence then either $T_{max}=\infty$ or $T_{max} < \infty$ with
\begin{align*}
\sup_{t\in [0,T_{max})}\cbrac{ \norm[H^{3.5}]{\Zap - 1}(t) + \norm[H^{3.5}]{\frac{1}{\Zap} - 1 }(t) + \norm[H^3]{\Zt}(t)} = \infty
\end{align*}
\item Let $(\Z^1,\Zt^1)(t)$ and $(\Z^2,\Zt^2)(t)$ be two solutions of \eqref{eq:systemone} in $[0,T]$ with 
\begin{align*}
\sup_{t\in [0,T_{max})}\cbrac{ \norm[H^{3.5}]{\nobrac{\Zap^i - 1}}(t) + \norm[H^{3.5}]{\nobrac{\frac{1}{\Zap^i} - 1}}(t) + \norm[H^3]{\nobrac{\Zt^i}}(t)} \leq M <\infty
\end{align*}
for both $i=1,2$ for some $M>0$. Then there is constant $C(M)$ depending only on $M$ and $\sigma$ such that
\begin{align*}
& \sup_{t\in [0,T)}\cbrac{ \norm[H^{2}]{\Zap^1 - \Zap^2}(t) + \norm[H^{2}]{\frac{1}{\Zap^1} - \frac{1}{\Zap^2}}(t) + \norm[H^{1.5}]{\Zt^1 - \Zt^2}(t)} \\
& \leq  e^{C(M)T}\cbrac{ \norm[H^{2}]{\Zap^1 - \Zap^2}(0) + \norm[H^{2}]{\frac{1}{\Zap^1} - \frac{1}{\Zap^2}}(0) + \norm[H^{1.5}]{\Zt^1 - \Zt^2}(0)}
\end{align*}
(Note that the inequality above is for a weaker norm than the one the problem is stated for, however it is still sufficient to prove uniqueness.)
\end{enumerate}
\end{cor}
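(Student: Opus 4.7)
The plan is to reduce Corollary \ref{cor:existencesobolev} directly to Theorem \ref{thm:existencesobolev} via the change of variables provided by Lemma \ref{lem:systemequiv}, which establishes that \eqref{eq:systemone} in the variables $(\Z,\Zt)$ is equivalent to \eqref{eq:systemtwo} in the variables $(\g,\vvar)$ through the transformations \eqref{eq:systemonetwo} and \eqref{eq:systemtwoone}. The substance of the proof lies in checking that this bijection preserves the Sobolev regularity at each level appearing in the statement, so that existence, uniqueness, the blow-up criterion, and the Lipschitz estimate all transfer cleanly between the two formulations.

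First, given initial data $(\Zap-1,\tfrac{1}{\Zap}-1,\Zt)(0)\in H^{s+\half}\times H^{s+\half}\times H^s$ with $s\geq 3$, Sobolev embedding $H^{s+\half}\hookrightarrow L^\infty$ gives quantitative bounds on $\norm[\infty]{\Zap}$ and $\norm[\infty]{\tfrac{1}{\Zap}}$. I then define $(\g,\vvar)(0)$ by \eqref{eq:systemonetwo}, and standard Moser-type composition estimates applied to the smooth functions $z\mapsto\Imag(\log z)$ and $(z_1,z_2)\mapsto\Imag(\tfrac{z_1}{\abs{z_1}}z_2)$ on the relevant annulus show that $(\g,\vvar)(0)\in H^{s+\half}\times H^s$. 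Applying Theorem \ref{thm:existencesobolev}(1) yields a unique $(\g,\vvar)\in C^l([0,T],H^{s+\half-\threebytwo l}\times H^{s-\threebytwo l})$ for $l=0,1$, and defining $(\Z,\Zt)(t)$ via \eqref{eq:systemtwoone} produces, by Lemma \ref{lem:systemequiv}, a solution of \eqref{eq:systemone}. Moser estimates applied in the reverse direction to $\Zap=e^{i(\Id+\Hil)\g}$, $\tfrac{1}{\Zap}=e^{-i(\Id+\Hil)\g}$, and $\Zt=\dvar$ (the latter combined with the commutator bounds in \propref{prop:commutator}) give the claimed regularity for $(\Zap-1,\tfrac{1}{\Zap}-1,\Zt)$. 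Uniqueness is immediate from Theorem \ref{thm:existencesobolev}(1) and the bijection in Lemma \ref{lem:systemequiv}. The blow-up criterion transfers by the same machinery: if $\norm[H^{3.5}]{\Zap-1}+\norm[H^{3.5}]{\tfrac{1}{\Zap}-1}+\norm[H^3]{\Zt}$ stayed bounded on $[0,T_{max})$, then so would $\norm[H^{3.5}]{\g}+\norm[H^3]{\vvar}$, contradicting the blow-up criterion for \eqref{eq:systemtwo}.

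For part (2), given two solutions $(\Z^i,\Zt^i)$ with a uniform bound $M$ in $H^{3.5}\times H^{3.5}\times H^3$, I pass to $(\g_i,\vvar_i)$, apply Theorem \ref{thm:existencesobolev}(2) to obtain the Lipschitz estimate for $\g_1-\g_2$ in $H^2$ and $\vvar_1-\vvar_2$ in $H^{1.5}$, and then translate back via the fundamental theorem identity $\Zap^1-\Zap^2=\int_0^1 ie^{i(\Id+\Hil)(\theta\g_1+(1-\theta)\g_2)}(\Id+\Hil)(\g_1-\g_2)\diff\theta$ and analogous formulas for $\tfrac{1}{\Zap^i}$ and $\Zt^i$. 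Since $H^2$ is an algebra in one dimension and the interpolant is uniformly bounded in $H^{3.5}$ by $M$, this gives $\norm[H^2]{\Zap^1-\Zap^2}\leq C(M)\norm[H^2]{\g_1-\g_2}$ and the analogous inequalities for the remaining differences, with the reverse inequalities obtained in the same fashion. The main obstacle is the careful bookkeeping of Moser and commutator estimates at the three simultaneous regularity scales — the top level $H^{s+\half}\times H^s$ for existence, the intermediate level $H^{s-1}\times H^{s-\threebytwo}$ for $C^1$ regularity in time, and the low level $H^2\times H^{1.5}$ for the Lipschitz estimate — all with constants uniformly controlled by the $L^\infty$ bounds on $\Zap$ and $\tfrac{1}{\Zap}$; however, since the analytic heart of the matter is already contained in Theorem \ref{thm:existencesobolev} and Lemma \ref{thm:existencesobolev}, no genuinely new difficulty beyond this technical accounting is expected.
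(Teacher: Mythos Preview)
Your proposal is correct and follows essentially the same approach as the paper, which simply states that part (1) is a direct consequence of Theorem \ref{thm:existencesobolev} and Lemma \ref{lem:systemequiv}, and that part (2) follows from Theorem \ref{thm:existencesobolev} together with an easy modification of the argument of Lemma \ref{lem:systemequiv}. Your write-up fleshes out the Moser-type regularity bookkeeping that the paper leaves implicit; note the typo in your final sentence where ``Lemma \ref{thm:existencesobolev}'' should read ``Lemma \ref{lem:systemequiv}''.
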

\begin{proof}
The first part is a direct consequence of \thmref{thm:existencesobolev} and \lemref{lem:systemequiv}. The second part follows from \thmref{thm:existencesobolev} and an easy modification of the argument of \lemref{lem:systemequiv}
\end{proof}

\medskip
\section{Proof of \thmref{thm:existence} and \corref{cor:example}}\label{sec:proof}
 
\begin{proof}[Proof of \thmref{thm:existence}]
Let $\ep>0$ and consider the mollified initial data given by $(\Z^\ep,\Zt^\ep)(0) = (P_\ep\conv\Z, P_\ep\conv\Zt)(0)$ where $P_\ep$ is the Poisson kernel \eqref{eq:Poissonkernel}. Observe that there exists an $\ep_0>0$ small enough so that for all $0<\ep\leq \ep_0$ we have
\begin{align*}
\Ecalsigma(\Z^\ep,\Zt^\ep)(0)  \leq 2\Ecalsigma(\Z,\Zt)(0) 
\end{align*}
Define 
\begin{align*}
M = \norm[2]{\frac{1}{\Zap} - 1}(0) + \norm[2]{\Zt}(0) + \norm[\infty]{\Zap}(0) < \infty
\end{align*}
We observe that for all $0<\ep\leq \ep_0$ we have
\begin{align*}
\norm[2]{\frac{1}{\Zap^\ep} - 1}(0) + \norm[2]{\Zt^\ep}(0) +  \norm[\infty]{\Zap^\ep}(0) \leq M
\end{align*} 
Now fix some $0<\ep\leq \ep_0$. Hence using \corref{cor:existencesobolev} we see that there exists a time $T_\ep>0$, so that the initial value problem to \eqref{eq:systemone} with initial data $(\Z^\ep,\Zt^\ep)(0)$ has a unique smooth solution $(\Z^\ep,\Zt^\ep)(t)$ in $[0,T_\ep]$ so that for all $s\geq 3$
\begin{align*}
\sup_{t\in[0,T_\ep]} \cbrac{\norm[H^{s+\half}]{\Zap^\ep - 1}(t) + \norm[H^{s+\half}]{\frac{1}{\Zap^\ep} -1}(t) + \norm[H^s]{\Zt^\ep}(t) } < \infty
\end{align*} 
Therefore by \thmref{thm:aprioriEsigma} we see that for all $t\in[0,T_\ep)$ we have
\begin{align*}
\frac{\diff \Esigma(\Z^\ep,\Zt^\ep)(t)}{\diff t} \leq P(\Esigma(\Z^\ep,\Zt^\ep)(t))
\end{align*}
Hence by using \propref{prop:equivEsigma}, \lemref{lem:equivsobolev} and the blow up criterion from \corref{cor:existencesobolev} we that there exists $T,C_1 > 0$ both depending only on $\Ecalsigma(0)$ so that $(\Z^\ep,\Zt^\ep)(t)$ in fact exists in $[0,T]$ with $\sup_{t\in[0,T]}\Ecalsigma(\Z^\ep,\Zt^\ep)(t) \leq C_1$. Also using \lemref{lem:equivsobolev} we see that 
\begin{align*}
\sup_{t\in[0,T]} \cbrac{\norm[H^{3.5}]{\Zap^\ep - 1}(t) + \norm[H^{3.5}]{\frac{1}{\Zap^\ep} -1}(t) + \norm[H^3]{\Zt^\ep}(t) } \leq C_2
\end{align*} 
where $C_2$ depends only on $C_1, M,T$ and $\sigma$.  Hence by passing to the limit $\ep \to 0$ using \corref{cor:existencesobolev} we have a unique solution $(\Z,\Zt)(t)$ in $[0,T]$ to \eqref{eq:systemone} with $\sup_{t\in[0,T]}\Ecalsigma(\Z,\Zt)(t) \leq C_1$ and 
\begin{align*}
\sup_{t\in[0,T]} \cbrac{\norm[H^{3.5}]{\Zap - 1}(t) + \norm[H^{3.5}]{\frac{1}{\Zap} -1}(t) + \norm[H^3]{\Zt}(t) } \leq C_2
\end{align*} 
thereby proving the result. 
\end{proof}

\begin{proof}[Proof of \corref{cor:example}]
Without loss of generality we assume that $c=1$ and define $\dis \tau = \frac{\sigma}{\ep^{3/2}}$ which implies that $\tau\leq 1$. Observe that the result for $\sigma=0$ and $0<\ep\leq 1$ follows directly from Theorem 3.9 of Wu \cite{Wu19}. Hence from now on we assume $\sigma>0$ and can therefore use \thmref{thm:existence}.  Using  \thmref{thm:existence}  we only need to show that if $\tau\leq 1$ then
\begin{align*}
\Ecalsigma(\Z^{\ep,\sigma}, \Zt^{\ep,\sigma})(0) \leq C(M) 
\end{align*}
where $C(M)$ is a constant depending only on $M$. We now prove this estimate. 

To simplify the proof we will suppress the dependence of $\Mconst$ in the inequalities i.e. when we write $a \lesssim b$, we mean that there exists a constant $C(M)$ depending only on $\Mconst$ such that $a\leq C(M)b$. As we only need to prove the estimates for $t=0$, we will suppress the time dependence of the solutions e.g. we will write $(\Z\conv P_\epsilon, \Zt\conv P_\epsilon)\vert_{t=0}$ by $(\Z,\Zt)_\ep$ for simplicity. 

\begin{enumerate}[leftmargin =*, align=left]

\item We first observe that for any $\ep>0$ we have
\begin{align*}
\norm[2]{\brac{\pap\frac{1}{\Zap}}_{\n\ep}} \lesssim \sup_{\yp<0}\norm[\Ltwo(\Rsp, \diff \xp)]{\partial_z \brac{\frac{1}{\Psi_z}}} \lesssim 1
\end{align*}
Similarly we have
\begin{align*}
& \norm[\Hhalf]{\brac{\frac{1}{\Zap}\pap\frac{1}{\Zap}}_{\n\ep}}^2 \\
& \lesssim \norm[2]{\brac{\pap\frac{1}{\Zap}}_{\n\ep}} \norm[2]{\brac{\frac{1}{\Zap^2}\pap^2\frac{1}{\Zap}}_{\n\ep}} +  \norm[2]{\brac{\pap\frac{1}{\Zap}}_{\n\ep}}^2 \norm[\Linfty]{\brac{\frac{1}{\Zap}\pap\frac{1}{\Zap}}_{\n\ep}} \\
& \lesssim \sup_{\yp<0}\norm[\Ltwo(\Rsp, \diff \xp)]{\partial_z \brac{\frac{1}{\Psi_z}}}\sup_{\yp<0}\norm[\Ltwo(\Rsp, \diff \xp)]{\frac{1}{\Psizp^2}\partial_z^2 \brac{\frac{1}{\Psi_z}}} \\
& \quad + \sup_{\yp<0}\norm[\Ltwo(\Rsp, \diff \xp)]{\partial_z \brac{\frac{1}{\Psi_z}}}^2\sup_{\yp<0}\norm[\Linfty(\Rsp, \diff \xp)]{\frac{1}{\Psizp}\partial_z \brac{\frac{1}{\Psi_z}}} \\
& \lesssim 1
\end{align*}

\item Observe that
\begin{align*}
\sup_{\yp<0}\norm[\Linfty(\Rsp,\diff \xp)]{\frac{1}{\Psi_z}} \lesssim \sup_{\yp<0}\norm[\Ltwo(\Rsp,\diff \xp)]{\frac{1}{\Psi_z} - 1}^\half \sup_{\yp<0}\norm[\Ltwo(\Rsp, \diff \xp)]{\partial_z \brac{\frac{1}{\Psi_z}}}^\half \lesssim 1
\end{align*}
Hence using $ \sup_{\yp<0}\norm[H^{3.5}(\Rsp,\diff \xp)]{\U} \leq M$ and that $0<\sigma\leq 1$ we obtain
\begin{align*}
\norm[2]{\brac{\Ztapbar}_{\ep}}^2 + \norm*[\Bigg][2]{\brac{\frac{1}{\Zap^2}\pap\Ztapbar}_{\n\ep}}^2 + \norm*[\Bigg][2]{\brac{\frac{\sigma^\half}{\Zap^\half}\pap\Ztapbar}_{\n\ep}}^2 + \norm*[\Bigg][2]{\brac{\frac{\sigma^\half}{\Zap^\fivebytwo}\pap^2\Ztapbar}_{\n\ep}}^2 \lesssim 1
\end{align*}

\item By \lemref{lem:conv} we have
\begin{align*}
\norm[2]{\brac{\sigma^\onebyfour\Zap^\threebyfour\pap\frac{1}{\Zap}}_{\n\ep}} + \norm[\infty]{\brac{\sigma^\sevenbytwelve\Zap^\threebyfour\pap\frac{1}{\Zap}}_{\n\ep}}  \lesssim \sup_{\yp<0}\norm[\Leightbyseven(\Rsp, \diff \xp)]{\Psi_z^\threebyfour\partial_z \brac{\frac{1}{\Psi_z}}}\brac{\tau^\onebyfour + \tau^\sevenbytwelve} \lesssim 1
\end{align*}
Now using \eqref{form:RealImagTh} we obtain
\begin{align*}
 \norm[2]{\brac*[\Bigg]{\sigma^\onebyfour \frac{1}{\Zap^\onebyfour}\pap\w }_{\n\ep}} + \norm[\infty]{\brac*[\Bigg]{\sigma^\sevenbytwelve \frac{1}{\Zap^\onebyfour}\pap\w }_{\n\ep}} \lesssim 1
\end{align*}

\item Using \lemref{lem:conv} we obtain
\begin{align*}
& \norm[2]{\brac{\sigma^\onebysix\Zap^\half\pap\frac{1}{\Zap}}_{\n\ep}} + \norm[\infty]{\brac{\sigma^\half\Zap^\half\pap\frac{1}{\Zap}}_{\n\ep}} +  \norm[2]{\sigma^\fivebysix\pap\brac{\Zap^\half\pap\frac{1}{\Zap}}_{\n\ep}} \\
& \lesssim \sup_{\yp<0}\norm[\Lfourbythree(\Rsp, \diff \xp)]{\Psi_z^\half\partial_z \brac{\frac{1}{\Psi_z}}}\brac{\tau^\onebysix + \tau^\half + \tau^\fivebysix} \\
& \lesssim 1
\end{align*}
Now using \eqref{form:RealImagTh} we obtain
\begin{align*}
 \norm[2]{\brac*[\Bigg]{\sigma^\onebysix \frac{1}{\Zap^\half}\pap\w }_{\n\ep}} + \norm[\infty]{\brac*[\Bigg]{\sigma^\half \frac{1}{\Zap^\half}\pap\w }_{\n\ep}} \lesssim 1
\end{align*}
We observe that
\begin{align*}
& \norm[2]{\brac{\sigma^\fivebysix\Zap^\half\pap^2\frac{1}{\Zap}}_{\n\ep}} \\
& \lesssim \norm[2]{\sigma^\fivebysix\pap\brac{\Zap^\half\pap\frac{1}{\Zap}}_{\n\ep}} + \norm[2]{\brac{\sigma^\onebyfour\Zap^\threebyfour\pap\frac{1}{\Zap}}_{\n\ep}}\norm[\infty]{\brac{\sigma^\sevenbytwelve\Zap^\threebyfour\pap\frac{1}{\Zap}}_{\n\ep}} \\
& \lesssim 1
\end{align*}
and in particular again by using \eqref{form:RealImagTh} we obtain
\begin{align*}
& \norm[2]{\brac{\sigma^\fivebysix \Zap^\half\pap\Dap\w}_{\n\ep}} \\
& \lesssim  \norm[2]{\brac{\sigma^\fivebysix \Zap^\half\pap\brac{\w\pap\frac{1}{\Zap}}}_{\n\ep}} \\
& \lesssim  \norm[2]{\brac{\sigma^\fivebysix\Zap^\half\pap^2\frac{1}{\Zap}}_{\n\ep}} +  \norm[2]{\brac*[\Bigg]{\sigma^\onebyfour \frac{1}{\Zap^\onebyfour}\pap\w }_{\n\ep}}\norm[\infty]{\brac{\sigma^\sevenbytwelve\Zap^\threebyfour\pap\frac{1}{\Zap}}_{\n\ep}} \\
& \lesssim 1
\end{align*}
and hence we have
\begin{align*}
& \norm[2]{\sigma^\fivebysix\pap\brac*[\Bigg]{\frac{\w_\ap}{\Zap^\half}}_{\n\ep}} \\
& \lesssim \norm[2]{\brac{\sigma^\fivebysix \Zap^\half\pap\Dap\w}_{\n\ep}} +   \norm[2]{\brac*[\Bigg]{\sigma^\onebyfour \frac{1}{\Zap^\onebyfour}\pap\w }_{\n\ep}}\norm[\infty]{\brac{\sigma^\sevenbytwelve\Zap^\threebyfour\pap\frac{1}{\Zap}}_{\n\ep}} \\
& \lesssim 1
\end{align*}

%


\item Using \lemref{lem:conv} we have
\begin{align*}
& \norm[\infty]{\brac{\sigma^\onebythree\pap\frac{1}{\Zap}}_{\n\ep}} +  \norm[2]{\brac{\sigma^\twobythree\pap^2\frac{1}{\Zap}}_{\n\ep}} + \norm[\Hhalf]{\brac{\sigma\pap^2\frac{1}{\Zap}}_{\n\ep}} \\
& \lesssim \sup_{\yp<0}\norm[\Ltwo(\Rsp, \diff \xp)]{\partial_z \brac{\frac{1}{\Psi_z}}} \brac{\tau^\onebythree + \tau^\twobythree + \tau} \\
& \lesssim 1
\end{align*}
Hence using \eqref{form:RealImagTh} we also get $\dis \norm[\infty]{\brac{\sigma^\onebythree\pap\frac{1}{\Zapabs}}_\ep} \lesssim 1$

\item Using \eqref{form:RealImagTh} and \propref{prop:Hhalfweight} with $\dis f = \brac{\Zapabs^\half\pap^2\frac{1}{\Zap}}_{\n\ep}$, $\dis w = \brac{\frac{1}{\Zapabs^\half}}_{\n\ep}$ and $h = \w_\ep$ we get
\begin{align*}
& \norm[\Hhalf]{\brac{\sigma\w\pap^2\frac{1}{\Zap}}_{\n\ep}} \\
& \lesssim \norm[\Hhalf]{\brac{\sigma\pap^2\frac{1}{\Zap}}_{\n\ep}} +  \norm[2]{\brac{\sigma^\fivebysix\Zap^\half\pap^2\frac{1}{\Zap}}_{\n\ep}} \norm[\Hhalf]{\brac{\sigma^\onebysix\Zap^\half\pap\frac{1}{\Zap}}_{\n\ep}} \\
& \lesssim 1
\end{align*}
Hence using \eqref{form:Th} and \propref{prop:Leibniz} we can finally control
\begin{align*}
& \norm[\Hhalf]{\brac{\sigma\pap\Th}_{\ep}} \\
& \lesssim \norm[\Hhalf]{\sigma\pap\brac{\w\pap\frac{1}{\Zap}}_{\n\ep}} \\
& \lesssim \norm[\Hhalf]{\sigma\brac*[\Bigg]{\frac{\w_\ap}{\Zap^\half}}_{\n\ep}\brac{\Zap^\half\pap\frac{1}{\Zap}}_{\n\ep}} + \norm[\Hhalf]{\brac{\sigma\w\pap^2\frac{1}{\Zap}}_{\n\ep}} \\
& \lesssim \norm[2]{\sigma^\fivebysix\pap\brac*[\Bigg]{\frac{\w_\ap}{\Zap^\half}}_{\n\ep}}\norm[2]{\brac{\sigma^\onebysix\Zap^\half\pap\frac{1}{\Zap}}_{\n\ep}} + \norm[2]{\sigma^\onebysix\brac*[\Bigg]{\frac{\w_\ap}{\Zap^\half}}_{\n\ep}}\norm[2]{\sigma^\fivebysix\pap\brac{\Zap^\half\pap\frac{1}{\Zap}}_{\n\ep}} \\
& \quad + \norm[\Hhalf]{\brac{\sigma\w\pap^2\frac{1}{\Zap}}_{\n\ep}} \\
& \lesssim 1
\end{align*}


\item Using \lemref{lem:conv} we get
\begin{align*}
 \norm[2]{\sigma\pap\brac{\frac{1}{\Zap}\pap^2\frac{1}{\Zap}}_{\n\ep}} \lesssim \sup_{\yp<0}\norm[\Lone(\Rsp, \diff \xp)]{\frac{1}{\Psi_z}\partial_z^2 \brac{\frac{1}{\Psi_z}}}  \tau \lesssim 1
\end{align*}
Hence using \eqref{form:RealImagTh} we have
\begin{align*}
&\norm[2]{\brac{\frac{\sigma}{\Zap}\pap^3\frac{1}{\Zap}}_{\n\ep}} + \norm*[\Bigg][2]{\cbrac*[\Bigg]{\sigma\Zap^\half\pap\brac*[\Bigg]{\frac{1}{\Zap^\threebytwo}\pap^2\frac{1}{\Zap}}}_{\n\ep}} +  \norm[2]{\sigma\pap\brac{\frac{1}{\Zapabs}\pap^2\frac{1}{\Zap}}_{\n\ep}}\\
& \lesssim \norm[2]{\sigma\pap\brac{\frac{1}{\Zap}\pap^2\frac{1}{\Zap}}_{\n\ep}} + \norm[\infty]{\brac{\sigma^\onebythree\pap\frac{1}{\Zap}}_{\n\ep}} \norm[2]{\brac{\sigma^\twobythree\pap^2\frac{1}{\Zap}}_{\n\ep}} \\
& \lesssim 1
\end{align*}

\item We observe that
\begin{align*}
\norm*[\Bigg][2]{\brac*[\Bigg]{\frac{\sigma^\half}{\Zap^\half}\pap^2\frac{1}{\Zap}}_{\n\ep}}^2 \lesssim \norm[2]{\brac{\pap\frac{1}{\Zap}}_{\n\ep}}\norm[2]{\sigma\pap\brac{\frac{1}{\Zapabs}\pap^2\frac{1}{\Zap}}_{\n\ep}} \lesssim 1
\end{align*}
Hence we also have
\begin{align*}
\norm*[\Bigg][2]{\sigma^\half\pap\brac*[\Bigg]{\frac{1}{\Zap^\half}\pap\frac{1}{\Zap}}_{\n\ep}} \lesssim \norm*[\Bigg][2]{\brac*[\Bigg]{\frac{\sigma^\half}{\Zap^\half}\pap^2\frac{1}{\Zap}}_{\n\ep}} + \norm[\infty]{\brac{\sigma^\half\Zap^\half\pap\frac{1}{\Zap}}_{\n\ep}}\norm[2]{\brac{\pap\frac{1}{\Zap}}_{\n\ep}} \lesssim 1
\end{align*}


\item We observe that
\begin{align*}
\norm*[\Bigg][\Hhalf]{\brac*[\Bigg]{\frac{\sigma^\half}{\Zap^\threebytwo}\pap^2\frac{1}{\Zap}}_{\n\ep}}^2 & \lesssim \norm[2]{\brac{\frac{1}{\Zap^2}\pap^2\frac{1}{\Zap}}_{\n\ep}}\norm*[\Bigg][2]{\cbrac*[\Bigg]{\sigma\Zap^\half\pap\brac*[\Bigg]{\frac{1}{\Zap^\threebytwo}\pap^2\frac{1}{\Zap} }}_{\n\ep}} \\
& \lesssim \sup_{\yp<0}\norm[\Ltwo(\Rsp, \diff \xp)]{\frac{1}{\Psi_z^2}\partial_z^2 \brac{\frac{1}{\Psi_z}}} \norm*[\Bigg][2]{\cbrac*[\Bigg]{\sigma\Zap^\half\pap\brac*[\Bigg]{\frac{1}{\Zap^\threebytwo}\pap^2\frac{1}{\Zap} }}_{\n\ep}} \\
& \lesssim 1
\end{align*}

\item Using \lemref{lem:conv} we get
\begin{align*}
\norm*[\Bigg][2]{\sigma^\twobythree\pap\brac*[\Bigg]{\frac{1}{\Zap^2}\pap^2\frac{1}{\Zap}}_{\n\ep}} + \norm*[\Bigg][\Hhalf]{\sigma\pap\brac*[\Bigg]{\frac{1}{\Zap^2}\pap^2\frac{1}{\Zap}}_{\n\ep}}\lesssim  \sup_{\yp<0}\norm[\Ltwo(\Rsp, \diff \xp)]{\frac{1}{\Psi_z^2}\partial_z^2 \brac{\frac{1}{\Psi_z}}} \brac{\tau^\twobythree + \tau} \lesssim 1
\end{align*}
Now we use \propref{prop:Hhalfweight} with $\dis f = \brac*[\Bigg]{\frac{1}{\Zap^\half}\pap^2\frac{1}{\Zap}}_{\n\ep}$, $\dis w = \brac{\frac{1}{\Zap}}_{\n\ep}$ and $\dis h = \Zap^\half\pap\frac{1}{\Zap}$ to get
\begin{align*}
& \norm[\Hhalf]{\sigma\brac{\Zap^\half\pap\frac{1}{\Zap}}_{\n\ep} \brac*[\Bigg]{\frac{1}{\Zap^\threebytwo}\pap^2\frac{1}{\Zap}}_{\n\ep}} \\
& \lesssim \norm[\Hhalf]{\brac*[\Bigg]{\frac{\sigma^\half}{\Zap^\threebytwo}\pap^2\frac{1}{\Zap}}_{\n\ep}}\norm[\infty]{\brac{\sigma^\half\Zap^\half\pap\frac{1}{\Zap}}_{\n\ep}} +  \norm[2]{\brac*[\Bigg]{\frac{\sigma^\half}{\Zap^\half}\pap^2\frac{1}{\Zap}}_{\n\ep}}\norm*[\Bigg][2]{\sigma^\half\pap\brac*[\Bigg]{\frac{1}{\Zap^\half}\pap\frac{1}{\Zap}}_{\n\ep}} \\
& \quad +  \norm[2]{\brac*[\Bigg]{\frac{\sigma^\half}{\Zap^\half}\pap^2\frac{1}{\Zap}}_{\n\ep}}\norm[2]{\brac{\pap\frac{1}{\Zap}}_{\n\ep}}\norm[\infty]{\brac{\sigma^\half\Zap^\half\pap\frac{1}{\Zap}}_{\n\ep}} \\
& \lesssim 1
\end{align*}
Hence we finally have
\begin{align*}
 \norm[\Hhalf]{\brac{\frac{\sigma}{\Zap^2}\pap^3\frac{1}{\Zap}}_\ep} \lesssim \norm*[\Bigg][\Hhalf]{\sigma\pap\brac*[\Bigg]{\frac{1}{\Zap^2}\pap^2\frac{1}{\Zap}}_\ep} +  \norm[\Hhalf]{\sigma\brac{\Zap^\half\pap\frac{1}{\Zap}}_{\n\ep} \brac*[\Bigg]{\frac{1}{\Zap^\threebytwo}\pap^2\frac{1}{\Zap}}_{\n\ep}}  \lesssim 1
\end{align*}

%

\end{enumerate}

This completes the proof of the first part of the corollary. To see the rate of growth of curvature, observe that the $\Linfty$ norm of the curvature of the initial interface of $Z^{\ep,\sigma}(\cdot,0)$ is 
\begin{align*}
 \norm[\infty]{\kappa^{\ep,\sigma}} = \norm[\infty]{\frac{1}{(\Zapabs)_\ep}\pap(g_\ep)} \qq \tx{ where} \qq \brac{\frac{\Zap}{\abs{\Zap}}}_{\!\!\ep} = e^{ig_\ep}
\end{align*}
Now if the interface $\Z(\cdot,0)$ has an angled crest at $\ap =0$, then we see that $\pap(g_\ep)(0,0) \sim \ep^{-1} $  as $\ep \to 0$, due to $g(\cdot,0)$ having a jump for $\ep=0$ and $\g_\ep = K_\ep\conv \g$ (where $K_\ep$ is the Poisson kernel \eqref{eq:Poissonkernel}). But we know from the local description of the conformal map that $(\Zap)_\ep(0,0) \sim \ep^{\nu -1}$ as $\ep \to 0$ (see  \cite{Wi65} for a proof). Hence we see that $\norm[\infty]{\kap^{\ep,\sigma}} \sim \ep^{-\nu}$ as $\ep\to0$, thereby proving the lemma.

\end{proof}

\medskip
\section{Appendix}\label{sec:appendix}

Here we will prove all the identities and estimates used in the paper. We will state most of the statements only for functions in the Schwartz class and it can be extended to more general functions by an approximation argument. Let us first recall some of the notation used. Let $\Dt = \pt + \bvar\pap$ where $\bvar$ is given by \eqref{form:bvar} and recall that $\sqbrac{f,g ; h}$ is defined as 
\begin{align*}
\sqbrac{f_1, f_2;  f_3}(\ap) = \frac{1}{i\pi} \int \brac{\frac{f_1(\ap) - f_1(\bp)}{\ap - \bp}}\brac{\frac{f_2(\ap) - f_2(\bp)}{\ap-\bp}} f_3(\bp) \diff \bp
\end{align*}

\begin{prop}\label{prop:tripleidentity}
Let $f,g,h \in \mathcal{S}(\Rsp)$. Then we have the following identities
\begin{enumerate}
\item $h\pap[f,\Hil]\pap g = \sqbrac{h\pap f,\Hil}\pap g + \sqbrac{f, \Hil}\pap\brac{h\pap g} - \sqbrac{h, f ; \pap g} $ 

\item $\Dt [f,\Hil]\pap g = \sqbrac{\Dt f, \Hil}\pap g + \sqbrac{f, \Hil}\pap(\Dt g) - \sqbrac{\bvar, f; \pap g} $  
\end{enumerate}
\end{prop}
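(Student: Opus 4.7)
My plan is to prove (1) by direct manipulation of the kernel representations, and then deduce (2) from (1) by applying Leibniz's rule to the time derivative.

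For (1), I will use the integral representation $[f,\Hil]\pap g(\ap) = \frac{1}{i\pi}\int \frac{f(\ap)-f(\bp)}{\ap-\bp}\pap g(\bp)\diff\bp$. First I expand the first two terms on the right-hand side: writing $[h\pap f,\Hil]\pap g$ directly from the formula, and then applying integration by parts in $\bp$ to $[f,\Hil]\pap(h\pap g) = \frac{1}{i\pi}\int\frac{f(\ap)-f(\bp)}{\ap-\bp}\pap_\bp(h\pap g)(\bp)\diff\bp$, using
\begin{align*}
\pap_\bp\Bigl(\frac{f(\ap)-f(\bp)}{\ap-\bp}\Bigr) = -\frac{f'(\bp)}{\ap-\bp} + \frac{f(\ap)-f(\bp)}{(\ap-\bp)^2}.
\end{align*}
Adding the two contributions, the $f'(\bp)h(\bp)/(\ap-\bp)$ pieces cancel with $-h(\bp)f'(\bp)/(\ap-\bp)$ coming from $[h\pap f,\Hil]\pap g$, leaving
\begin{align*}
[h\pap f,\Hil]\pap g + [f,\Hil]\pap(h\pap g) = \frac{1}{i\pi}\!\int\!\Bigl[\frac{h(\ap)f'(\ap)}{\ap-\bp} - \frac{(f(\ap)-f(\bp))h(\bp)}{(\ap-\bp)^2}\Bigr]\pap g(\bp)\diff\bp.
\end{align*}

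Subtracting the triple term $[h,f;\pap g]$ replaces $h(\bp)$ by $h(\ap)$ in the second fraction, so the square bracket collapses to $h(\ap)\,\pap_\ap\!\bigl(\frac{f(\ap)-f(\bp)}{\ap-\bp}\bigr)$. Pulling $h(\ap)$ and $\pap_\ap$ outside the integral then yields exactly $h\pap[f,\Hil]\pap g$, which is (1). The only subtlety is justifying the principal-value manipulations and differentiation under the integral; for $f,g,h \in \Scalsp(\Rsp)$ this is routine since all kernels are locally integrable away from the diagonal and decay at infinity.

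For (2), I will first note the elementary identity $\pt[f,\Hil]\pap g = [\pt f,\Hil]\pap g + [f,\Hil]\pap(\pt g)$, which follows directly from the definition and the fact that $\Hil$ commutes with $\pt$. Then I apply (1) with $h = \bvar$ to obtain
\begin{align*}
\bvar\pap[f,\Hil]\pap g = [\bvar\pap f,\Hil]\pap g + [f,\Hil]\pap(\bvar\pap g) - [\bvar,f;\pap g].
\end{align*}
Adding these and recognizing $\pt f + \bvar\pap f = \Dt f$, $\pap\pt g + \pap(\bvar\pap g) = \pap(\Dt g)$ gives (2).

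The main obstacle is purely bookkeeping rather than analytic: tracking the principal-value cancellations carefully in the integration-by-parts step, since individual terms like $f'(\bp)/(\ap-\bp)$ need the Hilbert-transform interpretation while the combined expression has better structure. I expect no difficulty beyond algebraic care for Schwartz-class inputs.
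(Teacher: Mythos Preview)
Your proof is correct and follows essentially the same route as the paper: the paper differentiates the left-hand side of (1) under the integral and then integrates by parts the $(\ap-\bp)^{-2}$ term to recover the three right-hand side pieces, while you start from the right-hand side and integrate by parts in the opposite direction---the same algebra read in reverse. Your derivation of (2) from (1) via $\pt$-Leibniz plus $h=\bvar$ is exactly what the paper means by ``direct consequence.''
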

\begin{proof}
The second identity is a direct consequence of the first. Now we see that
\begingroup
\allowdisplaybreaks
\begin{align*}
& h(\ap)\pap[f,\Hil]\pap g  \\
& = h(\ap)\pap\brac{\frac{1}{i\pi}\int \frac{f(\ap)-f(\bp)}{\ap-\bp}\pbp g(\bp) \diff\bp } \\
& = h(\ap)f'(\ap) \brac{\frac{1}{i\pi}\int \frac{1}{\ap-\bp}\pbp g(\bp) \diff \bp} - \frac{1}{i\pi}\int \brac{\frac{h(\ap)-h(\bp)}{\ap-\bp}}\brac{ \frac{f(\ap)-f(\bp)}{\ap-\bp}} \pbp g(\bp) \diff \bp \\*
& \quad - \frac{1}{i\pi} \int \frac{f(\ap)-f(\bp)}{(\ap-\bp)^2} h(\bp)\pbp g(\bp) \diff \bp \\
& = \frac{1}{i\pi}\int \frac{h(\ap)f'(\ap) - h(\bp)f'(\bp)}{\ap-\bp} \pbp g(\bp) \diff\bp + \frac{1}{i\pi}\int  \frac{f(\ap)-f(\bp)}{\ap-\bp}\pbp(h(\bp)\pbp g(\bp) )\diff\bp \\*
& \quad  - \frac{1}{i\pi}\int \brac{\frac{h(\ap)-h(\bp)}{\ap-\bp}}\brac{ \frac{f(\ap)-f(\bp)}{\ap-\bp}} \pbp g(\bp) \diff \bp 
\end{align*}
\endgroup
\end{proof}

\begin{prop}\label{prop:Coifman} Let $H \in C^1(\Rsp),A_i \in C^1(\Rsp) $ for $i=1,\cdots m$ and $F\in C^\infty(\Rsp)$. Define 
\begin{align*}
C_1(H,A,f)(x) & = p.v. \int F\brac{\frac{H(x)-H(y)}{x-y}}\frac{\Pi_{i=1}^{m}(A_i(x) - A_i(y))}{(x-y)^{m+1}}f(y)\diff y \\
C_2(H,A,f)(x) & = p.v. \int F\brac{\frac{H(x)-H(y)}{x-y}}\frac{\Pi_{i=1}^{m}(A_i(x) - A_i(y))}{(x-y)^{m}} \partial_y f(y)\diff y
\end{align*}
then there exists constants $c_1,c_2,c_3,c_4$ depending only on $F$ and $\norm[\infty]{H'}$ so that
\begin{enumerate}
\item $\norm[2]{C_1(H,A,f)} \leq c_1\norm[\infty]{A_1'}\cdots\norm[\infty]{A_m'}\norm[2]{f}$

\item $\norm[2]{C_1(H,A,f)} \leq c_2\norm[2]{A_1'}\norm[\infty]{A_2'}\cdots\norm[\infty]{A_m'}\norm[\infty]{f}$ 

\item $\norm[2]{C_2(H,A,f)} \leq c_3\norm[\infty]{A_1'}\cdots\norm[\infty]{A_m'}\norm[2]{f}$ 

\item $\norm[2]{C_2(H,A,f)} \leq c_4\norm[2]{A_1'}\norm[\infty]{A_2'}\cdots\norm[\infty]{A_m'}\norm[\infty]{f}$ 
\end{enumerate}
\end{prop}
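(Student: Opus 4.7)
The plan is to reduce all four bounds to the classical Calderón commutator theorem via two steps: Taylor expansion of $F$ to handle the nonlinearity, and integration by parts to convert $C_2$-type operators into $C_1$-type ones. By the mean value theorem $|(H(x)-H(y))/(x-y)|\leq \|H'\|_\infty$, so only values of $F$ on $[-\|H'\|_\infty,\|H'\|_\infty]$ enter and we may replace $F$ by a polynomial (or power series) approximation on this compact interval.

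First I would prove the bounds for $C_1$. For monomial $F(z)=z^k$, the operator is the $(k+m)$-th Calderón commutator
\begin{align*}
\text{p.v.}\!\int \frac{(H(x)-H(y))^k \prod_{i=1}^{m}(A_i(x)-A_i(y))}{(x-y)^{k+m+1}}f(y)\,dy,
\end{align*}
whose $L^2\to L^2$ norm, by Coifman--McIntosh--Meyer together with its quantitative refinement due to Coifman--Meyer, is bounded by $C(k+m+1)^{N}\|H'\|_\infty^{k}\prod_{i=1}^{m}\|A_i'\|_\infty$ for some fixed $N$. Summing the Taylor series of $F$, which converges absolutely on a neighborhood of $[-\|H'\|_\infty,\|H'\|_\infty]$ with geometric tails (since $F\in C^\infty$ and the interval is compact), yields (1). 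The same summation, combined with the $L^2\times L^\infty$ endpoint of Calderón's commutator (which follows by duality from $H^1\to L^1$ estimates on the commutator kernel), gives (2).

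For $C_2$, introduce $\tilde A_i(y):=(A_i(x)-A_i(y))/(x-y)=\int_0^1 A_i'(y+s(x-y))\,ds$ and $h(y):=(H(x)-H(y))/(x-y)$, so that $|\tilde A_i(y)|\leq \|A_i'\|_\infty$ and $|h(y)|\leq \|H'\|_\infty$, and the kernel of $C_2$ is simply $F(h(y))\prod_i \tilde A_i(y)$. Integration by parts in $y$ (boundary terms vanish since $f$ is Schwartz) gives
\begin{align*}
C_2(H,A,f)(x) = -\int \partial_y\!\left[F(h(y))\prod_{i=1}^m \tilde A_i(y)\right]f(y)\,dy,
\end{align*}
and the key identities
\begin{align*}
\partial_y h(y)=\frac{h(y)-H'(y)}{x-y},\qquad \partial_y \tilde A_i(y)=\frac{\tilde A_i(y)-A_i'(y)}{x-y}
\end{align*}
split the derivative into finitely many pieces, each of which regroups as one of: a $C_1$-operator with $F$ replaced by $F'$ and with $H$ appended to the list $(A_i)$; a $C_1$-operator applied to $H'(y)f(y)$; $m$ copies of $C_1(H,A,f)$ itself; or a $C_1$-operator with only $m-1$ of the $A_i$'s (the $k$-th omitted) applied to $A_k'(y)f(y)$. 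Applying (1) to every piece and using $\|A_k'f\|_2\leq \|A_k'\|_\infty\|f\|_2$ gives (3). For (4) one applies (1) or (2) to each piece depending on whether the distinguished $A_1$ appears as a difference quotient or has been pulled out as $A_1'(y)$, and uses $\|A_k'f\|_\infty\leq \|A_k'\|_\infty\|f\|_\infty$ or $\|A_1'f\|_2\leq \|A_1'\|_2\|f\|_\infty$ correspondingly.

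The main obstacle is the polynomial-in-$k$ operator-norm bound on the $k$-th Calderón commutator needed to make the Taylor series of $F$ converge in operator norm; this is a classical deep result that I would invoke rather than reprove. The remaining difficulty is purely bookkeeping: one must verify that every one of the roughly $2(m+1)$ terms produced by integrating by parts genuinely falls into the $C_1$-template with constants depending only on $F$ and $\|H'\|_\infty$, which the two identities above ensure.
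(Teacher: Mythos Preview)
Your integration-by-parts reduction of $C_2$ to $C_1$ via the identities $\partial_y h = (h-H')/(x-y)$ and $\partial_y\tilde A_i = (\tilde A_i - A_i')/(x-y)$ is correct and is exactly what the paper does for (3) and (4); the paper simply says ``the third and fourth estimates can be obtained from the first two by integration by parts'' without writing out the bookkeeping you supply.

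There is, however, a genuine gap in your treatment of (1) and (2). You assert that the Taylor series of $F$ ``converges absolutely on a neighborhood of $[-\|H'\|_\infty,\|H'\|_\infty]$ with geometric tails (since $F\in C^\infty$ and the interval is compact)''. This is false: $C^\infty$ does not imply real-analytic, so for a generic smooth $F$ the Taylor series need not converge to $F$ anywhere (take $F(z)=e^{-1/z^2}$). Your reduction to monomial Calder\'on commutators therefore only proves the result for analytic $F$. A Stone--Weierstrass polynomial approximation in $C^0$ does not immediately rescue the argument either, since a pointwise bound on the kernel perturbation does not directly control the operator norm. One workable patch is to cut $F$ off outside $[-\|H'\|_\infty,\|H'\|_\infty]$, write $F(z)=\int\hat F(\xi)e^{iz\xi}\,d\xi$, expand each $e^{iz\xi}$ as a power series (which does converge, with factorial decay that beats the polynomial growth of the Calder\'on constants), and integrate against the rapidly decaying $\hat F$.

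The paper sidesteps all of this: for (1) it cites Coifman--McIntosh--Meyer directly, and for (2) it invokes the $Tb$ theorem (referring to \cite{Wu09}) rather than your duality/$H^1\to L^1$ sketch. So the paper treats (1)--(2) as black boxes from the literature, whereas you attempt a reduction to the polynomial case; that reduction is a reasonable strategy but needs the analyticity issue repaired.
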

\begin{proof}
The first estimate is a theorem by Coifman, McIntosh and Meyer \cite{CoMcMe82}. See also chapter 9 of \cite{MeCo97}. Estimate 2 is a consequence of the Tb theorem and a proof can be found in \cite{Wu09}. The third and fourth estimates can be obtained from the first two by integration by parts. 
\end{proof}

\begin{prop}\label{prop:Lemarie}
Let $T:\Dcalsp(\Rsp) \to \Dcalsp'(\Rsp)$ be a linear operator with kernel $K(x,y)$ such that on the open set $\{(x,y):x\neq y\} \subset \Rsp\times\Rsp$, $K(x,y)$ is a function satisfying
\begin{align*}
\abs{K(x,y)} \leq \frac{C_0}{\abs{x-y}} \quad \tx{ and } \quad \abs{\grad_x K(x,y)} \leq \frac{C_0}{\abs{x-y}^2}
\end{align*}
where $C_0$ is a constant. If $T$ is continuous on $\Ltwo(\Rsp)$ with $\norm[\Ltwo\to\Ltwo]{T} \leq C_0$  and if $T(1) =0$, then $T$ is bounded on $\dot{H}^s$ for $0<s<1$ with $\norm[\dot{H}^s\to\dot{H}^s]{T} \lesssim C_0$ 
\end{prop}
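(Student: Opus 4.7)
The plan is to work with the Gagliardo characterization
\[
\|f\|_{\dot{H}^s}^2 \asymp \iint_{\Rsp \times \Rsp} \frac{|f(x)-f(y)|^2}{|x-y|^{1+2s}}\, dx\, dy, \qquad 0<s<1,
\]
and to bound this integral applied to $Tf$ by a pointwise near/far splitting of the kernel. Writing $r := |x-y|$ and $B := B(x,4r)$, I would use $T(1)=0$ to subtract the average $(f)_B$ and decompose
\[
Tf(x)-Tf(y) \;=\; \int_{|z-x|\geq 2r}(K(x,z)-K(y,z))f(z)\,dz \;+\; \bigl[ T(g)(x) - T(g)(y) \bigr],
\]
where $g := (f-(f)_B)\mathbf{1}_{B}$. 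The first piece is the far contribution; the second piece is the near contribution and will be handled by the $L^2$ boundedness of $T$.

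For the far piece, the kernel gradient estimate together with $|x-z|\asymp|y-z|\gtrsim r$ on the region of integration gives the pointwise bound
\[
\Bigl|\int_{|z-x|\geq 2r}(K(x,z)-K(y,z))f(z)\,dz\Bigr| \lesssim C_0\, r \int_{|z-x|\geq 2r}\frac{|f(z)|}{|x-z|^2}\,dz.
\]
Substituting this into the Gagliardo double integral, integrating out $y$ over $|x-y|=r$, and applying a Schur/Hardy-type estimate with a kernel of the correct homogeneity (which acts boundedly on $\dot{H}^s$ precisely for $0<s<1$) yields a bound of $C_0^2\|f\|_{\dot{H}^s}^2$. For the near piece, the $L^2$-boundedness gives $\|T(g)\|_{L^2}\le C_0\|g\|_{L^2}$, and a Poincar\'e-type inequality converts $\|f-(f)_B\|_{L^2(B)}^2$ into $r^{2s}\int_B\int_B |f(u)-f(v)|^2|u-v|^{-1-2s}\,du\,dv$; inserting this and interchanging the order of integration again produces a bound of $C_0^2\|f\|_{\dot{H}^s}^2$.

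The main obstacle will be to make rigorous the cancellation coming from $T(1)=0$, since $T(1)$ is defined only distributionally and the freedom to subtract the constant $(f)_B$ tacitly ignores $(f)_B\, T(\mathbf{1}_{B^c})$, an integral of the kernel on a noncompact set. The clean way around this is to regularize $K$ by multiplying by $\chi(|x-y|/\varepsilon)\chi(\varepsilon|x-y|)$ for a smooth cutoff $\chi$, prove the estimate with constants independent of $\varepsilon$, and pass to the limit $\varepsilon\to 0$ (in the spirit of the Coifman–Meyer proof of the $T(1)$ theorem). The restriction $s<1$ enters precisely in the Poincar\'e step, which controls the mean oscillation of $f$ at scale $r$ by its Gagliardo seminorm only when $s<1$; at $s=1$ one would additionally need a gradient estimate on $K$ in the second variable (equivalently, a condition like $T^*(1)=0$) to close the argument.
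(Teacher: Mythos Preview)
The paper does not give a self-contained argument at all: it simply invokes Lemari\'e's theorem (where only weak $L^2$-boundedness in the David--Journ\'e sense is assumed) and notes that strong $L^2$-boundedness implies weak. So your proposal is already more ambitious than the paper's one-line citation. That said, as a proof sketch it has a genuine gap in the near piece.

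The difficulty is this: for each pair $(x,y)$ you manufacture a function $g=g_{x,y}=(f-(f)_B)\mathbf{1}_B$ with $B=B(x,4|x-y|)$, and what enters the Gagliardo double integral are the \emph{pointwise values} $Tg_{x,y}(x)$ and $Tg_{x,y}(y)$. The bound $\|Tg\|_{L^2}\le C_0\|g\|_{L^2}$ is a global estimate for a single input; it says nothing about $Tg$ at two prescribed points, and since $g$ itself varies with $(x,y)$ there is no fixed $\|Tg\|_{L^2}$ to quote. No step in your outline bridges this, and in fact no such step is available at this level of generality: a Calder\'on--Zygmund operator applied to a bump supported in $B(x,4r)$ can be arbitrarily large at the center $x$. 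The standard proofs (Lemari\'e; Meyer--Coifman, Chapter~10) avoid exactly this obstruction by passing to a Littlewood--Paley or wavelet representation, so that pointwise evaluation is replaced by pairing against fixed atoms and the $L^2$-boundedness then applies cleanly to each matrix entry.

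A secondary point: in your decomposition the far integral carries $f(z)$ rather than $f(z)-(f)_B$, so the Schur/Hardy step would produce a quantity scaling like $\|f\|_{L^2}$, not $\|f\|_{\dot{H}^s}$ --- indeed your far bound is infinite for $f$ constant. The subtraction afforded by $T(1)=0$ must be made in \emph{both} pieces, not only the near one; once that is done, your regularization remark correctly disposes of the residual boundary term, and the far piece closes by the Hardy-type argument you describe.
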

\begin{proof}
This proposition is a direct consequence of the result of Lemarie \cite{Le85} where only weak boundedness of $T$ on $\Ltwo$ (in the sense of David and Journe) is assumed. As boundedness on $\Ltwo$ implies weak boundedness, the proposition follows. See also chapter 10 of \cite{MeCo97} for another proof of the result of Lemarie. 
\end{proof}

\begin{lemma}\label{lem:interpolation}
Let $r,s \in \Rsp$, $k,m\in \Zsp$. If $f \in \Scalsp(\Rsp)$, then we have the following 
\begin{enumerate}
\item $\norm[2]{\papabs^r f} \lesssim \norm[2]{f}^{\thvar}\norm[2]{\papabs^s f}^{1-\thvar}$ \quad  for $0\leq r<s $ with $1-\thvar = \frac{r}{s}$
\item  $\norm[2]{\papabs^r f} \lesssim \norm[2]{f''}^{\thvar}\norm[2]{\papabs^s f}^{1-\thvar}$ \quad  for $2\leq r<s $ with $1-\thvar = \frac{r-2}{s-2}$
\item $ \norm[\infty]{\pap^k f} \lesssim \norm[2]{f}^\thvar\norm[2]{\pap^m f}^{1-\thvar}$ \quad for $0\leq k <m$ with $1-\thvar = \frac{k+\half}{m}$
\item $ \norm[\infty]{\pap^k f} \lesssim \norm[2]{f''}^\thvar\norm[2]{\pap^m f}^{1-\thvar}$ \quad for $2\leq k <m$ with $1-\thvar = \frac{k-\threebytwo}{m-2}$
\end{enumerate}
\end{lemma}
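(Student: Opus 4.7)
All four bounds are instances of standard Littlewood–Paley/Fourier interpolation, so the plan is to pass to the Fourier side and split frequencies. I will treat the $L^2$ estimates (1)–(2) together by Plancherel and H\"older, and the $L^\infty$ estimates (3)–(4) together by Fourier inversion plus low/high frequency decomposition.

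For (1), by Plancherel I would write
\begin{align*}
\norm[2]{\papabs^r f}^2 = \int \abs{\xi}^{2r}\abs{\hat f(\xi)}^2\diff\xi = \int \abs{\hat f(\xi)}^{2\thvar}\brac{\abs{\xi}^s\abs{\hat f(\xi)}}^{2(1-\thvar)}\diff\xi,
\end{align*}
where the choice $1-\thvar=r/s$ matches the exponent of $\abs\xi$. H\"older's inequality with conjugate exponents $1/\thvar$ and $1/(1-\thvar)$ then gives
$\norm[2]{\papabs^r f}^2\le \norm[2]{f}^{2\thvar}\norm[2]{\papabs^s f}^{2(1-\thvar)}$. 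For (2) the same trick works after factoring $\abs{\xi}^{2r}=(\abs{\xi}^2)^{2\thvar}\abs{\xi}^{2s(1-\thvar)-4\thvar}$ with the exponent equation $2\thvar+s(1-\thvar)=r$, which gives $1-\thvar=(r-2)/(s-2)$, and then bounding $\abs{\xi}^2\abs{\hat f}$ in $\Ltwo$ by $\norm[2]{f''}$.

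For (3), by Fourier inversion and the triangle inequality I would use
\begin{align*}
\abs{\pap^k f(x)}\le \frac{1}{\sqrt{2\pi}}\int \abs{\xi}^k \abs{\hat f(\xi)}\diff\xi,
\end{align*}
split the integral at some $\abs{\xi}=R$, and apply Cauchy--Schwarz on each piece. The low-frequency piece is bounded by $\norm[\Ltwo(\abs\xi\le R)]{\abs{\xi}^k}\norm[2]{f}\lesssim R^{k+\half}\norm[2]{f}$, while the high-frequency piece is bounded by $\norm[\Ltwo(\abs\xi> R)]{\abs{\xi}^{k-m}}\norm[2]{\papabs^m f}\lesssim R^{k-m+\half}\norm[2]{\papabs^m f}$; the latter integral converges precisely because $k<m$, hence $k-m+\half<0$. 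Optimizing in $R$ by setting $R^m=\norm[2]{\papabs^m f}/\norm[2]{f}$ yields exactly the stated exponent $1-\thvar=(k+\half)/m$. For (4) I would bound the low-frequency piece instead by $\norm[\Ltwo(\abs\xi\le R)]{\abs{\xi}^{k-2}}\norm[2]{f''}\lesssim R^{k-\threebytwo}\norm[2]{f''}$ (convergent since $k\ge 2$), keep the same high-frequency bound, and optimize to get $1-\thvar=(k-\threebytwo)/(m-2)$.

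There is no real obstacle here; the only thing to be careful about is tracking the convergence conditions on the frequency integrals, which are exactly what forces the stated ranges $0\le r<s$, $2\le r<s$, $0\le k<m$, $2\le k<m$ and make each optimizing exponent fall in $(0,1)$. Everything else is direct bookkeeping.
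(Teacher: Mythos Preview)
Your proof is correct and follows essentially the same Fourier-analytic route as the paper. The only cosmetic difference is that the paper derives (2) and (4) by applying (1) and (3) respectively to $f''$ (with shifted exponents), and for (3) simply cites Gagliardo--Nirenberg, whereas you carry out the low/high frequency split explicitly; both amount to the same computation.
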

\begin{proof}
 The first estimate is a standard interpolation estimate which can be easily proved by using the Fourier transform. We skip its proof. The second one follows from the first by applying it on the function $f''$ with $r,s$ replaced by $r-2,s-2$ respectively.

The third estimate is a consequence of the Gagliardo-Nirenberg interpolation estimate (see Theorem 12.87 in \cite{Le17}). The last one follows from the third estimate by applying it on the function $f''$ with $k,m$ replaced by $k-2,m-2$ respectively.
\end{proof}

\begin{lemma}\label{lem:interpolationmult}
Let $k,n \in \Nsp$ and $f_1,f_2, \cdots, f_k \in \Scalsp(\Rsp)$. Let $r_1,r_2\cdots, r_k \in \Zsp$ with $r_1 + \cdots + r_k = n$ and $ r_i\geq 0$ for all $1\leq i\leq k$ and. Let $r = \max\cbrac{r_1,r_2,\cdots, r_k} \geq 1$. Then 
\begin{enumerate}
\item $\norm*[\big][2]{f_1^{(r_1)}\cdots f_k^{(r_k)} } \leq C(K)\cbrac{\norm[H^s]{f_1'} + \cdots + \norm[H^s]{f_k'} }$ \quad for $s = \max\cbrac{r-1,n-2}$
\item $\norm*[\big][\Hhalf]{f_1^{(r_1)}\cdots f_k^{(r_k)} } \leq C(K)\cbrac{\norm[H^s]{f_1'} + \cdots + \norm[H^s]{f_k'} }$  \quad for $s = \max\cbrac{r-\half,n-2}$
\end{enumerate}
with $K = (\norm[\infty]{f_1} + \norm[H^1]{f_1'}) + \cdots +  (\norm[\infty]{f_k} + \norm[H^1]{f_k'})$ and $C(K)$ is a constant depending only on $K$.
\end{lemma}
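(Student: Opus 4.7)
The plan is to prove both parts by combining H\"older's inequality with the Gagliardo--Nirenberg interpolation inequalities of \lemref{lem:interpolation}. After reordering indices so that $r = r_{i_0}$ is attained at some distinguished index $i_0$, I would apply H\"older with $f_{i_0}^{(r_{i_0})}$ placed in $L^2$ (respectively $\dot{H}^{1/2}$ for part 2) and every other factor placed in $L^\infty$:
\begin{equation*}
\|f_1^{(r_1)} \cdots f_k^{(r_k)}\|_{L^2} \leq \|f_{i_0}^{(r)}\|_{L^2} \prod_{i \neq i_0} \|f_i^{(r_i)}\|_{L^\infty}.
\end{equation*}
The $L^2$ factor satisfies $\|f_{i_0}^{(r)}\|_{L^2} \leq \|f_{i_0}'\|_{H^{r-1}}$, which for $r \geq 3$ I further interpolate between $\|f_{i_0}'\|_{H^1} \leq K$ and $\|f_{i_0}'\|_{H^s}$ to extract a fractional power of the top-order norm.

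For the $L^\infty$ factors, if $r_i \in \{0,1\}$ then $\|f_i^{(r_i)}\|_\infty \lesssim K$ directly (via the one-dimensional Sobolev embedding $H^1 \hookrightarrow L^\infty$ when $r_i = 1$). If $r_i \geq 2$, I would invoke part (4) of \lemref{lem:interpolation} applied to $f_i$ with $k = r_i$ and $m = s+1$, giving $\|f_i^{(r_i)}\|_\infty \lesssim \|f_i''\|_{L^2}^{\theta_i}\|f_i^{(s+1)}\|_{L^2}^{1-\theta_i} \leq C(K)\|f_i'\|_{H^s}^{1-\theta_i}$ with exponent $1 - \theta_i \in [0,1)$. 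The crucial verification is that $r_i \leq s$ for every such $i \neq i_0$: if $r_i < r$ then $r_i \leq r - 1 \leq s$, while if $r_i = r$ one must have at least two indices attaining the maximum, forcing $n \geq 2r$ and hence $s \geq n - 2 \geq 2r - 2 \geq r$ (using $r \geq 2$). Without this inequality the Gagliardo--Nirenberg interpolation would fail to apply.

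Combining all factors yields a bound of the form $C(K) \prod_j \|f_j'\|_{H^s}^{\gamma_j}$ with $\gamma_j \in [0,1)$, and a direct computation shows that $\sum_j \gamma_j \leq 1$ in every allowed configuration of $(r_1,\dots,r_k)$; the threshold $s = \max\{r-1, n-2\}$ is precisely calibrated to enforce this. Young's weighted inequality then converts the product into a linear combination, delivering the claimed bound $C(K) \sum_j \|f_j'\|_{H^s}$. Part 2 proceeds by an identical scheme, now with the $L^2$ factor placed in $\dot{H}^{1/2}$ and a Kato--Ponce-type Moser product rule $\|uv\|_{H^{1/2}} \lesssim \|u\|_\infty\|v\|_{H^{1/2}} + \|v\|_\infty\|u\|_{H^{1/2}}$ used to distribute the extra half-derivative across the factors; the shift of $s$ from $\max\{r-1,n-2\}$ to $\max\{r-1/2,n-2\}$ precisely absorbs the half-derivative gain.

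The principal technical obstacle will be the combinatorial bookkeeping showing $\sum_j \gamma_j \leq 1$. Unraveling the Gagliardo--Nirenberg exponents, this reduces to the elementary inequality $2q_1 + 3N_{\geq 2} \geq 2$, where $q_1$ and $N_{\geq 2}$ are the numbers of indices $i \neq i_0$ with $r_i = 1$ and $r_i \geq 2$ respectively; this is automatic as soon as some index other than $i_0$ has positive $r_i$, which is the only nontrivial case.
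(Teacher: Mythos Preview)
Your proposal is correct and follows essentially the same strategy as the paper: H\"older's inequality placing the highest-derivative factor in $L^2$ and the rest in $L^\infty$, Gagliardo--Nirenberg interpolation via \lemref{lem:interpolation}, verification that the interpolation exponents sum to at most $1$, and AM--GM/Young to linearize. The only organizational difference is that the paper first strips off all factors with $r_j\le 1$ (bounding them by $C(K)$) before interpolating, which forces $s=n-2$ in the surviving product and reduces the exponent check to a single line, whereas you carry these small-$r_i$ factors throughout and must track the case $s=r-1$ separately.
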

\begin{proof}
Let us begin by proving the first estimate. Without loss of generality $0\leq r_1\leq r_2 \leq \cdots \leq r_k$. Clearly the estimate holds if $k=1$ or $r=1$. Hence we can now assume that $k\geq 2$ and $r\geq 2$. If  $r_1\leq \cdots \leq r_j \leq 1$ for some $j<k$ with $r_{j+1} \geq 2$, then we have
\begin{align*}
\norm*[\big][2]{f_1^{(r_1)}\cdots f_k^{(r_k)} } \leq C(K)\norm*[\big][2]{f_{j+1}^{(r_{j+1})} \cdots f_{k}^{(r_k)}}
\end{align*}
Hence without loss of generality we can assume that $r_1\geq 2$. As $k\geq 2$ this implies that $n\geq 4$ and we also have $r\geq 2$, $r\leq n-2$ and $s = n-2$. Hence using \lemref{lem:interpolation} we have
\begin{align*}
& \norm*[\big][2]{f_1^{(r_1)}\cdots f_k^{(r_k)} } \\
& \leq \norm*[\big][\infty]{f_1^{(r_1)}}\cdots \norm*[\big][\infty]{f_{k-1}^{(r_{k-1})}}\norm*[\big][2]{f_k^{(r_k)}} \\
& \lesssim \brac{\norm[2]{f_1''}^{\thvar_1} \cdots \norm[2]{f_{k-1}''}^{\thvar_{k-1}}}\brac{\norm[2]{f_1^{(s+1)}}^{1-\thvar_1} \cdots \norm[2]{f_{k-1}^{(s+1)}}^{1-\thvar_{k-1}}}\norm[2]{f_k''}^{\thvar_k}\norm[2]{f_k^{(s+1)}}^{1-\thvar_k}
\end{align*}
where $1-\thvar_j = \frac{r_j - \threebytwo}{s-1}$ for $j<k$ and $1-\thvar_k = \frac{r_k - 2}{s-1}$. Now observe that
\begin{align*}
(1-\thvar_1) + \cdots + (1-\thvar_k) = \frac{r_1 - \threebytwo}{s-1} + \cdots + \frac{r_{k-1} - \threebytwo}{s-1} + \frac{r_k - 2}{s-1} \leq 1
\end{align*}
Hence by using $AM-GM$ inequality the estimate follows. The proof of the second estimate is very similar and we skip it. 
\end{proof}
\begin{cor}\label{cor:commutatoreasy}
Let $f,g \in \Scalsp(\Rsp)$ and let $n\in \Nsp$ with $n\geq 2$. Then
\begin{enumerate}
\item $\norm[2]{(f\pap)^n g - f^n \pap^n g} \leq C(K)\cbrac{\norm[H^s]{f'} + \norm[H^s]{g'}}$ for $s = n-2$
\item $\norm[\Hhalf]{(f\pap)^n g - f^n \pap^n g} \leq C(K)\cbrac{\norm[H^s]{f'} + \norm[H^s]{g'}}$ for $s = n-\threebytwo$
\end{enumerate}
where $K = \norm[\infty]{f} + \norm[H^1]{f'} +  \norm[\infty]{g} + \norm[H^1]{g'}$ and $C(K)$ is a constant depending only on $K$.
\end{cor}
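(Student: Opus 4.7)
The plan is to expand $(f\pap)^n g$ via Leibniz's rule and then apply \lemref{lem:interpolationmult} to each term of the remainder. First I would establish by induction on $n$ that
\[
(f\pap)^n g - f^n g^{(n)} = \sum_{\alpha \in \mathcal{I}_n} c_\alpha \, f^{(\alpha_1)} f^{(\alpha_2)} \cdots f^{(\alpha_n)} g^{(\alpha_{n+1})},
\]
where $\mathcal{I}_n$ is a finite set of multi-indices with nonnegative integer entries satisfying $\alpha_1 + \cdots + \alpha_{n+1} = n$ and $1 \leq \alpha_{n+1} \leq n-1$. The base case $n=2$ is $(f\pap)^2 g - f^2 g'' = f f' g'$, which has the stated form. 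For the inductive step, applying $f\pap$ to a term at level $n$ either sends the new $\pap$ onto $g^{(\alpha_{n+1})}$ (raising its order by $1$) or onto some $f^{(\alpha_i)}$; in either case the new multiplication by $f$ contributes an extra factor $f^{(0)}$. The pure term $f^{n+1} g^{(n+1)}$ at level $n+1$ arises only from $\pap$ landing on $g$ inside the leading term $f^n g^{(n)}$; all other possibilities produce terms in which at least one $\pap$ has hit an $f$, so the derivative on $g$ is at most $n = (n+1)-1$ and at least $1$.

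With this expansion in hand, fix a term in the remainder and let $r = \max_i \alpha_i$. Since $\alpha_{n+1} \leq n-1$ and the $\alpha_i$'s are nonnegative integers summing to $n$, each satisfies $\alpha_i \leq n-1$, so $r \leq n-1$. Applying \lemref{lem:interpolationmult} with $n$ copies of $f$ and one copy of $g$, and derivative orders $\alpha_1, \ldots, \alpha_{n+1}$, gives
\begin{align*}
\norm[2]{f^{(\alpha_1)} \cdots f^{(\alpha_n)} g^{(\alpha_{n+1})}} &\leq C(K)\brac{\norm[H^{s_1}]{f'} + \norm[H^{s_1}]{g'}}, \\
\norm[\Hhalf]{f^{(\alpha_1)} \cdots f^{(\alpha_n)} g^{(\alpha_{n+1})}} &\leq C(K)\brac{\norm[H^{s_2}]{f'} + \norm[H^{s_2}]{g'}},
\end{align*}
with $s_1 = \max\{r-1, n-2\} \leq n-2$ and $s_2 = \max\{r-\half, n-2\} \leq n-\threebytwo$, using the bound $r \leq n-1$. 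Summing over the finitely many multi-indices in $\mathcal{I}_n$ yields both claimed estimates.

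The argument is essentially structural, with all analytic content absorbed into \lemref{lem:interpolationmult}. The main obstacle is really just bookkeeping: one must verify carefully that removing the single leading term $f^n g^{(n)}$ drops the maximum derivative on $g$ from $n$ to $n-1$, and it is exactly this one unit of slack that buys the Sobolev index $n-2$ (respectively $n-\threebytwo$) in the final estimate. No harmonic analysis beyond what is already used inside \lemref{lem:interpolationmult} is required.
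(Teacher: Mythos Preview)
Your proof is correct and is precisely the argument the paper has in mind: the paper's proof consists of the single sentence ``This follows directly from \lemref{lem:interpolationmult},'' and you have written out the details of that implication (the Leibniz expansion plus the observation that in every remainder term the maximal derivative order is at most $n-1$).
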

\begin{proof}
This follows directly from \lemref{lem:interpolationmult}
\end{proof}

\begin{prop}\label{prop:Hardy}
Let $f \in \Scalsp(\Rsp)$. Then we have
\begin{enumerate}

\item $\norm[\infty]{f} \lesssim \norm[H^s]{f}$ if $s>\frac{1}{2}$ and for $s=\half$ we have $\norm[BMO]{f} \lesssim \norm[\Hhalf]{f}$

\item $
\begin{aligned}[t]
\int \abs{\frac{f(\ap) - f(\bp)}{\ap - \bp}}^2 \diff \bp \lesssim \norm[2]{f'}^2 
\end{aligned}
$

\item $
\begin{aligned}[t]
\norm[\Ltwo(\Rsp, \diff \ap)]{\sup_{\bp}\abs{\frac{f(\ap) - f(\bp)}{\ap - \bp}}} \lesssim \norm[2]{f'}
\end{aligned}
$

\item $
\begin{aligned}[t]
\norm[\Hhalf]{f}^2 = \frac{1}{2\pi}\int\!\! \!\int \abs{\frac{f(\ap) - f(\bp)}{\ap - \bp}}^2 \diff \bp \diff\ap
\end{aligned}
$

\item $
\begin{aligned}[t]
\norm[\Ltwo(\Rsp^2, \diff\ap\diff\bp)]{\pbp\brac{\frac{f(\ap) - f(\bp)}{\ap - \bp}} } \lesssim \norm[\Hhalf]{f'}
\end{aligned}
$
\end{enumerate}
\end{prop}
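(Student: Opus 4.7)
The plan is to treat the five estimates separately, since they are standard but of different flavors, and to reuse the later ones via the earlier ones whenever possible.

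For estimate (1), the bound $\norm[\infty]{f} \lesssim \norm[H^s]{f}$ for $s > 1/2$ is the usual Sobolev embedding: split $\hat f = \hat f \mathbf 1_{|\xi|\le 1} + \hat f \mathbf 1_{|\xi|\ge 1}$ and apply Cauchy--Schwarz in Fourier to each piece, using $\int (1+|\xi|)^{-2s}\,d\xi < \infty$. The $BMO$ bound $\norm[BMO]{f}\lesssim \norm[\Hhalf]{f}$ at the endpoint can be obtained by the John--Nirenberg type duality $BMO = (H^1)^*$ combined with the Fefferman--Stein characterization $\papabs^{1/2}: \Hhalf \to BMO$ (equivalently, by noting $\papabs^{-1/2}: H^1\to L^2$ and dualizing). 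For estimate (2), write $g(\bp) := f(\bp)-f(\ap)$, which vanishes at $\bp=\ap$; applying the one-dimensional Hardy inequality on the half-lines $\{\bp>\ap\}$ and $\{\bp<\ap\}$ gives $\int \abs{g(\bp)/(\bp-\ap)}^2\,d\bp \lesssim \int |g'(\bp)|^2\,d\bp = \|f'\|_2^2$, uniformly in $\ap$. For estimate (3), observe that $|f(\ap)-f(\bp)|/|\ap-\bp|\le \frac{1}{|\ap-\bp|}\int_{I(\ap,\bp)}|f'|$ where $I(\ap,\bp)$ is the interval between the two points, so the supremum in $\bp$ is controlled by the Hardy--Littlewood maximal function $M f'(\ap)$; then $L^2$-boundedness of $M$ closes it.

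For estimate (4), the identity is a direct Plancherel computation. With the change of variable $\bp = \ap + h$,
\begin{align*}
\int\!\!\int \abs{\frac{f(\ap)-f(\bp)}{\ap-\bp}}^2 d\bp\,d\ap
&= \int\!\!\int \frac{|f(\ap+h)-f(\ap)|^2}{h^2}\,d\ap\,dh\\
&= \int |\hat f(\xi)|^2 \int \frac{|e^{ih\xi}-1|^2}{h^2}\,dh\,d\xi\\
&= 4\int |\hat f(\xi)|^2 |\xi| \int \frac{\sin^2(u/2)}{u^2}\,du\,d\xi
= 2\pi\,\|\papabs^{1/2} f\|_2^2,
\end{align*}
using $\int \sin^2(u/2)/u^2\,du = \pi/2$. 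This yields the stated formula.

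Estimate (5) is the one where one has to do the most work; the key observation is to rewrite the $\bp$-derivative symmetrically. A direct computation gives
\[
\pbp\!\left(\frac{f(\ap)-f(\bp)}{\ap-\bp}\right)
= \frac{1}{(\ap-\bp)^2}\int_\bp^\ap \bigl(f'(s)-f'(\bp)\bigr)\,ds
= \int_0^1 t\cdot h\bigl(\bp+t(\ap-\bp),\,\bp\bigr)\,dt,
\]
where $h(\ap,\bp) := \bigl(f'(\ap)-f'(\bp)\bigr)/(\ap-\bp)$. Applying Minkowski's integral inequality in the $L^2(\Rsp^2_{\ap,\bp})$ norm and then the change of variables $s = \bp+t(\ap-\bp)$ (which has Jacobian $1/t$ in $\ap$ for fixed $t,\bp$), we obtain
\[
\norm[\Ltwo(\Rsp^2)]{\pbp g} \le \int_0^1 t\cdot t^{-1/2}\,dt\cdot \norm[\Ltwo(\Rsp^2)]{h} = \tfrac{2}{3}\,\norm[\Ltwo(\Rsp^2)]{h},
\]
and then estimate (4) applied to $f'$ gives $\norm[\Ltwo(\Rsp^2)]{h} = \sqrt{2\pi}\,\norm[\Hhalf]{f'}$. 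Combining, $\norm[\Ltwo(\Rsp^2)]{\pbp g} \lesssim \norm[\Hhalf]{f'}$ as desired. The only nontrivial technical point in the whole proposition is finding the symmetric representation of $\pbp g$ above; once that is in place, all the machinery is Minkowski and Plancherel.
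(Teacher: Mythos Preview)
Your proof is correct and essentially matches the paper's approach: items (1)--(3) are handled identically (Sobolev embedding, Hardy's inequality, Hardy--Littlewood maximal function), and item (5) uses the same integral representation $\pbp g = \int_0^1 t\,h(\bp+t(\ap-\bp),\bp)\,dt$ followed by Minkowski and a scaling change of variables. The only genuine difference is in (4): you use Plancherel directly, while the paper writes $\papabs = i\Hil\pap$, expresses $\papabs f$ via the kernel $\frac{f(\ap)-f(\bp)}{(\ap-\bp)^2}$, and obtains the identity by a symmetrization argument $\int\!\!\int \frac{(f(\ap)-f(\bp))\bar f(\bp)}{(\ap-\bp)^2}\,d\bp\,d\ap = -\|f\|_{\Hhalf}^2$; both routes yield the same constant $\frac{1}{2\pi}$.
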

\begin{proof}
\begin{enumerate}[leftmargin =*]
\item This is a standard Sobolev embedding result.
\item This is a consequence of Hardy's inequality. 
\item We see that
\begin{align*}
\sup_{\bp}\abs{\frac{f(\ap) - f(\bp)}{\ap - \bp}} \leq \sup_{\bp} \frac{\int_\ap^\bp \abs{f'(s)} \diff s}{\abs{\ap -\bp}} \leq M(f')(\ap)
\end{align*}
where $M$ is the uncentered Hardy Littlewood maximal operator. As the maximal operator is bounded on $\Ltwo$, the estimate follows.
\item Observe that as $\papabs = i\Hil\pap$ and $\Hil(1) =0$ we have
\begin{align*}
\norm[\Hhalf]{f}^2 & = -\frac{1}{\pi}\int \bar{f}(\ap) \pap\brac{\int \frac{f(\ap)-f(\bp)}{\ap-\bp}\diff \bp } \diff \ap \\
& = \frac{1}{\pi}\int \bar{f}(\ap) \brac{\int \frac{f(\ap)-f(\bp)}{(\ap-\bp)^2}\diff \bp } \diff \ap \\
& = \frac{1}{\pi}\int \!\!\! \int \abs{\frac{f(\ap)-f(\bp)}{\ap-\bp} }^2\diff \bp \diff\ap + \frac{1}{\pi}\int \!\!\! \int\frac{f(\ap)-f(\bp)}{(\ap-\bp)^2} \bar{f}(\bp) \diff \bp \diff\ap
\end{align*}
Now observe that 
\begin{align*}
 \frac{1}{\pi}\int \bar{f}(\ap) \brac{\int \frac{f(\ap)-f(\bp)}{(\ap-\bp)^2}\diff \bp } \diff \ap = -  \frac{1}{\pi}\int \!\!\! \int\frac{f(\ap)-f(\bp)}{(\ap-\bp)^2} \bar{f}(\bp) \diff \bp \diff\ap
\end{align*}
The identity now follows.
\item We see that
\begin{align*}
\pbp \brac{\frac{f(\ap)-f(\bp)}{(\ap-\bp)}} & = \frac{f(\ap)-f(\bp)}{(\ap-\bp)^2} - \frac{f'(\bp)}{\ap-\bp} \\
& = \int_0^1 \frac{f'(\bp + s(\ap-\bp))-f'(\bp)}{(\ap-\bp)}\diff s \\
& = \int_0^1 s\sqbrac{ \frac{f'(\bp + sl)-f'(\bp)}{sl}} \diff s \quad \tx{ using } \ap = \bp + l
\end{align*}
Hence we have
\begin{align*}
\norm[\Ltwo(\Rsp^2, \diff\ap\diff\bp)]{\pbp\brac{\frac{f(\ap) - f(\bp)}{\ap - \bp}} } & \lesssim \int_0^1 s\norm[\Ltwo(\Rsp^2, \diff\bp\diff l)]{ \frac{f'(\bp + sl)-f'(\bp)}{sl}} \diff s \\
& \lesssim \int_0^1 \sqrt{s} \norm[\Hhalf]{f'} \diff s \\
& \lesssim \norm[\Hhalf]{f'}
\end{align*}
\end{enumerate}
\end{proof}

\begin{prop}\label{prop:commutator}
Let $f,g \in \mathcal{S}(\Rsp)$ with $s,a\in \Rsp$ and $m,n \in \Zsp$. Then we have the following estimates
\begin{enumerate}
\item $\norm*[\big][2]{\papabs^s\sqbrac{f,\Hil}(\papabs^{a} g )} \lesssim  \norm*[\big][BMO]{\papabs^{s+a}f}\norm[2]{g}$ \quad  for $s,a \geq 0$

\item $\norm*[\big][2]{\papabs^s\sqbrac{f,\Hil}(\papabs^{a} g )} \lesssim  \norm*[\big][2]{\papabs^{s+a}f}\norm[BMO]{g}$ \quad  for $s\geq 0$ and $a>0$

\item $\norm*[\big][2]{\sqbrac*[\big]{f,\papabs^\half}g } \lesssim \norm*[\big][BMO]{\papabs^\half f}\norm[2]{g}$

\item $\norm*[\big][2]{\sqbrac*[\big]{f,\papabs^\half}(\papabs^\half g) } \lesssim \norm*[\big][BMO]{\papabs f}\norm[2]{g}$

\item $\norm[\Linfty\cap\Hhalf]{\pap^m\sqbrac{f,\Hil}\pap^n g} \lesssim \norm*[\big][2]{\pap^{(m+n+1)}f}\norm[2]{g}$ \quad  for $m,n \geq 0$

\item $\norm[2]{\partial_{\ap}^m\sqbrac{f,\Hil}\partial_{\ap}^n g} \lesssim \norm*[\infty]{\partial_\ap^{(m+n)} f}\norm[2]{g}$ \quad  for $m,n \geq 0$

\item $\norm[2]{\partial_{\ap}^m\sqbrac{f,\Hil}\partial_{\ap}^n g} \lesssim \norm*[2]{\partial_\ap^{(m+n)} f}\norm[\infty]{g}$ \quad for $m\geq 0$ and $n\geq 1$

\item $\norm[2]{\sqbrac{f,\Hil}g} \lesssim \norm[2]{f'}\norm[1]{g}$
\end{enumerate}
\end{prop}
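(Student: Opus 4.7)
I will tackle the eight estimates in three groups, starting with the most elementary integer-derivative cases and building up to the fractional BMO estimates.

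Item 8 is immediate: writing $[f,\Hil]g(x)=\frac{1}{i\pi}\int K(x,y)g(y)\,dy$ with $K(x,y)=\frac{f(x)-f(y)}{x-y}$ and applying Minkowski's integral inequality gives $\|[f,\Hil]g\|_2\le \int\|K(\cdot,y)\|_{L^2(dx)}\,|g(y)|\,dy$, where \propref{prop:Hardy}(2) bounds $\|K(\cdot,y)\|_{L^2(dx)}\lesssim \|f'\|_2$ uniformly in $y$. For items 6 and 7, I iterate the Leibniz identity $\pap[f,\Hil]h=[\pap f,\Hil]h+[f,\Hil]\pap h$ (which follows from $[\Hil,\pap]=0$) to obtain
\[
\pap^m[f,\Hil]\pap^n g=\sum_{j=0}^m\binom{m}{j}[\pap^j f,\Hil]\pap^{m+n-j}g.
\]
Each summand $[\pap^j f,\Hil]\pap^k g$ with $k\ge 1$ is treated by integrating by parts $k$ times; the resulting kernels are finite linear combinations of Calder\'on-commutator kernels, to which \propref{prop:Coifman}(1) applies (yielding item 6 with constant $\|\pap^{m+n}f\|_\infty$), while \propref{prop:Coifman}(2) yields item 7 with constant $\|\pap^{m+n}f\|_2\|g\|_\infty$. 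The hypothesis $n\ge 1$ in item 7 is precisely what avoids the borderline term $[\pap^{m+n}f,\Hil]g$, which would require an $L^\infty$-on-$g$ bound via $\Hil$ alone. For $k=0$ the estimate follows immediately from $L^2$-boundedness of $\Hil$.

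Item 5 is obtained from the above together with a half-derivative gain. The $L^\infty$ part uses the 1D interpolation $\|h\|_\infty^2\le 2\|h\|_2\|h'\|_2$ applied to $h=\pap^m[f,\Hil]\pap^n g$, reducing to item 6 estimates for $h$ and for $h'$; intermediate $L^\infty$ norms of derivatives of $f$ are converted into $L^2$ norms of top derivatives by \lemref{lem:interpolation}. The $\Hhalf$ part is obtained by a parallel expansion of $\papabs^{1/2}\pap^m[f,\Hil]\pap^n g$ into Calder\'on-commutator kernels plus one direct application of item 4 to handle the single fractional-derivative piece that cannot be integrated by parts.

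For the fractional estimates 1--4, the base case is the Coifman--Rochberg--Weiss theorem $\|[f,\Hil]g\|_2\lesssim \|f\|_{BMO}\|g\|_2$, corresponding to item 1 at $s=a=0$. To extend to arbitrary $s,a\ge 0$, I decompose $\papabs^s[f,\Hil]\papabs^a$ via Kato--Ponce-type fractional Leibniz manipulations, moving half-derivatives through products at the cost of commutators that re-absorb into the same class of operators. The fractional Sobolev boundedness of the resulting auxiliary operators comes from Lemari\'e's proposition \propref{prop:Lemarie} after a $T(1)$-correction (since $[f,\Hil](1)=f-\Hil f$ is nonzero but still lies in $BMO$). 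Item 2 then follows by duality from item 1 when $a>0$, and items 3--4 by specialization to $s=a=1/2$. The main obstacle throughout is verifying the kernel hypotheses of \propref{prop:Lemarie} when only the $BMO$ norm of $f$ is available (rather than Lipschitz regularity); this is the deepest technical point and ultimately requires a careful Littlewood--Paley decomposition or a $T(b)$-theorem argument to circumvent.
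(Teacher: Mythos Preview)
Your treatment of item 8 is fine and essentially matches the paper. But there are two genuine gaps elsewhere.

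\textbf{Items 6 and 7.} Your plan is to Leibniz-expand and then integrate by parts $k$ times in each summand $[\pap^j f,\Hil]\pap^k g$, claiming the result is a combination of Calder\'on commutators covered by \propref{prop:Coifman}. This does not work once $k\ge 2$. After $k$ integrations by parts in $y$ the kernel becomes
\[
\partial_y^k\!\left(\frac{f^{(j)}(x)-f^{(j)}(y)}{x-y}\right)=\int_0^1 (1-s)^k f^{(j+k+1)}\!\bigl((1-s)y+sx\bigr)\,ds,
\]
which is \emph{bounded}, not singular, and in particular is not of the form $\prod_i(A_i(x)-A_i(y))/(x-y)^{m+1}$ required by \propref{prop:Coifman}. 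A bounded kernel gives no $L^2\to L^2$ bound; and if instead you stop the integration by parts early, the intermediate terms such as $\int\frac{f^{(j)}(x)-f^{(j)}(y)}{(x-y)^2}\,g'(y)\,dy$ again fall outside the $C_1$/$C_2$ templates for the exponents you need. The paper's route is simpler: use the anticommutation $[f,\Hil]\Hil=-\Hil[f,\Hil]$ together with $\pap=-i\Hil\papabs$ to show $\|\pap^m[f,\Hil]\pap^n g\|_2=\|\papabs^m[f,\Hil]\papabs^n g\|_2$, and then read off items 6 and 7 directly from items 1 and 2.

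\textbf{Item 5, $L^\infty$ part.} Your interpolation $\|h\|_\infty^2\lesssim\|h\|_2\|h'\|_2$ combined with item 6 produces $\|\pap^{m+n}f\|_\infty\,\|\pap^{m+n+1}f\|_\infty\,\|g\|_2^2$. This cannot be converted to $\|\pap^{m+n+1}f\|_2^2\|g\|_2^2$: the Gagliardo--Nirenberg inequalities in \lemref{lem:interpolation} always require a \emph{lower}-order norm of $f$ to interpolate against, and none is available here. The paper instead uses the mean-value representation above directly: writing $\pap^m[f,\Hil]\pap^n g(x)=(-1)^n\int_0^1 s^m(1-s)^n\bigl(\int f^{(m+n+1)}((1-s)y+sx)\,g(y)\,dy\bigr)ds$ and applying Cauchy--Schwarz in $y$ gives the $L^\infty$ bound with exactly $\|f^{(m+n+1)}\|_2\|g\|_2$. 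For the $\Hhalf$ half of item 5 the paper simply invokes item 1 with $s=m+\tfrac12$, $a=n$, followed by $\|\papabs^{m+n+1/2}f\|_{BMO}\lesssim\|\pap^{m+n+1}f\|_2$; your route through item 4 is more circuitous.

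For items 1--4 the paper, like you, defers to the literature (paraproduct/Kato--Ponce arguments in \cite{HuIfTa16} and \cite{Li19}); your sketch via \propref{prop:Lemarie} cannot be made to work as stated since the kernel hypotheses there need Lipschitz control on $f$, not merely $\papabs^{s+a}f\in BMO$, a point you yourself flag. Also, a minor slip: $[f,\Hil](1)=f\cdot\Hil 1-\Hil f=-\Hil f$, not $f-\Hil f$.
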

\begin{proof}
The first four estimates are all variants of the Kato Ponce commutator estimate and are proved using the paraproduct decomposition. See Lemma 2.1 in \cite{HuIfTa16} for the first two estimates and Theorem 1.2 in \cite{Li19} for the third and fourth estimates. The fourth estimate is not explicitly stated as part of Theorem 1.2 in \cite{Li19} however the proof is identical to the proof of estimate 3 with the only change being at the last step where you move half a derivative from $g$ to $f$. 

The $\Hhalf$ estimate of the fifth estimate follows from the first estimate. For the $\Linfty$ estimate note that 
\begin{align*}
\pap^m\sqbrac{f,\Hil}\pap^n g &= \pap^m \int \frac{f(\ap) -f(\bp)}{\ap-\bp} \pbp^n g(\bp) \diff \bp \\
& = \pap^m \int \!\!\!\int_0^1 f'((1-s)\bp + s\ap)\pbp^n g(\bp) \diff s \diff \bp \\
& = (-1)^n\int_0^1 s^m (1-s)^n \brac{\int  f^{(m+n+1)}((1-s)\bp + s\ap) g(\bp)\diff \bp} \diff s
\end{align*}
The estimate now follows from the Cauchy Schwarz inequality. The sixth and seventh estimates follow from the first two estimates. For the last estimate observe that
\begin{align*}
\abs{\sqbrac{f,\Hil}g}(\ap) \lesssim  \int \abs{\frac{f(\ap) -f(\bp)}{\ap-\bp}} \abs{g(\bp)}^\half\abs{g(\bp)}^\half  \diff \bp \lesssim \brac{\int  \abs{\frac{f(\ap) -f(\bp)}{\ap-\bp}}^2 \abs{g(\bp)} \diff \bp}^\half \norm[1]{g}^\half
\end{align*}
The estimate now follows from Hardy's inequality as stated in \propref{prop:Hardy}. 
\end{proof}

\begin{prop}\label{prop:Leibniz}
Let $f,g,h \in \mathcal{S}(\Rsp)$ with $s,a\in \Rsp$ and $m,n \in \Zsp$. Then we have the following estimates
\begin{enumerate}
\item $\norm[2]{\papabs^s (fg)} \lesssim  \norm[2]{\papabs^s f}\norm[\infty]{g} + \norm[\infty]{f}\norm[2]{\papabs^s g}$ \quad for $s > 0$
\item $\norm[\Hhalf]{fg} \lesssim \norm[\Hhalf]{f}\norm[\infty]{g} + \norm[\infty]{f}\norm[\Hhalf]{g}$
\item $\norm[\Hhalf]{fg} \lesssim \norm[2]{f'}\norm[2]{g} + \norm[\infty]{f}\norm[\Hhalf]{g}$
\end{enumerate}
\end{prop}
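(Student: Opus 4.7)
The plan is to prove all three estimates by exploiting the intrinsic Gagliardo-type representation of fractional Sobolev norms together with the product-difference splitting
\begin{align*}
(fg)(\ap) - (fg)(\bp) = \bigl(f(\ap) - f(\bp)\bigr)\,g(\ap) + f(\bp)\bigl(g(\ap) - g(\bp)\bigr).
\end{align*}

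For part (2), I would start from the identity $\norm[\Hhalf]{fg}^2 = \frac{1}{2\pi}\int\!\!\int \abs{\frac{(fg)(\ap)-(fg)(\bp)}{\ap-\bp}}^2 d\bp\, d\ap$ in \propref{prop:Hardy}(4). Squaring out the splitting and using the triangle inequality produces two double integrals. In the first, pull $\norm[\infty]{g}^2$ outside the $\bp$ integral and apply \propref{prop:Hardy}(4) to recognize the remainder as $\norm[\Hhalf]{f}^2$; in the second, do the same with $\norm[\infty]{f}^2$ and $\norm[\Hhalf]{g}^2$. Together these yield (2).

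For part (3) I would use the same identity and splitting. The second integral is still bounded by $\norm[\infty]{f}^2\norm[\Hhalf]{g}^2$ as in (2). For the first,
\begin{align*}
\int |g(\ap)|^2 \left( \int \abs{\frac{f(\ap) - f(\bp)}{\ap - \bp}}^2 d\bp \right) d\ap,
\end{align*}
I apply Hardy's inequality (\propref{prop:Hardy}(2)) to the inner integral, using that $\bp \mapsto f(\bp) - f(\ap)$ vanishes at $\bp = \ap$; this gives the uniform-in-$\ap$ bound $\norm[2]{f'}^2$, after which the outer integral contributes only $\norm[2]{g}^2$, producing $\norm[2]{f'}\norm[2]{g}$.

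For part (1) with $0 < s < 1$, I would use the analogous Gagliardo characterization
\begin{align*}
\norm[2]{\papabs^s f}^2 \sim \int\!\!\int \frac{|f(\ap) - f(\bp)|^2}{|\ap - \bp|^{1+2s}} d\ap\, d\bp,
\end{align*}
and repeat the argument of (2) verbatim. For $s \geq 1$, write $s = k + \sigma$ with $k \in \Nsp$ and $\sigma \in [0,1)$, expand $\pap^k(fg)$ by the classical Leibniz rule, and apply the $\sigma$-case (or the pure integer case when $\sigma = 0$) to each resulting product $\pap^j f \cdot \pap^{k-j} g$. The intermediate terms with $1 \le j \le k-1$ are absorbed into $\norm[2]{\papabs^s f}\norm[\infty]{g} + \norm[\infty]{f}\norm[2]{\papabs^s g}$ via the Gagliardo-Nirenberg interpolation estimates of \lemref{lem:interpolation}. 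The main obstacle is precisely this $s \ge 1$ reduction: the interpolation needed to absorb the intermediate products is the substance of the classical Kato-Ponce inequality, and one could alternatively invoke it directly from the literature (along the lines used in the proof of \propref{prop:commutator}) rather than carry out the reduction by hand.
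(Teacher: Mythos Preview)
Your arguments for (2) and (3) are correct and take a genuinely different route from the paper. The paper simply cites Kato--Ponce \cite{KaPo88} for (1), records (2) as the special case $s=\tfrac12$, and for (3) writes $\papabs^\half(fg)=\sqbrac{\papabs^\half,f}g+f\papabs^\half g$ and invokes the commutator estimate $\norm[2]{\sqbrac{\papabs^\half,f}g}\lesssim\norm[BMO]{\papabs^\half f}\norm[2]{g}$ from \propref{prop:commutator}, followed by the embedding $\norm[BMO]{\papabs^\half f}\lesssim\norm[2]{f'}$. Your approach via the Gagliardo double-integral representation (\propref{prop:Hardy}(4)) and Hardy's inequality (\propref{prop:Hardy}(2)) is more self-contained: it avoids both the Kato--Ponce black box for (2) and the nontrivial commutator bound for (3), at the cost of being specific to $s=\tfrac12$. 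The paper's route for (3), on the other hand, shows more structure (the commutator is the only obstruction) and generalises readily to other fractional orders.

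For (1) you end up in the same place as the paper: the $0<s<1$ case is fine via the Gagliardo kernel $|\ap-\bp|^{-1-2s}$, but your $s\ge1$ reduction does not quite go through with the interpolation inequalities actually recorded in \lemref{lem:interpolation}, which interpolate between $L^2$ norms rather than between $\norm[\infty]{g}$ and $\norm[2]{\papabs^s g}$. One needs the full Gagliardo--Nirenberg family (mixed $L^p$ endpoints) together with H\"older to absorb the intermediate products, and that is exactly the content of Kato--Ponce. You correctly flag this and propose citing the literature, which is precisely what the paper does.
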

\begin{proof}
See \cite{KaPo88} for the first estimate. The second one is a special case of the first. For the third one observe that
\begin{align*}
\papabs^\half (fg) = \sqbrac*{\papabs^\half,f}g + f\papabs^\half g
\end{align*}
and hence from \propref{prop:commutator}
\begin{align*}
\norm[\Hhalf]{ fg} \lesssim \norm*[BMO]{\papabs^\half f}\norm[2]{g} + \norm[\infty]{f}\norm[\Hhalf]{g}
\lesssim \norm[2]{f'}\norm[2]{g} + \norm[\infty]{f}\norm[\Hhalf]{g}
\end{align*}
\end{proof}

\begin{prop}\label{prop:triple}
Let $f,g,h \in \mathcal{S}(\Rsp)$ . Then we have the following estimates
\begin{enumerate}

\item $\norm[2]{\sqbrac{f,g;h}} \lesssim \norm[2]{f'}\norm[2]{g'}\norm[2]{h}$

\item $\norm[2]{\pap\sqbrac{f,\sqbrac{g,\Hil}}h} \lesssim \norm[2]{f'}\norm[2]{g'}\norm[2]{h}$

\item $\norm[2]{\sqbrac{f,g; h'}} \lesssim \norm[\infty]{f'}\norm[\infty]{g'}\norm[2]{h}$

\item $\norm[\Hhalf]{\sqbrac{f,g; h'}} \lesssim \norm[\infty]{f'}\norm[\infty]{g'}\norm[\Hhalf]{h}$

\item $\norm[\Linfty\cap\Hhalf]{\sqbrac{f,g;h}} \lesssim \norm[\infty]{f'}\norm[2]{g'}\norm[2]{h}$

\end{enumerate}
\end{prop}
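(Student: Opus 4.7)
Plan: I will handle the five estimates using the toolkit from \propref{prop:Coifman}, \propref{prop:Lemarie}, \propref{prop:Hardy}, \propref{prop:commutator}, and \propref{prop:Leibniz}, in an order that builds on itself. For estimate (1), I apply Cauchy--Schwarz in the $\bp$-integration to separate $\frac{f(\ap)-f(\bp)}{\ap-\bp}$ from $\frac{g(\ap)-g(\bp)}{\ap-\bp}h(\bp)$, use Hardy's inequality (\propref{prop:Hardy}(2)) to bound $\int\abs{(f(\ap)-f(\bp))/(\ap-\bp)}^2\,d\bp\lesssim\norm[2]{f'}^2$ uniformly in $\ap$, then integrate in $\ap$ via Fubini and invoke Hardy a second time in the $\ap$-variable for fixed $\bp$ to extract $\norm[2]{g'}^2\norm[2]{h}^2$. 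Estimate (3) is a direct application of the Coifman--McIntosh--Meyer theorem (\propref{prop:Coifman}, estimate 3) with $F\equiv 1$, $H\equiv 0$, $m=2$, $A_1=f$, $A_2=g$, under which $\sqbrac{f,g;h'}$ is precisely the $C_2$-integrand.

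For estimates (2) and (4) the key is the algebraic identity
\begin{equation*}
\pap \sqbrac{g,\sqbrac{f,\Hil}}h \;=\; f'\,\sqbrac{g,\Hil}h + g'\,\sqbrac{f,\Hil}h - \sqbrac{f,g;h},
\end{equation*}
obtained by $\ap$-differentiating the double-commutator kernel $(f(\ap)-f(\bp))(g(\ap)-g(\bp))/(\ap-\bp)$. Estimate (2) then follows by bounding each right-hand term: $\norm[2]{g'\sqbrac{f,\Hil}h}\leq\norm[2]{g'}\norm[\infty]{\sqbrac{f,\Hil}h}\lesssim\norm[2]{f'}\norm[2]{g'}\norm[2]{h}$ using \propref{prop:commutator}(5), symmetrically for the $f'$-term, and $\sqbrac{f,g;h}$ by estimate (1). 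For estimate (4), I apply \propref{prop:Lemarie} to the linear operator $Th:=\sqbrac{f,g;h'}$: after one integration by parts in $\bp$ its kernel $\tilde K$ satisfies $\abs{\tilde K}\leq C/\abs{\ap-\bp}$ and $\abs{\pap\tilde K}\leq C/\abs{\ap-\bp}^2$ with $C\lesssim\norm[\infty]{f'}\norm[\infty]{g'}$ by bounding each difference quotient with its $L^\infty$ norm; $L^2$-boundedness with this constant is exactly estimate (3); and $T(1)=\sqbrac{f,g;0}=0$ trivially.

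For estimate (5), the $L^\infty$ bound is immediate by extracting $\abs{(f(\ap)-f(\bp))/(\ap-\bp)}\leq\norm[\infty]{f'}$ pointwise followed by Cauchy--Schwarz with Hardy on the remaining factor. For the $\Hhalf$ part, I use the characterization in \propref{prop:Hardy}(4) together with the symmetric split
\begin{equation*}
\sqbrac{f,g;h}(\ap)-\sqbrac{f,g;h}(\gamma) \;=\; \Delta_1 + \Delta_2,
\end{equation*}
where $\Delta_1=\frac{1}{2i\pi}\int(A(\ap,\bp)+A(\gamma,\bp))(B(\ap,\bp)-B(\gamma,\bp))h(\bp)\,d\bp$, with $A(x,\bp):=(f(x)-f(\bp))/(x-\bp)$ and $B(x,\bp)$ defined analogously from $g$, and $\Delta_2$ obtained by moving the difference from $B$ to $A$. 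For $\Delta_1$, the bound $\abs{A(\ap,\bp)+A(\gamma,\bp)}\leq 2\norm[\infty]{f'}$, Cauchy--Schwarz in $\bp$, Fubini, \propref{prop:Hardy}(4) in the $\ap$-variable, and the Fourier--Plancherel identity
\begin{equation*}
\int \norm[\Hhalf(d\ap)]{B(\cdot,\bp)}^2\,d\bp \;=\; \pi\,\norm[2]{g'}^2,
\end{equation*}
proved by writing $B(\ap,\bp)=\int_0^1 g'(\bp+t(\ap-\bp))\,dt$, computing $\widehat{B}(\xi,\bp)$, integrating in $\bp$ to produce a $\delta(s-t)$ whose Jacobian $t^2/\abs{\xi}$ exactly cancels the $\abs{\xi}$-weight of $\Hhalf$, and changing variables $\eta=\xi/t$, gives the $\Delta_1$ contribution to $\norm[\Hhalf]{\sqbrac{f,g;h}}^2$ bounded by $\norm[\infty]{f'}^2\norm[2]{g'}^2\norm[2]{h}^2$.

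The main obstacle will be $\Delta_2$: the naive symmetric argument would require $\norm[\Hhalf(d\ap)]{A(\cdot,\bp)}\lesssim\norm[\infty]{f'}$ uniformly in $\bp$, but this fails outright (take $f(\ap)=\abs{\ap}$, $\bp=0$: then $A(\cdot,0)=\operatorname{sgn}\notin\Hhalf$), so the Plancherel trick cannot be applied to $\Delta_2$ as is. My plan to circumvent this is to exploit the double vanishing of the numerator $(f(\ap)-f(\bp))(\gamma-\bp)-(f(\gamma)-f(\bp))(\ap-\bp)$ of $A(\ap,\bp)-A(\gamma,\bp)$ at $\bp=\ap$ and $\bp=\gamma$: rewriting this difference as a second divided difference times $(\ap-\gamma)$ and integrating by parts in $\bp$ will transfer a derivative off the $A$-difference onto either $h$ or the $(B(\ap,\bp)+B(\gamma,\bp))$ factor, after which the resulting pieces reorganize into Coifman--McIntosh--Meyer/Lemarie-type operators of the form already handled in estimates (3) and (4), now exploiting the asymmetric roles of $f$ (Lipschitz) and $g$ (Sobolev) to recover the stated bound.
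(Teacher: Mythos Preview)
Your treatment of (1)--(4) and the $L^\infty$ half of (5) matches the paper's. The divergence is in the $\Hhalf$ part of (5), and there your $\Delta_2$ step is a genuine gap.

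Your $\Delta_1$ argument is correct (the Plancherel identity $\int\norm[\Hhalf(d\ap)]{B(\cdot,\bp)}^2\,d\bp=\pi\norm[2]{g'}^2$ checks out), but the plan for $\Delta_2$ does not. Integrating by parts in $\bp$ against the second divided difference $f[\ap,\gamma,\bp]$ would land a $\pbp$ on either $h$ or on $B(\ap,\bp)+B(\gamma,\bp)$. In the first case you are driven toward $[f,g;h']$-type terms, but those are controlled in (3)--(4) only under $g'\in L^\infty$, not $g'\in L^2$; in the second case you produce $\pbp B(\ap,\bp)$, whose natural $L^2(d\ap\,d\bp)$ bound from \propref{prop:Hardy}(5) costs $\norm[\Hhalf]{g'}$, again not $\norm[2]{g'}$. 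Neither branch recovers the asymmetric $\norm[\infty]{f'}\norm[2]{g'}\norm[2]{h}$ bound, and the ``reorganize into Coifman/Lemarie operators'' claim is not substantiated.

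The paper avoids the symmetric split entirely. Writing $F(a,b)=\frac{f(a)-f(b)}{a-b}$ and $G$ likewise, it uses the asymmetric telescoping
\[
F(\ap,s)G(\ap,s)-F(\bp,s)G(\bp,s)=\bigl(F(\ap,s)-F(\bp,s)\bigr)G(\ap,s)+F(\bp,s)\bigl(G(\ap,s)-G(\bp,s)\bigr)
\]
together with the partial-fraction identity
\[
\frac{F(\ap,s)-F(\bp,s)}{\ap-\bp}=\frac{F(\ap,\bp)-F(\ap,s)}{\bp-s},
\]
which trades the $(\ap-\bp)^{-1}$ denominator for a Hilbert kernel $(\bp-s)^{-1}$ (and similarly $(\ap-s)^{-1}$ for the $G$-difference). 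This yields exactly four terms, each a Hilbert transform in one of the outer variables acting on a product like $F(\ap,\cdot)G(\ap,\cdot)h(\cdot)$. Each is then bounded in $L^2(d\ap\,d\bp)$ by combining $\abs{F}\le\norm[\infty]{f'}$ with the maximal Hardy inequality \propref{prop:Hardy}(3),
\[
\norm[L^2(d\ap)]{\sup_{\bp}\abs{G(\ap,\bp)}}\lesssim\norm[2]{g'},
\]
and the $L^2$-boundedness of $\Hil$. No Fourier transform, no integration by parts. If you want to repair your argument, the move to make is to apply this same partial-fraction identity to $\frac{A(\ap,\bp)-A(\gamma,\bp)}{\ap-\gamma}$ inside $\Delta_2$; that immediately produces the Hilbert-transform structure and closes the estimate.
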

\begin{proof}
\begin{enumerate}[leftmargin = *]
\item We see that
\begin{align*}
& \abs{\sqbrac{f,g;h}}(\ap) \\
& \lesssim  \int \abs{\frac{f(\ap) -f(\bp)}{\ap-\bp}} \abs{\frac{g(\ap) -g(\bp)}{\ap-\bp}}\abs{h(\bp)}\diff\bp \lesssim \norm[2]{f'} \brac{\int  \abs{\frac{g(\ap) -g(\bp)}{\ap-\bp}}^2\abs{h(\bp)}^2 \diff \bp  }^\half
\end{align*}
The estimate now follows from Hardy's inequality.

\item We see that
\begin{align*}
& \pap\sqbrac{f,\sqbrac{g,\Hil}}h \\
& = \pap\brac{f\sqbrac{g,\Hil}h - \sqbrac{g,\Hil}fh } \\
& = \frac{1}{i\pi}\pap\int \frac{(g(\ap) -g(\bp))(f(\ap)-f(\bp))}{\ap-\bp} h(\bp) \diff \bp \\
& = -\frac{1}{i\pi}\int \frac{g(\ap) -g(\bp)}{\ap-\bp}\frac{f(\ap) -f(\bp)}{\ap-\bp}h(\bp)\diff\bp + g'(\ap)\brac{\frac{1}{i\pi}\int \frac{f(\ap) -f(\bp)}{\ap-\bp} h(\bp) \diff\bp} \\
& \quad + f'(\ap) \brac{\frac{1}{i\pi}\int \frac{g(\ap) -g(\bp)}{\ap-\bp} h(\bp) \diff\bp}
\end{align*}
The estimate now follows by previous estimates.

\item This is a special case of  \propref{prop:Coifman}

\item  From the third estimate we observe that the operator $T$ defined by the action $h \mapsto \sqbrac{f,g;h'}$ is bounded on $\Ltwo$. Also we clearly see that $T(1) =0$. It is also easy to see that the kernel of this operator satisfies the conditions for \propref{prop:Lemarie}. Hence the operator $T$ is bounded on $\Hhalf$. 

\item The $\Linfty$ estimate is obtained easily by an application of Cauchy Schwarz and Hardy's inequality. Now we use $\norm[\Hhalf]{f} \lesssim \norm[\Ltwo(\Rsp\times\Rsp, \diff\ap\diff\bp)]{\frac{f(\ap)-f(\bp)}{\ap-\bp}}$ and see that
\begin{align*}
\frac{\sqbrac{f,g;h}(\ap) - \sqbrac{f,g;h}(\bp)}{\ap - \bp} & = \frac{1}{i\pi} \int \frac{h(s)}{\ap-\bp}\sqbrac{\frac{f(\ap)-f(s)}{\ap-s} - \frac{f(\bp)-f(s)}{\bp-s} }\frac{g(\ap)-g(s)}{\ap-s} \diff s \\
& \quad +  \frac{1}{i\pi} \int \frac{h(s)}{\ap-\bp}\sqbrac{\frac{g(\ap)-g(s)}{\ap-s} - \frac{g(\bp)-g(s)}{\bp-s} }\frac{f(\bp)-f(s)}{\bp-s} \diff s
\end{align*}
Now we use the following notation to simplify the calculation
\begin{align*}
F(a,b) = \frac{f(a) -f(b)}{a-b} \quad \tx{ and } G(a,b) = \frac{g(a)-g(b)}{a-b}
\end{align*}
Hence we have
\begin{align*}
\frac{\sqbrac{f,g;h}(\ap) - \sqbrac{f,g;h}(\bp)}{\ap - \bp}  & = -\frac{1}{i\pi}\int \frac{h(s)}{\bp-s}F(\ap,s)G(\ap,s) \diff s + \frac{1}{i\pi}\int \frac{h(s)}{\bp-s}F(\ap,\bp)G(\ap,s) \diff s \\
& \quad + \frac{1}{i\pi}\int \frac{h(s)}{\ap-s}F(\bp,s)G(\ap,\bp) \diff s - \frac{1}{i\pi}\int \frac{h(s)}{\ap-s}F(\bp,s)G(\bp,s) \diff s \\
& = - \Hil(F(\ap,\cdot)G(\ap,\cdot)h(\cdot))(\bp) + F(\ap,\bp)\Hil(G(\ap,\cdot)h(\cdot))(\bp) \\
& \quad + G(\ap,\bp)\Hil(F(\bp,\cdot)h(\cdot))(\ap) - \Hil(F(\bp,\cdot)G(\bp,\cdot)h(\cdot))(\ap)
\end{align*}
and we see that
\begin{align*}
\norm[\Ltwo(\Rsp\times\Rsp, \diff\ap\diff\bp)]{\Hil(F(\ap,\cdot)G(\ap,\cdot)h(\cdot))(\bp) } & \lesssim \norm[\Ltwo(\Rsp,\diff \ap)]{\norm[\Ltwo(\Rsp,\diff \bp)]{F(\ap,\bp)G(\ap,\bp)h(\bp) }} \\
& \lesssim \norm[\infty]{f'}\norm[2]{h}\norm[\Ltwo(\Rsp,\diff \ap)]{\norm[\Linfty(\Rsp,\diff \bp)]{G(\ap,\bp)} } \\
& \lesssim  \norm[\infty]{f'}\norm[2]{g'}\norm[2]{h}
\end{align*}
The other terms are handled similarly.
\end{enumerate}
\end{proof}

\begin{prop} \label{prop:LinftyHhalf}
Let $f \in \mathcal{S}(\Rsp)$ and let $w$ be a smooth non-zero weight with $w,\frac{1}{w} \in \Linfty(\Rsp) $ and $w' \in \Ltwo(\Rsp)$. Then 
\begin{enumerate}
\item $\norm[\infty]{f}^2 \lesssim \norm[2]{\frac{f}{w}}\norm[2]{wf'}$
\item $\norm[\Linfty\cap\Hhalf]{f}^2 \lesssim \norm[2]{\frac{f}{w}}\norm[2]{(wf)'} +  \norm[2]{\frac{f}{w}}^2\norm[2]{w'}^2$
\end{enumerate}
\end{prop}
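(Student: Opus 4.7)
The plan is a three-step bootstrap: prove part (1) by the fundamental theorem of calculus, use it to obtain the $\Linfty$ half of part (2), and then combine with the elementary Sobolev interpolation $\norm[\Hhalf]{f}^2 \leq \norm[2]{f}\norm[2]{f'}$ to get the $\Hhalf$ half. The only real task will be the careful use of Young's inequality to split cross terms of the form $\norm[2]{f/w}\norm[2]{w'}\norm[\infty]{f}$ into a piece absorbable on the left and a piece of the shape $\norm[2]{f/w}^2\norm[2]{w'}^2$ on the right.

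For part (1), since $f \in \Scalsp(\Rsp)$ vanishes at $-\infty$, I would write $f^2(\ap) = 2\int_{-\infty}^\ap f(s)\,f'(s) \diff s$, so $\norm[\infty]{f}^2 \leq 2\int |f\,f'| \diff s$. Writing the integrand as $(f/w)(wf')$ and applying Cauchy--Schwarz then yields $\norm[\infty]{f}^2 \leq 2\norm[2]{f/w}\norm[2]{wf'}$.

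For the $\Linfty$ portion of part (2), I would expand $wf' = (wf)' - w'f$ so that $\norm[2]{wf'} \leq \norm[2]{(wf)'} + \norm[2]{w'}\norm[\infty]{f}$. Inserting this into part (1) produces a cross term $\norm[2]{f/w}\norm[2]{w'}\norm[\infty]{f}$, which Young's inequality dominates by $\half\norm[\infty]{f}^2 + C\norm[2]{f/w}^2\norm[2]{w'}^2$; absorbing $\half\norm[\infty]{f}^2$ into the left-hand side gives the stated bound.

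For the $\Hhalf$ portion, I would use $\norm[\Hhalf]{f}^2 \leq \norm[2]{f}\norm[2]{f'}$ (immediate from Plancherel and Cauchy--Schwarz on the factorisation $|\xi| = |\xi|^{1/2}\cdot|\xi|^{1/2}$). Combine with the pointwise bounds $\norm[2]{f} \leq \norm[\infty]{w}\norm[2]{f/w}$ and $\norm[2]{f'} \leq \norm[\infty]{1/w}\brac{\norm[2]{(wf)'} + \norm[2]{w'}\norm[\infty]{f}}$, where the finite factor $\norm[\infty]{w}\norm[\infty]{1/w}$ is absorbed into the $\lesssim$ constant by hypothesis. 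Substituting the $\Linfty$ bound just proved, in the form $\norm[\infty]{f} \lesssim \brac{\norm[2]{f/w}\norm[2]{(wf)'}}^{1/2} + \norm[2]{f/w}\norm[2]{w'}$, and applying Young's inequality one more time to the resulting $\norm[2]{f/w}\norm[2]{w'}\norm[\infty]{f}$ cross term finishes the proof. There is no substantial obstacle here: the argument is essentially bookkeeping, and the only step that must be handled carefully is ensuring that every successive Young's inequality splits cleanly into a piece absorbable on the left and a piece already of the shape present on the right-hand side of the statement.
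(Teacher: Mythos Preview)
Your arguments for part (1) and for the $\Linfty$ half of part (2) are correct and essentially identical to the paper's. The gap is in the $\Hhalf$ half of part (2).

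The implicit constant in this proposition must be \emph{universal}, i.e.\ independent of $\norm[\infty]{w}$ and $\norm[\infty]{1/w}$. This is exactly how the proposition is used throughout \secref{sec:quantEsigma}: the weights are things like $w = \sigma^{1/6}/\Zapabs^{1/2}$ or $w = 1/\Zapabs$, and the whole point of the weighted energy is that $\norm[\infty]{\Zapabs}$ and $\norm[\infty]{1/\Zapabs}$ are \emph{not} controlled by $\Esigma$ (they blow up at angled crests). The hypothesis $w,1/w\in\Linfty$ is purely qualitative, ensuring that the quantities make sense for smooth solutions; it cannot enter the constant. Your step $\norm[2]{f}\leq\norm[\infty]{w}\norm[2]{f/w}$ and $\norm[2]{f'}\leq\norm[\infty]{1/w}\norm[2]{wf'}$ therefore proves a strictly weaker statement than what is claimed and needed.

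The paper avoids this by never passing through $\norm[2]{f}$ or $\norm[2]{f'}$ separately. Instead one writes
\[
\norm[\Hhalf]{f}^2 = \int \bar f\,(i\Hil\pap f)\,\diff\ap = \int \frac{\bar f}{w}\,\bigl(w\,\Hil f'\bigr)\,\diff\ap \leq \norm[2]{f/w}\norm[2]{w\Hil f'},
\]
inserting the weight \emph{inside} the pairing. Then $w\Hil f' = [w,\Hil]f' + \Hil(wf')$, and the commutator estimate $\norm[2]{[w,\Hil]\pap f}\lesssim\norm[2]{w'}\norm[\infty]{f}$ from \propref{prop:commutator} gives $\norm[2]{w\Hil f'}\lesssim\norm[2]{w'}\norm[\infty]{f}+\norm[2]{wf'}$ with a universal constant. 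From there the argument proceeds exactly as in your $\Linfty$ step. The commutator trick is the missing idea; your interpolation $\norm[\Hhalf]{f}^2\leq\norm[2]{f}\norm[2]{f'}$ is too crude because it factors the weight out instead of threading it through the Hilbert transform.
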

\begin{proof}
1) We see that
\begin{align*}
\pap(f^2) = 2\brac{\frac{f}{w}}(wf')
\end{align*}
Now we integrate and use Cauchy Schwarz to get the estimate. 

\noindent 2) The $\Linfty$ estimate is obtained from the first estimate by observing that
\begin{align*}
\norm[\infty]{f}^2 \lesssim \norm[2]{\frac{f}{w}}\norm[2]{wf'} \lesssim \norm[2]{\frac{f}{w}}\norm[2]{(wf)'} + \norm[2]{\frac{f}{w}}\norm[2]{w'}\norm[\infty]{f}
\end{align*}
Now use the inequality $ab \leq \frac{a^2}{2\ep} + \frac{\ep b^2}{2}$ on the last term to obtain the estimate. For the $\Hhalf $ estimate, using $\papabs = i\Hil\pap$ we see that
\begin{align*}
\norm[\Hhalf]{f}^2 \lesssim  \abs{\int \brac{\frac{\bar{f}}{w}}\brac{w\Hil f'} \diff \ap } \lesssim \norm[2]{\frac{f}{w}}\norm[2]{w\Hil f'}
\end{align*}
Now as $w\Hil f' = \sqbrac{w,\Hil}f' + \Hil(wf')$ we have
\begin{align*}
\norm[\Hhalf]{f}^2 \lesssim \norm[2]{\frac{f}{w}}\brac{\norm[2]{w'}\norm[\infty]{f} + \norm[2]{wf'} } \lesssim \norm[2]{\frac{f}{w}}\norm[2]{w'}\norm[\infty]{f} + \norm[2]{\frac{f}{w}}\norm[2]{(wf)'}
\end{align*}
Hence using the inequality $ab \leq \frac{a^2}{2} + \frac{b^2}{2}$, we see that
\begin{align*}
\norm[\Hhalf]{f}^2  \lesssim  \norm[2]{\frac{f}{w}}\norm[2]{(wf)'} +  \norm[2]{\frac{f}{w}}^2\norm[2]{w'}^2 + \norm[\infty]{f}^2  \lesssim \norm[2]{\frac{f}{w}}\norm[2]{(wf)'} +  \norm[2]{\frac{f}{w}}^2\norm[2]{w'}^2
\end{align*}
\end{proof}

\begin{prop}\label{prop:Hhalfweight}
Let $f,g \in \mathcal{S}(\Rsp)$ and let $w,h \in \Linfty(\Rsp)$ be smooth functions with $w',h' \in \Ltwo(\Rsp)$. Then 
\begin{align*}
\norm[\Hhalf]{fwh} \lesssim \norm[\Hhalf]{fw}\norm[\infty]{h} + \norm[2]{f}\norm[2]{(wh)'} + \norm[2]{f}\norm[2]{w'}\norm[\infty]{h}
\end{align*}
If in addition we assume that $w$ is real valued then
\begin{align*}
\norm[2]{fgw} \lesssim \norm[\Hhalf]{fw}\norm[2]{g} + \norm[\Hhalf]{gw}\norm[2]{f} + \norm[2]{f}\norm[2]{g}\norm[2]{w'} 
\end{align*}
\end{prop}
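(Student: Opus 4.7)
Plan. Both bounds will be derived from the double-integral characterization of the homogeneous Sobolev seminorm,
\[
\norm[\Hhalf]{F}^2 \;\approx\; \int\!\!\int \abs{\frac{F(\ap)-F(\bp)}{\ap-\bp}}^2 \diff\ap\diff\bp \qquad \text{(\propref{prop:Hardy}, part 4),}
\]
together with the pointwise Hardy bound
\[
\int \abs{\frac{f(\ap)-f(\bp)}{\ap-\bp}}^2 \diff\bp \lesssim \norm[2]{f'}^2 \qquad \text{(\propref{prop:Hardy}, part 2).}
\]

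For the first estimate, I would perform two successive Leibniz-type splittings of $fwh(\ap)-fwh(\bp)$. First split around $(fw)$:
\[
fwh(\ap)-fwh(\bp) = [(fw)(\ap)-(fw)(\bp)]\,h(\bp) + (fw)(\ap)\,[h(\ap)-h(\bp)].
\]
Then, inside the second piece, pull the $w(\ap)$ factor next to the $h$-difference and use
\[
w(\ap)[h(\ap)-h(\bp)] = [(wh)(\ap)-(wh)(\bp)] - [w(\ap)-w(\bp)]\,h(\bp),
\]
which produces a three-term decomposition of $fwh(\ap)-fwh(\bp)$. Dividing by $\ap-\bp$ and taking $\Ltwo(\diff\ap\diff\bp)$ norms, the three resulting integrands are controlled, respectively, by $\norm[\infty]{h}\norm[\Hhalf]{fw}$, $\norm[2]{f}\norm[2]{(wh)'}$, and $\norm[\infty]{h}\norm[2]{f}\norm[2]{w'}$, where each of the last two arises by applying Hardy's inequality in the $\bp$ variable with $\ap$ held fixed.

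For the second estimate, I would first use the real-valuedness of $w$ to rewrite
\[
\norm[2]{fgw}^2 = \int (fw)(\ap)\,\overline{(fw)(\ap)}\,\abs{g(\ap)}^2 \diff\ap,
\]
and then introduce a second variable $\bp$ via an averaging kernel. Applying the same Leibniz-type splitting to the factor $\abs{fw(\ap)}^2$ and using the identity
\[
\abs{fw(\ap)}^2 - \abs{fw(\bp)}^2 = 2\Real\bigl[\overline{fw(\bp)}\,(fw(\ap)-fw(\bp))\bigr] + \abs{fw(\ap)-fw(\bp)}^2,
\]
the difference terms produce factors of $(fw(\ap)-fw(\bp))/(\ap-\bp)$ controlled by $\norm[\Hhalf]{fw}$ times $\norm[2]{g}$, with $w$-differences contributing the $\norm[2]{w'}$-error via Hardy; the leftover "non-differenced" piece, after the $\bp$-average, is bounded symmetrically by $\norm[\Hhalf]{gw}\norm[2]{f}$ after relabeling.

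The main obstacle will be the second estimate: neither $fw$ nor $gw$ lies in $\Linfty$ (only in BMO by Sobolev embedding), so the standard Kato--Ponce type product rules do not directly apply. Extracting exactly the factor $\norm[2]{w'}$ (and not a higher-order derivative of $w$) while simultaneously recovering the symmetric contribution $\norm[\Hhalf]{gw}\norm[2]{f}$ will require careful tracking of how the weight $w$ moves between the two factors in each commutator step, using \propref{prop:commutator}(3) together with the weighted bounds of \propref{prop:LinftyHhalf}.
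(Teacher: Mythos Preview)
Your approach to the first estimate is correct and gives a clean physical-space alternative to the paper's argument. The paper instead writes
\[
\papabs^\half(fwh) = \sqbrac*{\papabs^\half, hw}f - h\sqbrac*{\papabs^\half, w}f + h\,\papabs^\half(fw)
\]
and applies the commutator bound $\norm[2]{[\papabs^\half,\phi]\psi}\lesssim\norm[2]{\phi'}\norm[2]{\psi}$ from \propref{prop:commutator}(3) term by term. Your double-integral splitting and the paper's commutator identity are two sides of the same coin; both yield the three terms on the right-hand side with no extra effort.

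For the second estimate, however, your plan has a genuine gap. The target is an $L^2$ norm, not an $\Hhalf$ seminorm, so there is no double integral to split; ``introducing a second variable via an averaging kernel'' is not a concrete step, and the identity for $|fw(\ap)|^2-|fw(\bp)|^2$ does not by itself produce the factor $\norm[\Hhalf]{gw}\norm[2]{f}$ you need for the symmetric term. The underlying difficulty is that $fw$ lies only in $\mathrm{BMO}$, and multiplication by a $\mathrm{BMO}$ function is not bounded on $L^2$; the estimate is only possible because of cancellation between the high-frequency parts of $f$ and $g$, and that cancellation must be made explicit. The paper does this by a paraproduct-type decomposition: writing $f=\Ph f+\Pa f$ and $g=\Ph g+\Pa g$, each of the four pieces of $fgw$ becomes a product of a holomorphic factor and an antiholomorphic factor times $w$. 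For such a product, say $(\Ph f)(\Pa \bar g)w$, one has the exact identity
\[
2(\Ph f)(\Pa \bar g)w = [w\Pa\bar g,\Hil]\Ph f - [w\Ph f,\Hil]\Pa\bar g,
\]
because $(\Id-\Hil)$ annihilates the range of $\Pa$ and $(\Id+\Hil)$ annihilates the range of $\Ph$. Each commutator is then bounded in $L^2$ by $\norm[\Hhalf]{w\Pa\bar g}\norm[2]{f}$ (respectively $\norm[\Hhalf]{w\Ph f}\norm[2]{g}$) via \propref{prop:commutator}(1), and the $\Hhalf$ norms of the projected pieces are reduced to $\norm[\Hhalf]{wg}$ and $\norm[\Hhalf]{wf}$ plus the $\norm[2]{w'}$-error by one more commutator with $\Hil$. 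This frequency splitting is the missing ingredient in your outline; without it (or an equivalent paraproduct mechanism) the second inequality does not close.
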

\begin{proof}
1) We see that
\begin{align*}
\papabs^\half (fwh) =  \sqbrac*{\papabs^\half, h}fw + h\papabs^\half (fw) = \sqbrac*{\papabs^\half, hw}f + h\sqbrac*{\papabs^\half, w}f + h\papabs^\half (fw)
\end{align*}
The estimate now follows from the estimate $\norm*[2]{\sqbrac*{\papabs^\half, g}f} \lesssim \norm*[BMO]{\papabs^\half g}\norm[2]{f} \lesssim \norm[2]{g'}\norm[2]{f}$

\noindent 2) We observe that
\begin{align*}
fgw & = (\Ph f)(\Ph g)w + (\Ph f)(\Pa g)w + (\Pa f)(\Ph g)w + (\Pa f)(\Pa g)w \\
& = (\Ph f)\overline{(\Pa \bar{g})}w + (\Ph f)(\Pa g)w + (\Pa f)(\Ph g)w + (\Pa f)\overline{(\Ph \bar{g})}w
\end{align*}
We will control only the first term and the other terms are controlled similarly. Now see that
\begin{align*}
\norm[2]{(\Ph f)\overline{(\Pa \bar{g})}w} = \norm[2]{(\Ph f)(\Pa \bar{g})w} 
\end{align*}
Hence we have
\begin{align*}
2(\Ph f)(\Pa \bar{g})w & = (\Id - \Hil)\cbrac{(\Ph f)(\Pa \bar{g})w } + (\Id + \Hil)\cbrac{(\Ph f)(\Pa \bar{g})w } \\
& = \sqbrac{w\Pa \bar{g} ,\Hil}\Ph f - \sqbrac{w\Ph f ,\Hil}\Pa \bar{g}
\end{align*}
Now observe that as $w $ is real valued we have
\begin{align*}
\norm[2]{ \sqbrac{w\Pa \bar{g} ,\Hil}\Ph f} \lesssim \norm[\Hhalf]{ \sqbrac{w\Pa \bar{g}}}\norm[2]{\Ph f} & \lesssim \brac{\norm[\Hhalf]{\sqbrac{w,\Hil}\bar{g} } + \norm[\Hhalf]{w\bar{g}} }\norm[2]{f} \\
& \lesssim \norm[2]{w'}\norm[2]{g}\norm[2]{f} + \norm[\Hhalf]{wg}\norm[2]{f}
\end{align*}
Similarly we have
\begin{align*}
\norm[2]{\sqbrac{w\Ph f ,\Hil}\Pa \bar{g}} \lesssim \norm[\Hhalf]{w\Ph f}\norm[2]{\Pa \bar{g}} & \lesssim \brac{\norm[\Hhalf]{\sqbrac{w,\Hil}f} + \norm[\Hhalf]{wf} }\norm[2]{g} \\
& \lesssim \norm[2]{w'}\norm[2]{f}\norm[2]{g} + \norm[\Hhalf]{wf}\norm[2]{g}
\end{align*}
\end{proof}

\begin{prop} \label{prop:DtLinfty}
Let $f  \in  C^3([0,T), H^3(\Rsp))$. Then for any $t\in [0,T)$ we have
\begin{align*}
\limsup_{s \to 0^+} \frac{\norm[\infty]{f(\cdot,t+s)} - \norm[\infty]{f(\cdot,t)}}{s} \leq \norm[\infty]{\pt f(\cdot,t)}
\end{align*}
\end{prop}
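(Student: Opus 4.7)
The plan is to reduce the inequality to the pointwise fundamental theorem of calculus, exploiting only that $\pt f$ is continuous into $L^\infty$. Since $f\in C^3([0,T),H^3(\Rsp))$ and $H^3(\Rsp)\hookrightarrow L^\infty(\Rsp)$, the map $\tau\mapsto \pt f(\cdot,\tau)$ is continuous from $[0,T)$ into $L^\infty(\Rsp)$; in particular $\tau\mapsto \norm[\infty]{\pt f(\cdot,\tau)}$ is continuous at $\tau=t$.

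For any fixed $\ap\in\Rsp$ and $s>0$ with $t+s<T$, the fundamental theorem of calculus in the time variable gives
\[
f(\ap,t+s) = f(\ap,t) + \int_t^{t+s}\pt f(\ap,\tau)\diff\tau,
\]
so $\abs{f(\ap,t+s)}\leq \abs{f(\ap,t)} + \int_t^{t+s}\abs{\pt f(\ap,\tau)}\diff\tau \leq \norm[\infty]{f(\cdot,t)} + \int_t^{t+s}\norm[\infty]{\pt f(\cdot,\tau)}\diff\tau$. Taking the supremum over $\ap$ on the left side yields
\[
\norm[\infty]{f(\cdot,t+s)} - \norm[\infty]{f(\cdot,t)} \leq \int_t^{t+s}\norm[\infty]{\pt f(\cdot,\tau)}\diff\tau.
\]

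Divide by $s$ and let $s\to 0^+$. By the Lebesgue differentiation theorem, or more simply by the continuity of $\tau\mapsto\norm[\infty]{\pt f(\cdot,\tau)}$ established above, the right side converges to $\norm[\infty]{\pt f(\cdot,t)}$, giving the claimed Dini-derivative bound. The only genuinely substantive ingredient is the continuity of $\pt f$ in $L^\infty$, which follows immediately from the regularity hypothesis, so there is no serious obstacle here — the result is essentially a direct consequence of the fundamental theorem of calculus plus a sup-swap argument.
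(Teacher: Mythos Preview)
Your proof is correct and follows essentially the same approach as the paper: both reduce to the fundamental theorem of calculus in time followed by a supremum bound and a limit using the continuity of $\pt f$ in $\Linfty$. The only cosmetic difference is that the paper picks an $\epsilon$-near-maximizer $a_\epsilon$ of $\abs{f(\cdot,t+s)}$ and bounds the difference at that single point (effectively a mean-value estimate), whereas you integrate directly and swap the supremum inside the integral; your version is slightly more streamlined but not substantively different.
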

\begin{proof}
Fix $s > 0$ satisfying $t+s \in [0,T)$ and for every $\epsilon >0$ we find $a_{\epsilon} \in \Rsp$ such that $\norm[\infty]{f(\cdot,t+s)} \leq \abs{f}(a_{\epsilon},t+s) + \epsilon$. Observe that $\abs{f}(a_{\epsilon}, t) \leq \norm[\infty]{f(\cdot,t)}$ and hence we have
\begin{align*}
\norm[\infty]{f(\cdot,t+s)} - \norm[\infty]{f(\cdot,t)} & \leq  \abs{f}(a_{\epsilon},t+s) - \abs{f}(a_{\epsilon},t) + \epsilon \\
& \leq  \abs{f(a_{\epsilon},t+s) - f(a_{\epsilon},t)} + \epsilon \\
& \leq \sup_{\substack{\ap \in \Rsp \\ u \in (0,s)}}  \abs{\partial_t f(\ap,t+u)}s + \epsilon
\end{align*}
Now let $\epsilon \to 0$ to get 
\[
 \frac{\norm[\infty]{f(\cdot,t+s)} - \norm[\infty]{f(\cdot,t)}}{s} \leq \sup_{u \in (0,s)}\norm[\infty]{\pt f(\cdot,t+u)}
\]
As $\pt^2 f \in \Linfty(\Rsp\times[0,T))$, we take the limit as $s\to 0$ to finish the proof.
\end{proof}
\begin{lemma}\label{lem:conv}
Let $K_\ep$ be the Poisson kernel from \eqref{eq:Poissonkernel}. If $\f \in L^q(\Rsp)$, then for $s\geq 0$ an integer we have
\begin{align*}
\norm[p]{(\pap^sf)\conv P_\ep} \lesssim \norm[q]{f}\ep^{-s-\brac{\frac{1}{q} - \frac{1}{p}}} \quad \tx{ for } 1\leq q\leq p \leq \infty
\end{align*}
Similarly for $s \in\Rsp, s\geq 0$ we have
\begin{align*}
\norm[p]{(\papabs^s f)\conv P_\ep} \lesssim \norm[q]{f}\ep^{-s-\brac{\frac{1}{q} - \frac{1}{p}}} \quad \tx{ for } 1\leq q\leq p \leq \infty
\end{align*}
\end{lemma}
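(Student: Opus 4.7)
\medskip

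\noindent\textbf{Proof proposal for Lemma \ref{lem:conv}.} The plan is to move all derivatives onto the Poisson kernel and then apply Young's convolution inequality, reducing the estimate to a scaling computation for $P_1$. Concretely, for integer $s \geq 0$, integration by parts gives
\[
(\pap^s f) \conv P_\ep = f \conv (\pap^s P_\ep),
\]
and for fractional $s \geq 0$ the same identity holds by Fourier analysis, since $\widehat{\papabs^s f}(\xi)\widehat{P_\ep}(\xi) = \hat f(\xi)\,\widehat{\papabs^s P_\ep}(\xi)$. Applying Young's inequality with exponents satisfying $1 + \tfrac{1}{p} = \tfrac{1}{q} + \tfrac{1}{r}$ (so that $\tfrac{1}{r} = 1 - \brac{\tfrac{1}{q} - \tfrac{1}{p}} \in [0,1]$, which is legitimate as $q \leq p$) yields
\[
\norm[p]{(\pap^s f)\conv P_\ep} \leq \norm[q]{f}\,\norm[r]{\pap^s P_\ep}.
\]

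The next step is a direct scaling calculation. Since $P_\ep(x) = \ep^{-1}P_1(x/\ep)$, one has $\pap^s P_\ep(x) = \ep^{-1-s}(\pap^s P_1)(x/\ep)$, hence
\[
\norm[r]{\pap^s P_\ep} = \ep^{-s - (1 - \frac{1}{r})}\,\norm[r]{\pap^s P_1} = \ep^{-s - (\frac{1}{q} - \frac{1}{p})}\,\norm[r]{\pap^s P_1}.
\]
For the fractional case replace $\pap^s$ by $\papabs^s$ throughout; the same scaling identity holds since $\papabs^s$ is homogeneous of degree $s$. Combining the two displays gives the desired inequality, provided that $\norm[r]{\pap^s P_1}$ (respectively $\norm[r]{\papabs^s P_1}$) is finite for every $r \in [1,\infty]$.

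The one substantive point is this finiteness, which is the main (and only) obstacle. For integer $s$ it is immediate: $\pap^s P_1$ is smooth and decays like $|x|^{-2-s}$, so it lies in every $L^r$. For fractional $s \geq 0$ one uses the Fourier representation $\widehat{\papabs^s P_1}(\xi) = |\xi|^s e^{-|\xi|}$. Since this symbol is integrable in $\xi$, Fourier inversion gives $\papabs^s P_1 \in L^\infty$. Moreover the only obstruction to smoothness of $|\xi|^s e^{-|\xi|}$ is the singularity at $\xi=0$, which is of type $|\xi|^s$; standard stationary-phase/principal-value arguments then show $\papabs^s P_1(x) = O(|x|^{-1-s})$ as $|x| \to \infty$. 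This decay is integrable to the $r$-th power for all $r \geq 1$ when $s \geq 0$, so $\papabs^s P_1 \in L^r(\Rsp)$ for every $r \in [1,\infty]$, completing the proof.
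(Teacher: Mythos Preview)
Your proof is correct and is precisely the argument the paper has in mind: the paper's own proof is the single sentence ``The proof follows from basic properties of convolution,'' and what you wrote (transfer the derivative to the kernel, apply Young's inequality, and reduce to a scaling computation for $P_1$) is the standard way to unpack that sentence. The only negligible looseness is at the borderline $s=0$, $r=1$ in the fractional case, where the asserted $O(|x|^{-1})$ bound is not $L^1$; but there $\papabs^0 P_1 = P_1$ and $\|P_1\|_1 = 1$, so nothing is at stake.
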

\begin{proof}
The proof follows from basic properties of convolution. 
\end{proof}


\bibliographystyle{amsplain}
\bibliography{Main.bib}

\providecommand{\bysame}{\leavevmode\hbox to3em{\hrulefill}\thinspace}
\providecommand{\MR}{\relax\ifhmode\unskip\space\fi MR }
\providecommand{\MRhref}[2]{%
  \href{http://www.ams.org/mathscinet-getitem?mr=#1}{#2}
}
\providecommand{\href}[2]{#2}
\begin{thebibliography}{10}

\bibitem{Ag20a}
Siddhant Agrawal, \emph{Angled crested like water waves with surface tension
  {II}: Zero surface tension limit}, Preprint (2020), arXiv:2009.13469.

\bibitem{Ag20}
\bysame, \emph{Rigidity of singularities of 2{D} gravity water waves}, J.
  Differential Equations \textbf{268} (2020), no.~3, 1220--1249. \MR{4029004}

\bibitem{Ai17}
Albert Ai, \emph{Low regularity solutions for gravity water waves}, Preprint
  (2017), arXiv:1712.07821.

\bibitem{Ai20}
\bysame, \emph{Low regularity solutions for gravity water waves {II}: {T}he
  2{D} case}, Ann. PDE \textbf{6} (2020), no.~1, Paper No. 4, 117. \MR{4098033}

\bibitem{AlBuZu11}
T.~Alazard, N.~Burq, and C.~Zuily, \emph{On the water-wave equations with
  surface tension}, Duke Math. J. \textbf{158} (2011), no.~3, 413--499.
  \MR{2805065}

\bibitem{AlBuZu14}
\bysame, \emph{On the {C}auchy problem for gravity water waves}, Invent. Math.
  \textbf{198} (2014), no.~1, 71--163. \MR{3260858}

\bibitem{AlBuZu18}
Thomas Alazard, Nicolas Burq, and Claude Zuily, \emph{Strichartz estimates and
  the {C}auchy problem for the gravity water waves equations}, Mem. Amer. Math.
  Soc. \textbf{256} (2018), no.~1229, v+108. \MR{3852259}

\bibitem{Am03}
David~M. Ambrose, \emph{Well-posedness of vortex sheets with surface tension},
  SIAM J. Math. Anal. \textbf{35} (2003), no.~1, 211--244. \MR{2001473}

\bibitem{AmMa05}
David~M. Ambrose and Nader Masmoudi, \emph{The zero surface tension limit of
  two-dimensional water waves}, Comm. Pure Appl. Math. \textbf{58} (2005),
  no.~10, 1287--1315. \MR{2162781}

\bibitem{AmMa09}
\bysame, \emph{The zero surface tension limit of three-dimensional water
  waves}, Indiana Univ. Math. J. \textbf{58} (2009), no.~2, 479--521.
  \MR{2514378}

\bibitem{BeGu98}
Klaus Beyer and Matthias G\"unther, \emph{On the {C}auchy problem for a
  capillary drop. {I}. {I}rrotational motion}, Math. Methods Appl. Sci.
  \textbf{21} (1998), no.~12, 1149--1183. \MR{1637554}

\bibitem{CaCoFeGaGo12}
Angel Castro, Diego C\'{o}rdoba, Charles Fefferman, Francisco Gancedo, and
  Javier G\'{o}mez-Serrano, \emph{Finite time singularities for water waves
  with surface tension}, J. Math. Phys. \textbf{53} (2012), no.~11, 115622, 26.
  \MR{3026567}

\bibitem{CaCoFeGaGo13}
Angel Castro, Diego C\'ordoba, Charles Fefferman, Francisco Gancedo, and Javier
  G\'omez-Serrano, \emph{Finite time singularities for the free boundary
  incompressible {E}uler equations}, Ann. of Math. (2) \textbf{178} (2013),
  no.~3, 1061--1134. \MR{3092476}

\bibitem{ChLi00}
Demetrios Christodoulou and Hans Lindblad, \emph{On the motion of the free
  surface of a liquid}, Comm. Pure Appl. Math. \textbf{53} (2000), no.~12,
  1536--1602. \MR{1780703}

\bibitem{CoMcMe82}
R.~R. Coifman, A.~McIntosh, and Y.~Meyer, \emph{L'int\'egrale de {C}auchy
  d\'efinit un op\'erateur born\'e sur {$L\sp{2}$}\ pour les courbes
  lipschitziennes}, Ann. of Math. (2) \textbf{116} (1982), no.~2, 361--387.
  \MR{672839}

\bibitem{CoSh07}
Daniel Coutand and Steve Shkoller, \emph{Well-posedness of the free-surface
  incompressible {E}uler equations with or without surface tension}, J. Amer.
  Math. Soc. \textbf{20} (2007), no.~3, 829--930. \MR{2291920}

\bibitem{Cr85}
Walter Craig, \emph{An existence theory for water waves and the {B}oussinesq
  and {K}orteweg-de {V}ries scaling limits}, Comm. Partial Differential
  Equations \textbf{10} (1985), no.~8, 787--1003. \MR{795808}

\bibitem{Po19}
Thibault de~Poyferr\'{e}, \emph{A {P}riori {E}stimates for {W}ater {W}aves with
  {E}merging {B}ottom}, Arch. Ration. Mech. Anal. \textbf{232} (2019), no.~2,
  763--812. \MR{3925531}

\bibitem{HaIfTa17}
Benjamin Harrop-Griffiths, Mihaela Ifrim, and Daniel Tataru, \emph{Finite depth
  gravity water waves in holomorphic coordinates}, Ann. PDE \textbf{3} (2017),
  no.~1, Art. 4, 102. \MR{3625189}

\bibitem{HuIfTa16}
John~K. Hunter, Mihaela Ifrim, and Daniel Tataru, \emph{Two dimensional water
  waves in holomorphic coordinates}, Comm. Math. Phys. \textbf{346} (2016),
  no.~2, 483--552. \MR{3535894}

\bibitem{Ig01}
Tatsuo Iguchi, \emph{Well-posedness of the initial value problem for
  capillary-gravity waves}, Funkcial. Ekvac. \textbf{44} (2001), no.~2,
  219--241. \MR{1865389}

\bibitem{KaPo88}
Tosio Kato and Gustavo Ponce, \emph{Commutator estimates and the {E}uler and
  {N}avier-{S}tokes equations}, Comm. Pure Appl. Math. \textbf{41} (1988),
  no.~7, 891--907. \MR{951744}

\bibitem{KiWu18}
Rafe~H. Kinsey and Sijue Wu, \emph{A priori estimates for two-dimensional water
  waves with angled crests}, Camb. J. Math. \textbf{6} (2018), no.~2, 93--181.
  \MR{3811234}

\bibitem{La05}
David Lannes, \emph{Well-posedness of the water-waves equations}, J. Amer.
  Math. Soc. \textbf{18} (2005), no.~3, 605--654. \MR{2138139}

\bibitem{Le85}
Pierre~Gilles Lemari\'e, \emph{Continuit\'e sur les espaces de {B}esov des
  op\'erateurs d\'efinis par des int\'egrales singuli\`eres}, Ann. Inst.
  Fourier (Grenoble) \textbf{35} (1985), no.~4, 175--187. \MR{812324}

\bibitem{Le17}
Giovanni Leoni, \emph{A first course in {S}obolev spaces}, second ed., Graduate
  Studies in Mathematics, vol. 181, American Mathematical Society, Providence,
  RI, 2017. \MR{3726909}

\bibitem{Li19}
Dong Li, \emph{On {K}ato-{P}once and fractional {L}eibniz}, Rev. Mat. Iberoam.
  \textbf{35} (2019), no.~1, 23--100. \MR{3914540}

\bibitem{Li05}
Hans Lindblad, \emph{Well-posedness for the motion of an incompressible liquid
  with free surface boundary}, Ann. of Math. (2) \textbf{162} (2005), no.~1,
  109--194. \MR{2178961}

\bibitem{MeCo97}
Yves Meyer and Ronald Coifman, \emph{Wavelets}, Cambridge Studies in Advanced
  Mathematics, vol.~48, Cambridge University Press, Cambridge, 1997,
  Calder\'on-Zygmund and multilinear operators, Translated from the 1990 and
  1991 French originals by David Salinger. \MR{1456993}

\bibitem{MiZh09}
Mei Ming and Zhifei Zhang, \emph{Well-posedness of the water-wave problem with
  surface tension}, J. Math. Pures Appl. (9) \textbf{92} (2009), no.~5,
  429--455. \MR{2558419}

\bibitem{Na74}
V.~I. Nalimov, \emph{The {C}auchy-{P}oisson problem}, Dinamika Splo\v sn. Sredy
  (1974), no.~Vyp. 18 Dinamika \v Zidkost. so Svobod. Granicami, 104--210, 254.
  \MR{0609882}

\bibitem{Ng17}
Huy~Quang Nguyen, \emph{A sharp {C}auchy theory for the 2{D} gravity-capillary
  waves}, Ann. Inst. H. Poincar\'e Anal. Non Lin\'eaire \textbf{34} (2017),
  no.~7, 1793--1836. \MR{3724757}

\bibitem{OgTa02}
Masao Ogawa and Atusi Tani, \emph{Free boundary problem for an incompressible
  ideal fluid with surface tension}, Math. Models Methods Appl. Sci.
  \textbf{12} (2002), no.~12, 1725--1740. \MR{1946720}

\bibitem{Po92}
Ch. Pommerenke, \emph{Boundary behaviour of conformal maps}, Grundlehren der
  Mathematischen Wissenschaften [Fundamental Principles of Mathematical
  Sciences], vol. 299, Springer-Verlag, Berlin, 1992. \MR{1217706}

\bibitem{Sc05}
Ben Schweizer, \emph{On the three-dimensional {E}uler equations with a free
  boundary subject to surface tension}, Ann. Inst. H. Poincar\'e Anal. Non
  Lin\'eaire \textbf{22} (2005), no.~6, 753--781. \MR{2172858}

\bibitem{ShSh19}
Shuanglin Shao and Hsi-Wei Shih, \emph{A remark on the two dimensional water
  wave problem with surface tension}, J. Differential Equations \textbf{266}
  (2019), no.~9, 5748--5771. \MR{3912765}

\bibitem{ShZe08}
Jalal Shatah and Chongchun Zeng, \emph{Geometry and a priori estimates for free
  boundary problems of the {E}uler equation}, Comm. Pure Appl. Math.
  \textbf{61} (2008), no.~5, 698--744. \MR{2388661}

\bibitem{ShZe11}
\bysame, \emph{Local well-posedness for fluid interface problems}, Arch.
  Ration. Mech. Anal. \textbf{199} (2011), no.~2, 653--705. \MR{2763036}

\bibitem{Ti86}
E.~C. Titchmarsh, \emph{Introduction to the theory of {F}ourier integrals},
  third ed., Chelsea Publishing Co., New York, 1986. \MR{942661}

\bibitem{Wi65}
Neil~M. Wigley, \emph{Development of the mapping function at a corner}, Pacific
  J. Math. \textbf{15} (1965), 1435--1461. \MR{0186806}

\bibitem{Wu97}
Sijue Wu, \emph{Well-posedness in {S}obolev spaces of the full water wave
  problem in {$2$}-{D}}, Invent. Math. \textbf{130} (1997), no.~1, 39--72.
  \MR{1471885}

\bibitem{Wu99}
\bysame, \emph{Well-posedness in {S}obolev spaces of the full water wave
  problem in 3-{D}}, J. Amer. Math. Soc. \textbf{12} (1999), no.~2, 445--495.
  \MR{1641609}

\bibitem{Wu09}
\bysame, \emph{Almost global wellposedness of the 2-{D} full water wave
  problem}, Invent. Math. \textbf{177} (2009), no.~1, 45--135. \MR{2507638}

\bibitem{Wu15b}
\bysame, \emph{Wellposedness and singularities of the water wave equations},
  Preprint (2015), arXiv:1503.00549.

\bibitem{Wu19}
\bysame, \emph{Wellposedness of the 2{D} full water wave equation in a regime
  that allows for non-{$C^1$} interfaces}, Invent. Math. \textbf{217} (2019),
  no.~2, 241--375. \MR{3987173}

\bibitem{Yo82}
Hideaki Yosihara, \emph{Gravity waves on the free surface of an incompressible
  perfect fluid of finite depth}, Publ. Res. Inst. Math. Sci. \textbf{18}
  (1982), no.~1, 49--96. \MR{660822}

\bibitem{Yo83}
\bysame, \emph{Capillary-gravity waves for an incompressible ideal fluid}, J.
  Math. Kyoto Univ. \textbf{23} (1983), no.~4, 649--694. \MR{728155}

\bibitem{ZhZh08}
Ping Zhang and Zhifei Zhang, \emph{On the free boundary problem of
  three-dimensional incompressible {E}uler equations}, Comm. Pure Appl. Math.
  \textbf{61} (2008), no.~7, 877--940. \MR{2410409}

\end{thebibliography}

\end{document}